\newtheoremstyle{thmstyle}%style name
  {\medskipamount}%space before
  {\smallskipamount}%space after
  {\slshape}%font used
  {0pt}%indentation
  {\bfseries}%modifier theorem head
  {.}%punctuation between theorem head and body
  { }%space after punctuation
  {\thmname{#1}\thmnumber{ #2}{\normalfont\thmnote{ (#3)}}}%theorem specifier
\newtheoremstyle{plainstyle}%style name
  {\medskipamount}%space before
  {\smallskipamount}%space after
  {\rmfamily}%font used
  {0pt}%indentation
  {\bfseries}%modifier theorem head
  {.}%punctuation between theorem head and body
  { }%space after punctuation
  {\thmname{#1}\thmnumber{ #2}{\normalfont\thmnote{ (#3)}}}%theorem specifier
\theoremstyle{thmstyle}
\newtheorem{theorem}{Theorem}[section]
\newtheorem{lemma}[theorem]{Lemma}
\newtheorem{proposition}[theorem]{Proposition}
\newtheorem{claim}[theorem]{Claim}
\theoremstyle{plainstyle}
\newtheorem{definition}[theorem]{Definition}
\newtheorem{remark}{Remark}
\newtheorem{example}{Example}
\newenvironment{proofof}[1]{\begin{proof}[Proof of #1.]}{\end{proof}}
\setlist[enumerate]{label={\roman*.}, ref={(\roman*)}}
\newcommand{\rn}{\bm}
\newcommand{\df}{\stackrel{\text{def}}{=}}
\newcommand{\place}{\mathord{-}}
\newcommand{\cat}{\textsc}
\newcommand{\disjcup}{\mathbin{\stackrel\cdot\cup}}
\def\symdiff{\mathbin{\triangle}}
\newcommand{\comp}{\mathbin{\circ}}
\newcommand{\rest}{\mathord{\vert}}
\newcommand{\function}[2]{\colon #1 \rightarrow #2}
\newcommand{\injection}[2]{\colon #1 \rightarrowtail #2}
\newcommand{\interpret}[2]{\colon #1 \leadsto #2}
\newcommand{\set}[2]{\left\{\hspace{0.2ex} #1 \:\middle\vert\: #2\right\}}
\newcommand{\rk}{\text{rk}}
\DeclareMathOperator{\Homop}{Hom}
\newcommand{\HomT}[1]{\Homop^+(\cA[#1],\RR)}
\DeclareMathOperator{\Aut}{Aut}
\def\Tind{T_{\operatorname{ind}}}
\def\tind{t_{\operatorname{ind}}}
\def\Dopen{D_{\operatorname{open}}}
\DeclareMathOperator{\im}{im}
\DeclareMathOperator{\sgn}{sgn}
\DeclareMathOperator{\id}{id}
\DeclareMathOperator{\Th}{Th}
\newcommand{\NN}{\mathbb{N}}
\newcommand{\PP}{\mathbb{P}}
\newcommand{\RR}{\mathbb{R}}
\newcommand{\One}{\mathbbm{1}}
\newcommand{\cA}{\mathcal{A}}
\newcommand{\cC}{\mathcal{C}}
\newcommand{\cE}{\mathcal{E}}
\newcommand{\cG}{\mathcal{G}}
\newcommand{\cH}{\mathcal{H}}
\newcommand{\cK}{\mathcal{K}}
\newcommand{\cL}{\mathcal{L}}
\newcommand{\cM}{\mathcal{M}}
\newcommand{\cN}{\mathcal{N}}
\def\prop{\texttt}
\def\CliqueDisc{\prop{CliqueDisc}}
\def\Disc{\prop{Disc}}
\def\Dev{\prop{Dev}}
\def\Independence{\prop{Independence}}
\def\UCouple{\prop{UCouple}}
\def\UInduce{\prop{UInduce}}
\newcommand{\TcColoring}[1][c]{T_{#1\operatorname{-Coloring}}}
\newcommand{\TLinOrder}{T_{\operatorname{LinOrder}}}
\newcommand{\TGraph}{T_{\operatorname{Graph}}}
\newcommand{\TkHypergraph}[1][k]{T_{#1\operatorname{-Hypergraph}}}
\newcommand{\explicitTcColkHyp}[2]{T_{#1,#2}}
\newcommand{\TcColkHyp}[1][{c}{k}]{\explicitTcColkHyp#1}
\newcommand{\TkTournament}[1][k]{T_{#1\operatorname{-Tournament}}}
\title{Natural quasirandomness properties}
\author{Leonardo N.~Coregliano\thanks{University of Chicago, {\tt lenacore@uchicago.edu}} \and Alexander A.~Razborov\thanks{University of Chicago,
{\tt razborov@math.uchicago.edu}, and Steklov Mathematical Institute, Moscow}
}
\begin{document}
\maketitle

\begin{abstract}
  The theory of quasirandomness has greatly expanded from its inaugural graph theoretical setting to several
  different combinatorial objects such as hypergraphs, tournaments, permutations, etc. However, these
  quasirandomness variants have been done in an ad-hoc case-by-case manner.
In this paper, we propose three new hierarchies of quasirandomness
  properties that can be naturally defined for arbitrary combinatorial
  objects. Our properties are also ``natural'' in more formal sense: they
  are preserved by local combinatorial constructions (encoded by open
  interpretations). We show that our quasirandomness properties have
several different but equivalent characterizations that are similar to
  hypergraph quasirandomness properties. We also prove several implications
  and separations comparing them to each other and to what has been known
  for hypergraphs.

  The main notion explored by our statements and proofs is that of unique
  coupleability: two limit objects are uniquely coupleable if there is a
  unique limit object in the combined theory that is an alignment (i.e., a
  coupling) of these two objects.
\end{abstract}

\section{Introduction}

The theory of graph quasirandomness introduced by Thomason~\cite{Tho85} and
Chung--Graham--Wilson~\cite{CGW89} studies deterministic graphs that look
random. The main discovery of this theory is that several properties that
hold asymptotically almost surely for the sequence of Erd\H{o}s--R\'{e}nyi
random graphs $(\rn{G_{n,p}})_{n\in\NN}$ are equivalent when rephrased as properties of a
deterministic graph sequence $(G_n)_{n\in\NN}$. Since then, the theory of
quasirandomness has expanded not only within graph
theory~\cite{CG92,SS97,SS03,Sha08,Yus10,Jan11,CFS18} but also towards studying
quasirandomness for other combinatorial objects such as
tournaments~\cite{CG91,KS13,CR17}, permutations~\cite{Coo04,Coo05,KP13,CKNPSV20} and
hypergraphs~\cite{CG90,Chu90,KRS02,KNRS10,DR11,LM15b,LM17,Tow17,ACHP18}.

The theory of quasirandomness was one of the motivations and driving forces behind
the theory of dense limits of combinatorial objects (we
refer the reader to~\cite{Lov12} for the case of graphs and to~\cite{Aus08,AC14,CR19}
for the general case). The starting point of the latter theory is that if
$(N_n)_{n\in\NN}$ is a sequence of combinatorial objects such that for every
fixed combinatorial object $M$, the normalized number of (unlabeled induced)
copies $p(M,N_n)$ of $M$ in $N_n$ converges to some limit $\phi(M)$, then the
sequence $(N_n)_{n\in\NN}$ can alternatively be represented as a limit object
that captures all these limit values. But as the theory of graph (and other)
limits has been maturing, and in particular after the uniqueness theorem was
proved in~\cite{BCL10} (see~\cite[Theorem~13.10]{Lov12} for graphs
and~\cite[Theorem~3.9]{CR19} for the general case), it has turned out that in
a sense this theory transcends counting. Namely, limit objects can be
described, up to an appropriate notion of isomorphism (or, as Lov\'{a}sz
dubbed it, {\em cryptomorphism}), using different languages and quite
different kinds of mathematics and statistics (\cite[Theorem~11.52]{Lov12}
and~\cite[Theorem~6.3]{CR19}) and only one of those descriptions is based on
sampling statistics $p(M,\place)$ per se~\cite{Raz07}. Arguably, it is this
versatility that is largely responsible for the wide spread of graph limits
and their connections to many other areas.

\smallskip
The situation with quasirandomness remains somewhat different, and we are
aware only of a few attempts to study it intrinsically, that is, based on
principles other than counting. One of the equivalent properties in the
seminal paper~\cite{CGW89} ($P_3$) was of spectral nature, namely it
requested the second largest eigenvalue of $G_n$ to be $o(\lvert G_n\rvert)$. This spectral
theme was further continued for (linear) quasirandom hypergraphs
in~\cite{LM15a,LM17}.

Even though most other quasirandomness properties in the literature are
stated in terms of counting, it is still possible to extract from them
something intrinsic. For example, the property $P_4$ in~\cite{CGW89} (see
also~\cite[Theorem~2.4]{SS97}) implies that quasirandom limits $W$ are the
only graphons with the following unique inducibility property: if
$(G_n)_{n\in\NN}$ converges to $W$ then the sequence of induced graphs
$(G_n\rest_{U_n})_{n\in\NN}$ also converges to $W$ as long as $\lvert
U_n\rvert\geq\Omega(\lvert G_n\rvert)$. As another example, using graphon language~\cite{LS06}, we can
extract a trivial intrinsic characterization of quasirandom limits in terms
of an independence property: a graphon $W\function{[0,1]^2}{[0,1]}$ is
quasirandom if and only if $W$ a.e.\ does not depend on its variables, that
is, it is a.e.\ constant.

\bigskip
In this paper we attempt to initiate a more systematic study of quasirandom
properties that can be reasonably identified as ``intrinsic'' (for reasons
that will become clear very shortly, we will also use in this context the
word ``natural''), and let us first explain what we roughly mean by this. Our
explanation will be deliberately informal and open-ended; instead of trying
to give a rigorous definition, we present a set of tests that in our view
have to be passed and then describe some concrete properties we will be
studying in this paper that pass these tests.

First and foremost, we view this paper as a continuation of~\cite{Raz07,CR19}, which in particular implies that we require qualifying properties to be formulated in an uniform way for arbitrary universal theories in a finite relational language. For examples of what can be expressed in that language see~\cite[Sct.~2.1 and Sct.~7]{CR19}.

The next two requirements are somewhat derivative of the first.

We require that the property should not refer to densities of concrete models
and their explicit values (thus, this is more about the \emph{formulation} of
the property than the class of objects defined by it.) The reason is that any
such definition is necessarily somewhat arbitrary. For example, there is no
such thing as ``edge densities'' in the theories of tournaments and
permutations so their ad hoc analogues had to be found when defining
quasirandom objects in those contexts. Of the quasirandom graph properties
mentioned above, the description as a constant graphon definitely satisfies
this criterion, and so does the inducibility property (the tweak of $P_4$ in~\cite{CGW89}). Spectral properties also pass the test but unfortunately they
fail (given our current state of knowledge) the previous universality test.

The next requirement is that we want the property to be preserved under open
interpretations, and this is where the word ``natural'' (like in ``natural
transformations'' -- open interpretations do form a category~\cite[Sct.~2.2]{CR19}) comes in. In plain words, everything that can be syntactically
defined in a quasirandom object must display proportionally strong quasirandom
properties. Again, in an implicit form this requirement was exploited in the
previous literature both in positive and negative manner. For example, the
proofs of the implications $P_{10}\Longrightarrow P_{11}\Longrightarrow P_1(s)$ in the
seminal paper on quasirandom tournaments~\cite{CG91} can be viewed as
divided into two parts. First one proves that all ``couplings'' of a
quasirandom graph with a linear ordering are the same and hence completely
determined by the random coupling. Then the tournament obtained from the
resulting quasirandom ordered graph via the ``arc-orientation''
interpretation must be quasirandom. This example is paradigmatic for many
parts of our paper. As for ``negative'' use, let us note that most
separations in the hierarchy of quasirandom hypergraphs~\cite{ACHP18,LM15b,Tow17} can be viewed as coming from the fact that these
properties are {\em not} preserved under open interpretations between the
theories of hypergraphs of possibly different arity. We will elaborate on
this in Section~\ref{sec:separations} (see Theorem~\ref{thm:separationDevUInduce}).

Our final requirement is more ``traditional'', and it is well-rooted in the
previous literature. Namely, we require that the property should be satisfied
asymptotically almost surely for some ``natural'' random model of some
``natural'' theory $T$. Examples of ``natural'' random models include, of
course, the Erd\H{o}s--R\'{e}nyi model and its generalization to hypergraphs,
the random tournament, the random permutation, etc.

\bigskip
This list of requirements may appear to be rather restrictive, so let us describe quasirandom properties we are studying;
they are essentially far-reaching generalizations of what we already
discussed above. Several more remarks are in place before we begin.
\begin{enumerate}[label={\arabic*.}]
\item We have deliberately decided against attempting to state our
    properties in the language of finite combinatorial objects and their
    asymptotic behavior -- it is probably possible but the result might be
    rather ugly and disappointing. Instead, we use the language of
    graphons~\cite{LS06}, hypergraphons~\cite{ES12} and theons~\cite{CR19}
    for the geometric view of our objects and that of flag
    algebras~\cite{Raz07} for a concise algebraic description. We remark
    that we are not the first authors to make this election, and the
    advantages of using the continuous setting are illustrated by the fact
    that such proofs are often more elegant and less technical than their
    finite world counterparts~\cite{Jan11,KP13,Tow17}. This view is more
    instructive, too: for example, by looking back through the lenses of
    graphons, we can extract an elegant graphon proof of quasirandomness of
    property $P_2(4)$ of~\cite{CGW89} based on the Lebesgue Density Theorem
    from a paper as early as~\cite[Theorem~3.10]{DF81}.

However, for the benefit of more combinatorially-oriented reader we try to inject as much of ``finite intuition'' as possible in appropriate places.

\item Our properties are not equivalent with those previously studied in
    the literature even for hypergraphs (see Figure~\ref{fig:hypergraph}).
    Hence the reader interested only in this case can safely assume that
    our base theory is $\TkHypergraph$ for some $k\geq 3$, and the objects
    are just hypergraphons. But let us mention that more complicated
    objects like colorings, orderings, couplings, etc.\ will pop up in the
    statements and the proofs anyway.

\item Finally, the description below is loose and sweeps under the rug some
    important technicalities. Proper definitions are deferred to Section~\ref{sec:quasirandomness}.
\end{enumerate}

\begin{description}
\item[\protect{$\Independence[\ell]$}] If we want to realize the
    quasirandom (that is, constant) graphon of density $p$ as a
    2-hypergraphon $\mathcal G\subseteq [0,1]^3$, one way of doing it is by
    \begin{equation}\label{eq:W}
    \mathcal G\df \set{(x_{\{1\}}, x_{\{2\}}, x_{\{1,2\}})}{x_{\{1,2\}}\leq p}.
    \end{equation}
    This 2-hypergraphon has one peculiar property: it does not depend on
    first-order coordinates $x_{\{1\}}, x_{\{2\}}$, and this property is
    perfectly generalizable. Namely, we call a combinatorial object $\phi$
    {\em $\ell$-independent} if it has a representation similar
    to~\eqref{eq:W} that does not depend on the coordinates $x_A$ with
    $\lvert A\rvert\leq \ell$. This is the strongest in the hierarchy of
    our properties, and it relatively easily implies all the others, with
    the same value of the parameter $\ell$. Let us also remark that if the
    object is given in an implicit form, say as a positive homomorphism
    $\phi\in \HomT T$ from the flag algebra, then $\Independence[\ell]$
    only talks about the {\em existence} of the required geometric
    realization or, equivalently, about the {\em possibility} of
    straightening up any geometric realization\footnote{In the case of a
    $T$-on $\mathcal N$, we require that this transformation be uniform
    over all $P$-ons $\mathcal N_P$ forming $\mathcal N$.} using specific
    families of measure-preserving functions~\cite[Ch.~7.3]{Lov12},
    \cite[Sct.~4.1]{ES12}, \cite[Sct.~3]{CR19}. As an example of a
    non-straight representation, the 2-hypergraphon
    \begin{align}\label{eq:Wskew}
      \mathcal G' &\df \set{(x_{\{1\}}, x_{\{2\}}, x_{\{1,2\}})}{(x_{\{1\}} + x_{\{2\}} + x_{\{1,2\}})\bmod 1\leq p}
    \end{align}
    represents the same limit as the one in~\eqref{eq:W} but does depend on first-order coordinates.

\item[\protect{$\UCouple[\ell]$} (Unique $\ell$-coupleability)]
    Roughly speaking (the exact definition in the language of open
    interpretations will be given in Section~\ref{sec:quasirandomness}),
    two combinatorial objects $\phi$ and $\psi$ are uniquely coupleable if
    any two alignments of these objects on the same ground set (a coupling)
    give the same object in the combined theory. In that case, this unique
    coupling can be described by the random alignment, called
    \emph{independent coupling}, and this allows us to compute the combined
    object (represented as a flag-algebraic homomorphism) by a very simple
    formula. For example, quasirandom graphs of density $p\in[0,1]$ are
    uniquely coupleable with any 2-coloring of the vertices as well as with
    the linear ordering. They are {\em not} uniquely coupleable with
    themselves, except for the trivial case $p\in \{0,1\}$. Now, to every
    combinatorial object $\phi$ we associate its rank dually to the notion
    of \Independence: $\rk(\phi)\leq\ell$ if and only if $\phi$ has a
    representation as a $T$-on $\mathcal N$ in which all $P$-ons $\mathcal
    N_P$ depend {\em only} on the coordinates $x_A$ with $\lvert
    A\rvert\leq\ell$. We call an object $\phi$ \emph{uniquely
    $\ell$-coupleable} if it is uniquely coupleable with all objects $\psi$
    such that $\rk(\psi)\leq\ell$.

\item[\protect{$\UInduce[\ell]$} (Unique $\ell$-inducibility)] One
    equivalent way to view the induced subgraph $G\rest_V$ is this: we
    first color the vertices into two colors, say, green (corresponding to
    $V$) and red. Then instead of removing red vertices, we remove all
    edges adjacent to at least one red vertex. In this form, it has a
    perfect generalization in higher dimensions. Namely, we start as in the
    previous paragraph and consider couplings of a combinatorial object
    $\phi$ with an $\ell$-hypergraphon $\psi$ (note that $\rk(\psi)\leq
    \ell$). The unique coupleability requires that for any two such
    couplings $\xi_1$ and $\xi_2$, we have $\xi_1(M)=\xi_2(M)$ for any
    model $M$ of the combined theory. Unique inducibility by $\psi$ relaxes
    this property by requiring that $\xi_1(M)=\xi_2(M)$ holds only for
    those $M$ that are based on a clique in the hypergraphon $\psi$. The
    object $\phi$ is \emph{uniquely $\ell$-inducible} if it is uniquely
    inducible by any $\ell$-hypergraphon $\psi$.
\end{description}

From the loose formulation of the properties above, one can already see that
the first two ``naturality'' requirements are satisfied: the formulations are
made for arbitrary theories and do not refer to densities of concrete models
and their explicit values. As for the third ``naturality'' requirement (Theorem~\ref{thm:naturality}), while
the fact that $\Independence[\ell]$ is preserved under open interpretations
follows easily from the general theory, for $\UCouple[\ell]$ and
$\UInduce[\ell]$, this will follow from an amalgamation property (Theorem~\ref{thm:amalgamation}) that roughly says that couplings can be lifted
through open interpretations (Proposition~\ref{prop:lifting}).

As we mentioned before, the quasirandom $k$-hypergraph satisfies $\Independence[k-1]$. The situation for asymmetric combinatorial objects is more diverse. For
example, the quasirandom tournament satisfies $\UCouple[1]$ but not
$\Independence[1]$ and this example can be generalized to higher values of
$\ell$. One interesting example for unique inducibility is the linear order
as it satisfies $\UInduce[\ell]$ for every $\ell$ without being a trivial
object.

\bigskip
All our properties are anti-monotone in $\ell$ in the sense that for any of
the above, we have the implications $P[\ell]\implies P[\ell-1]$ (see
Theorem~\ref{thm:anti-monotone}) and as for relations between the properties
(Theorem~\ref{thm:inter-properties}), we show that $\Independence[\ell]$
implies $\UCouple[\ell]$ and that $\UCouple[\ell]$ implies\footnote{This
implication is obvious from the definition.} $\UInduce[\ell]$ (see
Figure~\ref{fig:implications}).

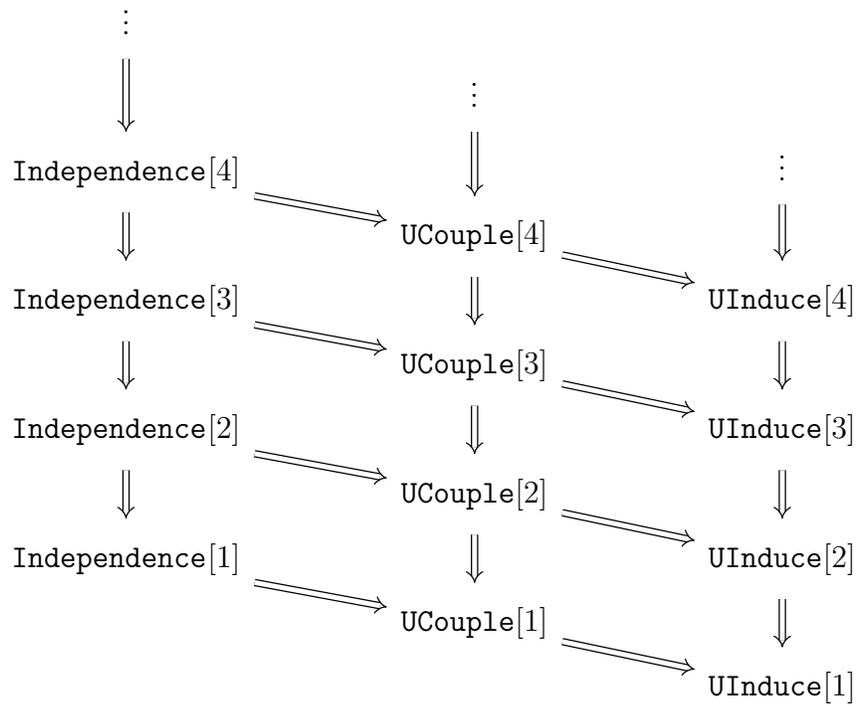
\begin{figure}[htbp]
  \begin{center}
    \begingroup
\newcommand{\indepsample}{\Independence[n]}
\newcommand{\coupsample}{\UCouple[n]}
\newcommand{\indsample}{\UInduce[n]}
\begin{tikzcd}[column sep={1.75cm}, row sep={-0.2cm}, arrows={Rightarrow}, cells={inner ysep={1.75ex}}]
  \makebox[\widthof{\indepsample}][c]{\vdots}
  \arrow[dd] & &
  \\
  &
  \makebox[\widthof{\coupsample}][c]{\vdots}
  \arrow[dd]
  &
  \\
  \Independence[4]
  \arrow[dd]\arrow[dr]
  & &
  \makebox[\widthof{\indsample}][c]{\vdots}
  \arrow[dd]
  \\
  &
  \UCouple[4]
  \arrow[dd]\arrow[dr]
  &
  \\
  \Independence[3]
  \arrow[dd]\arrow[dr]
  & &
  \UInduce[4]
  \arrow[dd]
  \\
  &
  \UCouple[3]
  \arrow[dd]\arrow[dr]
  &
  \\
  \Independence[2]
  \arrow[dd]\arrow[dr]
  & &
  \UInduce[3]
  \arrow[dd]
  \\
  &
  \UCouple[2]
  \arrow[dd]\arrow[dr]
  &
  \\
  \Independence[1]
  \arrow[dr]
  & &
  \UInduce[2]
  \arrow[dd]
  \\
  &
  \UCouple[1]
  \arrow[dr]
  &
  \\
  & &
  \UInduce[1]
\end{tikzcd}
\endgroup    

%% Local Variables:
%% mode: latex
%% End:
    \caption{Implications between quasirandomness properties. This is almost a Hasse diagram: only separations
      of the form $\UCouple[\ell]\nRightarrow\Independence[\ell']$ for $\ell' < \ell$ are left open.}
    \label{fig:implications}
  \end{center}
\end{figure}

In terms of separations, we show that no upward implication holds, that is,
none of the studied quasirandomness properties with parameter $\ell$ can
imply the same, or for that matter any other, property with parameter
$\ell+1$ (Theorem~\ref{thm:separationupward}). As for separations between
different families of properties, we show that $\UCouple[\ell]$ does not
imply $\Independence[\ell]$ (Theorem~\ref{thm:separationUCoupleIndependence})
and $\UInduce[\ell]$ does not imply even $\UCouple[1]$
(Theorem~\ref{thm:separationUInduceUCouple}). We have not been able to extend
the latter result to $\UCouple$ vs.\ $\Independence$, that is to
show that $\UCouple[\ell]$ does not imply $\Independence[\ell']$ for a single
pair $\ell'<\ell$; in fact these are the only separations involving the three
families of properties that we leave open. All these separations are
relatively easy when we are working with arbitrary theories, but we show that
they still hold even if we restrict ourselves to the theory of
$k$-hypergraphs, for $k\geq \ell+2$
(Theorems~\ref{thm:separationUCoupleIndependencehypergraph}
and~\ref{thm:separationUInduceUCouplehypergraph}).

Next, we provide the following alternate characterizations (summarized in
Theorems~\ref{thm:UCouple} and~\ref{thm:UInduce}) of these classes.

\begin{description}
\item[Weak $\ell$-independence] Every combinatorial object $\phi$ can be
    represented, in a canonical way, by an infinite countable random model
    $\rn{K}$ defined from a collection of independent random variables
    $(\rn{\theta}_A)_A$ indexed by finite non-empty subsets of $\NN_+$ (see
    e.g.~\cite[proof of Theorem~3.4]{CR19}). We say that $\phi$ is
    \emph{weakly $\ell$-independent} if $\rn{K}$ is independent from
    $(\rn{\theta_A} \mid \lvert A\rvert\leq\ell)$ {\bf as a random
    variable} (full $\Independence[\ell]$ requires this to happen
    ``pointwise''). This weak version of independence turns out to be
    equivalent to $\UCouple[\ell]$
    (Theorem~\ref{thm:UCouple}\ref{thm:UCouple:weakindep}).

\item[$\ell$-Locality] One of the defining properties of the countable
    random model $\rn{K}$ is {\em locality}: the marginals
    $(\rn{K}\rest_{V_i} \mid i\in I)$ are mutually independent whenever the
    collection of finite sets $(V_i)_{i\in I}$ is pairwise disjoint. The
    notion of \emph{$\ell$-locality} strengthens this property to require
    mutual independence of $(\rn{K}\rest_{V_i} \mid i\in I)$
    whenever the collection of finite sets $(V_i)_{i\in I}$ have pairwise
    intersections of size at most $\ell$. It is clear that weak
    $\ell$-independence implies $\ell$-locality, but we prove that the
    converse also holds, hence $\ell$-locality is also equivalent to $\UCouple[\ell]$ (Theorem~\ref{thm:UCouple}\ref{thm:UCouple:local}).

\item[Symmetric $\ell$-locality] The notion of \emph{symmetric $\ell$-locality} relaxes the notion of $\ell$-locality by requiring only
    mutual independence of the events $(\rn{K}\rest_{V_i}\cong M_i \mid i\in I)$
    for all choices of $(V_i)_{i\in I}$ with pairwise intersections of size
    at most $\ell$ and all choices of models $M_i$, i.e., we only care
    about the submodels $\rn{K}\rest_{V_i}$ up to isomorphism. We show that
    symmetric $\ell$-locality is equivalent to $\UInduce[\ell]$
    (Theorem~\ref{thm:UInduce}\ref{thm:UInduce:symlocal}).
\end{description}

The right way to view the definitions of unique coupleability and unique
inducibility is that each $\psi$ of rank $\leq\ell$ generates a test for the
respective property that $\phi$ has to pass. It is natural to ask for a
smaller and more explicit set of universal tests that guarantees each
property. We show (Theorem~\ref{thm:UCouple}\ref{thm:UCouple:QRhyper}) that
$\phi\in\UCouple[\ell]$ is equivalent to $\phi$ being uniquely coupleable
with a non-degenerate {\bf quasirandom} $\ell'$-hypergraphon $\psi_{\ell',p}$
in every dimension $\ell'\leq\ell$. We further prove
(Theorem~\ref{thm:UCouple}\ref{thm:UCouple:QRhyperindepcoup}) that it is also
equivalent to $\phi$ being uniquely coupleable with their independent
coupling $\psi_{1,p_1}\otimes\ldots\otimes \psi_{\ell,p_\ell}$; for the
reasons explained right after the statement of the theorem, it does not
immediately follow from the previous item~\ref{thm:UCouple:QRhyper}.
In the particular case $\ell=1$, this means that the fact that $\phi$ is uniquely coupleable with a single non-trivial vertex-coloring implies it must also be uniquely coupleable with any rank $1$ limit object, such as linear orders, permutations, etc.

Our findings for unique inducibility are by far less conclusive but at
least we can show that it is sufficient to consider only hypergraphons $\psi$
with any fixed non-trivial edge density $p\in(0,1)$ (Theorem~\ref{thm:UInduce}\ref{thm:UInduce:singledensity}).

\medskip
Of all choices of parameters, arguably the most interesting one is when
$\ell$ is exactly one less than the maximum arity $k$ of a predicate of the
language. In the theory of $k$-hypergraphs the three classes with $\ell =
k-1$ become the same and are satisfied only by the full quasirandom
hypergraph, that is, the almost sure limit of the generalization of the
Erd\H{o}s--R\'{e}nyi model. If we consider general theories of arity at most
$k$, it is not hard to see (Theorem~\ref{thm:fullIndependence}) that
$(k-1)$-independent objects are (essentially) quasirandom colored
$k$-hypergraphs. The property $\UCouple[k-1]$ in arity at most $k$
corresponds to independent couplings of quasirandom colored $k$-hypergraphs with
generalizations of quasirandom tournaments (Theorem~\ref{thm:fullUCouple}).
The case of unique inducibility is (again) considerably more complicated:
$\UInduce[1]$ in arity $2$ corresponds to (essentially) independent couplings
of quasirandom colored graphs with an aligned coupling of several biased
quasirandom tournaments; this latter aligned coupling is so that all biases
are in the same direction. But since this latter proof is very technical and
does not seem to easily generalize to arbitrary arities $k$, we do not
include it in the paper.

Finally, let us compare our properties to the known hypergraph
quasirandomness properties (Figure~\ref{fig:hypergraph}). In~\cite{Tow17},
Towsner defined $k$-hypergraph quasirandomness properties $\Disc[\cA]$ for
every antichain $\cA$ of non-empty subsets of $[k]\df\{1,\ldots,k\}$ and
showed that $\Disc[\binom{[k]}{\ell}]$ and $\Disc[\cA_\ell]$ are equivalent
to $\CliqueDisc[\ell]$ and $\Dev[\ell]$ of~\cite{LM15b}, respectively,
where $\cA_\ell\df \{A\in\binom{[k]}{k-1} \mid [k-\ell]\subseteq A\}$. It is
immediate from definitions that $\UInduce[\ell]$ implies $\CliqueDisc[\ell]$
(Theorem~\ref{thm:UInduce->CliqueDisc}). In terms of separations between our
properties and the ones from the literature, we show the strongest separation
possible. The strongest $\Disc[\cA]$ property that is not equivalent to full
quasirandomness is $\Dev[k-1]$ and this does not imply even $\UInduce[1]$
(Theorem~\ref{thm:separationDevUInduce}). In the other direction, the weakest
$\Disc[\cA]$ property that is not implied by $\CliqueDisc[\ell]$ is
$\Disc[\{[\ell+1]\}]$ and this is not implied by $\Independence[\ell]$
(Theorem~\ref{thm:separationIndependenceDisc}).

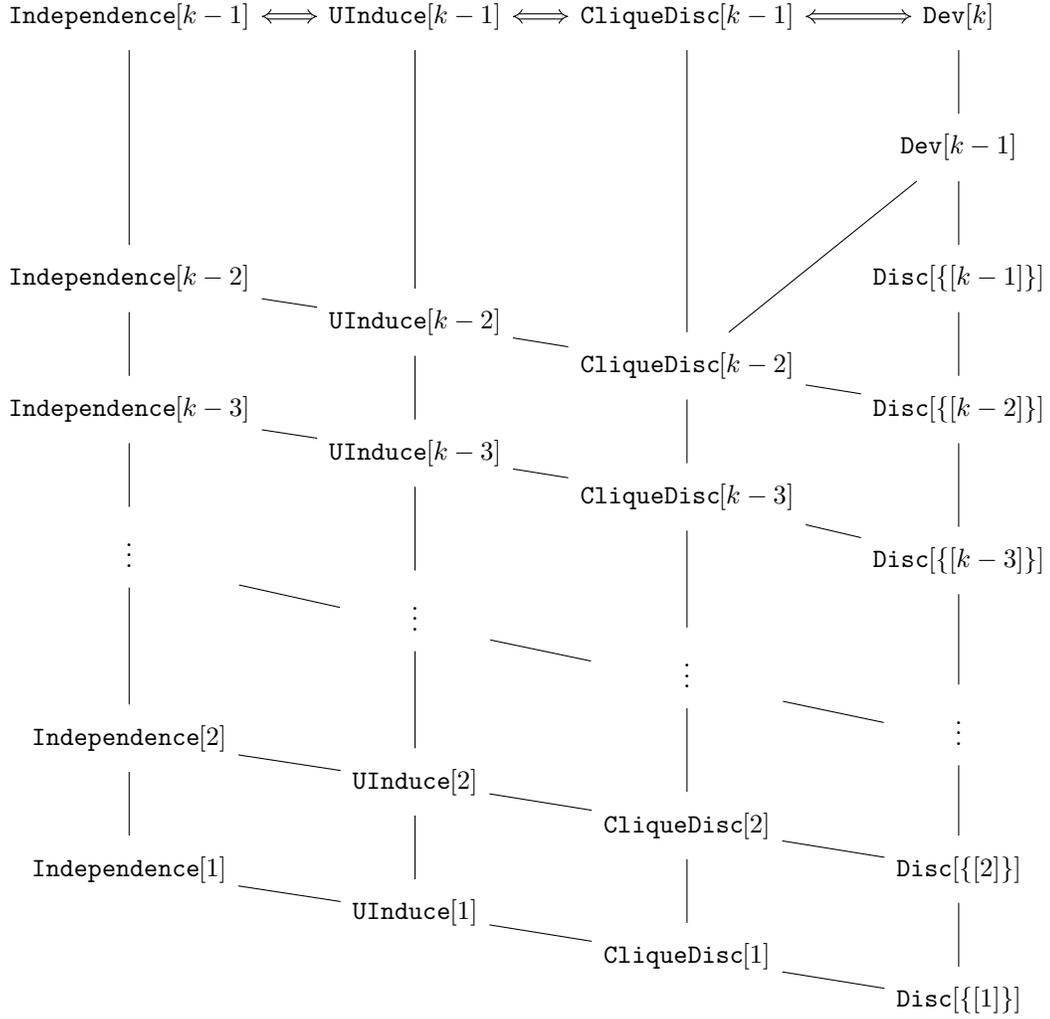
\begin{figure}[htbp]
  \begin{center}
    \begingroup
\newcommand{\indepsample}{\Independence[n]}
\newcommand{\indsample}{\UInduce[n]}
\newcommand{\CDsample}{\CliqueDisc[n]}
\newcommand{\Discsample}{\Disc[\{[n]\}]}
\begin{footnotesize}
  \begin{tikzcd}[column sep={0.75cm}, row sep={-0.3cm}, arrows={dash}, cells={inner ysep={1.75ex}}]
    \Independence[k-1]
    \arrow[r, Leftrightarrow]\arrow[dddddd]
    &
    \UInduce[k-1]
    \arrow[r, Leftrightarrow]
    \arrow[ddddddd]
    &
    \CliqueDisc[k-1]
    \arrow[r, Leftrightarrow]
    \arrow[dddddddd]
    &
    \Dev[k]
    \arrow[ddd]
    \\
    \vphantom{\indepsample} & & &
    \\
    \vphantom{\indepsample} & & &
    \\
    & & &
    \Dev[k-1]
    \arrow[ddd]\arrow[dddddl]
    \\
    \vphantom{\indepsample} & & &
    \\
    \vphantom{\indepsample} & & &
    \\
    \Independence[k-2]
    \arrow[dr]\arrow[ddd]
    & & &
    \Disc[\{[k-1]\}]
    \arrow[ddd]
    \\
    &
    \UInduce[k-2]
    \arrow[dr]\arrow[ddd]
    & &
    \\
    & &
    \CliqueDisc[k-2]
    \arrow[dr]\arrow[ddd]
    &
    \\
    \Independence[k-3]
    \arrow[dr]\arrow[ddd]
    & & &
    \Disc[\{[k-2]\}]
    \arrow[ddd]
    \\
    &
    \UInduce[k-3]
    \arrow[dr]\arrow[ddd]
    & &
    \\
    & &
    \CliqueDisc[k-3]
    \arrow[dr]\arrow[ddd]
    &
    \\
    \makebox[\widthof{\indepsample}][c]{\vdots}
    \arrow[dr]\arrow[ddd]
    & & &
    \Disc[\{[k-3]\}]
    \arrow[ddd]
    \\
    &
    \makebox[\widthof{\indsample}][c]{\vdots}
    \arrow[dr]\arrow[ddd]
    & &
    \\
    & &
    \makebox[\widthof{\CDsample}][c]{\vdots}
    \arrow[dr]\arrow[ddd]
    &
    \\
    \Independence[2]
    \arrow[dr]\arrow[ddd]
    & & &
    \makebox[\widthof{\Discsample}][c]{\vdots}
    \arrow[ddd]
    \\
    &
    \UInduce[2]
    \arrow[dr]\arrow[ddd]
    & &
    \\
    & &
    \CliqueDisc[2]
    \arrow[dr]\arrow[ddd]
    &
    \\
    \Independence[1]
    \arrow[dr]
    & & &
    \Disc[\{[2]\}]
    \arrow[ddd]
    \\
    &
    \UInduce[1]
    \arrow[dr]
    & &
    \\
    & &
    \CliqueDisc[1]
    \arrow[dr]
    &
    \\
    & & &
    \Disc[\{[1]\}]
  \end{tikzcd}
\end{footnotesize}
\endgroup    

%% Local Variables:
%% mode: latex
%% End:
    \caption{Hasse diagram of quasirandomness hypergraph properties in arity $k$.
    The top four equivalent properties represent full quasirandomness.}
    \label{fig:hypergraph}
  \end{center}
\end{figure}

\bigskip

The paper is organized as follows. In Section~\ref{sec:prelim} we give necessary preliminaries. In
Section~\ref{sec:mainresults} we formally state our main results. In Section~\ref{sec:basic}, we prove
some basic facts that will be used throughout the text. In
Section~\ref{sec:naturality}, we show that our properties are natural, that is, that they are preserved under
open interpretations. In Section~\ref{sec:UInduce} we prove the alternative formulations of $\UInduce$, and in
Section~\ref{sec:UCouple} we prove the alternative formulations of $\UCouple$. The proofs are done in this
slightly reversed order because they are simpler for the unique inducibility; besides, some auxiliary
statements we need for that part are later re-used for the unique coupleability. In
Section~\ref{sec:separations}, we show separations between different classes of properties. In
Section~\ref{sec:fullQR}, we completely classify the properties $\Independence[k-1]$ and $\UCouple[k-1]$ when
all arities are at most $k$. The paper is concluded with a few remarks and open problems in
Section~\ref{sec:concl}.

\section{Preliminaries and notation}
\label{sec:prelim}

Throughout the text, we will use the notation $\NN\df\{0,1,\ldots\}$ for the
non-negative integers and $\NN_+\df\NN\setminus\{0\}$ for the positive
integers. We also let $[n]\df\{1,\ldots,n\}$ and $(n)_m\df
n(n-1)\cdots(n-m+1)$. The usage of the arrow $\rightarrowtail$ for a function
will always presume the function to be injective. For a set $V$, we let
$(V)_\ell$ be the set of all injective functions
$\alpha\injection{[\ell]}{V}$ and for such an $\alpha$, we may use the
notation $\alpha_i$ for $\alpha(i)$ when convenient. We let
$2^V\df\{A\subseteq V\}$ be the set of all the subsets of $V$, let
$\binom{V}{\ell}\df\{A\subseteq V\mid\lvert A\rvert=\ell\}$ and let
$\binom{V}{> \ell}\df\{A\subseteq V\mid\lvert A\rvert > \ell\}$. For $V\subseteq\NN_+$ and $A\in
\binom{V}{\ell}$, we let $\iota_A\injection{[\ell]}{V}$ be the function
  enumerating the set $A$ in the increasing order (so $\im(\iota_A)=A$).
We let
$r(V)$ be the set of all \emph{finite non-empty} subsets of $V$ and
$r(V,\ell)\df\{A\in r(V)\mid\lvert A\rvert\leq \ell\}$ be the set of all
non-empty subsets of $V$ of size at most $\ell$. We will be frequently
abusing notation by identifying $[n]$ with $n$, e.g., we will use $r(n,\ell)$ as a shorthand for
$r([n],\ell)$. Random variables will always be typed in $\rn{math\ bold\
face}$. We denote by $S_V$ the group of bijections $V\rightarrowtail V$ so
that $S_n$ is the group of permutations on $n$ elements.

\subsection{Model theory and limit theory}
\label{sec:notation}

We will be working in the framework of~\cite{CR19}, in which combinatorial objects are encoded as models of a
canonical theory. We will also be using the same notation as in~\cite{CR19} with some small additions.

For a finite relational language $\cL$, we let $T_\cL$ be the pure canonical theory on $\cL$, that is, the
theory whose axioms are
\begin{align}\label{eq:canonical}
  \forall\vec{x}, \left(\bigvee_{1\leq i < j\leq k(P)} x_i = x_j\right) \to \neg P(x_1,\ldots,x_{k(P)}).
\end{align}
for every $P\in\cL$. For a canonical theory $T$ and a set $V$, let $\cK_V[T]$ be the set of (labeled) models of $T$ with
vertex set $V$. We use the symbol $\cong$ to indicate that two models are isomorphic. For $n\in \mathbb N_+$, we let
$\mathcal M_n[T]\df\mathcal K_n[T]/\cong$ be the set of $n$-element (unlabeled) models up to isomorphism;
we also let $\mathcal M[T]\df\bigcup_{n\in\mathbb N_+} \mathcal M_n[T]$. For $n=\lvert V\rvert$
and $K\in\mathcal K_V[T]$, we denote by $[K]\in \mathcal M_n[T]$ the isomorphism
type of $K$.

Other important examples of canonical theories include the
theory of $k$-hypergraphs $\TkHypergraph$, whose language contains a single
predicate $E$ of arity $k(E)\df k$ and whose axioms are~\eqref{eq:canonical} 
for $P=E$ and
\begin{align}\label{eq:hypergraphsymmetry}
  \forall\vec{x}, (E(x_1,\ldots,x_k)\to E(x_{\sigma(1)},\ldots,x_{\sigma(k)})) & & (\sigma\in S_k);
\end{align}
the theory of (simple) graphs $\TGraph\df\TkHypergraph[2]$; the theory of
(strict) linear orders $\TLinOrder$, whose language contains a single binary
predicate $\prec$ with the axioms
\begin{gather*}
  \forall x, \neg(x \prec x);\\
  \forall\vec{x}, (x_1\neq x_2\to (x_1\prec x_2 \lor x_2\prec x_1));\\
  \forall\vec{x}, (x_1\prec x_2\land x_2\prec x_3 \to x_1\prec x_3);
\end{gather*}
and the theory of $c$-colorings $\TcColoring$, whose language contains $c$ unary predicates $\chi_1,\ldots,\chi_c$
and that has axioms
\begin{gather*}
  \forall x, \neg\chi_i(x)\lor\neg\chi_j(x) \qquad (1\leq i < j\leq c);\\
  \forall x, \bigvee_{i\in[c]}\chi_i(x).
\end{gather*}
Note that $\TcColoring[2]$ and $\TkHypergraph[1]$ are isomorphic in the category \cat{Int} (see~\cite[Sct.~2.2]{CR19}).

Given an atomless complete\footnote{\label{foot:complete}
In~\cite[Sct.~7]{CR19} we carefully considered incomplete spaces as well and
drew finer distinctions between various assumptions on them, cf.~the
discussion in~\cite[page~218]{Lov12}. It was needed to differentiate between
weak theons (satisfying the axioms a.e.) and strong ones (satisfying them
everywhere off-diagonal), as well as for removal lemmas. As we prefer to
avoid dwelling into these issues in this paper, we make the simplifying
assumption of completeness once and for all.} probability space $\Omega =
(X,\cA,\mu)$, a set $V$ and $\ell\in\NN$, we let $\cE_{V,\ell}(\Omega)\df
X^{r(V,\ell)}$, equipping it with the completion of the product measure of
$\lvert r(V,\ell)\rvert$ copies of $\mu$, which by abuse of notation we also denote by
$\mu$ (cf.~\cite[Definition~7.3]{CR19}). Likewise,
$\cE_V(\Omega)\df X^{r(V)}$.  Given an injective function
$\alpha\injection{V_1}{V_2}$, we define the projection
$\alpha^*\function{\cE_{V_2,\ell}(\Omega)}{\cE_{V_1,\ell}(\Omega)}$ by
$\alpha^*(x)_A\df x_{\alpha(A)}$ (this is consistent with the notation for the
projection $\alpha^*\function{\cE_{V_2}(\Omega)}{\cE_{V_1}(\Omega)}$ defined
analogously in~\cite[Definition~2.19]{CR19}, which we will also use). The spaces
used in this paper most often are $([0,1]^t,\cL^t,\lambda^t)$, where $\cL^t$ is
the $\sigma$-algebra of Lebesgue measurable subsets of $[0,1]^t$ and $\lambda^t$
is the ($t$-dimensional) Lebesgue measure; these will be denoted simply by
$[0,1]^t$. When $\Omega=[0,1]$, we will omit $\Omega$ from the notation
(e.g., a $P$-on without reference to any space $\Omega$ is assumed to be a
measurable subset of $\cE_{k(P)}\df\cE_{k(P)}([0,1])$). For spaces
$\Omega$ and $\Omega'$, we let $\Omega\times\Omega'$ be the \emph{completion}
of the product space. Finally, we will often abuse
notation by identifying the spaces $\cE_V(\Omega\times\Omega')$ and
$\cE_V(\Omega)\times\cE_V(\Omega')$ via the correspondence
$\cE_V(\Omega\times\Omega')\ni x\leftrightarrow
(y,z)\in\cE_V(\Omega)\times\cE_V(\Omega')$ given by $y_A \df (x_A)_1$ and
$z_A\df (x_A)_2$ for every $A\in r(V)$. An analogous identification will be
done for products of finitely many spaces.

We also adopt the same conventions as in~\cite{CR19}: unless we
explicitly say otherwise, all our languages are assumed to be finite first-order relational languages, all our
theories are assumed to be canonical (in particular, also universal and we will typically omit universal
quantifiers from their axioms) and all our structures are assumed to be canonical (i.e., models of $T_\cL$, or
equivalently, structures $K$ such that $R_P(K)\df\{\alpha\in V(K)^{k(P)}\mid K\vDash
P(\alpha_1,\ldots,\alpha_{k(P)})\}$ is contained in $(V(K))_{k(P)}$ for every $P\in\cL$).

Recall that a sequence of finite (unlabeled) models $(N_n)_{n\in\NN}$ is
\emph{convergent} if $\lvert N_n\rvert < \lvert N_{n+1}\rvert$ and for every
fixed finite model $M$, the limit $\lim_{n\to\infty} p(M,N_n)$ exists, where
$p(M,N)$ denotes the normalized number of unlabeled induced copies of $M$ in $N$.  We
will be using three cryptomorphic ways of representing convergent
sequences: flag-algebraic homomorphisms~\cite{Raz07}, theons~\cite[Scts.~3
and~7]{CR19} and exchangeable arrays~\cite[Definition~5.7]{CR19}. In this
language, a hypergraphon of~\cite{ES12} is, up to zero-measure change, a
$\TkHypergraph$-on and there is a (not one-to-one) correspondence between
graphons $W$ of~\cite{LS06} and $\TGraph$-ons $\cN$ that preserves densities
given by
\begin{align*}
  W & \mapsto \{x\in\cE_2 \mid x_{\{1,2\}} < W(x_{\{1\}},x_{\{2\}})\}\\
  W_\cN & \mapsfrom \cN,
\end{align*}
where
\begin{align}\label{eq:theonsVSgraphons}
  W_\cN(x_{\{1\}},x_{\{2\}}) & \df \lambda(\{x_{\{1,2\}} \mid (x_{\{1\}},x_{\{2\}},x_{\{1,2\}})\in\cN\}).
\end{align}

Furthermore, for $M\in\cM[T]$ we let
\begin{align}\label{eq:labeledflagalgebra}
  \langle M\rangle &\df \frac{\lvert\Aut(M)\rvert}{\lvert M\rvert!} M
\end{align}
denote\footnote{Note that if we think of $M$ as a flag algebra type, then this notation is compatible
  with~\cite[Definition~8]{Raz07}. But in this paper, like in~\cite{CR19}, we try to avoid
  flag algebras in non-trivial types.} the element of the flag algebra $\cA[T]$ encoding the \emph{labeled} (induced) density of
$M$.

The main theorem of dense limit theory says that positive homomorphisms, theons and local exchangeable arrays all encode convergent sequences.
\begin{theorem}[\protect{\cite{LS06,Raz07}}, \protect{\cite[Theorem~6.3]{CR19}} see also~\protect{\cite[Sct.~7]{CR19}}]\label{thm:cryptomorphism}
  Fix an atomless complete probability space $\Omega$ and consider the following objects for a theory $T$.
  \begin{enumerate}
  \item A convergent sequence $(N_n)_{n\in\NN}$ of models of $T$.
  \item A positive homomorphism $\phi\in\HomT{T}$.
  \item A $T$-on $\cN$ over $\Omega$.
  \item A local exchangeable array $\rn{K}$ supported on models of $T$.
  \end{enumerate}

  The objects above are cryptomorphic in the sense that given an instance of one of them, one can
  ``explicitly'' construct instances of the others that satisfy the following for every $M\in\cM[T]$:
  \begin{align*}
    \lim_{n\to\infty} p(M,N_n) & = \phi(M) = \phi_\cN(M) = \PP[\rn{K}\rest_{[\lvert M\rvert]}\cong M].
  \end{align*}
\end{theorem}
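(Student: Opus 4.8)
The plan is to treat this statement as a compendium of the limit and representation theorems of \cite{LS06,Raz07,CR19} and to assemble it from a cycle of explicit constructions $(1)\to(2)\to(4)\to(3)\to(1)$, verifying at each step that the chain of equalities $\lim_n p(M,N_n)=\phi(M)=\phi_\cN(M)=\PP[\rn K\rest_{[\lvert M\rvert]}\cong M]$ is propagated. For $(1)\to(2)$ I would set $\phi(M)\df\lim_n p(M,N_n)$ and extend linearly to $\cA[T]$: the exact averaging identities $p(M,N)=\sum_{M'\in\cM_n[T]}p(M,M')\,p(M',N)$ for $\lvert M\rvert\le n\le\lvert N\rvert$ hold for each $N_n$, and $p(M_1,N_n)\,p(M_2,N_n)-p(M_1\cdot M_2,N_n)\to 0$, so in the limit $\phi$ respects both the linear and the multiplicative structure of the flag algebra; positivity is inherited from $p(M,N_n)\ge 0$, so $\phi\in\HomT{T}$, which is \cite{Raz07}. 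For $(2)\to(4)$ I would, given $\phi\in\HomT{T}$, equip each $\cK_{[n]}[T]$ with the measure weighting a labelled model $K$ by $\phi([K])\,\lvert\Aut([K])\rvert/n!$; this is a probability measure since $\phi$ is positive and $\sum_{M\in\cM_n[T]}\phi(M)=\phi(1)=1$, and the averaging identity makes the family consistent under the restriction maps $\cK_{[n+1]}[T]\to\cK_{[n]}[T]$, so Kolmogorov extension yields a random model $\rn K$ on $\NN_+$. It is exchangeable (its marginals depend only on isomorphism types) and local (mutual independence of $\rn K\rest_{V_i}$ for pairwise disjoint $V_i$, which is exactly the multiplicativity of $\phi$), with $\PP[\rn K\rest_{[\lvert M\rvert]}\cong M]=\phi(M)$ by construction; this is done in \cite[Sct.~5]{CR19}.

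The step I expect to be the main obstacle is $(4)\to(3)$. Given a local exchangeable array $\rn K$, I would invoke the Aldous--Hoover--Kallenberg representation theorem: $\rn K$ is, in distribution, a fixed measurable function of i.i.d.\ seeds $(\rn{\theta}_A)_{A\in r(\NN_+)}$ attached to the finite non-empty subsets of $\NN_+$, and the locality of $\rn K$ forces the decision on each atomic statement $P(\alpha)$ to depend only on the seeds $(\rn{\theta}_{\alpha(B)})_{\emptyset\neq B\subseteq[k(P)]}$; collecting these decisions yields measurable sets $\cN_P\subseteq\cE_{k(P)}$, i.e.\ a $T$-on $\cN$ over $[0,1]$ with $\phi_\cN(M)=\PP[\rn K\rest_{[\lvert M\rvert]}\cong M]$ for all $M$. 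Two points require care and account for the bulk of the work: arranging the representing functions so that the sampled structure honestly satisfies the canonical axioms~\eqref{eq:canonical} and the symmetry axioms of $T$ (here the completeness hypothesis lets us avoid the weak-versus-strong theon distinctions of \cite[Sct.~7]{CR19}), and passing from $[0,1]$ to an arbitrary atomless complete $\Omega$ by transporting $\cN$ coordinatewise along an isomorphism $\Omega\cong[0,1]$ modulo null sets. An alternative route for this step would be an ultraproduct/Loeb-measure construction in the spirit of \cite{ES12}, or, in the graph case, the weak-regularity-plus-martingale argument of \cite{LS06}.

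Finally, $(3)\to(1)$ closes the cycle: from a $T$-on $\cN$ over $\Omega$ I would sample i.i.d.\ points $(\rn{\theta}_A)_{A\in r(\NN_+)}$ from $\Omega$ and form the induced random model $\rn K$ exactly as in the previous step; a Fubini computation gives $\PP[\rn K\rest_{[m]}\cong M]=\phi_\cN(M)$, and the ergodic theorem for exchangeable arrays gives $p(M,\rn K\rest_{[n]})\to\PP[\rn K\rest_{[\lvert M\rvert]}\cong M]$ almost surely for every $M$; since there are only countably many $M$, almost every realization $(\rn K\rest_{[n]})_{n}$ is a convergent sequence, automatically of strictly increasing sizes, with the prescribed limit, and fixing one such realization completes $(3)\to(1)$. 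The equivalences with graphons and hypergraphons are then obtained by composing with the dictionary~\eqref{eq:theonsVSgraphons} and with \cite{ES12}, and the ``explicitness'' is automatic because every arrow in the cycle is a concrete recipe.
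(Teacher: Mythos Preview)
The paper does not supply its own proof of this theorem: it is stated as a known result imported from \cite{LS06,Raz07} and \cite[Theorem~6.3]{CR19}, and the only direction the paper spells out is the construction of a local exchangeable array from a $T$-on via~\eqref{eq:arraycryptomorphism}. So there is nothing to compare your argument against in the paper itself.

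Your sketch is a faithful assembly of the cited results: $(1)\to(2)$ is precisely \cite{Raz07}, $(2)\to(4)$ via Kolmogorov extension is \cite[Sct.~5]{CR19}, $(4)\to(3)$ via Aldous--Hoover--Kallenberg with the locality hypothesis eliminating the global seed is the substantive content of \cite[Theorem~6.3]{CR19}, and $(3)\to(1)$ by sampling and the law of large numbers for exchangeable arrays is standard. The one place where your account is slightly loose is the transport from $[0,1]$ to an arbitrary atomless complete $\Omega$: the relevant fact is not quite a measure-algebra isomorphism with $[0,1]$ but rather \cite[Proposition~7.7]{CR19}, which provides measure-preserving-on-h.o.a.\ families letting you pull a theon over one space back to another while preserving $\phi_\cN$. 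Otherwise the outline is correct and matches what the cited references actually do.
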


One of the (easy) directions of the cryptomorphism above will be of
particular importance to us, namely, how to construct a local exchangeable
array $\rn{K}$ from a given $T$-on $\cN$ over $\Omega=(X,\cA,\mu)$.
Intuitively, the only thing we have to do is to independently sample
countably many points from our theon. Formally, let $\rn{\theta} =
(\rn{\theta}_A)_{A\in r(\NN_+)}$ be picked in $\cE_{\NN_+}(\Omega)$ according to
[the product measure] $\mu$, that is, each $\rn{\theta}_A$ picked in $X$
according to $\mu$ independently of all other
coordinates. The \emph{exchangeable array $\rn{K}$ corresponding to $\cN$
with respect to $\rn{\theta}$} is defined by
\begin{align}\label{eq:arraycryptomorphism}
  V(\rn{K}) & \df \NN_+, &
  R_P(\rn{K}) & \df \{\alpha\in(\NN_+)_{k(P)} \mid \alpha^*(\rn{\theta}) \in \cN_P\}
\end{align}
and we have $\phi_\cN(M) = \PP[\rn{K}\rest_{[\lvert M\rvert]}\cong M]$ for every $M\in\cM[T]$ (see~\cite[proof of Theorem~3.4]{CR19}).

Once we capture combinatorial objects as models of canonical theories, local combinatorial constructions are
then captured by open interpretations (see~\cite[Sct.~2.2]{CR19}) in the sense that if $I\interpret{T_1}{T_2}$
is an open interpretation and $K$ is a model of $T_2$, there is a naturally defined model $I(K)$ of $T_1$
given by $V(I(K))\df V(K)$ and $R_P(I(K))\df\{\alpha\in(V(K))_{k(P)}\mid K\vDash
I(P)(\alpha_1,\ldots,\alpha_{k(P)})\}$. The simplest but most important type of open interpretations are the
\emph{structure-erasing} interpretations, which are open interpretations of the form $I\interpret{T_1}{T_1\cup
  T_2}$, where $T_1\cup T_2$ is the disjoint union of the theories $T_1$ and $T_2$. They act identically on the
  language of $T_1$, and the corresponding combinatorial
construction corresponds to erasing all information of $T_2$. Convergent sequences behave very well
with respect to open interpretations, namely, if $(N_n)_{n\in\NN}$ is a convergent sequence of models of
$T_2$, then $(I(N_n))_{n\in\NN}$ is a convergent sequence of models of $T_1$. This behavior is translated to
operations on the limit objects of Theorem~\ref{thm:cryptomorphism}. Namely, if $\phi\in\HomT{T_2}$, the
$T_2$-on $\cN$ and the array $\rn{K}$ correspond to a convergent sequence $(N_n)_{n\in\NN}$ of models of $T_2$
under Theorem~\ref{thm:cryptomorphism}, then $\phi^I\df\phi\comp\pi^I$ (where $\pi^I$ is $\pi^{(U,I)}$
in~\cite[Definition~4 and Theorem~2.6]{Raz07} when $U(x)$ is $x=x$; see also~\cite[Theorem~2.14]{CR19}), $I(\cN)$ given by $I(\cN)_P\df T(I(P),\cN)$
(see~\cite[Definition~3.5]{CR19}) and $I(\rn{K})$ are limit objects corresponding to $(I(N_n))_{n\in\NN}$ for
Theorem~\ref{thm:cryptomorphism} (see~\cite[Remark~6]{CR19}).

Finally, let us denote the identity interpretation of a theory $T$ by
$\id_T\interpret{T}{T}$ and for interpretations $I\interpret{T_1}{T_3}$ and
$J\interpret{T_2}{T_4}$, we denote by $I\cup J\interpret{T_1\cup T_2}{T_3\cup
T_4}$ the amalgamation interpretation that acts as $I$ on $T_1$ and acts as
$J$ on $T_2$.

\subsection{Quasirandomness properties}
\label{sec:quasirandomness}

In this section we formalize all notions of quasirandomness presented in the introduction.

\begin{definition}[rank and independence]
  The \emph{rank} of a peon $\cN\subseteq\cE_k(\Omega)$ over $\Omega=(X,\cA,\mu)$, denoted $\rk(\cN)$, is the minimum $r\in\NN$ such that $\cN$ can be written as $\cN = \cH\times X^{\binom{[k]}{> r}}$ for some $\cH\subseteq\cE_{k,r}(\Omega)$. The \emph{rank} of an Euclidean structure $\cN$ is the maximum rank $\rk(\cN)$ of its peons.

  Dually, for $\ell\in\NN$, a peon $\cN\subseteq\cE_k(\Omega)$ is called \emph{$\ell$-independent} if
  it can be written as $\cN = \cE_{k,\ell}(\Omega)\times\cH$ for some $\cH\subseteq X^{\binom{[k]}{> \ell}}$ and an Euclidean structure is called \emph{$\ell$-independent} if all of its peons are
  $\ell$-independent.

  For $\ell\in\NN$, an Euclidean structure $\cN$ on $\cL$ over $\Omega$ is \emph{weakly $\ell$-independent} if the exchangeable array $\rn{K}$ corresponding to $\cN$ with respect to $\rn{\theta}$ picked in $\cE_{\NN_+}(\Omega)$ according to $\mu$ (see~\eqref{eq:arraycryptomorphism}) is independent from $(\rn{\theta}_A \mid A\in r(\NN_+,\ell))$ as a random variable.

  Given $\phi\in\HomT{T}$, the \emph{rank} of $\phi$, denoted $\rk(\phi)$, is
  the minimum rank of a $T$-on $\cN$ such that $\phi_\cN=\phi$. Dually, we
  say $\phi\in\HomT{T}$ is \emph{$\ell$-independent} (resp., \emph{weakly $\ell$-independent})
  if there exists an $\ell$-independent (resp., weakly $\ell$-independent) $T$-on $\cN$ such that
  $\phi_\cN=\phi$. We will refer to the former property as $\Independence[\ell]$ but we do not introduce any special notation for weak
  independence as it will be shown to be equivalent to another property below.
\end{definition}

\begin{definition}[couplings]
  Given canonical theories $T_1,\ldots,T_t$ and $\phi_i\in\HomT{T_i}$ ($i\in[t]$), a \emph{coupling} of
  $\phi_1,\ldots,\phi_t$ is a positive homomorphism $\xi\in\HomT{\bigcup_{i\in[t]} T_i}$ such that $\xi^{I_i}
  = \phi_i$ for every $i\in[t]$, where $I_i\interpret{T_i}{\bigcup_{j\in[t]} T_j}$ is the structure-erasing
  interpretation.
\end{definition}

The most important coupling is the independent coupling defined below. In the finite world, the independent
coupling of limits of sequences $(N_n^i)_{n\in\NN}$ with $V(N_n^i)=V(N_n^j)$ corresponds to the almost sure
limit of the random sequence $(\rn{N_n})_{n\in\NN}$ where $\rn{N_n}$ is obtained by first randomly permuting
the vertices of each $N_n^i$ uniformly and independently and coupling the result.

\begin{definition}[independent coupling, semantic version]\label{def:indcoup}
  For $i\in[t]$, let $\cN^i$ be a $T_i$-on over $\Omega_i$. The \emph{independent coupling} of $\cN^1,\ldots,\cN^t$ is the $(\bigcup_{i\in[t]}T_i)$-on $\cN^1\otimes\cdots\otimes\cN^t$ over $\prod_{i\in[t]}\Omega_i$ defined by
  \begin{align*}
    (\cN^1\otimes\cdots\otimes\cN^t)_P & \df \set{x\in\prod_{j\in[t]}\cE_{k(P)}(\Omega_j)}{\pi_i(x)\in\cN^i_P},
  \end{align*}
  whenever $P$ is in the language of $T_i$ and where $\pi_i$ denotes the natural projection on the $i$-th coordinate.
  \end{definition}

\begin{definition}[independent coupling, syntactic version]\label{def:indcoup_syntactic}
  For $i\in[t]$, let $\phi_i\in\HomT{T_i}$. The \emph{independent coupling} of $\phi_1,\ldots,\phi_t$ is defined
  by
  \begin{align*}
    (\phi_1\otimes\cdots\otimes\phi_t)(\langle M\rangle) & \df \prod_{i\in[t]} \phi_i(\langle I_i(M)\rangle),
  \end{align*}
  for every $M\in\cM[\bigcup_{i\in[t]}T_i]$, where $I_i\interpret{T_i}{\bigcup_{j\in[t]}T_j}$ is the
  structure-erasing interpretation.
\end{definition}

These two definitions are obviously consistent: if $\cN^i$ is a $T_i$-on over $\Omega_i$ such that
$\phi_{\cN^i} = \phi_i$ ($i\in[t]$), then $(\phi_1\otimes\cdots\otimes\phi_t) =
\phi_{\cN^1\otimes\cdots\otimes\cN^t}$. In particular, this implies that $\phi_1\otimes\cdots\otimes\phi_t\in
\HomT{\bigcup_{i\in[t]} T_i}$ (which can be also verified by a direct computation).

\begin{definition}[unique coupleability and inducibility]\label{def:UCoupleUInduce}
  We say that $\phi_1,\ldots,\phi_t$ are \emph{uniquely coupleable} if the
  independent coupling is their only coupling.
  For $\ell\in\NN$, we say that $\phi\in\HomT{T}$ is \emph{uniquely $\ell$-coupleable} if for every theory
  $T'$ and every $\psi\in\HomT{T'}$ with $\rk(\psi)\leq\ell$, $\phi$ and $\psi$ are uniquely coupleable.
  We will be using the abbreviation $\UCouple[\ell]$ for this property.

  Given $\ell\in\NN_+$, $\phi\in\HomT{T}$ and $\psi\in\HomT{\TkHypergraph[\ell]}$, we say that $\phi$ is
  \emph{uniquely inducible} by $\psi$ if for any coupling $\xi$ of $\phi$ and $\psi$ and for
  every $M\in\cM[T\cup\TkHypergraph[\ell]]$ such that $I(M)$ is a complete $\ell$-hypergraph,
  we have $\xi(M) = (\phi\otimes\psi)(M)$,
  where $I\interpret{\TkHypergraph[\ell]}{T\cup\TkHypergraph[\ell]}$ is the structure-erasing
  interpretation. We say that $\phi$ is \emph{uniquely $\ell$-inducible} if it is uniquely
inducible by every $\psi\in\HomT{\TkHypergraph[\ell]}$, and we will be using
the abbreviation $\UInduce[\ell]$. For completeness, we declare every $\phi$ to satisfy $\UInduce[0]$.
\end{definition}

\begin{remark}\label{rmk:UInduce1}
  Since $\TkHypergraph[1]\cong\TcColoring[2]$, for $\ell=1$ we prefer to work with the following equivalent
  formulation of $\UInduce[1]$ that can be deduced from this isomorphism. $\phi\in\HomT{T}$ is uniquely
  inducible by $\psi\in\HomT{\TcColoring[2]}$ if for any coupling $\xi$ of $\phi$ and $\psi$ and for every
  $M\in\cM[T\cup\TcColoring[2]]$ such that $R_{\chi_1}(M) = V(M)$, we have $\xi(M) =
  (\phi\otimes\psi)(M)$. Then
  $\phi$ is uniquely $1$-inducible if it is uniquely inducible by every $\psi\in\HomT{\TcColoring[2]}$.

  Also, as we will see below (Theorem~\ref{thm:anti-monotone}), $\UInduce[\ell]$
  implies $\UInduce[\ell']$ for any $\ell'\leq\ell$. Hence, we could have
  equivalently required in this definition unique inducibility by every
  $\psi\in\HomT{\TkHypergraph[\ell']}$ with $\ell'\leq\ell$.
\end{remark}

These three properties are central to our paper. If $P$ is any of them, we
will say interchangeably that $\phi$ satisfies $P[\ell]$ or that $\phi\in
P[\ell]$.

\begin{definition}[locality]
  Let $\cN$ be a $T$-on over $\Omega=(X,\cA,\mu)$ and let $\rn{K}$ be the exchangeable array corresponding to $\cN$
  with respect to $\rn{\theta}$ picked in $\cE_{\NN_+}(\Omega)$ according to $\mu$
  (see~\eqref{eq:arraycryptomorphism}).

  We say that $\cN$ is \emph{$\ell$-local} if for every collection $(V_i)_{i\in I}$ of finite subsets of
  $\NN_+$ with pairwise intersections of size at most $\ell$, the marginals $(\rn{K}\rest_{V_i}\mid i\in I)$
  are mutually independent.

  We say that $\cN$ is \emph{symmetrically $\ell$-local} if for every collection $(V_i)_{i\in I}$ of finite
  subsets of $\NN_+$ with pairwise intersections of size at most $\ell$, the random variables $(\left[\rn{K}\rest_{V_i}\right]\mid i\in I)$ (recall that $[K]$ is the isomorphism
  type of $K$) are mutually independent.

  We say that $\phi\in\HomT{T}$ is \emph{$\ell$-local} (resp., \emph{symmetrically $\ell$-local}) if there exists an $\ell$-local (resp., symmetrically $\ell$-local) $T$-on $\cN$ such that $\phi = \phi_\cN$.
\end{definition}

Note that both the notions of $0$-locality and symmetric $0$-locality
coincide with the notion of locality for $\rn{K}$ (see~\cite[Definition~5.12]{CR19}). Besides, it is very easy
to give an explicit purely syntactic description of both locality and symmetric
locality in the style of Definition~\ref{def:indcoup_syntactic}; this in particular implies that for an $\ell$-local (resp., symmetrically $\ell$-local) $\phi$, every $T$-on $\cN$ with $\phi = \phi_\cN$ must necessarily be $\ell$-local (resp., symmetrically $\ell$-local).

Finally, let us state the properties $\CliqueDisc[\ell]$ and $\Disc[\cA]$ in
the limit language.

\begin{definition}\label{def:CliqueDisc}
  Let $K^{(t)}_n\in\cM_n[\TkHypergraph[t]]$ be the complete $t$-uniform hypergraph on $n$ vertices and let $\rho_t\df
  K^{(t)}_t$. Let $\phi\in\HomT{\TkHypergraph}$ and $\ell\in[k]$.

  We say that $\phi$ satisfies $\CliqueDisc[\ell]$ (\cite{LM15b}) if for every
  $\psi\in\HomT{\TkHypergraph[\ell]}$ and every coupling $\xi$ of $\phi$ and $\psi$, we have
  \begin{align*}
    \xi(K^{(k,\ell)}_k) & = \phi(\rho_k)\psi(K^{(\ell)}_k),
  \end{align*}
  where $K^{(k,\ell)}_k\in\cM_k[\TkHypergraph\cup\TkHypergraph[\ell]]$ is the model obtained by aligning
  $\rho_k$ and $K^{(\ell)}_k$ (i.e., the model of size $k$ that is a complete hypergraph in both theories).

  Given an antichain $\cA\subseteq r(k)$, let $\cL_\cA$ be the
  language containing one predicate symbol $P_A$ of arity $k(P_A)\df\lvert A\rvert$
  for every $A\in\cA$.
  We say that $\phi$ satisfies $\Disc[\cA]$ (\cite{Tow17,ACHP18}) if for every $\psi\in\HomT{T_{\cL_\cA}}$ and every coupling $\xi$ of $\phi$ and $\psi$, if $\rn{K}$ is the exchangeable array in $\cK_{\NN_+}[\TkHypergraph\cup T_{\cL_\cA}]$ associated with $\xi$, then we
  have
  \begin{align*}
    \begin{multlined}
      \PP[(1,\ldots,k)\in R_E(\rn{K})\land \forall A\in\cA, \iota_A\in R_{P_A}(\rn{K})]
      \\
      =
      \phi(\rho_k)\cdot\PP[\forall A\in\cA, \iota_A\in R_{P_A}(\rn{K})],
    \end{multlined}
  \end{align*}
  that is, the events $(1,2,\ldots,k)\in R_E(\rn K)$ and $\forall A\in \cA,\iota_A\in R_{P_A}(\rn K)$ are
  independent.
\end{definition}

\smallskip
In~\cite{Tow17}, the definition of $\Disc[\cA]$ further requires symmetry of the predicate symbols $P_A$, but
it was shown in~\cite{ACHP18} that this condition can be dropped.

\subsection{Useful theories and objects} \label{sec:useful}

In this final preliminary subsection, we define some theories and limit objects that are necessary to formally
state some of our main results. We start with a very general definition (that nonetheless
will be used in full generality in Theorem~\ref{thm:fullUCouple}) and then derive
all others as special cases.

For $c\geq 2$, let $\Pi_c\df\{p=(p_i)_{i=1}^c \in (0,1)^c \mid \sum_{i=1}^c p_i = 1\}$ be the interior of the
standard $(c-1)$-dimensional simplex. Also, given $x\in\cE_n$, let $\sigma_x\in S_n$ be the unique permutation
such that $x_{\{\sigma_x^{-1}(1)\}} < \cdots < x_{\{\sigma_x^{-1}(n)\}}$ when the coordinates $(x_{\{i\}} \mid
i\in[n])$ are distinct, and define it arbitrarily otherwise.

  \begin{definition}[$S_k$-action theories]\label{def:actiontheories}
  Let $k\in\NN_+$, let $\cL$ be a language containing only predicate symbols of arity exactly $k$, let
  $\Theta\function{S_k\times \cL}{\cL}$ be a (left) action of $S_k$ on $\cL$ and write
  $\sigma\cdot P\df\Theta(\sigma,P)$. The canonical theory $T_\Theta$ is defined as the theory over $\cL$ with
  axioms
  \begin{align}
    \left(\bigwedge_{1\leq i < j\leq k} x_i\neq x_j\right) & \equiv \left(\bigvee_{P\in\cL} P(x_1,\ldots,x_k)\right);
    \label{eq:action:existsone}
    \\
    P(x_{\sigma(1)},\ldots,x_{\sigma(k)}) & \equiv (\sigma\cdot P)(x_1,\ldots,x_k) & (P\in\cL,\sigma\in S_k);
    \label{eq:action:action}
    \\
    \neg P(x_1,\ldots,x_k) & \lor \neg P'(x_1,\ldots,x_k) & (P,P'\in\cL, P\neq P').
    \label{eq:action:unique}
  \end{align}

  Given a $\Theta$-invariant $p = (p_P)_{P\in\cL}\in[0,1]^\cL$ with $\sum_{P\in\cL} p_P = 1$, the
  \emph{$(\Theta,p)$-quasirandom homomorphism} is the homomorphism $\psi_{\Theta,p}\in\HomT{T_\Theta}$
  corresponding to picking at random for each $k$-set $A$, independently of other $k$-sets, an orbit
  $O\subseteq\cL$ of the action $\Theta$ with probability $\sum_{P\in O}p_P$ then uniformly at random choosing
  an $S_k$-equivariant assignment of the $k$-tuples with image $A$ to the elements of $O$. A $T_\Theta$-on
  $\cN^Z$ representing $\psi_{\Theta,p}$ is given by\footnote{We will check that all axioms of
  $T_\Theta$ are satisfied and provide an alternate syntactic description as part
  of Proposition~\ref{prop:ThetapQR}.}
  \begin{align}\label{eq:ThetapQRcNZ}
    \cN^Z_P
    & \df
    \{x\in\cE_k \mid x_{[k]}\in Z_{\sigma_x\cdot P}\}
    \qquad (P\in\cL),
  \end{align}
  where $Z = (Z_P)_{P\in\cL}$ is a measurable partition of $[0,1]$ with $\lambda(Z_P) = p_P$ ($P\in\cL$).
\end{definition}

Let us now note a few special cases that will play an active role in our paper.

\begin{definition}[$c$-colored $k$-hypergraphs]
Let $\mathcal L=\{E_1,\ldots,E_c\}$ and assume that the action
$\Theta$ is trivial. In that case we will denote the theory $T_\Theta$
by $T_{c,k}$ and call it the \emph{theory of $c$-colored $k$-hypergraphs}.
The $(\Theta,p)$-quasirandom homomorphism will be called
\emph{quasirandom $c$-colored $k$-hypergraphon with densities $p$} and denoted
by $\psi_{k,p}$.
\end{definition}

\begin{definition}[quasirandom $k$-hypergraphons]\label{def:hypergraphons}
  Let us further specify $c=2$ in the previous definition. Since $E_2$ is the
  negation of $E_1$ and hence can be safely removed, the theory
  $T_\Theta$ is isomorphic to $\TkHypergraph$. For $p\in (0,1)$, the $(\Theta,(p,1-p))$-quasirandom homomorphism
  is called  the \emph{quasirandom $k$-hypergraphon of density $p$}; it will
  also be denoted by $\psi_{k,p}$.
\end{definition}

\begin{definition}[Colorings]\label{def:colorings}
Letting in Definition~\ref{def:actiontheories} $k=1$ instead, and keeping the action
$\Theta$ trivial, we see that $T_\Theta$
is naturally isomorphic to the theory $\TcColoring$. The quasirandom object will
be called \emph{$c$-coloring with densities $p$}, $p\in\Pi_c$, and denoted by
$\psi_p\in\HomT{\TcColoring}$.
\end{definition}

\begin{definition}[$k$-tournaments]
  Let now $\mathcal L=\{E_1,E_2\}$ and $k\geq 2$, but this time the action $\Theta$
  is not trivial but instead given by the sign homomorphism $\sgn\function{S_k}{S_2}$.
  Then the only $\Theta$-invariant $p$ is $p_1=p_2=1/2$ and, as in the case of
  hypergraphons, we can exclude $E_2$ from the theory. We call it
  the theory of \emph{$k$-tournaments} and denote by $\TkTournament$; intuitively,
  this theory corresponds to choosing one of the two possible orientations for every
  $k$-set. The quasi-random object $\psi_{\Theta, (1/2, 1/2)}$ will then be called
  the {\em quasirandom $k$-tournamon} and denoted by $\psi_k$; thus, $\psi_k\in
  \HomT{\TkTournament}$, and $\psi_2$ is the ordinary quasi-random tournamon.
\end{definition}

\section{Main results}
\label{sec:mainresults}

In this section we present the main results. We remark that some of these
results follow trivially from definitions and we will point these out as we
go along.

\begin{theorem}\label{thm:anti-monotone}
  The properties $\Independence$, $\UCouple$ and $\UInduce$ are anti-monotone in the sense that
  $P[\ell]\implies P[\ell-1]$.
\end{theorem}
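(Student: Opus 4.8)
The plan is to prove each of the three anti-monotonicity implications, handling the easy cases first and isolating the one that requires a small argument.

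For $\Independence[\ell]\implies\Independence[\ell-1]$, the point is simply that an $\ell$-independent peon is automatically $(\ell-1)$-independent: if $\cN_P = \cE_{k,\ell}(\Omega)\times\cH$ for some $\cH\subseteq X^{\binom{[k]}{>\ell}}$, then since $\cE_{k,\ell}(\Omega) = \cE_{k,\ell-1}(\Omega)\times X^{\binom{[k]}{\ell}}$ (as $r(k,\ell)$ is the disjoint union of $r(k,\ell-1)$ and $\binom{[k]}{\ell}$), we may rewrite $\cN_P = \cE_{k,\ell-1}(\Omega)\times\cH'$ with $\cH' \df X^{\binom{[k]}{\ell}}\times\cH\subseteq X^{\binom{[k]}{>\ell-1}}$. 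So the same $T$-on witnesses $\Independence[\ell-1]$. This is a one-line set-theoretic identity on the index sets.

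For $\UCouple[\ell]\implies\UCouple[\ell-1]$ and $\UInduce[\ell]\implies\UInduce[\ell-1]$, the key observation is that the class of test objects shrinks as $\ell$ decreases. For $\UCouple$: if $\rk(\psi)\leq\ell-1$ then a fortiori $\rk(\psi)\leq\ell$, so every test $\psi$ relevant to $\UCouple[\ell-1]$ is already a test passed by any $\phi\in\UCouple[\ell]$; hence $\phi$ is uniquely coupleable with every $\psi$ of rank $\leq\ell-1$, i.e.\ $\phi\in\UCouple[\ell-1]$. For $\UInduce$, the cleanest route is to use the equivalent formulation already flagged in Remark~\ref{rmk:UInduce1}: it suffices to show that unique inducibility by every $\psi\in\HomT{\TkHypergraph[\ell]}$ implies unique inducibility by every $\psi'\in\HomT{\TkHypergraph[\ell-1]}$. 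The natural mechanism is the structure-erasing open interpretation $J\interpret{\TkHypergraph[\ell-1]}{\TkHypergraph[\ell]}$ that forgets the $\ell$-uniform edge relation: an $(\ell-1)$-hypergraphon $\psi'$ can be realized as $\psi^J$ for some $\ell$-hypergraphon $\psi$ (e.g.\ any coupling of $\psi'$ with a nontrivial $\ell$-uniform quasirandom hypergraphon, or simply $\psi'\otimes\psi_{\ell,p}$). Then, given a coupling $\xi'$ of $\phi$ and $\psi'$ and a model $M$ of $T\cup\TkHypergraph[\ell-1]$ whose $(\ell-1)$-reduct is a complete $(\ell-1)$-hypergraph, one lifts $\xi'$ to a coupling $\xi$ of $\phi$ and $\psi$ (e.g.\ via the independent coupling with $\psi_{\ell,p}$) and checks that the required identity $\xi'(M) = (\phi\otimes\psi')(M)$ follows from the corresponding identity for $\xi$ applied to the model(s) of $T\cup\TkHypergraph[\ell]$ lying over $M$ that are complete $\ell$-hypergraphs, together with the compatibility of independent couplings with open interpretations noted in Section~\ref{sec:notation}.

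The main obstacle is the $\UInduce$ case: unlike $\UCouple$, the inducibility test is phrased with a fixed arity $\ell$ (it refers to models $M$ whose image under a specific structure-erasing interpretation is a complete $\ell$-hypergraph), so the test families for $\ell$ and $\ell-1$ are not literally nested and one genuinely has to transport a lower-arity test through an open interpretation, keeping track of how the ``based on a clique'' condition and the independent-coupling right-hand side behave under that interpretation. I expect this is exactly the bookkeeping the authors intend, and it is routine once the interpretation $J$ is set up; alternatively, one could defer this implication, as the paper's own Remark~\ref{rmk:UInduce1} suggests, to the place where the equivalence $\UInduce[\ell]\Leftrightarrow$ symmetric $\ell$-locality is available, since symmetric $\ell$-locality is manifestly anti-monotone in $\ell$ (fewer constraints on the collections $(V_i)_{i\in I}$). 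Either way, the first two implications are immediate and only the third needs care.
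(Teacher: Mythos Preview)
Your proposal is correct and essentially matches the paper. For $\Independence$ and $\UCouple$ the paper says exactly what you do (trivial from definitions), and for $\UInduce$ the paper takes precisely the alternative you mention at the end: it defers the implication to the equivalence $\UInduce[\ell]\Leftrightarrow$ symmetric $\ell$-locality (Lemmas~\ref{lem:symlocal->UInduce} and~\ref{lem:UInduce->symlocal}) and uses that symmetric $\ell$-locality is trivially anti-monotone. The paper even remarks, as you do, that an ad hoc proof is possible but that the locality route is cleaner.

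One small terminological wrinkle in your ad hoc sketch: there is no ``structure-erasing'' interpretation $J\interpret{\TkHypergraph[\ell-1]}{\TkHypergraph[\ell]}$ in the paper's sense (those are always of the form $T_1\leadsto T_1\cup T_2$). What you actually want is to couple $\xi'$ with $\psi_{\ell,p}$ and then apply the interpretation $I''\interpret{\TkHypergraph[\ell]}{\TkHypergraph[\ell-1]\cup\TkHypergraph[\ell]}$ that declares an $\ell$-edge precisely when it is an $\ell$-edge \emph{and} all its $(\ell-1)$-subsets are $(\ell-1)$-edges; the clique condition then forces both parts to be complete, and cancelling the positive factor $\psi_{\ell,p}(K^{(\ell)}_m)$ yields the desired identity. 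With that correction your ad hoc route goes through, but since you also identified the paper's actual route, this is only a cosmetic issue.
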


For $\Independence$ and $\UCouple$, this theorem trivially follows from definitions. Even though it is
possible to give an ad hoc proof that $\UInduce$ is also anti-monotone, this follows trivially from its
equivalence with symmetric locality (Theorem~\ref{thm:UInduce} below) and the fact that symmetric locality is
trivially anti-monotone.

\begin{theorem}\label{thm:inter-properties}
For any $\ell\in\NN$, $\Independence[\ell]\implies \UCouple[\ell]\implies
\UInduce[\ell]$.
\end{theorem}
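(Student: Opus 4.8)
The second implication is immediate from the definitions. Any $\TkHypergraph[\ell]$-on $\cM$ has its single peon contained in $\cE_\ell(\Omega)$, so $\rk(\cM)\le\ell$ and hence every $\psi\in\HomT{\TkHypergraph[\ell]}$ has $\rk(\psi)\le\ell$. Thus if $\phi$ satisfies $\UCouple[\ell]$, then $\phi$ and $\psi$ are uniquely coupleable, so any coupling $\xi$ of $\phi$ and $\psi$ equals $\phi\otimes\psi$; in particular $\xi(M)=(\phi\otimes\psi)(M)$ for \emph{all} $M\in\cM[T\cup\TkHypergraph[\ell]]$, a fortiori for those whose $\TkHypergraph[\ell]$-reduct is a complete $\ell$-hypergraph, which is exactly $\UInduce[\ell]$. (For $\ell=1$ one argues identically using $\TcColoring[2]\cong\TkHypergraph[1]$ and the formulation in Remark~\ref{rmk:UInduce1}.)

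For the first implication, fix an $\ell$-independent $\phi\in\HomT{T}$, a theory $T'$, and $\psi\in\HomT{T'}$ with $\rk(\psi)\le\ell$, and let $\xi$ be an arbitrary coupling of $\phi$ and $\psi$, with structure-erasing interpretations $I_1\interpret{T}{T\cup T'}$ and $I_2\interpret{T'}{T\cup T'}$; we must show $\xi=\phi\otimes\psi$. Passing to exchangeable arrays (Theorem~\ref{thm:cryptomorphism}), let $\rn{K}$ be the array of $\xi$, so that $I_1(\rn{K})$ and $I_2(\rn{K})$ are arrays of $\phi$ and $\psi$. Unwinding Definition~\ref{def:indcoup_syntactic}, the desired identity is equivalent to $I_1(\rn{K})$ and $I_2(\rn{K})$ being independent as random variables, i.e.\ to $I_1(\rn{K})\rest_{[n]}$ and $I_2(\rn{K})\rest_{[n]}$ being independent for every $n$; this is what I will prove.

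The plan is to realize $\xi$ by a $(T\cup T')$-on $\cP$ over an atomless complete $\Omega=(X,\cA,\mu)$, to generate $\rn{K}$ from $\rn{\theta}=(\rn{\theta}_A)_{A\in r(\NN_+)}$ chosen i.i.d.\ $\mu$ via~\eqref{eq:arraycryptomorphism}, and then to \emph{normalize} $\cP$ so that the reduct $I_1(\cP)$ depends only on coordinates $x_A$ with $\lvert A\rvert>\ell$ while the reduct $I_2(\cP)$ depends only on coordinates $x_A$ with $\lvert A\rvert\le\ell$; once this is achieved, $I_1(\rn{K})$ is a function of $(\rn{\theta}_A\mid\lvert A\rvert>\ell)$ while $I_2(\rn{K})$ is a function of the disjoint, independent family $(\rn{\theta}_A\mid\lvert A\rvert\le\ell)$, so they are independent and $\xi=\phi\otimes\psi$. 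The normalization is carried out in two steps, each realized by a \emph{one-sided} measure-preserving transformation of $\Omega$ so that they do not interfere: first, using $\rk(\psi)\le\ell$, a transformation supported on the coordinates of size $\le\ell$ (identity on the rest) straightens $I_2(\cP)$ to a $T'$-on of rank $\le\ell$ without changing $\xi$; then, using that $\phi$ is $\ell$-independent, a transformation supported on the coordinates of size $>\ell$ (identity on the rest) straightens $I_1(\cP)$ to an $\ell$-independent $T$-on without changing $\xi$ --- since this second transformation fixes every coordinate of size $\le\ell$, the reduct $I_2(\cP)$, already depending only on those, remains of rank $\le\ell$.

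The crux --- and the step I expect to be the main obstacle --- is the first normalization: that $\rk(\psi)\le\ell$ allows one to straighten an \emph{arbitrary} geometric realization of $\psi$ to rank $\le\ell$ using only a rearrangement of the small-index coordinates (the second normalization is the ``possibility of straightening'' reformulation of $\Independence[\ell]$ discussed in Section~\ref{sec:quasirandomness}, cf.~\cite{ES12,CR19}). Should this one-sidedness prove awkward to establish directly, one can instead route the implication through the characterizations proved later: $\Independence[\ell]$ trivially implies weak $\ell$-independence --- for an $\ell$-independent $T$-on $\cN$, the array associated to it via~\eqref{eq:arraycryptomorphism} is visibly a function of $(\rn{\theta}_A\mid\lvert A\rvert>\ell)$, hence independent of $(\rn{\theta}_A\mid\lvert A\rvert\le\ell)$ --- and weak $\ell$-independence is one of the equivalent forms of $\UCouple[\ell]$ established in Theorem~\ref{thm:UCouple}.
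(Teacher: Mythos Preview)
Your fallback route---observing that an $\ell$-independent $T$-on yields an exchangeable array that is a function of $(\rn\theta_A\mid\lvert A\rvert>\ell)$, hence weakly $\ell$-independent, and then invoking the equivalence of weak $\ell$-independence with $\UCouple[\ell]$ (Lemma~\ref{lem:weakindep})---is exactly the paper's proof, and your treatment of $\UCouple[\ell]\Rightarrow\UInduce[\ell]$ matches as well.

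Your primary ``two one-sided normalizations'' plan is close in spirit to how Lemma~\ref{lem:weakindep} actually establishes weak independence $\Rightarrow\UCouple$, but you have the difficulty of the two steps reversed. The rank step is the \emph{easy} one: Proposition~\ref{prop:rankae} shows that any realization of a rank-$\le\ell$ homomorphism already agrees a.e.\ with a rank-$\le\ell$ peon, so no transformation is needed at all---just a zero-measure change. The genuinely delicate step is the other one, forcing $I_1(\cP)$ into $\ell$-independent form; it is not clear that this can be done by a transformation that is the identity on low-order coordinates of an \emph{arbitrary} starting realization $\cP$ of $\xi$. The paper sidesteps this by working in the opposite order: it fixes once and for all an $\ell$-independent $T$-on $\cN$ representing $\phi$, then uses Proposition~\ref{prop:theonalignment} to build a $(T\cup T')$-on $\cH$ over the doubled space $\Omega\times\Omega$ whose $T$-reduct is $\cN\times\cE_{k(P)}(\Omega)$, and only afterwards applies the (trivial) a.e.\ rank correction to the $T'$-reduct. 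So your intuition is right, but the mechanism is ``align with a chosen good representation via space-doubling'' rather than ``straighten an arbitrary representation in place.''
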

The second implication follows trivially from the definitions.

\medskip
The next theorem concerns preservation of properties under open
interpretations.

\begin{theorem}[Naturality]\label{thm:naturality}
  Let $I\interpret{T_1}{T_2}$ be an open interpretation and let $\ell\in\NN$. The following hold for
  any $\phi\in\HomT{T_2}$.
  \begin{enumerate}
  \item If $\phi$ is uniquely coupleable with some $\psi\in\HomT{T'}$, then
      $\phi^I$ is uniquely coupleable with
      $\psi$.\label{thm:naturality:uniquecouplings}
  \item If $\phi\in\Independence[\ell]$, then
      $\phi^I\in\Independence[\ell]$.\label{thm:naturality:Independence}
  \item If $\phi\in\UCouple[\ell]$, then
      $\phi^I\in\UCouple[\ell]$.\label{thm:naturality:UCouple}
  \item If $\phi\in\UInduce[\ell]$, then
      $\phi^I\in\UInduce[\ell]$.\label{thm:naturality:UInduce}
  \end{enumerate}
\end{theorem}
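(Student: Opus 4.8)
\textbf{Proof plan for Theorem~\ref{thm:naturality}.}
The plan is to derive everything from part~\ref{thm:naturality:uniquecouplings}, which is the one substantive claim; parts~\ref{thm:naturality:UCouple} and~\ref{thm:naturality:UInduce} then follow by unwinding definitions, and part~\ref{thm:naturality:Independence} is handled separately using the structure of open interpretations on theons. So the first and main step is~\ref{thm:naturality:uniquecouplings}. Suppose $\phi\in\HomT{T_2}$ is uniquely coupleable with $\psi\in\HomT{T'}$, and let $\xi$ be any coupling of $\phi^I$ and $\psi$ in $\HomT{T_1\cup T'}$; we must show $\xi=\phi^I\otimes\psi$. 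The natural move is to lift $\xi$ through the interpretation $I\cup\id_{T'}\interpret{T_1\cup T'}{T_2\cup T'}$: by the amalgamation/lifting result (Theorem~\ref{thm:amalgamation}, via Proposition~\ref{prop:lifting}, both cited as available in the excerpt), since $\xi$ restricts to $\psi$ on $T'$ and to $\phi^I=\phi\comp\pi^I$ on $T_1$, and $\phi$ itself is a lift of $\phi^I$ along $I$, there is a coupling $\widetilde\xi\in\HomT{T_2\cup T'}$ of $\phi$ and $\psi$ with $\widetilde\xi^{\,I\cup\id_{T'}}=\xi$. By the hypothesis that $\phi$ and $\psi$ are uniquely coupleable, $\widetilde\xi=\phi\otimes\psi$. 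Applying $(\place)^{I\cup\id_{T'}}$ to both sides and using that independent couplings are preserved under componentwise interpretations — i.e.\ $(\phi\otimes\psi)^{I\cup\id_{T'}}=\phi^I\otimes\psi$, which is a direct check from Definition~\ref{def:indcoup_syntactic} together with functoriality of $\pi^{I\cup\id_{T'}}=\pi^I\otimes\pi^{\id}$ on flag algebras — we get $\xi=\phi^I\otimes\psi$, as desired.

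For part~\ref{thm:naturality:UCouple}: if $\phi\in\UCouple[\ell]$ then $\phi$ is uniquely coupleable with every $\psi$ of rank $\leq\ell$; by part~\ref{thm:naturality:uniquecouplings}, $\phi^I$ is uniquely coupleable with every such $\psi$; hence $\phi^I\in\UCouple[\ell]$. Part~\ref{thm:naturality:UInduce} is the special case obtained by restricting attention to $\psi\in\HomT{\TkHypergraph[\ell]}$ (which have rank $\leq\ell$) and to models $M$ whose image under the structure-erasing interpretation is a complete $\ell$-hypergraph: the lifting argument is identical except we only test on those $M$, and unique inducibility of $\phi$ by $\psi$ gives $\widetilde\xi(M)=(\phi\otimes\psi)(M)$ exactly on that class, which descends through $I\cup\id$ to the corresponding statement for $\phi^I$. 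One should note the interpretation $I\cup\id_{\TkHypergraph[\ell]}$ sends models with complete-$\ell$-hypergraph image to models with complete-$\ell$-hypergraph image, so the relevant class of test models is respected.

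For part~\ref{thm:naturality:Independence} the cleanest route is geometric rather than via couplings. Take an $\ell$-independent $T_2$-on $\cN$ with $\phi_\cN=\phi$; then $I(\cN)$, defined by $I(\cN)_P\df T(I(P),\cN)$, is a $T_1$-on with $\phi_{I(\cN)}=\phi^I$. The point is that if $\cN_Q=\cE_{k(Q),\ell}(\Omega)\times\cH_Q$ for every $Q$ in the language of $T_2$ (i.e.\ $\cN_Q$ does not depend on coordinates $x_A$ with $\lvert A\rvert\leq\ell$), then the same holds for each $I(\cN)_P$: the open formula $I(P)(x_1,\ldots,x_{k(P)})$ is a Boolean combination of atoms $Q(x_{i_1},\ldots,x_{i_{k(Q)}})$ and equalities, and by the canonical-structure convention the equalities are all false on $(V)_{k(P)}$, so $I(\cN)_P$ is a Boolean combination of pullbacks of the $\cN_Q$'s along coordinate injections $[k(Q)]\rightarrowtail[k(P)]$; each such pullback still ignores all coordinates indexed by sets of size $\leq\ell$ (an injection maps $\leq\ell$-sets to $\leq\ell$-sets), and a Boolean combination of sets each independent of the coordinates in $r(k(P),\ell)$ is again independent of those coordinates. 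Hence $I(\cN)$ is $\ell$-independent, so $\phi^I\in\Independence[\ell]$.

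\textbf{Main obstacle.} The crux is the lifting step in~\ref{thm:naturality:uniquecouplings}: one needs that an arbitrary coupling of $\phi^I$ with $\psi$ actually lifts to a coupling of $\phi$ with $\psi$, which is precisely what Theorem~\ref{thm:amalgamation}/Proposition~\ref{prop:lifting} are for; verifying that the lift respects the two structure-erasing interpretations (so that it genuinely is a coupling of $\phi$ and $\psi$, not merely of $\phi^I$ and $\psi$) and that interpreting back recovers $\xi$ requires care with the functoriality of $\pi^{(\place)}$ on flag algebras, but no new ideas. The compatibility $(\phi\otimes\psi)^{I\cup\id}=\phi^I\otimes\psi$ is routine from Definition~\ref{def:indcoup_syntactic}. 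If the lifting theorem is only stated for pairs, one applies it with $\id_{T'}$ on the second coordinate and uses $I\cup\id_{T'}$; this is why $I\cup J$ for interpretations was introduced at the end of Section~\ref{sec:notation}.
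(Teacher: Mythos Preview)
Your proposal is correct and follows essentially the same approach as the paper: the lifting via Proposition~\ref{prop:lifting} (derived from Theorem~\ref{thm:amalgamation}) is exactly the key step the paper uses for~\ref{thm:naturality:uniquecouplings} and~\ref{thm:naturality:UInduce}, with~\ref{thm:naturality:UCouple} following immediately and~\ref{thm:naturality:Independence} handled geometrically via $I(\cN)$. One minor notational slip: in your sketch for~\ref{thm:naturality:UInduce} you write ``$\widetilde\xi(M)=(\phi\otimes\psi)(M)$'' but $M$ lives in $\cM[T_1\cup\TkHypergraph[\ell]]$ while $\widetilde\xi$ lives over $T_2\cup\TkHypergraph[\ell]$---you mean this for the models $M'$ in the fiber $(I\cup\id)^{-1}(M)$, which (as you note) all have complete-$\ell$-hypergraph part, and then sum; the paper writes this summation out explicitly.
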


Item~\ref{thm:naturality:Independence} follows trivially from the definition
of $I(\cN)$ applied to an $\ell$-independent $T_2$-on $\cN$ such that
$\phi=\phi_\cN$. Note also that item~\ref{thm:naturality:UCouple} follows
trivially from item~\ref{thm:naturality:uniquecouplings}. Furthermore,
applying this theorem to the axiom-adding interpretation
$I\interpret{T_\cL}{T}$, where $\cL$ is the language of $T$, we see that all
our main notions do not depend on non-logical axioms. Nonetheless, using theories and theons (as opposed to
arbitrary Euclidean structures) helps to better orient ourselves and put many
of the results in the ``right'' focus.

The next theorem says that both $\Independence$ and $\UCouple$ are preserved
under independent couplings.

\begin{theorem}\label{thm:naturalityindcoup}
  Let $\phi_1\in\HomT{T_1}$ and $\phi_2\in\HomT{T_2}$. The following hold for $\ell\in\NN$.
  \begin{enumerate}
  \item If $\phi_1,\phi_2\in\Independence[\ell]$, then
      $\phi_1\otimes\phi_2\in\Independence[\ell]$.\label{thm:naturalityindcoup:Independence}
  \item If $\phi_1,\phi_2\in\UCouple[\ell]$, then
      $\phi_1\otimes\phi_2\in\UCouple[\ell]$.\label{thm:naturalityindcoup:UCouple}
  \end{enumerate}
\end{theorem}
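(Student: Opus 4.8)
The plan is to prove both items by producing an explicit $(T_1\cup T_2)$-on of the appropriate kind whose homomorphism is $\phi_1\otimes\phi_2$, using the semantic version of the independent coupling (Definition~\ref{def:indcoup}) together with the semantic characterizations of the two properties.

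For item~\ref{thm:naturalityindcoup:Independence}: choose an $\ell$-independent $T_i$-on $\cN^i$ over $\Omega_i$ with $\phi_{\cN^i}=\phi_i$ ($i\in\{1,2\}$), so that $\cN^i_P = \cE_{k(P),\ell}(\Omega_i)\times\cH^i_P$ for suitable $\cH^i_P\subseteq X_i^{\binom{[k(P)]}{>\ell}}$. Form the independent coupling $\cN^1\otimes\cN^2$ over $\Omega_1\times\Omega_2$; by the consistency remark after Definition~\ref{def:indcoup_syntactic} its homomorphism is $\phi_1\otimes\phi_2$. Now I would simply unwind definitions: under the canonical identification $\cE_{k(P)}(\Omega_1\times\Omega_2)\cong\cE_{k(P)}(\Omega_1)\times\cE_{k(P)}(\Omega_2)$ fixed in Section~\ref{sec:notation}, a predicate $P$ in the language of $T_i$ has $(\cN^1\otimes\cN^2)_P = \{x \mid \pi_i(x)\in\cN^i_P\}$, and since $\cN^i_P$ does not depend on the coordinates indexed by sets of size $\le\ell$, neither does $(\cN^1\otimes\cN^2)_P$. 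Hence $\cN^1\otimes\cN^2$ is $\ell$-independent and witnesses $\phi_1\otimes\phi_2\in\Independence[\ell]$. This part is routine bookkeeping with the product-space identification.

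For item~\ref{thm:naturalityindcoup:UCouple}: by Theorem~\ref{thm:UCouple} (item~\ref{thm:UCouple:weakindep} or~\ref{thm:UCouple:local}) it suffices to show $\phi_1\otimes\phi_2$ is weakly $\ell$-independent, equivalently $\ell$-local. Pick $T_i$-ons $\cN^i$ over $\Omega_i=(X_i,\cA_i,\mu_i)$ with $\phi_{\cN^i}=\phi_i$ that are weakly $\ell$-independent, i.e., the exchangeable array $\rn{K}^i$ built from $\cN^i$ via a sample $\rn{\theta}^i\in\cE_{\NN_+}(\Omega_i)$ is independent (as a random variable) from $(\rn{\theta}^i_A \mid A\in r(\NN_+,\ell))$. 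Take the independent coupling $\cN\df\cN^1\otimes\cN^2$ over $\Omega_1\times\Omega_2$ and sample $\rn{\theta}=(\rn{\theta}^1,\rn{\theta}^2)$ with $\rn{\theta}^1,\rn{\theta}^2$ independent; then by~\eqref{eq:arraycryptomorphism} the array $\rn{K}$ of $\cN$ has $R_P(\rn{K})$ determined by $\alpha^*(\rn{\theta}^i)\in\cN^i_P$, so $\rn{K}$ is the ``coordinatewise coupling'' of $\rn{K}^1$ and $\rn{K}^2$ — in fact $\rn{K}$ as a random structure is a deterministic function of the pair $(\rn{K}^1,\rn{K}^2)$. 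Since $\rn{K}^1\perp(\rn{\theta}^1_A)$, $\rn{K}^2\perp(\rn{\theta}^2_A)$, and the two sides (superscript $1$ and superscript $2$) are mutually independent, the pair $(\rn{K}^1,\rn{K}^2)$ is independent from $(\rn{\theta}_A\mid A\in r(\NN_+,\ell)) = (\rn{\theta}^1_A,\rn{\theta}^2_A \mid A)$; hence so is $\rn{K}$. Thus $\cN$ is weakly $\ell$-independent, giving $\phi_1\otimes\phi_2\in\UCouple[\ell]$.

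The main obstacle I anticipate is not conceptual but technical: carefully justifying the independence factorization in the last step, i.e., that conditioning is compatible with the identification $\cE_V(\Omega_1\times\Omega_2)\cong\cE_V(\Omega_1)\times\cE_V(\Omega_2)$ and that $\sigma(\rn{K}^1,\rn{\theta}^1)\perp\sigma(\rn{K}^2,\rn{\theta}^2)$ so that independence of each factor from its own low-level coordinates lifts to joint independence from all low-level coordinates. Once the right product-space picture is set up this is a standard fact about independence of $\sigma$-algebras, but it must be stated cleanly. An alternative, slightly cleaner route for item~\ref{thm:naturalityindcoup:UCouple} would be to bypass theons entirely and argue at the level of couplings: given any coupling $\xi$ of $\phi_1\otimes\phi_2$ with some $\psi$ of rank $\le\ell$, use the amalgamation/lifting machinery (Proposition~\ref{prop:lifting}, Theorem~\ref{thm:amalgamation}) to view $\xi$ as built from couplings of the individual $\phi_i$ with $\psi$, each of which is forced to be independent by $\phi_i\in\UCouple[\ell]$; but this seems to require more care about how the two couplings interact, so I would prefer the weak-independence argument above.
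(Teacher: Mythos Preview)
Your proposal is correct and follows essentially the same approach as the paper: item~\ref{thm:naturalityindcoup:Independence} is handled by observing that the independent coupling of $\ell$-independent theons is $\ell$-independent, and item~\ref{thm:naturalityindcoup:UCouple} goes through the weak-$\ell$-independence characterization (Lemma~\ref{lem:weakindep}), reducing to the elementary probability fact that if $f_i(\rn{X_i},\rn{Y_i})\perp\rn{X_i}$ and $\rn{X_1},\rn{X_2},\rn{Y_1},\rn{Y_2}$ are mutually independent then $(f_1,f_2)\perp(\rn{X_1},\rn{X_2})$. The paper states this abstract claim explicitly and applies it with $\rn{X_i}=(\rn{\theta}^i_A:\lvert A\rvert\le\ell)$, $\rn{Y_i}=(\rn{\theta}^i_A:\lvert A\rvert>\ell)$, which is exactly the independence factorization you identified as the only nontrivial step.
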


Remarkably, this is not true for $\UInduce$. For example, the linear order
$\psi\in\HomT{\TLinOrder}$ satisfies $\UInduce[\ell]$ for every $\ell\in\NN$
but does not satisfy $\UCouple[1]$
(Theorem~\ref{thm:separationUInduceUCouple} below). So the quasirandom permuton
$\psi\otimes\psi\in\HomT{\TLinOrder\cup\TLinOrder}$
(see~\cite[Example~6]{CR19}) cannot satisfy $\UInduce[1]$ by
Theorem~\ref{thm:UCouple}\ref{thm:UCouple:UCouple}$\equiv$\ref{thm:UCouple:orderUInduce} below.
This can be also verified by a direct computation, of course.

The next five theorems concern separations between properties, either allowing general theories or restricted
to the theory of hypergraphs.

\begin{theorem}\label{thm:separationupward}
 $\Independence[\ell]$ does not imply
$\UInduce[\ell+1]$, not even when restricted to the theory of $k$-hypergraphs
as long as $k > \ell$.
\end{theorem}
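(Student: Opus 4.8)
The plan is to exhibit, for each $\ell$ and each $k>\ell$, a single $k$-hypergraphon $\phi$ that is $\ell$-independent but fails $\UInduce[\ell+1]$. The natural candidate is the quasirandom $k$-hypergraphon $\psi_{k,p}$ of density $p\in(0,1)$ itself: by Definition~\ref{def:hypergraphons} and the representation~\eqref{eq:ThetapQRcNZ} (or directly~\eqref{eq:W} generalized to arity $k$), $\psi_{k,p}$ has a $T_{k\operatorname{-Hypergraph}}$-on that depends only on the top coordinate $x_{[k]}$, hence is $(k-1)$-independent, and therefore $\ell$-independent for every $\ell\leq k-1$. So the same object works for all the relevant $\ell$ simultaneously, and it remains to show $\psi_{k,p}\notin\UInduce[\ell+1]$ whenever $\ell+1\leq k-1$, i.e.\ $k\geq\ell+2$ — wait, the statement only asks $k>\ell$, i.e.\ $k\geq\ell+1$; in the boundary case $k=\ell+1$ the failure of $\UInduce[\ell+1]=\UInduce[k]$ is essentially automatic since $\psi_{k,p}$ for $p\in(0,1)$ is not the full quasirandom hypergraph and $\UInduce[k-1]$ in arity $k$ already characterizes full quasirandomness (as discussed after Figure~\ref{fig:hypergraph}); so I will treat the boundary separately and concentrate on $k\geq\ell+2$.

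For the main range, I would produce two distinct couplings of $\psi_{k,p}$ with some $(\ell+1)$-hypergraphon $\psi$ that already disagree on a model $M$ supported on a clique of $\psi$. The cleanest choice of $\psi$ is the quasirandom $(\ell+1)$-hypergraphon $\psi_{\ell+1,q}$ of density $q\in(0,1)$. The first coupling is the independent coupling $\psi_{k,p}\otimes\psi_{\ell+1,q}$. The second coupling is a ``skew'' one: using the extra degree of freedom available because $k\geq\ell+2$, I would build a $T_{k\operatorname{-Hypergraph}}$-on for the $\psi_{k,p}$ component that, instead of depending only on $x_{[k]}$, depends on $x_{[k]}$ together with the coordinates indexed by the $(\ell+1)$-subsets of $[k]$ — precisely the coordinates on which the $\psi_{\ell+1,q}$-on lives — in such a way that the marginal on the $k$-hypergraph predicate is still quasirandom of density $p$ (so it is a valid coupling) but conditioned on a clique in the $(\ell+1)$-hypergraph the edge probability of $E$ changes. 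Concretely, one correlates membership of $x$ in the $E$-part with the event that all the $(\ell+1)$-coordinates lie in the ``edge'' block $Z_{E_1}$ of the $\psi_{\ell+1,q}$-on; averaging over the $(\ell+1)$-coordinates recovers density $p$, but restricted to an $(\ell+1)$-clique the density becomes something $\neq p$. Then $\xi_{\text{skew}}(M)\neq(\psi_{k,p}\otimes\psi_{\ell+1,q})(M)$ for $M$ the model of size $k$ that is a complete $(\ell+1)$-hypergraph and a single $k$-edge (or its complement), which is exactly the kind of model $\UInduce[\ell+1]$ quantifies over.

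The key steps, in order: (1) record that $\psi_{k,p}$ with $p\in(0,1)$ is $(k-1)$-independent hence $\ell$-independent, so the positive side of the separation is free; (2) fix the test object $\psi=\psi_{\ell+1,q}$ and write down the independent coupling explicitly via Definition~\ref{def:indcoup}; (3) construct the skew $T_{k\operatorname{-Hypergraph}}$-on as a subset of $\cE_k$ defined by a condition mixing $x_{[k]}$ with $\{x_A : A\in\binom{[k]}{\ell+1}\}$, and check it is genuinely a $k$-hypergraphon (symmetry under $S_k$ and the off-diagonal axiom) with $E$-marginal density $p$, so that pairing it with the standard $\psi_{\ell+1,q}$-on via the independent-coupling-of-theons construction gives a legitimate coupling $\xi_{\text{skew}}$ of $\psi_{k,p}$ and $\psi_{\ell+1,q}$; (4) compute both $\xi_{\text{skew}}(M)$ and $(\psi_{k,p}\otimes\psi_{\ell+1,q})(M)$ on the clique-based model $M$ and see they differ; (5) handle the boundary $k=\ell+1$ by the remark that full quasirandomness is strictly stronger than being $\psi_{k,p}$ for $p\in(0,1)$.

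The main obstacle is step (3): one must arrange the correlation so that (a) the $S_k$-symmetry of the $E$-part is preserved — this forces the defining condition to be symmetric in the $k$ variables and symmetric under permuting the $(\ell+1)$-coordinates compatibly — and (b) the overall $E$-density, after integrating out everything except $x_{[k]}$, is still exactly $p$, so that $\xi_{\text{skew}}$ really restricts to $\psi_{k,p}$ on the first factor; yet (c) the conditional density given an $(\ell+1)$-clique is provably $\neq p$. A clean way to get (a)–(c) simultaneously is to let the $E$-part be $\{x : x_{[k]}\le f(\#\{A\in\binom{[k]}{\ell+1}: x_A\in Z_{E_1}\})\}$ for a suitable function $f$ of the count that averages to $p$ under the $\mathrm{Binomial}(\binom{k}{\ell+1},q)$ distribution but is non-constant; non-constancy of $f$ together with $\binom{k}{\ell+1}\ge 2$ (which needs $k\ge\ell+2$, matching the stated range for the hypergraph-restricted version) forces a genuine discrepancy on the all-ones count, i.e.\ on a clique. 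Verifying these three facts is the only real computation, and everything else is bookkeeping with the definitions already set up in Sections~\ref{sec:prelim} and the statement of Definition~\ref{def:UCoupleUInduce}.
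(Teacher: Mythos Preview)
Your candidate $\psi_{k,p}$ cannot work in the main range $k\ge\ell+2$: since $\psi_{k,p}\in\Independence[k-1]$, Theorems~\ref{thm:inter-properties} and~\ref{thm:anti-monotone} give $\psi_{k,p}\in\UInduce[\ell+1]$, and in fact $\psi_{k,p}\in\UCouple[k-1]$ already forces every coupling of $\psi_{k,p}$ with $\psi_{\ell+1,q}$ (which has rank $\le\ell+1\le k-1$) to be the independent one. The flaw is step~(3): the skew $E$-on you describe does \emph{not} represent $\psi_{k,p}$. Having ``$E$-marginal density $p$'' only fixes $\phi(\rho_k)$; it does not make $\phi$ equal to $\psi_{k,p}$. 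For two $k$-sets sharing a common $(\ell+1)$-subset $A_0$, their edge indicators in your theon both depend on $x_{A_0}$ and are therefore correlated, whereas under $\psi_{k,p}$ distinct $k$-edges are independent. Equivalently, Theorem~\ref{thm:UCouple}\ref{thm:UCouple:weakindepall} says every theon for $\psi_{k,p}$ is weakly $(k-1)$-independent, and in your skew theon the conditional edge probability given $(\theta_A:\lvert A\rvert\le k-1)$ is $f(\text{count})\ne p$ once $f$ is non-constant.

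The paper instead takes as separating object an $\ell$-independent hypergraphon that is \emph{not} fully quasirandom, namely $\cN=\{x\in\cE_k:\max_{A\in\binom{[k]}{\ell+1}}x_A<p\}$, and derives the separation from Theorems~\ref{thm:UInduce->CliqueDisc} and~\ref{thm:separationIndependenceDisc} together with $\CliqueDisc[\ell+1]\Rightarrow\Disc[\{[\ell+1]\}]$. Your construction is in fact salvageable along the same lines: the skew $E$-on is itself $\ell$-independent, and your intended coupling computation shows that the hypergraphon \emph{it} represents (rather than $\psi_{k,p}$) fails $\UInduce[\ell+1]$. Only the boundary case $k=\ell+1$ goes through with $\psi_{k,p}$ as stated.
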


In fact, this theorem is a consequence of Theorems~\ref{thm:UInduce->CliqueDisc}
and~\ref{thm:separationIndependenceDisc} below.

\medskip
The following two theorems are included since the separating objects are quite
natural and explicit and the proofs are simpler. But in a sense they will be
superseded by Theorems~\ref{thm:separationUCoupleIndependencehypergraph} and~\ref{thm:separationUInduceUCouplehypergraph}.

\begin{theorem}\label{thm:separationUCoupleIndependence}
  For every $\ell\in\NN_+$, the quasirandom $(\ell+1)$-tournamon $\psi_{\ell+1}$ satisfies $\UCouple[\ell]$
  but does not satisfy $\Independence[\ell]$.
\end{theorem}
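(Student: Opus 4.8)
I would split this into two halves: showing $\psi_{\ell+1}\in\UCouple[\ell]$, and showing $\psi_{\ell+1}\notin\Independence[\ell]$.

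First, for the \emph{positive} direction, recall that $\UCouple[\ell]$ asks that $\psi_{\ell+1}$ be uniquely coupleable with every $\psi'$ of rank $\leq\ell$. By Theorem~\ref{thm:UCouple}\ref{thm:UCouple:QRhyper} (which we may assume), it suffices to check unique coupleability with the non-degenerate quasirandom $\ell'$-hypergraphon $\psi_{\ell',p}$ for every $\ell'\leq\ell$ (one could also work directly, via the $\ell$-locality characterization). So let $\xi$ be any coupling of $\psi_{\ell+1}$ with some $\psi'$ of rank $\leq\ell$; I would represent $\xi$ by a theon $\cN$ over a product space and argue that the $\TkTournament[\ell+1]$-part of $\cN$ may be taken to depend only on the top coordinate $x_{[\ell+1]}$ (using the straightening/realization freedom for the quasirandom tournamon — its defining peon~\eqref{eq:ThetapQRcNZ} is already a function of $x_{[\ell+1]}$ alone, via $\sigma_x$ which only involves the first-order coordinates, but crucially the orientation is decided by $Z_{\sigma_x\cdot E_1}$ which sits in the top coordinate). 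The point is that $\psi'$ has rank $\leq\ell$, so its peons depend only on coordinates $x_A$ with $\lvert A\rvert\leq\ell$, i.e.\ they never see the coordinate $x_{[\ell+1]}$; hence on any $(\ell+1)$-set the orientation coordinate is independent of everything $\psi'$ can detect, and a short computation with~\eqref{eq:arraycryptomorphism} shows $\xi$ must equal the independent coupling. I would phrase this cleanly using the exchangeable-array picture: fix $\alpha\in(\NN_+)_{\ell+1}$; the event that $\alpha$ is positively oriented in $\rn K$ is governed by the single coordinate $\rn\theta_{\im(\alpha)}$ of maximal support, which is independent of all $\rn\theta_A$ with $\lvert A\rvert\leq\ell$, hence independent of the entire $\psi'$-marginal.

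Second, for the \emph{negative} direction, I want to show no $(\ell+1)$-tournamon theon representing $\psi_{\ell+1}$ can be $\ell$-independent. Suppose $\cN$ is an $\ell$-independent $T_{(\ell+1)\operatorname{-Tournament}}$-on with $\phi_\cN=\psi_{\ell+1}$. Then by definition $\cN_{E_1}=\cE_{\ell+1,\ell}\times\cH$ for some $\cH\subseteq X^{\{[\ell+1]\}}$, i.e.\ membership in $\cN_{E_1}$ depends only on the single top coordinate $x_{[\ell+1]}$. But the antisymmetry axiom~\eqref{eq:action:action} for the sign action forces: for a point $x$ and a transposition $\sigma$, exactly one of $x$ and $\sigma^* x$ lies in $\cN_{E_1}$ (off-diagonal); yet $\sigma$ for a transposition among $[\ell+1]$ fixes the coordinate $x_{[\ell+1]}$, so $x$ and $\sigma^* x$ would have the same membership status — contradiction, since $\ell+1\geq 2$. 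Thus $\cN_{E_1}$ genuinely depends on lower-order coordinates, so $\psi_{\ell+1}$ has no $\ell$-independent representation. I'd state this as: the defining axiom of $k$-tournaments is incompatible with $(k-1)$-independence, because a transposition within $[k]$ is invisible to the top coordinate but must flip the predicate.

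\textbf{Main obstacle.} The positive direction is the substantive part, and the real care is needed in reducing "uniquely coupleable with everything of rank $\leq\ell$" to a statement about a single maximal coordinate: one has to be honest that although the \emph{canonical} theon~\eqref{eq:ThetapQRcNZ} is convenient, a general coupling theon need not present the tournament part in that straightened form, so either one invokes the realization/straightening machinery cited in the $\Independence$ discussion to bring it to that form uniformly over the fibers, or one argues at the level of the exchangeable array where the independence of $\rn\theta_{[\ell+1]}$ from $\{\rn\theta_A:\lvert A\rvert\leq\ell\}$ is transparent. I would take the array route to keep things short, and lean on Theorem~\ref{thm:UCouple} to avoid re-deriving the locality equivalence. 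The negative direction is essentially a one-line parity argument and should pose no difficulty. Finally, the "even restricted to $k$-hypergraphs" clause of the theorem header is handled by the separate hypergraph-specific Theorem~\ref{thm:separationUCoupleIndependencehypergraph}, so for \emph{this} theorem I only owe the tournamon construction.
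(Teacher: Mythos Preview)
Your overall strategy matches the paper's: prove $\psi_{\ell+1}\in\UCouple[\ell]$ via weak $\ell$-independence of the canonical theon~\eqref{eq:theonquasirandom}, and prove $\psi_{\ell+1}\notin\Independence[\ell]$ by the parity contradiction. The negative half is correct and is exactly the paper's argument.

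The positive half, however, has a genuine gap. You assert that ``the event that $\alpha$ is positively oriented in $\rn K$ is governed by the single coordinate $\rn\theta_{\im(\alpha)}$ of maximal support,'' and then deduce independence from the lower-order coordinates. This is false: in the theon~\eqref{eq:theonquasirandom} the orientation event is $\{x_{[\ell+1]}<1/2\}\equiv\{\sgn(\sigma_x)=1\}$, and $\sigma_x$ is a function of the \emph{first-order} coordinates $x_{\{1\}},\dots,x_{\{\ell+1\}}$. So $\rn K\rest_{[m]}$ genuinely depends on $(\rn\theta_{\{i\}})_i$, not only on the top-level coordinates. Indeed, if the event really were measurable with respect to $\rn\theta_{[\ell+1]}$ alone, that would exhibit an $\ell$-independent representation and contradict your own negative half. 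The sentence in your obstacle paragraph about ``the independence of $\rn\theta_{[\ell+1]}$ from $\{\rn\theta_A:\lvert A\rvert\leq\ell\}$'' is of course true, but it is not the independence you need.

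What the paper actually does is the missing step: $\rn K\rest_{[m]}$ is determined by the pair $\bigl(\sigma_{\iota_{[m]}^*(\rn\theta)},\,(\rn\theta_A)_{A\in\binom{[m]}{\ell+1}}\bigr)$, and any change in the sign pattern coming from $\sigma$ can be \emph{offset} by flipping the corresponding fair coins $\rn\theta_A$ across $1/2$. Hence the conditional distribution of $\rn K\rest_{[m]}$ given $(\rn\theta_A:\lvert A\rvert\leq\ell)$ is the uniform $(\ell+1)$-tournament regardless of the conditioning, which is exactly weak $\ell$-independence. Your sketch is one symmetry observation away from being correct; once you add this offset argument, the proof goes through and coincides with the paper's.
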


\begin{theorem}\label{thm:separationUInduceUCouple}
  The linear order $\psi\in\HomT{\TLinOrder}$ satisfies $\UInduce[\ell]$ for every $\ell\in\NN$ but does not
  satisfy $\UCouple[1]$.
\end{theorem}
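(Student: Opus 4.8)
The statement has two halves. For the first half --- that the linear order $\psi\in\HomT{\TLinOrder}$ satisfies $\UInduce[\ell]$ for all $\ell$ --- I would use the characterization from Theorem~\ref{thm:UInduce} that $\UInduce[\ell]$ is equivalent to symmetric $\ell$-locality, and in fact it suffices to check symmetric $\ell$-locality for \emph{one} convenient $T$-on representing $\psi$. The canonical choice is the rank-$1$ $\TLinOrder$-on $\cN$ with $\cN_\prec = \{x\in\cE_2\mid x_{\{1\}} < x_{\{2\}}\}$, whose associated exchangeable array $\rn K$ is: sample i.i.d.\ uniform $(\rn\theta_{\{i\}})_{i\in\NN_+}$ and set $i\prec j$ iff $\rn\theta_{\{i\}} < \rn\theta_{\{j\}}$. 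The isomorphism type $[\rn K\rest_{V}]$ of the induced suborder on a finite set $V$ is \emph{always} the unique linear order on $\lvert V\rvert$ points --- it is a deterministic constant. Hence for any family $(V_i)_{i\in I}$ (with pairwise intersections of any size whatsoever, not just $\leq\ell$), the random variables $([\rn K\rest_{V_i}]\mid i\in I)$ are mutually independent, being constants. So $\cN$ is symmetrically $\ell$-local for every $\ell$, and $\psi\in\UInduce[\ell]$ for all $\ell$. (One should note that $\psi$ is \emph{not} a trivial object: it has rank $1$, not $0$, since a rank-$0$ object is a single model blown up, and $\psi$ is not of that form.)

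For the second half --- that $\psi\notin\UCouple[1]$ --- I would exhibit an explicit rank-$1$ object $\chi$ and two distinct couplings of $\psi$ and $\chi$. The natural candidate is a second independent linear order: take $\chi$ to be the same $\psi\in\HomT{\TLinOrder}$, which has rank $1$, and consider couplings in $\HomT{\TLinOrder\cup\TLinOrder}$, i.e.\ permutons. The independent coupling $\psi\otimes\psi$ is the quasirandom permuton (the ``identity'' $=$ ``random'' permuton), in which the two orders are statistically uncoupled. A competing coupling is the one where the second order is forced to \emph{equal} the first (the ``diagonal'' permuton, corresponding to the identity permutation sequence): this is a legitimate positive homomorphism in the combined theory, it restricts to $\psi$ under each structure-erasing interpretation, and it is visibly different from $\psi\otimes\psi$ --- e.g.\ on the $2$-element model $M$ in which $1\prec_1 2$ and $2\prec_2 1$, the diagonal coupling gives $0$ while $\psi\otimes\psi$ gives $1/4$. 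Thus $\psi$ has at least two couplings with a rank-$1$ object, so $\psi\notin\UCouple[1]$.

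I would realize the diagonal coupling concretely as a $(\TLinOrder\cup\TLinOrder)$-on over $[0,1]$: set $\cN^{\mathrm{diag}}_{\prec_1} = \cN^{\mathrm{diag}}_{\prec_2} = \{x\in\cE_2\mid x_{\{1\}} < x_{\{2\}}\}$, i.e.\ use the \emph{same} first-order coordinate for both orders. Both structure-erasing interpretations send this to $\cN$ above, so $\phi_{\cN^{\mathrm{diag}}}$ is indeed a coupling of $\psi$ with itself; and it is not the independent coupling because in the independent coupling the two orders live on two independent copies of $[0,1]$. Computing one discrepant value as above (density of a fixed $2$-point model) confirms $\phi_{\cN^{\mathrm{diag}}}\neq\psi\otimes\psi$.

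The main subtlety --- really the only place one must be slightly careful --- is the very first half: one must invoke the right representation of $\psi$. Symmetric $\ell$-locality (equivalently $\UInduce[\ell]$) is a property that holds for $\phi$ as soon as \emph{some} representing $T$-on is symmetrically $\ell$-local, and the remark after the locality definition notes that for a symmetrically $\ell$-local $\phi$ every representing $T$-on is in fact symmetrically $\ell$-local; but to \emph{verify} it we only need to produce the one good representation, and for the linear order the standard rank-$1$ theon makes all induced isomorphism types deterministic, which trivializes the independence condition. Everything else is a routine check that the proposed diagonal object satisfies the axioms of $\TLinOrder\cup\TLinOrder$ and the coupling constraints, plus a one-line density computation to separate it from the independent coupling.
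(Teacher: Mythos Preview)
Your proposal is correct and takes essentially the same approach as the paper. For the first half you argue exactly as the paper does (symmetric $\ell$-locality is trivial because every finite induced suborder has a deterministic isomorphism type; the paper phrases this as ``$\psi$ is $n$-categorical for every $n$''). For the second half the paper simply invokes Proposition~\ref{prop:rankbasic}\ref{prop:rankbasic:selfcouple} (rank $1$ implies $\notin\UCouple[1]$), whose proof is precisely your diagonal-versus-independent self-coupling argument unpacked; so your explicit construction is just that proposition done by hand in this special case.
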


\begin{theorem}\label{thm:separationUCoupleIndependencehypergraph}
  For $\ell\geq 1$, there exists $\phi\in\HomT{\TkHypergraph[(\ell+2)]}$ satisfying $\UCouple[\ell]$ but not
  satisfying $\Independence[\ell]$.
\end{theorem}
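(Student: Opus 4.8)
The plan is to find a $(\ell+2)$-hypergraphon that "looks quasirandom from below" but fails to be straightenable down to level $\ell$. The natural candidate is a hypergraphon built on a $(\ell+1)$-tournament-like construction embedded inside the $(\ell+2)$-hypergraph: take the quasirandom $(\ell+1)$-tournamon $\psi_{\ell+1}$ (which by Theorem~\ref{thm:separationUCoupleIndependence} satisfies $\UCouple[\ell]$ but not $\Independence[\ell]$) and push it through an open interpretation $I\interpret{\TkHypergraph[(\ell+2)]}{\TkTournament[(\ell+1)]}$ that records, for each $(\ell+2)$-set, some $S_{\ell+2}$-invariant Boolean function of the orientations of its $(\ell+1)$-subsets. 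Concretely, one can let $E(x_1,\dots,x_{\ell+2})$ hold iff an even number of the $(\ell+1)$-subsets are "positively oriented" (or any other symmetric function whose fibers have nontrivial measure). Set $\phi\df\psi_{\ell+1}^I$. By Theorem~\ref{thm:naturality}\ref{thm:naturality:UCouple}, since $\psi_{\ell+1}\in\UCouple[\ell]$, we immediately get $\phi\in\UCouple[\ell]$, which handles the positive half of the statement for free.

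The work is then entirely in showing $\phi\notin\Independence[\ell]$. The key point is that $\Independence[\ell]$ is an assertion about \emph{every} theon representation simultaneously: $\phi$ is $\ell$-independent iff some $\TkHypergraph[(\ell+2)]$-on representing it can be written as $\cE_{\ell+2,\ell}\times\cH$. The natural representation $I(\cN^Z)$ coming from~\eqref{eq:ThetapQRcNZ} manifestly depends on the coordinates $x_A$ with $\lvert A\rvert=\ell+1$, so it is not $\ell$-independent as written; but I must rule out \emph{all} representations. The cleanest route is to invoke the equivalence (Theorem~\ref{thm:UCouple}, items \ref{thm:UCouple:weakindep}/\ref{thm:UCouple:local}) between the properties and weak independence / $\ell$-locality, combined with the fact — already used implicitly in the proof of Theorem~\ref{thm:separationUCoupleIndependence} — that $\Independence[\ell]$ is strictly stronger. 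More directly: I would compute a single density that distinguishes $\phi$ from every $\ell$-independent homomorphism. If $\phi$ were $\ell$-independent, then its restriction to any $(\ell+1)$-set would be determined "below level $\ell$", forcing the induced sub-object on $\ell+1$ vertices (after the structure-erasing interpretation back to the tournament, or by a direct flag-algebraic projection) to coincide with that of an $\ell$-independent object — but the embedded $\psi_{\ell+1}$ is demonstrably not $\ell$-independent, giving a contradiction. To make this rigorous I would use that open interpretations and structure-erasing interpretations commute appropriately and that $\ell$-independence of $\phi$ would pull back to $\ell$-independence of a tournament-level object obtained by an open interpretation, contradicting Theorem~\ref{thm:separationUCoupleIndependence}.

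The main obstacle I anticipate is precisely this last "pullback" step: $\Independence[\ell]$ is not obviously preserved under taking the pre-image along an open interpretation (naturality goes the other way — it is preserved under $I(\place)$, i.e. pushforward). So I cannot literally say "if $\phi=\psi_{\ell+1}^I$ is $\ell$-independent then $\psi_{\ell+1}$ is". Instead I expect to need a more hands-on argument: exhibit an explicit measurable obstruction in the theon, or compute an explicit polynomial identity among densities $\phi(M)$ that every $\ell$-independent homomorphism must satisfy but $\phi$ violates. For the latter, the $\ell$-independence condition says the theon factors as $\cE_{\ell+2,\ell}\times\cH$, so the value of $E$ on an $(\ell+2)$-tuple is independent (given the higher-order coordinates) of the full collection of low-order coordinates; testing this against a carefully chosen pair of $(\ell+2)$-sets sharing an $(\ell+1)$-subset should reveal a correlation forced by the parity function, since that shared $(\ell+1)$-set's orientation influences both edges. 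Verifying that this correlation is genuinely nonzero — and that it cannot be absorbed by any change of representation — is the crux, and is where I would spend most of the effort; I would likely model it on the "$P$-ons depend on an $(\ell+1)$-coordinate" obstruction already present in the non-hypergraph case and check that passing to arity $\ell+2$ via a symmetric parity does not destroy it.
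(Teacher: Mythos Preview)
Your overall skeleton matches the paper's: push the quasirandom $(\ell+1)$-tournamon $\psi_{\ell+1}$ through an open interpretation $I\interpret{\TkHypergraph[(\ell+2)]}{\TkTournament[(\ell+1)]}$, and get $\UCouple[\ell]$ for free from naturality. Two remarks on the setup. First, your concrete choice of $I$ (``parity of the number of positively oriented $(\ell+1)$-faces'') is not $S_{\ell+2}$-invariant as written: there is no canonical ``positive'' orientation of an $(\ell+1)$-set, and if you fix one via the increasing enumerations $\iota_A$, the resulting parity changes under permutations of $[\ell+2]$ (already for $\ell=1$ a transposition flips it). The paper instead declares $E(x_1,\ldots,x_{\ell+2})$ when some pair of $(\ell+1)$-faces agrees in orientation relative to their common $\ell$-face; this is manifestly symmetric. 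Second, your aside about ``restriction to an $(\ell+1)$-set'' is vacuous here: the only predicate has arity $\ell+2$, so there is no sub-structure on $(\ell+1)$-sets to compare.

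The substantive gap is in the negative half. Your proposed witness --- correlation between the $E$-values on two $(\ell+2)$-sets sharing an $(\ell+1)$-subset --- cannot separate $\phi$ from $\Independence[\ell]$. An $\ell$-independent peon is allowed to depend on the coordinates $x_A$ with $\lvert A\rvert=\ell+1$ (it only forbids dependence on $\lvert A\rvert\leq\ell$), so two $(\ell+2)$-sets sharing an $(\ell+1)$-subset share a level-$(\ell+1)$ coordinate, and an $\ell$-independent theon may legitimately correlate through it. In effect you are testing for $\Independence[\ell+1]$, not $\Independence[\ell]$; no density identity of this shape will work.

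What the paper does instead is to couple the hypothetical $\ell$-independent hypergraph representation with the tournament structure in a single theon. Using the amalgamation/lifting machinery (Proposition~\ref{prop:theonalignment}) and Proposition~\ref{prop:rankae}, one obtains a $(T\cup\TkTournament[(\ell+1)])$-on $\cH$ with $\phi_\cH$ representing the lift of $\phi$, in which $\cH_E$ is $\ell$-independent \emph{and} $\rk(\cH)\leq\ell+1$. These two constraints together force $\cH_E$ to depend \emph{only} on the $(\ell+1)$-level coordinates, say $\cH_E=\cE_{\ell+2,\ell}\times\cG\times[0,1]$ with $\cG\subseteq[0,1]^{\binom{[\ell+2]}{\ell+1}}$. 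Now the added axiom $E\equiv I(E)$ ties $\cG$ to the tournament peon $\cH_P$: a pigeonhole-type argument on the sections of $\cH_P$ shows $\lambda(\cG)=1$, hence $\phi(\rho_{\ell+2})=1$. This is then contradicted by a single explicit density: the alternating $(\ell+1)$-tournament $A^{(\ell+1)}_{\ell+2}$ satisfies $I(A^{(\ell+1)}_{\ell+2})\cong\overline{K}^{(\ell+2)}_{\ell+2}$, so $\phi(\overline{K}^{(\ell+2)}_{\ell+2})\geq\psi_{\ell+1}(A^{(\ell+1)}_{\ell+2})=2^{-(\ell+1)}>0$. The key idea you are missing is precisely this simultaneous realization (hypergraph $\ell$-independent, tournament rank $\leq\ell+1$) that squeezes $\cH_E$ down to a single level; without it there is no leverage against an arbitrary change of representation.
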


\begin{theorem}\label{thm:separationUInduceUCouplehypergraph}
  For $\ell\geq 1$ odd, there exists $\phi\in\HomT{\TkHypergraph[(\ell+2)]}$ satisfying $\UInduce[\ell]$ but
  not satisfying $\UCouple[1]$.
\end{theorem}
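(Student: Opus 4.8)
The goal is to build, for each odd $\ell\geq 1$, a hypergraphon $\phi$ in arity $k\df\ell+2$ that is symmetrically $\ell$-local (hence $\UInduce[\ell]$ by Theorem~\ref{thm:UInduce}) but not $\ell$-local in the full sense, and moreover not even $1$-local-enough to be $\UCouple[1]$. The natural candidate, given Theorem~\ref{thm:separationUInduceUCouplehypergraph}'s predecessor (the linear order) and the general philosophy of the paper, is to take a \emph{biased quasirandom $k$-tournamon} and push it through the structure-erasing interpretation into $\TkHypergraph[(\ell+2)]$. More precisely: start with the quasirandom $k$-tournamon $\psi_k\in\HomT{\TkTournament}$ on the language $\{E_1,E_2\}$ with the sign action, tilt it so that one orientation of each $k$-set is more likely than the other — i.e. replace the measurable partition $Z=(Z_1,Z_2)$ of $[0,1]$ underlying $\cN^Z$ by one with $\lambda(Z_1)=p\neq 1/2$ (this is where \emph{oddness of $\ell$}, equivalently oddness of $k-1$... wait, $k=\ell+2$ so $k$ is odd — the sign action on $S_k$ is nontrivial exactly because $S_k$ has sign characters, which works for all $k\geq 2$; oddness must enter elsewhere). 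Then apply the interpretation $I$ that sends the single $k$-ary edge predicate $E$ of $\TkHypergraph[(\ell+2)]$ to $E_1$, obtaining $\phi\df\psi^I\in\HomT{\TkHypergraph[(\ell+2)]}$, and argue both claims for $\phi$.

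First I would pin down \textbf{where oddness is used}. The point of the tilt is that the "parity" / sign structure of the tournamon makes $\phi$ symmetrically local but the bias $p\neq 1/2$ breaks genuine locality. When $\ell$ is odd, $k=\ell+2$ is odd; the key combinatorial fact is that for an odd number $k$ of points the induced $k$-tournament orientation, \emph{as an unlabeled model of $\TkHypergraph[(\ell+2)]$}, carries no net orientation information in the right sense — the two orientations of a fixed $k$-set are related by an odd permutation of the coordinates, and for odd $k$ a $k$-cycle is an even permutation while... this is exactly the delicate point. I would compute carefully: the quasirandom tilted $k$-tournamon has, for each $k$-set $A$, a hidden coin $\rn\theta_A$ landing in $Z_1$ (prob $p$) or $Z_2$ (prob $1-p$), and then the orientation on tuples enumerating $A$ is determined equivariantly. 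Passing to the unlabeled hypergraph $I(\rn K)\rest_{V_i}$, whether two $k$-sets $A,B$ with $|A\cap B|\leq\ell$ produce \emph{independent} isomorphism types should reduce to showing that the isomorphism type of $I(\rn K)\rest_{V_i}$ does not depend on the coins $\rn\theta_A$ for $|A|\leq\ell$ in a way that correlates across overlapping $V_i$'s — and symmetric $\ell$-locality should go through cleanly because any fixed $\leq\ell$-subset is shared by at most the constitutents it lies in, and the isomorphism type of each marginal only sees coins on $k$-subsets, any two $V_i, V_j$ sharing $\leq\ell < k$ points share \emph{no} $k$-subset. That is the clean half.

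Next, \textbf{failure of $\UCouple[1]$}: I would exhibit a rank-$1$ object $\psi'$ (a coloring, or the linear order $\psi_{\mathrm{LinOrder}}$, both of rank $1$) and two distinct couplings of $\phi$ with $\psi'$. Using the interpretation-lifting machinery (Theorem~\ref{thm:naturality}\ref{thm:naturality:uniquecouplings} and the amalgamation property Theorem~\ref{thm:amalgamation} / Proposition~\ref{prop:lifting}), it suffices to show the tilted $k$-tournamon $\psi$ is not uniquely coupleable with a linear order: couple the hidden coins $(\rn\theta_A)_{|A|=k}$ with a random linear order by making the coin $\rn\theta_A$ correlated with whether the $\iota_A$-increasing enumeration of $A$ "agrees" with a secretly stored sign — since $p\neq 1/2$ there is slack to do this and produce a coupling different from the independent one, and then pushing forward through $I$ gives two distinct couplings of $\phi$ with the linear order. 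Concretely: the independent coupling of $\psi$ with $\TLinOrder$ has $\PP[\text{orientation} = E_1\text{-type}\mid\text{order}]=p$ regardless of order; a skewed coupling can make this conditional probability depend on the relative order, and these differ as elements of $\HomT{\TkTournament\cup\TLinOrder}$, witnessed on a model of size $k$. Pushing through $I$ preserves the discrepancy since $I$ only erases $E_2$, which is redundant. Hence $\phi\notin\UCouple[1]$.

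The \textbf{main obstacle} I expect is the symmetric-$\ell$-locality verification together with checking $\phi\notin\Independence$ is replaced here by $\phi\notin\UCouple[1]$, which is stronger: I must ensure the \emph{tilt} genuinely survives the passage to unlabeled hypergraph types. If $k$ were even, the induced orientation information could collapse to something trivial (or conversely fail symmetric locality), so the oddness hypothesis must be used precisely to guarantee (a) each $k$-set's orientation gives a \emph{nontrivial, bias-detecting} isomorphism invariant of the $k$-point submodel in $\TkHypergraph[(\ell+2)]$ — this is what lets the bias $p\neq 1/2$ be detected by a coupling with a linear order and thus kill $\UCouple[1]$ — while (b) still only depending on the top-level coins, which is what preserves symmetric $\ell$-locality. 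Getting both (a) and (b) simultaneously, and identifying the exact parity computation (likely: for odd $k$, $\sgn$ is surjective onto $S_2$ and the orbit of a $k$-tuple under $A_k$ versus $S_k$ separates the two orientations, so the isomorphism type of the induced $k$-hypergraph records the orientation faithfully), is the technical heart; once that is in hand, both assertions follow by routine coupling arguments as sketched, and the restriction to the hypergraph theory (rather than an abstract theory) is automatic since $\phi$ already lives in $\HomT{\TkHypergraph[(\ell+2)]}$ by construction.
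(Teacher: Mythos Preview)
Your proposal has a fundamental gap at the very first step: the map $I\interpret{\TkHypergraph[(\ell+2)]}{\TkTournament[(\ell+2)]}$ given by $I(E)\df E_1$ is \emph{not} an open interpretation. For $I$ to be an interpretation, the target theory must prove the $I$-translation of the hypergraph symmetry axiom, i.e.\ $E_1(x_1,\ldots,x_k)\to E_1(x_{\sigma(1)},\ldots,x_{\sigma(k)})$ for every $\sigma\in S_k$; but the sign action in $\TkTournament$ gives $E_1(x_{\sigma})\equiv E_{\sgn(\sigma)}(x)$, which is the \emph{negation} for odd $\sigma$. Equivalently, the peon $\cN^Z_{E_1}$ of~\eqref{eq:ThetapQRcNZ} is not $S_k$-invariant, so it is not a hypergraphon at all. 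In fact any $S_k$-invariant $k$-ary formula over the $k$-tournament language must be invariant under swapping $E_1\leftrightarrow E_2$ and is therefore equivalent off-diagonal to $\top$ or $\bot$; one cannot extract a nontrivial $k$-hypergraphon from a $k$-tournamon by an arity-preserving interpretation, regardless of parity.

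The paper's construction avoids this by dropping one level of arity: it takes a \emph{biased} $(\ell+1)$-tournamon $\cN^p$ (with $p\neq 1/2$) and the interpretation $I\interpret{\TkHypergraph[(\ell+2)]}{\TkTournament[(\ell+1)]}$ that declares an $(\ell+2)$-set to be an edge iff the induced $(\ell+1)$-tournament on it is isomorphic to the alternating tournament $A^{(\ell+1)}_{\ell+2}$; this condition \emph{is} $S_{\ell+2}$-invariant by design. Membership in $\UInduce[\ell]$ is then obtained not by a direct symmetric-locality check but by writing $\phi_{\cN^p}=(\psi_{\ell+1,p}\otimes\psi)^{I'}$ for the linear order $\psi$ and the arc-orientation interpretation $I'$, and invoking Theorem~\ref{thm:UCouple}\ref{thm:UCouple:UCouple}$\Rightarrow$\ref{thm:UCouple:orderUInduce} together with naturality. (Your locality sketch also overlooks the dependence on first-order coordinates through $\sigma_x$, so ``no shared $k$-subset'' is not by itself enough.) The failure of $\UCouple[1]$ and the role of oddness come from a concrete calculation: for a specific $(\ell+3)$-vertex hypergraph $H$ with exactly two edges, two different linear orders on $V(H)$ yield different densities in the natural coupling with $\TLinOrder$, the discrepancy being $p^\ell(1-p)^\ell(2p-1)^2$ after a parity computation (using Lemma~\ref{lem:twoalternating}) that is valid precisely when $\ell$ is odd.
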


The next theorem lists several properties that are equivalent to
$\UCouple[\ell]$. These include both alternative formulations and complete
sets of tests for unique coupleability.

\begin{theorem}[Characterization of $\UCouple$]\label{thm:UCouple}
  Let $\ell\in\NN_+$. The following are equivalent for $\phi\in\HomT{T}$.
  \begin{enumerate}
  \item $\phi\in\UCouple[\ell]$.\label{thm:UCouple:UCouple}
  \item For every $\ell'\in[\ell]$, there exists $p\in(0,1)$ such that $\phi$ is uniquely coupleable with the
    quasirandom $\ell'$-hypergraphon $\psi_{\ell',p}$.\label{thm:UCouple:QRhyper}
  \item There exist $p_1,\ldots,p_\ell\in(0,1)$ such that $\phi$ is uniquely coupleable with the independent
    coupling $\psi_{1,p_1}\otimes\cdots\otimes\psi_{\ell,p_\ell}$ of the quasirandom $\ell'$-hypergraphons
    $\psi_{\ell',p_{\ell'}}$ for $\ell'\in[\ell]$.\label{thm:UCouple:QRhyperindepcoup}
  \item $\phi$ is weakly $\ell$-independent.\label{thm:UCouple:weakindep}
  \item Every $T$-on $\cN$ with $\phi_\cN =\phi$ is weakly $\ell$-independent.\label{thm:UCouple:weakindepall}
  \item $\phi$ is $\ell$-local.\label{thm:UCouple:local}
  \item The independent coupling of $\phi$ with the linear order $\psi\in\HomT{\TLinOrder}$ satisfies
    $\UInduce[\ell]$.\label{thm:UCouple:orderUInduce}
  \end{enumerate}
\end{theorem}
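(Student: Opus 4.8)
The plan is to prove the cycle of equivalences by grouping the conditions into natural clusters and connecting them with a few key reductions. First, I would establish the ``easy'' parts of the cycle. The implications \ref{thm:UCouple:UCouple}$\Rightarrow$\ref{thm:UCouple:QRhyper} and \ref{thm:UCouple:UCouple}$\Rightarrow$\ref{thm:UCouple:QRhyperindepcoup} are immediate since $\rk(\psi_{\ell',p})\le\ell'\le\ell$ and, by Theorem~\ref{thm:naturalityindcoup}\ref{thm:naturalityindcoup:Independence} (or a direct rank computation), $\rk(\psi_{1,p_1}\otimes\cdots\otimes\psi_{\ell,p_\ell})\le\ell$. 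Similarly \ref{thm:UCouple:weakindepall}$\Rightarrow$\ref{thm:UCouple:weakindep} is trivial, and \ref{thm:UCouple:weakindep}$\Rightarrow$\ref{thm:UCouple:local} is the easy direction noted in the introduction: if $\rn{K}$ is independent of $(\rn\theta_A\mid\lvert A\rvert\le\ell)$ as a random variable, then for pairwise-$\le\ell$-intersecting $(V_i)_{i\in I}$ the marginals $\rn{K}\rest_{V_i}$ depend on disjoint blocks of the remaining $\rn\theta_A$'s (those $A$ with $A\subseteq V_i$ and $\lvert A\rvert>\ell$) together with the shared low-arity coordinates, so conditioning on the latter makes them independent; but independence of $\rn{K}$ from all low-arity coordinates upgrades this to unconditional independence. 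For \ref{thm:UCouple:UCouple}$\Leftrightarrow$\ref{thm:UCouple:orderUInduce}, I would use that couplings of $\phi$ with $\psi_{\TLinOrder}$ and with any rank-$\le\ell$ object correspond, via the arc-orientation style interpretations and the amalgamation property (Theorem~\ref{thm:amalgamation}), and that cliques in the relevant $\ell$-hypergraphon translate into the linear-order tests.

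The substantive work is closing the loop through the ``only a few tests suffice'' direction, i.e.\ proving \ref{thm:UCouple:QRhyper}$\Rightarrow$\ref{thm:UCouple:local} (or \ref{thm:UCouple:weakindep}), and separately \ref{thm:UCouple:QRhyperindepcoup}$\Rightarrow$ one of the core conditions. The strategy for \ref{thm:UCouple:QRhyper}$\Rightarrow$\ref{thm:UCouple:local}: fix a $T$-on $\cN$ with $\phi_\cN=\phi$ and its array $\rn{K}$ built from $\rn\theta$. To show $\ell$-locality it suffices, by a standard reduction, to handle two sets $V_1,V_2$ with $\lvert V_1\cap V_2\rvert\le\ell$; say the overlap has size exactly $\ell'\le\ell$. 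Unique coupleability with $\psi_{\ell',p}$ says: any coupling of $\cN$ with the quasirandom $\ell'$-hypergraphon is the independent one. Now couple $\cN$ with a copy of $\psi_{\ell',p}$ realized on a \emph{correlated} space: choose the $\ell'$-hypergraphon's randomness so that its edge on an $\ell'$-set $A$ is a deterministic function of $\rn\theta_A$ (a ``diagonal'' coupling). If $\rn{K}\rest_{V_1}$ and $\rn{K}\rest_{V_2}$ were correlated through some shared low-arity coordinate $\rn\theta_A$ with $A\subseteq V_1\cap V_2$ and $\lvert A\rvert\le\ell$, this correlation could be detected as a nontrivial correlation between $\rn{K}$ and the hypergraph edges, contradicting that the coupling with $\psi_{\ell',p}$ is independent for all $p$ in some interval (letting $p$ vary sweeps out enough test functions to detect any measurable dependence on $\rn\theta_A$). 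This is essentially the ``any coupling is independent forces conditional independence, which forces the marginal not to depend on the shared low coordinates'' argument, and I expect it to be the main obstacle: making precise that varying $p$ over $(0,1)$ (together with the freedom in which $\ell'\le\ell$ and which measurable function of $\rn\theta_A$ we encode) yields a separating family rich enough to conclude $\rn{K}$ is genuinely independent of $(\rn\theta_A\mid\lvert A\rvert\le\ell)$ as a random variable, not merely uncorrelated with finitely many events.

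For \ref{thm:UCouple:local}$\Rightarrow$\ref{thm:UCouple:weakindep}, the converse direction flagged in the introduction, the idea is that $\ell$-locality for \emph{all} pairwise-$\le\ell$-intersecting families is strong enough to force the stated independence of $\rn{K}$ from the low-arity coordinates: use infinitely many disjoint ``private'' copies of each low-arity pattern. Concretely, for a fixed finite window $[n]$ and a target low-arity configuration, build an auxiliary family of vertex sets each meeting $[n]$ in $\le\ell$ vertices but collectively pinning down the values $(\rn\theta_A\mid A\in r(n,\ell))$ via the law of large numbers over many independent samples; $\ell$-locality then makes $\rn{K}\rest_{[n]}$ independent of this pinning data, hence of $(\rn\theta_A\mid A\in r(n,\ell))$, and letting $n\to\infty$ and using that $\rn{K}$ is determined by its finite restrictions gives \ref{thm:UCouple:weakindep}. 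Then \ref{thm:UCouple:weakindep}$\Rightarrow$\ref{thm:UCouple:weakindepall} follows because weak $\ell$-independence, like locality, admits a purely syntactic characterization (as remarked after the locality definition), so it is a property of $\phi$ alone and passes to every representing $T$-on. Finally \ref{thm:UCouple:weakindep} (or \ref{thm:UCouple:weakindepall})$\Rightarrow$\ref{thm:UCouple:UCouple}: given $\psi$ of rank $\le\ell$, realize it by a $T'$-on $\cN'$ depending only on coordinates $x_A$ with $\lvert A\rvert\le\ell$, and realize any coupling $\xi$ of $\phi,\psi$ by a $(T\cup T')$-on on a common space; the $\psi$-part is a function of the $\le\ell$-coordinates while, by weak $\ell$-independence, the $\phi$-part's array is independent of those coordinates as a random variable, which is exactly enough to force $\xi=\phi\otimes\psi$. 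For \ref{thm:UCouple:QRhyperindepcoup}$\Rightarrow$\ref{thm:UCouple:QRhyper}, as the authors warn it is not completely immediate: being uniquely coupleable with the \emph{product} $\psi_{1,p_1}\otimes\cdots\otimes\psi_{\ell,p_\ell}$ must be unwound so that one recovers unique coupleability with each individual $\psi_{\ell',p_{\ell'}}$; I would do this by noting that a coupling of $\phi$ with $\psi_{\ell',p_{\ell'}}$ can always be extended (using the amalgamation property) to a coupling of $\phi$ with the full product, where the other factors are attached via their independent coupling, and then the hypothesis forces the whole thing to be independent, in particular the $(\phi,\psi_{\ell',p_{\ell'}})$-marginal.
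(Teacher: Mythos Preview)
Your overall skeleton is reasonable and several implications (\ref{thm:UCouple:weakindepall}$\Rightarrow$\ref{thm:UCouple:weakindep}, \ref{thm:UCouple:weakindep}$\Rightarrow$\ref{thm:UCouple:local}, \ref{thm:UCouple:weakindep}$\Rightarrow$\ref{thm:UCouple:UCouple}, \ref{thm:UCouple:QRhyperindepcoup}$\Rightarrow$\ref{thm:UCouple:QRhyper}) are essentially correct and match the paper. However, two of your key steps have genuine gaps.

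\textbf{The step \ref{thm:UCouple:QRhyper}$\Rightarrow$\ref{thm:UCouple:local}/\ref{thm:UCouple:weakindep} via diagonal coupling.} First, the reduction ``it suffices to handle two sets $V_1,V_2$'' is wrong: pairwise independence does not imply mutual independence, and $\ell$-locality explicitly requires mutual independence. Second, even for two sets, your argument is incomplete. The diagonal coupling with $\psi_{\ell',p}$ tells you, for each fixed $p$, that $\rn K\rest_{[m]}$ is independent of the random vector $(\One\{\rn\theta_A<p\}\mid |A|=\ell')$. Even allowing all $p$ simultaneously, this is only the constraint $C(p,\ldots,p)=p^n$ on the conditional copula of $(\rn\theta_A)_{|A|=\ell'}$, which does \emph{not} force the conditional distribution to be uniform product; and combining different arities $\ell'$ into \emph{joint} independence across all $|A|\le\ell$ is a further difficulty you do not address. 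The paper takes a completely different route for \ref{thm:UCouple:QRhyper}$\Rightarrow$\ref{thm:UCouple:UCouple}: it first bootstraps from a single $p$ to all $p$ (Lemma~\ref{lem:UCouple:densities}), then to all $c$-colored quasirandom hypergraphons (Lemma~\ref{lem:changenumberofcolors}), then to independent couplings thereof (Lemma~\ref{lem:chainuniquecoupleability}), and finally approximates an arbitrary rank-$\le\ell$ object in the $L_1$-topology by interpretations of such couplings (Lemmas~\ref{lem:ucL1closed} and~\ref{lem:L1approximation}). This $L_1$-approximation machinery is precisely what replaces your missing ``separating family rich enough'' step.

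\textbf{The step \ref{thm:UCouple:local}$\Rightarrow$\ref{thm:UCouple:weakindep} via law of large numbers.} This does not work as stated. If $(V_i)_i$ all contain a common $\ell'$-set $A\subseteq[n]$ but are otherwise disjoint, then $\ell$-locality makes the $\rn K\rest_{V_i}$ \emph{mutually independent} (and, by exchangeability, identically distributed). Hence their empirical average converges to a deterministic constant, not to anything $\rn\theta_A$-dependent, so they cannot ``pin down'' $\rn\theta_A$. The paper does not attempt \ref{thm:UCouple:local}$\Rightarrow$\ref{thm:UCouple:weakindep} directly; it closes the loop through~\ref{thm:UCouple:orderUInduce} instead: \ref{thm:UCouple:local}$\Rightarrow$\ref{thm:UCouple:orderUInduce} (Lemma~\ref{lem:localorderUInduce}, showing $\phi\otimes\psi_{\TLinOrder}$ is symmetrically $\ell$-local) and then \ref{thm:UCouple:orderUInduce}$\Rightarrow$\ref{thm:UCouple:QRhyper} (Lemma~\ref{lem:orderUInducecolors}, by constructing for each coupling $\xi$ of $\phi$ with $\psi_{\ell,1/2}$ an explicit $\ell$-hypergraphon on $(\phi\otimes\psi_{\TLinOrder})$ whose clique densities recover $\xi$). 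Your one-line sketch of \ref{thm:UCouple:UCouple}$\Leftrightarrow$\ref{thm:UCouple:orderUInduce} via ``arc-orientation style interpretations'' does not capture either of these constructions. Also, your claim that \ref{thm:UCouple:weakindep}$\Rightarrow$\ref{thm:UCouple:weakindepall} follows from a syntactic characterization is unjustified: the remark after the locality definition applies to locality, not to weak independence; the paper obtains \ref{thm:UCouple:weakindepall} by proving \ref{thm:UCouple:UCouple}$\Rightarrow$\ref{thm:UCouple:weakindepall} directly (Lemma~\ref{lem:weakindep}).
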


Note that since $\ell'$-hypergraphons have rank at most $\ell'$, a
posteriori, we can also strengthen items~\ref{thm:UCouple:QRhyper}
and~\ref{thm:UCouple:QRhyperindepcoup} by replacing existential quantifiers
on $p,p_1,\ldots,p_\ell$ with universal ones. Also, since the linear order has rank $1$, a posteriori, we can
strengthen item~\ref{thm:UCouple:orderUInduce} to say that \emph{every}
coupling of $\phi$ with the linear order satisfies $\UInduce[\ell]$. In the
actual proof of the implication~\ref{thm:UCouple:QRhyper}$\implies$\ref{thm:UCouple:UCouple} (that, arguably, is our technically most difficult
result), we go in the opposite direction and painstakingly ``bootstrap'' the
premise in~\ref{thm:UCouple:QRhyper} to the unique coupleability with
increasingly larger families of objects.

Let us also point out that, given Theorem~\ref{thm:naturalityindcoup}\ref{thm:naturalityindcoup:UCouple}, one might
expect that, in general, if each one of $\psi_1,\ldots,\psi_t$ is uniquely
coupleable with a given $\phi$, then the same should hold for their
independent coupling $\psi_1\otimes\cdots\otimes\psi_t$; this would
immediately
give~\ref{thm:UCouple:QRhyper}$\implies$\ref{thm:UCouple:QRhyperindepcoup} in
Theorem~\ref{thm:UCouple}. However, this question has turned out surprisingly
difficult in full generality (see Section~\ref{sec:concl} for a discussion).

The next, more modest, theorem provides properties equivalent to $\UInduce[\ell]$.

\begin{theorem}[Characterization of $\UInduce$]\label{thm:UInduce}
  The following are equivalent for $\ell\in\NN_+$ and $\phi\in\HomT{T}$.
  \begin{enumerate}
  \item $\phi\in\UInduce[\ell]$.\label{thm:UInduce:UInduce}
  \item There exists $p\in(0,1)$ such that $\phi$ is uniquely inducible by every
    $\psi\in\HomT{\TkHypergraph[\ell]}$ with $\psi(\rho_\ell) = p$.\label{thm:UInduce:singledensity}
    \item $\phi$ is symmetrically $\ell$-local.\label{thm:UInduce:symlocal}
  \end{enumerate}
\end{theorem}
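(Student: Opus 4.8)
The plan is to prove the cycle of implications \ref{thm:UInduce:UInduce}$\implies$\ref{thm:UInduce:singledensity}$\implies$\ref{thm:UInduce:symlocal}$\implies$\ref{thm:UInduce:UInduce}. The implication \ref{thm:UInduce:UInduce}$\implies$\ref{thm:UInduce:singledensity} is trivial: if $\phi$ is uniquely inducible by \emph{every} $\psi\in\HomT{\TkHypergraph[\ell]}$, then in particular it is uniquely inducible by all those with $\psi(\rho_\ell)=p$, for any fixed $p\in(0,1)$ we care to name. So the content is in the other two steps.

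For \ref{thm:UInduce:symlocal}$\implies$\ref{thm:UInduce:UInduce}, I would first give the purely syntactic description of symmetric $\ell$-locality announced in the text (in the spirit of Definition~\ref{def:indcoup_syntactic}): $\phi$ is symmetrically $\ell$-local iff for every coupling $\xi$ of $\phi$ with an arbitrary object, the events ``$\rn{K}\rest_{V_i}\cong M_i$'' for families $(V_i)$ with pairwise intersections $\le\ell$ and arbitrary models $M_i$ are mutually independent. Given this, fix a hypergraphon $\psi\in\HomT{\TkHypergraph[\ell]}$, a coupling $\xi$ of $\phi$ and $\psi$, and a model $M\in\cM[T\cup\TkHypergraph[\ell]]$ of size $k$ whose $\TkHypergraph[\ell]$-reduct $I(M)$ is the complete $\ell$-hypergraph $K^{(\ell)}_k$. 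Let $\rn{K}$ be the exchangeable array in $\cK_{\NN_+}[T\cup\TkHypergraph[\ell]]$ associated with $\xi$. I want to show $\PP[\rn{K}\rest_{[k]}\cong M]=(\phi\otimes\psi)(M)=\phi(I_T(M))\cdot\psi(K^{(\ell)}_k)$. The trick is the standard one for clique-based densities: express the event $\{\rn{K}\rest_{[k]}\cong M\}$ after conditioning on the $\TkHypergraph[\ell]$-reduct, and use that a complete $\ell$-hypergraph on $[k]$ is built up from its size-$(\ell{+}1)$, $\dots$, size-$k$ faces in a way that lets one split $[k]$ into overlapping blocks of size $\le k-1$ whose pairwise intersections have size $\le\ell$ — here one uses $k-1\ge\ell$, i.e.\ $\ell\in[k]\setminus\{k\}$, which is exactly the regime where $\UInduce[\ell]$ is interesting; the case $\ell=k$ is covered by the locality already present in $\rn{K}$. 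Symmetric $\ell$-locality then forces the $T$-structure on $[k]$ and the hyperedge indicators to decouple on the clique event, which is precisely the asserted product formula. I would probably phrase this via the exchangeable-array picture: condition on $(\rn\theta_A\mid |A|\le\ell)$ from the $\psi$-side and observe that on the clique the remaining $T$-randomness is governed by the disjointly-supported coordinates.

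For \ref{thm:UInduce:singledensity}$\implies$\ref{thm:UInduce:symlocal}, which I expect to be the main obstacle, the idea is to reverse-engineer symmetric $\ell$-locality from the tests provided by quasirandom $\ell$-hypergraphons of a single density $p$. Fix a $T$-on $\cN$ with $\phi_\cN=\phi$ and the associated array $\rn{K}$; I must show that for any family $(V_i)_{i\in I}$ with pairwise intersections $\le\ell$ and any models $(M_i)$, the events $\{\rn{K}\rest_{V_i}\cong M_i\}$ are independent. By an inclusion–exclusion / finite-union argument it suffices to treat finitely many $V_i$, and then, by conditioning, to handle the ``one-step'' statement that $\{\rn K\rest_{V_0}\cong M_0\}$ is independent of the sigma-algebra generated by the isomorphism types on $V_1,\dots,V_m$ with $|V_0\cap V_j|\le\ell$. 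The plan is to encode the partition structure ``which vertices of $V_0$ fall in which $V_j$-slots'' using a quasirandom $\ell$-hypergraphon $\psi_{\ell,p}$: couple $\phi$ with $\psi_{\ell,p}$ so that the clique-event of $\psi$ carves out exactly a copy of $K^{(\ell)}_k$ sitting on a prescribed $\ell$-overlap pattern among the $V_j$'s; then the hypothesis $\xi(M)=(\phi\otimes\psi)(M)$ on clique-based $M$, applied across all relevant $M$ and all such couplings (and leveraging that we may build these couplings by independent coupling on the parts that are already forced), yields the desired independence of isomorphism types. Concretely I would run an induction on $|I|$ (or on $\sum|V_i|$), peeling off one set at a time and using the single test density $p$ together with Remark~\ref{rmk:UInduce1} and the anti-monotonicity from Theorem~\ref{thm:anti-monotone} to get unique inducibility by all $\TkHypergraph[\ell']$ with $\ell'\le\ell$; the case $\ell'=1$ (two-colorings) is the base and matches the ``removing red-adjacent edges'' picture from the introduction. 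The delicate point is that the test in item~\ref{thm:UInduce:singledensity} only sees one density $p$ and only clique-based models $M$, so I have to argue that the full joint law of $(\rn K\rest_{V_i})$ is pinned down from this limited information — the resolution is that the quasirandom hypergraphon's clique event is ``scale-free'' (its conditional law given the clique is again $\phi$-like), so a single $p$ suffices to separate all the overlap patterns once we iterate.

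**Main obstacle.** The hard part will be the step \ref{thm:UInduce:singledensity}$\implies$\ref{thm:UInduce:symlocal}: translating ``unique inducibility by hypergraphons of one fixed density, tested only on clique-supported models'' into ``full mutual independence of isomorphism types over $\le\ell$-overlapping windows.'' Everything else is either a definitional unwinding or the standard clique-decomposition bookkeeping; keeping track of the overlap patterns and setting up the right family of auxiliary couplings with $\psi_{\ell,p}$ is where the real work lies.
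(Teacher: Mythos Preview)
Your proposed cycle \ref{thm:UInduce:UInduce}$\implies$\ref{thm:UInduce:singledensity}$\implies$\ref{thm:UInduce:symlocal}$\implies$\ref{thm:UInduce:UInduce} differs from the paper's route, and two of your steps have genuine gaps.

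\textbf{On \ref{thm:UInduce:symlocal}$\implies$\ref{thm:UInduce:UInduce}.} Your ``syntactic description'' of symmetric $\ell$-locality is wrong: the definition concerns independence of isomorphism types in the exchangeable array of $\phi$ itself, not in the array of an arbitrary coupling $\xi$. Once you take a coupling with some $\psi$, you have no control over how $\psi$'s randomness interacts with the overlap structure. Your block-decomposition idea (``split $[k]$ into overlapping blocks of size $\le k-1$ with pairwise intersections $\le\ell$'') does not address this. The paper's argument (Lemma~\ref{lem:symlocal->UInduce}) instead passes through the known $\CliqueDisc[\ell]$ characterization of Towsner and ACHP (Theorem~\ref{thm:CliqueDisc}): given $M$ with $|M|>\ell$, one builds an interpretation $I'\interpret{\TkHypergraph[|M|]}{T}$ declaring $|M|$-edges to be isomorphic copies of $I(M)$, uses symmetric $\ell$-locality of $\phi$ to verify that $\phi^{I'}\in\CliqueDisc[\ell]$, and reads off the required identity from the definition of $\CliqueDisc$. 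The small-$|M|$ case is handled separately by observing that symmetric $\ell$-locality forces $\phi(I(M))\in\{0,1\}$ (take $V_1=V_2=[m]$).

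\textbf{On \ref{thm:UInduce:singledensity}$\implies$\ref{thm:UInduce:symlocal}.} The paper does not attempt this implication directly, and your sketch does not contain a workable mechanism. The paper instead proves \ref{thm:UInduce:singledensity}$\implies$\ref{thm:UInduce:UInduce} (Lemma~\ref{lem:UInduce:singledensity}) by a density-bootstrap: unique $p$-inducibility implies unique $q$-inducibility for all $q$, via dilution with an independent $2$-coloring when $p\le q$, and a more delicate construction (especially when $\ell=1$, requiring a $3$-coloring argument to get from $p$ to $p^2$ and then iterating) when $q<p$. Only after reaching full $\UInduce[\ell]$ does the paper prove \ref{thm:UInduce:UInduce}$\implies$\ref{thm:UInduce:symlocal} (Lemma~\ref{lem:UInduce->symlocal}), and that proof again leans on $\CliqueDisc$: one first shows $\phi$ is $\ell'$-categorical for all $\ell'\le\ell$ (Lemma~\ref{lem:UInduce->categorical}), disposing of small $V_i$, and for the remaining $V_i$ uses naturality to push $\UInduce[\ell]$ through an interpretation into $\TkHypergraph[\vec{k}]$, invokes $\UInduce\Rightarrow\CliqueDisc$, and applies a multi-arity version of the flattening lemma (Proposition~\ref{prop:vecCliqueDisc}). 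Your ``encode overlap patterns with $\psi_{\ell,p}$'' idea does not obviously yield independence of isomorphism types from clique-only tests at a single density, and your appeal to anti-monotonicity (Theorem~\ref{thm:anti-monotone}) is circular here, since the paper derives that for $\UInduce$ \emph{from} the equivalence with symmetric locality.
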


The next two theorems completely classify $\Independence[k-1]$ and $\UCouple[k-1]$ when all arities are at
most $k$. These can be thought of as analogues of full quasirandomness for these families of properties.

\begin{theorem}\label{thm:fullIndependence}
  Let $k\in\NN_+$ and suppose that $k(P)\leq k$ for all $P\in\cL$. Let $T$ be a theory over $\cL$ and
  $\phi\in\HomT{T}$. Then $\phi\in\Independence[k-1]$ if and only if there exist $c\in\mathbb N_+$,
  $p\in\Pi_c$ and an open interpretation $I\interpret{T}{\TcColkHyp[{c}{k}]}$ such that $\phi = \psi_{k,p}^I$.
\end{theorem}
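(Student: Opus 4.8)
The plan is to prove the two directions of Theorem~\ref{thm:fullIndependence} separately, with the ``if'' direction being essentially immediate from the general theory and the ``only if'' direction being the substantive part.

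\textbf{The easy direction.} Suppose $\phi = \psi_{k,p}^I$ for some open interpretation $I\interpret{T}{\TcColkHyp[{c}{k}]}$. The quasirandom $c$-colored $k$-hypergraphon $\psi_{k,p}$ is represented by the $T_{c,k}$-on $\cN^Z$ of~\eqref{eq:ThetapQRcNZ} (with trivial action), and inspecting that formula shows $\cN^Z_{E_i} = \{x\in\cE_k\mid x_{[k]}\in Z_{E_i}\}$, which depends only on the single top coordinate $x_{[k]}$; hence each peon is $(k-1)$-independent, so $\psi_{k,p}\in\Independence[k-1]$. By Theorem~\ref{thm:naturality}\ref{thm:naturality:Independence}, $\phi = \psi_{k,p}^I\in\Independence[k-1]$ as well. (Strictly one should note $I(\cN^Z)$ is still a $(k-1)$-independent $T$-on since $I(\cN^Z)_P = T(I(P),\cN^Z)$ is a Boolean combination of pullbacks of the $(k-1)$-independent sets $\cN^Z_{E_i}$ under coordinate reindexings, and arity $\leq k$ guarantees the $> (k-1)$ coordinates involved are exactly the single top ones.)

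\textbf{The hard direction.} Suppose $\phi\in\Independence[k-1]$, witnessed by a $(k-1)$-independent $T$-on $\cN$ over $[0,1]$ with $\phi_\cN = \phi$. For each predicate $P\in\cL$ with $k(P) = k$, $(k-1)$-independence means $\cN_P = \cE_{k,k-1}\times\cH_P$ for some measurable $\cH_P\subseteq[0,1]^{\binom{[k]}{>k-1}} = [0,1]^{\{[k]\}} = [0,1]$; that is, $\cN_P$ depends only on $x_{[k]}$, and is determined by a measurable set $\cH_P\subseteq[0,1]$. For predicates $P$ with $k(P) < k$, $(k-1)$-independence forces $\cN_P$ to depend on no coordinates at all (the relevant top coordinate has size $k(P)\le k-1$), so $\cN_P$ is a.e.\ $\emptyset$ or a.e.\ $\cE_{k(P)}$; since the array built from $\cN$ lands in models of $T$, each such $P$ is a.e.\ constant and by canonicity of $T$ it is forced; we may absorb these into the theory and assume WLOG every predicate has arity exactly $k$ (formally, compose with the structure-erasing / axiom-adding interpretations of Theorem~\ref{thm:naturality}). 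Now the key observation: the behavior of $\phi$ is entirely governed by the single random variable $\rn\theta_{[k]}$ for the top set together with independent copies for each $k$-subset, so $\phi$ is the pushforward of a quasirandom colored $k$-hypergraphon. Concretely, consider the common refinement of the sets $\{\cH_P\mid k(P)=k\}$ inside $[0,1]$: this partitions $[0,1]$ into finitely many measurable pieces $Z_1,\ldots,Z_c$, and we set $p_i \df \lambda(Z_i)$, $p\df(p_1,\ldots,p_c)\in\Pi_c$ (discarding null pieces). Define an interpretation $I\interpret T{\TcColkHyp[{c}{k}]}$ by letting $I(P)(x_1,\ldots,x_k)$ be the disjunction $\bigvee_{i\,:\,Z_i\subseteq\cH_P} E_i(x_1,\ldots,x_k)$ over the color classes making up $\cH_P$; one checks this is a well-defined open interpretation (it respects the canonical axioms and any symmetry axioms of $T$ because $\cN_P$, depending only on $x_{[k]}$, is automatically symmetric under $S_k$, matching the colored-hypergraph side). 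Finally verify $\psi_{k,p}^I = \phi$: applying $I$ to the representing theon $\cN^Z$ of $\psi_{k,p}$ (with the partition $Z = (Z_i)$ just constructed) yields exactly $\cN$ up to null sets, since $I(\cN^Z)_P = \bigcup_{i:Z_i\subseteq\cH_P}\{x\mid x_{[k]}\in Z_i\} = \{x\mid x_{[k]}\in\cH_P\} = \cN_P$; density preservation under open interpretations (the $\phi^I = \phi\comp\pi^I$ machinery recalled in Section~\ref{sec:notation}) then gives $\psi_{k,p}^I = \phi_{I(\cN^Z)} = \phi_\cN = \phi$.

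\textbf{Main obstacle.} The one genuinely delicate point is the reduction to ``all predicates have arity exactly $k$'' and, relatedly, checking that the interpretation $I$ is honestly \emph{open} and compatible with whatever non-logical axioms $T$ carries (e.g.\ symmetry axioms like those of $\TkHypergraph$, or more exotic constraints). The reason this works is that $(k-1)$-independence collapses each top-arity peon to a symmetric set depending on one coordinate --- so on the nose the data of $\phi$ is the same as the data of a quasirandom colored $k$-hypergraphon --- but one must phrase the translation so that the axioms of $T$ become derivable consequences of the colored-hypergraph axioms under $I$; a clean way is to first pass to the pure canonical theory $T_\cL$ using the axiom-adding interpretation (Theorem~\ref{thm:naturality} tells us $\Independence[k-1]$ is insensitive to this), do the whole construction there, and only afterwards note that the resulting $I$ automatically factors through $T$ because $\phi$ satisfied $T$'s axioms to begin with. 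Everything else is a routine unpacking of the definition of $\cN^Z$ and of density preservation under interpretations.
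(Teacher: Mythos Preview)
Your proposal is correct and follows essentially the same route as the paper's proof. The only cosmetic difference is in how the partition of $[0,1]$ is described: you take the common refinement of the individual sets $\cH_P\subseteq[0,1]$, while the paper indexes the pieces by the labeled models $K_1,\ldots,K_c\in\cK_k[\Th(\phi)]$ and sets $\cH_{E_i}\df\Tind(K_i,\cN)$; these two partitions coincide precisely because of the $S_k$-invariance you noted (each peon depending only on $x_{[k]}$ forces $\Aut(K_i)=S_k$, so a model on $[k]$ is determined by which $P$'s hold on the identity tuple).

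Two small points you glossed over that the paper spells out: (1) the degenerate case $c=1$ (equivalently $\rk(\phi)=0$), where $\Pi_1=\emptyset$ and one has to write $\phi$ as $\psi_{k,p}^I$ for a trivial interpretation into some $T_{c',k}$ with $c'\ge 2$; and (2) the justification that $I$ honestly factors through $T$ (not just $T_\cL$). Your phrase ``because $\phi$ satisfied $T$'s axioms to begin with'' is the right intuition but incomplete: one needs that \emph{every} model of $\TcColkHyp$ maps under $I$ to a model of $T$, and this is exactly where $p_i>0$ for all $i$ is used --- it guarantees $\psi_{k,p}(M)>0$ for every $M\in\cM[\TcColkHyp]$, hence $\phi(I(M))>0$, hence $I(M)\in\cM[T]$.
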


\begin{theorem}\label{thm:fullUCouple}
  Let $k\in\NN_+$ and suppose that $k(P)\leq k$ for all $P\in\cL$. Let $T$ be a theory over $\cL$ and
  $\phi\in\HomT{T}$. Then $\phi\in\UCouple[k-1]$ if and only if there exists a language $\mathcal L'$ whose
  predicate symbols have arity exactly $k$, an
  action $\Theta\function{S_k\times\cL'}{\cL'}$, a $\Theta$-invariant $p = (p_P)_{P\in\cL'}\in[0,1]^{\cL'}$
  with $\sum_{P\in\cL'} p_P = 1$ and an open interpretation $I\interpret{T}{T_\Theta}$ such that $\phi =
  \psi_{\Theta,p}^I$.
\end{theorem}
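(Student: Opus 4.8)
The plan is to prove both directions, with the ``easy'' direction being the ``if'' part. For the ``if'' direction, I would first establish that the quasirandom homomorphism $\psi_{\Theta,p}$ itself satisfies $\UCouple[k-1]$ for any $S_k$-action theory $T_\Theta$; this should follow from the characterization in Theorem~\ref{thm:UCouple} (say item~\ref{thm:UCouple:weakindep}, weak $(k-1)$-independence) applied to the explicit theon $\cN^Z$ of~\eqref{eq:ThetapQRcNZ}: the only coordinate that $\cN^Z_P$ depends on is $x_{[k]}$, which has size $k$, so the associated exchangeable array is manifestly independent of all $(\rn\theta_A\mid |A|\leq k-1)$. Then $\phi=\psi_{\Theta,p}^I\in\UCouple[k-1]$ follows immediately from naturality (Theorem~\ref{thm:naturality}\ref{thm:naturality:UCouple}).

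For the ``only if'' direction, suppose $\phi\in\HomT{T}\cap\UCouple[k-1]$ with all arities $\leq k$. By Theorem~\ref{thm:UCouple}\ref{thm:UCouple:weakindep}, pick a weakly $(k-1)$-independent $T$-on $\cN$ with $\phi_\cN=\phi$; equivalently, the exchangeable array $\rn K$ is independent (as a random variable) from $(\rn\theta_A\mid |A|\leq k-1)$. The key structural consequence to extract is that each $R_P(\rn K)$ is, up to a null event, a deterministic function of the ``top'' coordinates $(\rn\theta_A\mid |A|=k)$ alone — intuitively, weak $(k-1)$-independence plus locality forces the randomness used to decide a $k$-tuple $\alpha$ to live entirely in $\rn\theta_{\im(\alpha)}$. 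I would make this precise by a martingale/conditioning argument: condition on the top coordinates and use locality to show the conditional law of $\rn K\rest_{[k]}$ is a point mass; disjointness/locality of the array then lets one patch these together coherently across all $k$-subsets. Having done this, one reads off a new (cryptomorphic) theon representation of $\phi$ in which every peon $\cM_P\subseteq\cE_k$ has the form $\{x : x_{[k]}\in Z_P^{\text{(something depending on the relative order of the first-order coords)}}\}$; the $S_k$-equivariance built into a canonical theory forces the dependence on the lower coordinates to factor through $\sigma_x$, exactly as in Definition~\ref{def:actiontheories}. Collecting the atoms of the resulting partition of $[0,1]$ (indexed by ``colors''), defining $\cL'$ to have one $k$-ary predicate per color, defining the $S_k$-action by how $\sigma_x$ permutes these colors, and defining the interpretation $I$ by decoding $\phi$'s original predicates from the colors yields the desired $T_\Theta$, $p$, and $I\interpret{T}{T_\Theta}$ with $\phi=\psi_{\Theta,p}^I$.

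The main obstacle I expect is the step extracting the ``deterministic in the top coordinates'' structure and then repackaging it as a genuine $S_k$-action theory with a clean interpretation: one must verify that the colors obtained from the atoms carry a well-defined $S_k$-action (i.e., that relabeling a $k$-tuple permutes colors consistently — this is where the equivariant assignment of $k$-tuples to orbit elements in Definition~\ref{def:actiontheories} becomes unavoidable), that the resulting $p$ is $\Theta$-invariant, and that $I$ is genuinely an \emph{open} interpretation rather than merely a measurable reduction. This is essentially a higher-arity, action-twisted analogue of the argument behind Theorem~\ref{thm:fullIndependence}, and I would model the bookkeeping on that proof, the difference being that $(k-1)$-independence is replaced by \emph{weak} $(k-1)$-independence, so we only get the straightening after passing to a possibly different theon (which is exactly what Theorem~\ref{thm:UCouple}\ref{thm:UCouple:weakindep} vs.\ \ref{thm:UCouple:weakindepall} is telling us not to expect for the original $\cN$).
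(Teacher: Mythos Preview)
Your ``if'' direction contains a factual slip: the theon $\cN^Z_P$ in~\eqref{eq:ThetapQRcNZ} does \emph{not} depend only on $x_{[k]}$; it also depends on $\sigma_x$, which is a function of the first-order coordinates $(x_{\{i\}}\mid i\in[k])$. If your claim were literally true, $\psi_{\Theta,p}$ would be $(k-1)$-independent, contradicting Theorem~\ref{thm:separationUCoupleIndependence} (the quasirandom tournamon is a $\psi_{\Theta,p}$). What \emph{is} true is weak $(k-1)$-independence, but proving it uses the $\Theta$-invariance of $p$ in an essential way: conditioning on the lower coordinates fixes $\sigma_x$, and then the event $x_{[k]}\in Z_{\sigma_x\cdot P}$ has probability $p_{\sigma_x\cdot P}=p_P$ independent of that conditioning. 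This is the content of Proposition~\ref{prop:ThetapQR}; your one-line argument does not suffice.

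The more serious gap is in your ``only if'' direction. Your key step --- ``condition on the top coordinates and use locality to show the conditional law of $\rn K\rest_{[k]}$ is a point mass'' --- is false. Take the quasirandom tournamon again ($k=2$): conditioned on $\rn\theta_{\{1,2\}}$, the labeled model $\rn K\rest_{[2]}$ still depends on the sign of $\rn\theta_{\{1\}}-\rn\theta_{\{2\}}$, so the conditional law is uniform on two outcomes, not a point mass. Weak $(k-1)$-independence controls the \emph{opposite} conditioning: the law of $\rn K\rest_{[k]}$ given the \emph{lower} coordinates is unchanged. The paper exploits precisely this, showing (Lemma~\ref{lem:UCoupleWKcNaeconstant}) that the $(k-1)$-flattening $W^K_\cN$ is a.e.\ equal to the constant $\phi(\langle K\rangle)$ for each $K\in\cK_k[T]$. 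It then takes $\cL'\df\cK_k[\Th(\phi)]$ with the $S_k$-action by vertex permutation, sets $\cH_{P_K}\df\Tind(K,\cN)$ (no new theon is constructed for $\phi$ itself), and verifies $\phi_\cH=\psi_{\Theta,p}$ directly via Lemma~\ref{lem:densityfromWKcN}, which expresses any labeled density as an integral of products of these now-constant flattenings. The interpretation $I$ recovers each original $P$ as the disjunction of the $P_K$ with $\id_k\in R_P(K)$. No ``straightening'' or martingale argument is needed, and indeed none would work: you yourself note at the end that one should not expect a pointwise-independent representation of the original $\cN$, but your intermediate step effectively asks for one.
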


Finally, the last three theorems compare our quasirandomness properties with
the discrepancy properties in the literature. As we remarked in the
introduction, the results of~\cite{Tow17} imply that
$\Dev[k-1]=\Disc[\cA_{k-1}]$ is the strongest discrepancy property below full
quasirandomness and $\Disc[\{[\ell+1]\}]$ is the weakest discrepancy property
above $\CliqueDisc[\ell]$. This together with
Theorems~\ref{thm:anti-monotone}, \ref{thm:inter-properties}
and~\ref{thm:separationUInduceUCouplehypergraph} and the three theorems below
justify the Hasse diagram of Figure~\ref{fig:hypergraph} between the families
$\Independence$ and $\UInduce$ and the discrepancy properties in the
literature.

The following theorem trivially follows from definitions.

\begin{theorem}\label{thm:UInduce->CliqueDisc}
  For every $k\geq\ell\geq 1$ and every $\phi\in\HomT{\TkHypergraph}$, if $\phi\in\UInduce[\ell]$, then
  $\phi\in\CliqueDisc[\ell]$.
\end{theorem}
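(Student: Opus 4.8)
The plan is to unwind the definitions of $\UInduce[\ell]$ (Definition~\ref{def:UCoupleUInduce}) and $\CliqueDisc[\ell]$ (Definition~\ref{def:CliqueDisc}) and observe that the former is precisely the latter, tested on a single model. Fix $\psi\in\HomT{\TkHypergraph[\ell]}$ and an arbitrary coupling $\xi$ of $\phi$ and $\psi$; I must show $\xi(K^{(k,\ell)}_k) = \phi(\rho_k)\psi(K^{(\ell)}_k)$. Set $M\df K^{(k,\ell)}_k\in\cM_k[\TkHypergraph\cup\TkHypergraph[\ell]]$, so that $\lvert M\rvert = k$ and, writing $I\interpret{\TkHypergraph[\ell]}{\TkHypergraph\cup\TkHypergraph[\ell]}$ for the structure-erasing interpretation, $I(M) = K^{(\ell)}_k$ is the complete $\ell$-uniform hypergraph on $k$ vertices, hence a complete $\ell$-hypergraph. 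Thus $M$ satisfies the hypothesis in the definition of unique inducibility, and since $\phi\in\UInduce[\ell]$ it is uniquely inducible by $\psi$; applying this to $M$ gives $\xi(M) = (\phi\otimes\psi)(M)$.

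It then remains to evaluate $(\phi\otimes\psi)(M)$ via the syntactic description of the independent coupling (Definition~\ref{def:indcoup_syntactic}): since the $\TkHypergraph$-reduct of $M$ is $\rho_k$ and its $\TkHypergraph[\ell]$-reduct is $K^{(\ell)}_k$, we get $(\phi\otimes\psi)(\langle M\rangle) = \phi(\langle\rho_k\rangle)\,\psi(\langle K^{(\ell)}_k\rangle)$. Each of $M$, $\rho_k$ and $K^{(\ell)}_k$ is fixed by every permutation of its $k$ vertices (all the relations involved are symmetric and complete off the diagonal), so all three have automorphism group $S_k$; by~\eqref{eq:labeledflagalgebra} this means $\langle M\rangle = M$, $\langle\rho_k\rangle = \rho_k$ and $\langle K^{(\ell)}_k\rangle = K^{(\ell)}_k$. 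Therefore $(\phi\otimes\psi)(M) = \phi(\rho_k)\psi(K^{(\ell)}_k)$, and combining with the previous paragraph yields $\xi(K^{(k,\ell)}_k) = \xi(M) = \phi(\rho_k)\psi(K^{(\ell)}_k)$, which is exactly the defining identity of $\CliqueDisc[\ell]$.

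There is essentially no obstacle here; the only points that require (minor) care are matching the ``$I(M)$ is a complete $\ell$-hypergraph'' clause of the $\UInduce$ definition with the reduct $K^{(\ell)}_k$ occurring in the $\CliqueDisc$ definition, and keeping track of the $\lvert\Aut(\place)\rvert/\lvert\place\rvert!$ normalization when passing between $M$ and $\langle M\rangle$ in the independent-coupling formula -- harmless here since all the structures in sight are totally symmetric. As an alternative one may argue semantically: if $\rn{J}$ is the exchangeable array attached to $\xi$, with $\TkHypergraph$- and $\TkHypergraph[\ell]$-marginals $\rn{K}$ and $\rn{L}$, then $\xi(K^{(k,\ell)}_k)$ is the probability that $[k]$ simultaneously spans a complete edge of $\rn{K}$ and a complete $\ell$-hypergraph of $\rn{L}$, while $\UInduce[\ell]$ forces these two events to be independent, giving the product $\phi(\rho_k)\psi(K^{(\ell)}_k)$ directly.
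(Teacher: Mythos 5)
Your proof is correct and takes exactly the approach the paper intends: the paper lists this theorem as following trivially from Definitions~\ref{def:UCoupleUInduce} and~\ref{def:CliqueDisc}, and your argument is precisely that unwinding -- specialize unique inducibility to $M = K^{(k,\ell)}_k$, note its $\TkHypergraph[\ell]$-reduct is the complete $\ell$-hypergraph, and evaluate the independent coupling, with the $\langle\cdot\rangle$-normalization harmless since all models involved have automorphism group $S_k$.
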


\medskip

\begin{theorem}\label{thm:separationDevUInduce}
  For every $k\in\NN_+$, there exists $\phi\in\HomT{\TkHypergraph}$ satisfying $\Dev[k-1]$ but not satisfying
  $\UInduce[1]$.
\end{theorem}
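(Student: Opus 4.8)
The plan is to exhibit, for each $k\in\NN_+$, an explicit $\TkHypergraph$-on $\cN$ such that $\phi_\cN$ satisfies $\Dev[k-1]$ but is not symmetrically $1$-local; by Theorem~\ref{thm:UInduce}\ref{thm:UInduce:symlocal} this gives $\phi_\cN\notin\UInduce[1]$. I would dispatch $k\leq 2$ separately and classically (for $k=1$, $\Dev[0]$ is vacuous while no non-degenerate $1$-hypergraphon lies in $\UInduce[1]$; for $k=2$, $\Dev[1]$ is degree-regularity, realized by a degree-regular non-quasirandom graphon), so assume $k\geq 3$. The object I would use: fix a measurable $f\function{[0,1]}{\{0,1\}}$ with $\lambda(f^{-1}(1))=1/2$ (say $f=\One[\,\cdot<1/2\,]$) and the symmetric $u\function{[0,1]^k}{\{0,1\}}$, $u(a_1,\dots,a_k)\df\prod_{i\in[k]}\One[a_i<1/2]$, and put
\[
  \cN \df \Bigl\{\, x\in\cE_k \;\Bigm|\; \bigoplus_{T\in\binom{[k]}{k-1}} f(x_T)\;=\;u(x_{\{1\}},\dots,x_{\{k\}}) \,\Bigr\},
\]
where $\bigoplus$ is addition mod $2$. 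Since every $\sigma\in S_k$ permutes the $(k-1)$-subsets of $[k]$ and the first-order coordinates, $\cN$ is $S_k$-invariant, hence a $\TkHypergraph$-on, and integrating out any single coordinate $x_T$ with $\lvert T\rvert=k-1$ gives $\phi_\cN(\rho_k)=1/2$. The design principle is that the hyperedge indicator is, over $\mathbb{F}_2$, affine in the ``fresh'' bit $f(x_{[k]\setminus\{1\}})$ while still depending genuinely on the first-order coordinates through $u$.

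To verify $\Dev[k-1]$ I would use that this equals $\Disc[\cA_{k-1}]$, where $\cA_{k-1}=\{[k]\setminus\{j\}\mid 2\leq j\leq k\}$ is the family of facets of $[k]$ containing the vertex $1$. The key combinatorial observation is that $\bigcup_{j=2}^{k}r([k]\setminus\{j\})=r([k])\setminus\{[k]\setminus\{1\},[k]\}$, so any product $\prod_{A\in\cA_{k-1}}h_A$ of measurable functions $h_A$ of the coordinates of the facet $A$ is independent of the single coordinate $x_{[k]\setminus\{1\}}$. Our hyperedge indicator depends on that coordinate only through the summand $f(x_{[k]\setminus\{1\}})$, a uniform bit independent of everything else, so integrating it out collapses the relevant conditional probability to $1/2=\phi_\cN(\rho_k)$ — this is exactly the $\Disc[\cA_{k-1}]$ decorrelation identity, first for the canonical coupling and then, after the standard reduction from arbitrary couplings to ``aligned'' ones of this shape, for every coupling. (As a sanity check, $\phi_\cN$ is not fully quasirandom: coupling with the structure whose $P_{[k]\setminus\{j\}}$ holds iff $f(x_{[k]\setminus\{j\}})=1$ forces the hyperedge event conditioned on all $k$ facets to have probability $2^{-k}$ or $1-2^{-k}$, so $\CliqueDisc[k-1]$ fails.)

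To violate symmetric $1$-locality I would let $\rn{K}$ be the exchangeable array of $\cN$ with respect to $\rn{\theta}$, set $Y_w\df\One[\rn{\theta}_{\{w\}}<1/2]$, and for finite $V\subseteq\NN_+$ let $P_V\in\{0,1\}$ be the parity of the number of induced hyperedges of $\rn{K}$ inside $V$, an isomorphism invariant of $\rn{K}\rest_V$. When $\lvert V\rvert=k+1$ each $(k-1)$-subset of $V$ lies in exactly two $k$-subsets of $V$, so the $f$-summands cancel in pairs and $P_V=\bigoplus_{S\in\binom{V}{k}}\prod_{w\in S}Y_w=\binom{m_V}{k}\bmod 2$ with $m_V\df\lvert\{w\in V:Y_w=1\}\rvert$; since $m_V\leq k+1$ this is $1$ if $m_V=k$, is $(k+1)\bmod 2$ if $m_V=k+1$, and is $0$ otherwise. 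Choosing $(k+1)$-subsets $V_1,V_2$ with $V_1\cap V_2=\{1\}$, the parities $P_{V_1},P_{V_2}$ are conditionally independent given $Y_1$ with the same law, and since $m_{V_i}-Y_1$ is $\operatorname{Bin}(k,1/2)$-distributed one computes $\EE[P_{V_i}\mid Y_1=0]=2^{-k}$ and $\EE[P_{V_i}\mid Y_1=1]=2^{-k}\bigl(k+((k+1)\bmod 2)\bigr)$, which differ for every $k\geq 2$. Hence $\operatorname{Cov}(P_{V_1},P_{V_2})=\operatorname{Var}\bigl(\EE[P_{V_1}\mid Y_1]\bigr)>0$, so $[\rn{K}\rest_{V_1}]$ and $[\rn{K}\rest_{V_2}]$ are not independent although $\lvert V_1\cap V_2\rvert\leq 1$; thus $\cN$, and therefore $\phi_\cN$, is not symmetrically $1$-local, giving $\phi_\cN\notin\UInduce[1]$.

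The heart of the matter — and the main obstacle — is the tension between the two requirements: $\Dev[k-1]$ forces the object to look quasirandom to every test localized on the facets through a fixed vertex, whereas failing $\UInduce[1]$ demands genuine first-order-coordinate dependence, which such tests can in general detect. The construction reconciles them by hiding that dependence behind the unique facet-coordinate $x_{[k]\setminus\{1\}}$ invisible to the $\Dev[k-1]$ tests (the role of the facet-parity), while on sets of size $k+1$ those parity bits cancel in pairs and expose the first-order part (the role of looking one size above $k$). Concretely, the two steps I expect to demand the most care are the reduction of the $\Disc[\cA_{k-1}]$ condition from arbitrary couplings to aligned ones and the parity bookkeeping leading to the conditional-expectation formula above; everything else is a routine calculation.
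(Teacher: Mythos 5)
Your proof is correct in its essential structure, but it takes a genuinely different route from the paper's, both in the choice of object and in the method of refuting $\UInduce[1]$, so it is worth comparing the two.

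The paper's $\cN$ is a hybrid: it uses the top coordinate $x_{[k]}$ when some first-order coordinate is $<1/2$, and $\sum_v x_{[k]\setminus\{v\}}\bmod 1$ otherwise. The $\Dev[k-1]$ verification is the same kind of conditional-probability argument as yours: after the Proposition~\ref{prop:theonalignment} alignment, the free coordinates $x_{[k]}$ and $x_{[k]\setminus\{1\}}$ make the edge event uniform given everything else. To refute $\UInduce[1]$, however, the paper works directly from Definition~\ref{def:UCoupleUInduce}: it exhibits two couplings with the $2$-coloring ``$x_{\{1\}}$ below/above $1/2$'' and checks that the clique-marginals differ ($\psi_{k,p}$ vs.\ the rank-$(k-1)$ theon $\cN'$), so inducibility is not unique. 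Your route is dual in flavor: you prove symmetric $1$-locality fails (Theorem~\ref{thm:UInduce}\ref{thm:UInduce:symlocal}) by exhibiting an explicit isomorphism-invariant --- the parity of induced edges on $(k{+}1)$-sets --- and showing its covariance across a single shared vertex is strictly positive. This buys a very clean combinatorial witness at the cost of needing the equivalence $\UInduce[\ell]\equiv$ symmetric $\ell$-locality, whereas the paper's version is more direct from the definition but requires a computation with the auxiliary theon $\cN'$.

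Two small remarks. First, your formula $P_V=\bigoplus_{S\in\binom{V}{k}}\prod_{w\in S}Y_w$ is off by the additive constant $(k{+}1)\bmod 2$: since $e_S=\One[\bigoplus_T f(\theta_T)=u_S]=1\oplus\bigoplus_T f(\theta_T)\oplus u_S$, summing over the $k{+}1$ sets $S$ leaves behind an extra $(k{+}1)\bmod 2$. This is harmless --- a constant shift of a $\{0,1\}$-variable does not change $\operatorname{Var}(\EE[P_V\mid Y_1])$ --- but it does mean that your stated conditional expectations are only correct for odd $k$ (for even $k$ they should be $1-2^{-k}$ and $1-(k{+}1)2^{-k}$); the conclusion that they differ for all $k\geq 2$ survives either way. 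If you instead declare edges by $\bigoplus_T f(x_T)\neq u$ (the complement, with the same density $1/2$ and the same $\Dev[k-1]$ argument), the constant disappears and your formula is exact. Second, it is worth stating explicitly --- as you do --- that the main construction genuinely requires $k\geq 3$: for $k=2$ the coordinate $x_{[k]\setminus\{1\}}=x_{\{2\}}$ is first-order, so $u$ depends on it as well, the decorrelation argument breaks, and in fact your $\cN$ degenerates to a $\{0,1\}$-graphon that is not degree-regular. Your fallback for $k\leq 2$ (vacuous $\Dev[0]$ plus $1$-categoricity for $k=1$; degree-regular non-quasirandom graphon plus $\UInduce[1]\Leftrightarrow\CliqueDisc[1]=$ quasirandomness for $k=2$) is correct. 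The paper's single construction is stated uniformly for all $k$, which is a modest economy.
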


\begin{theorem}\label{thm:separationIndependenceDisc}
  For every $k > \ell\geq 1$, there exists $\phi\in\HomT{\TkHypergraph}$ satisfying $\Independence[\ell]$ but
  not satisfying $\Disc[\{[\ell+1]\}]$.
\end{theorem}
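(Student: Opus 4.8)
The plan is to write down an explicit $\ell$-independent $k$-hypergraphon $\phi$ and then defeat $\Disc[\{[\ell+1]\}]$ with a single $(\ell+1)$-ary test object $\psi$ together with one cleverly chosen coupling. For the object, I would fix $p\in(0,1)$ and take the $\TkHypergraph$-on $\cN$ with
\[
  \cN_E\df\set{x\in\cE_k}{x_A\leq p\text{ for every }A\in\binom{[k]}{\ell+1}}.
\]
Combinatorially, $\phi\df\phi_\cN$ is the limit of the random $k$-hypergraphs obtained by tossing an independent $p$-coin for every $(\ell+1)$-subset and putting a $k$-edge exactly on those $k$-sets all of whose $(\ell+1)$-subsets came up heads. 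Since the defining condition is invariant under permuting $[k]$, $\cN$ really is a $\TkHypergraph$-on, so $\phi\in\HomT{\TkHypergraph}$; and since it uses no coordinate $x_A$ with $\lvert A\rvert\leq\ell$, we may write $\cN_E=\cE_{k,\ell}\times\cH$ with $\cH\df\set{y\in[0,1]^{\binom{[k]}{>\ell}}}{y_A\leq p\text{ for every }A\in\binom{[k]}{\ell+1}}$, which shows $\phi\in\Independence[\ell]$. (It is non-trivial: the edge density is $p^{\binom{k}{\ell+1}}\in(0,1)$.)

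For the test, I would take $\psi\in\HomT{T_{\cL_{\{[\ell+1]\}}}}$ represented by the theon $\cM$ with $\cM_{P_{[\ell+1]}}\df\set{x\in\cE_{\ell+1}}{x_{[\ell+1]}\leq p}$ (essentially the quasirandom $(\ell+1)$-hypergraphon of density $p$, read in the asymmetric language), and couple it to $\phi$ by reusing coordinates: let $\xi$ be the $(\TkHypergraph\cup T_{\cL_{\{[\ell+1]\}}})$-on over $[0,1]$ with $\xi_E\df\cN_E$ and $\xi_{P_{[\ell+1]}}\df\cM_{P_{[\ell+1]}}$. Erasing one block of structure returns $\cN$ or $\cM$, so $\xi$ is indeed a coupling of $\phi$ and $\psi$. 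Letting $\rn K$ be the array attached to $\xi$ via $\rn\theta$, the event $(1,\ldots,k)\in R_E(\rn K)$ is $\{\rn\theta_A\leq p\text{ for all }A\in\binom{[k]}{\ell+1}\}$ while $\iota_{[\ell+1]}\in R_{P_{[\ell+1]}}(\rn K)$ is $\{\rn\theta_{[\ell+1]}\leq p\}$; since $[\ell+1]$ is one of the sets in $\binom{[k]}{\ell+1}$ (here $\ell+1\leq k$), the first event is contained in the second, so their joint probability is $p^{\binom{k}{\ell+1}}$ rather than $p^{\binom{k}{\ell+1}+1}$. By Definition~\ref{def:CliqueDisc} these two events fail to be independent, hence $\phi\notin\Disc[\{[\ell+1]\}]$, which is what we want.

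The only genuine design decision — and the spot where the first naive attempt breaks — is how to make the $k$-edge depend on the $(\ell+1)$-sets \emph{while remaining $S_k$-symmetric enough to be a hypergraphon}. Taking a parity of the indicators over $\binom{[k]}{\ell+1}$ is symmetric but, as soon as $k>\ell+1$, it completely decorrelates from any single $(\ell+1)$-set because the other subsets re-randomize it, so it cannot violate $\Disc[\{[\ell+1]\}]$. A \emph{conjunction} is equally symmetric but, being monotone, preserves the containment $\{(1,\ldots,k)\text{ is an edge}\}\subseteq\{[\ell+1]\text{ is selected}\}$, which is exactly the single fact the computation needs; I therefore expect this symmetry-versus-correlation balancing act to be the main (and essentially only) obstacle. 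Everything else is a routine verification that $\cN$, $\cM$, and $\xi$ are legitimate (Euclidean-structure-wise) objects and that the marginals of $\xi$ are $\phi$ and $\psi$.
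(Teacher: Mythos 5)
Your proof is correct and follows essentially the same route as the paper: the $\ell$-independent hypergraphon is defined by the conjunction "$x_A\le p$ for all $A\in\binom{[k]}{\ell+1}$," and the coupling reuses the coordinates to defeat $\Disc[\{[\ell+1]\}]$. The only cosmetic difference is that the paper sets $\cH_{P_{[\ell+1]}}=\{x_{[\ell+1]}\ge p\}$ so that the joint probability is $0$ rather than $p^{\binom{k}{\ell+1}}$, but both choices immediately give the required failure of independence.
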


Table~\ref{tab:references} contains pointers to where each of the theorems (or their parts) are proved.

\begin{table}
  \begin{center}
    \begin{tabular}{lll}
      \multicolumn{2}{l}{Theorem} & Proof location\\
      \hline
      \ref{thm:anti-monotone}
      &
      &
      Section~\ref{sec:UInduce}
      \\
      \ref{thm:inter-properties}
      &
      &
      Section~\ref{sec:basic}
      \\
      \ref{thm:naturality}
      & &
      Section~\ref{sec:naturality}
      \\
      \ref{thm:naturalityindcoup}
      & &
      Section~\ref{sec:basic}
      \\
      \ref{thm:separationupward}
      & &
      Section~\ref{sec:separations}
      \\
      \ref{thm:separationUCoupleIndependence}
      & &
      Section~\ref{sec:separations}
      \\
      \ref{thm:separationUInduceUCouple}
      & &
      Section~\ref{sec:separations}
      \\
      \ref{thm:separationUCoupleIndependencehypergraph}
      & &
      Section~\ref{sec:separations}
      \\
      \ref{thm:separationUInduceUCouplehypergraph}
      & &
      Section~\ref{sec:separations}
      \\
      \ref{thm:UCouple}
      &
      \ref{thm:UCouple:UCouple}$\equiv$\ref{thm:UCouple:QRhyper}$\equiv$\ref{thm:UCouple:QRhyperindepcoup}
      &
      Lemma~\ref{lem:QRhyper}
      \\
      &
      \ref{thm:UCouple:UCouple}$\equiv$\ref{thm:UCouple:weakindep}$\equiv$\ref{thm:UCouple:weakindepall}
      &
      Lemma~\ref{lem:weakindep}
      \\
      &
      \ref{thm:UCouple:weakindep}$\implies$\ref{thm:UCouple:local}
      &
      Lemma~\ref{lem:weakindep->local}
      \\
      &
      \ref{thm:UCouple:local}$\implies$\ref{thm:UCouple:orderUInduce}
      &
      Lemma~\ref{lem:localorderUInduce}
      \\
      &
      \ref{thm:UCouple:orderUInduce}$\implies$\ref{thm:UCouple:QRhyper}
      &
      Lemma~\ref{lem:orderUInducecolors}
      \\
      \ref{thm:UInduce}
      &
      \ref{thm:UInduce:UInduce}$\equiv$\ref{thm:UInduce:singledensity}
      &
      Lemma~\ref{lem:UInduce:singledensity}
      \\
      &
      \ref{thm:UInduce:symlocal}$\implies$\ref{thm:UInduce:UInduce}
      &
      Lemma~\ref{lem:symlocal->UInduce}
      \\
      &
      \ref{thm:UInduce:UInduce}$\implies$\ref{thm:UInduce:symlocal}
      &
      Lemma~\ref{lem:UInduce->symlocal}
      \\
      \ref{thm:fullIndependence}
      & &
      Section~\ref{sec:fullQR}
      \\
      \ref{thm:fullUCouple}
      & &
      Section~\ref{sec:fullQR}
      \\
      \ref{thm:UInduce->CliqueDisc}
      & &
      Trivial (see Definitions~\ref{def:UCoupleUInduce} and~\ref{def:CliqueDisc})
      \\
      \ref{thm:separationDevUInduce}
      & &
      Section~\ref{sec:separations}
      \\
      \ref{thm:separationIndependenceDisc}
      & &
      Section~\ref{sec:separations}
    \end{tabular}
    \caption{Proof locations for theorems of Section~\ref{sec:mainresults}.}
    \label{tab:references}
  \end{center}
\end{table}

\section{Basic properties and the first equivalence}
\label{sec:basic}

In this section we present some initial properties about the notions we have
defined and prove the easiest equivalence in Theorem~\ref{thm:UCouple}
between items~\ref{thm:UCouple:UCouple}, \ref{thm:UCouple:weakindep}
and~\ref{thm:UCouple:weakindepall}. The first proposition says that only
trivial objects can have unique coupleability parameter greater or equal to
its rank; this stems from the fact that non-trivial objects are not uniquely
coupleable with themselves.

\begin{proposition}\label{prop:rankbasic}
  Let $\phi\in\HomT{T}$ and $r\df\rk(\phi)$.
  \begin{enumerate}
  \item $r=0$ if and only if $\phi\in\bigcap_{\ell\in \mathbb N} \UCouple[\ell]$.\label{prop:rankbasic:rank0}
  \item If $r>0$ then $\phi\notin\UCouple[r]$.\label{prop:rankbasic:selfcouple}
  \end{enumerate}
\end{proposition}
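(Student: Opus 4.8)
The plan is to prove both items by analyzing what a rank-zero object is and by exhibiting, for a positive-rank object, a second coupling distinct from the independent one. For item~\ref{prop:rankbasic:rank0}, first I would unwind the definition: $\rk(\phi) = 0$ means there is a $T$-on $\cN$ with $\phi_\cN = \phi$ in which every peon $\cN_P$ has the form $\cN_P = \cH_P \times X^{\binom{[k(P)]}{> 0}}$ with $\cH_P \subseteq \cE_{k(P),0}(\Omega)$; but $r(V,0) = \emptyset$, so $\cE_{k(P),0}(\Omega)$ is a one-point space and each $\cN_P$ is, up to the trivial factor, either all of $\cE_{k(P)}(\Omega)$ or empty. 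In other words a rank-$0$ $T$-on is essentially ``constant'': the associated exchangeable array $\rn{K}$ is a.s. equal to a fixed model on $\NN_+$ (a possibly non-canonical structure, but in any case deterministic as a point in the relevant space), so $\phi$ is a Dirac homomorphism. For such $\phi$ I claim it is uniquely $\ell$-coupleable for every $\ell$: indeed, if $\cN$ is rank $0$ then it is $\ell$-independent (its peons do not depend on \emph{any} coordinate, a fortiori not on those indexed by sets of size $\le \ell$), hence $\phi \in \Independence[\ell]$, and by Theorem~\ref{thm:inter-properties} $\phi \in \UCouple[\ell]$. This gives one direction of~\ref{prop:rankbasic:rank0}; the other direction is the contrapositive of~\ref{prop:rankbasic:selfcouple} together with the obvious monotonicity $\UCouple[\ell] \subseteq \UCouple[r]$ for $\ell \ge r$ (Theorem~\ref{thm:anti-monotone}), since if $r > 0$ then $\phi \notin \UCouple[r] \supseteq \bigcap_\ell \UCouple[\ell]$.

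For item~\ref{prop:rankbasic:selfcouple}, the idea is the one flagged in the introduction: a non-trivial object is not uniquely coupleable with itself. Take $\psi \df \phi \in \HomT{T}$; since $\rk(\psi) = r > 0$ we certainly have $\rk(\psi) \le r$, so if $\phi \in \UCouple[r]$ then $\phi$ and $\psi$ would be uniquely coupleable, meaning the independent coupling $\phi \otimes \phi$ is the \emph{only} coupling in $\HomT{T \cup T}$ (where the two copies of $T$ are over disjoint copies of the language). But there is an obvious second coupling: the \emph{diagonal} coupling $\Delta(\phi)$ obtained from any $T$-on $\cN$ representing $\phi$ by aligning $\cN$ with itself on the \emph{same} ground set — i.e. the $(T \cup T)$-on whose $P$-peon (for $P$ in the first copy) and $P'$-peon (the matching predicate in the second copy) are both $\cN_P$ inside a single copy of $\cE_{k(P)}(\Omega)$. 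Concretely, in exchangeable-array language: sample $\rn\theta$ once and let $\rn{K}$ be the array of $\cN$ w.r.t. $\rn\theta$; the diagonal coupling is the distribution of the pair $(\rn{K}, \rn{K})$, which is visibly a coupling of $\phi$ with itself. It remains to check $\Delta(\phi) \neq \phi \otimes \phi$ when $r > 0$: the independent coupling is the law of $(\rn{K}, \rn{K}')$ with $\rn{K}, \rn{K}'$ i.i.d. copies, so it suffices to find a model $M$ in the combined theory on which the two homomorphisms differ.

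The main obstacle — and the one step that needs genuine care — is producing such a distinguishing model $M$ from the sole hypothesis $r = \rk(\phi) > 0$. The natural candidate is built on $r$ vertices: we want a model $M$ of $T \cup T$ on vertex set $[r]$ whose first-copy reduct $I_1(M)$ and second-copy reduct $I_2(M)$ are isomorphic to a fixed $r$-vertex model that is ``witnessed'' by a genuine dependence on a size-$\le r$ coordinate set. Because $\rk(\phi) = r$ and not less, any $T$-on $\cN$ with $\phi_\cN = \phi$ must have some peon $\cN_P$ genuinely depending on some coordinate $x_A$ with $|A| = r$ (one cannot push $A$ down to size $r-1$); formally, projecting out the $x_A$-coordinate changes the set, so there are positive-measure sets of configurations agreeing off the $A$-coordinate but landing differently. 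Unfolding this into array language, there is an $r$-element labeled model $L$ with $\PP[\rn{K}\rest_{[r]} = L] > 0$ such that the event $\{\rn{K}\rest_{[r]} = L\}$ is \emph{not} independent of the corresponding event for an independent copy once we condition on the shared ``small'' coordinates — which is exactly the failure of the product formula on $M \df$ the $(T\cup T)$-model on $[r]$ whose two reducts are both $L$. I would then compute: $\Delta(\phi)(\langle M\rangle) = \phi(\langle L\rangle)$ (sampling once), whereas $(\phi\otimes\phi)(\langle M\rangle) = \phi(\langle L\rangle)^2$ by Definition~\ref{def:indcoup_syntactic}, up to the automorphism normalizations, and these differ precisely because $0 < \phi(\langle L\rangle) < 1$ — the strict inequalities holding because $\rk(\phi) = r$ forces the size-$r$ coordinate to be genuinely, and non-trivially, used. (If it were used ``trivially'', i.e. $\cN_P$ depended on $x_A$ only through a measure-zero adjustment, we could lower the rank, contradiction; and if $\phi(\langle L\rangle) \in \{0,1\}$ for every candidate $L$ then again the dependence is degenerate.) Making this last dichotomy precise — translating ``rank is exactly $r$'' into ``some induced-density at level $r$ is strictly between $0$ and $1$ in a way the self-coupling detects'' — is where the real work sits; everything else is bookkeeping with the cryptomorphism of Theorem~\ref{thm:cryptomorphism} and the definitions.
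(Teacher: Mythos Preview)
Your approach to item~\ref{prop:rankbasic:selfcouple} via the diagonal self-coupling is exactly the paper's, but your closing step has a real gap. You try to show $\Delta(\phi)\neq\phi\otimes\phi$ directly by producing an $r$-vertex labeled model $L$ with $0<\phi(\langle L\rangle)<1$, and you flag this as ``where the real work sits''. The restriction to $r$ vertices is wrong: a rank-$1$ graphon such as $W=\One_{[0,1/2]^2}$ has only one $1$-vertex model, with density $1$; non-trivial densities first appear on $2$ vertices. In general, $\rk(\phi)=r$ does not promise a witness on exactly $r$ vertices, so your sketch of extracting $L$ from ``genuine dependence on a size-$r$ coordinate'' cannot work as stated.

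The paper sidesteps witness-hunting by running a contradiction. It first records the equivalence $r=0\iff\phi(\langle K\rangle)\in\{0,1\}$ for \emph{all} finite $K$. Then, assuming $\phi\in\UCouple[r]$, the diagonal theon $\cH$ has rank $r$, so $\phi_\cH=\phi\otimes\phi$ is forced; evaluating on the diagonal model $K_2$ built from an \emph{arbitrary} $K$ gives $\phi(\langle K\rangle)=\tind(K_2,\cH)=(\phi\otimes\phi)(\langle K_2\rangle)=\phi(\langle K\rangle)^2$, hence $\phi$ is $\{0,1\}$-valued and $r=0$. The point is to use the \emph{equality} of the two couplings, which yields the identity uniformly over all models and sizes, rather than trying to exhibit their inequality on one model. (Your route for item~\ref{prop:rankbasic:rank0} via $\Independence[\ell]\Rightarrow\UCouple[\ell]$ is fine but forward-references Theorem~\ref{thm:inter-properties}; the paper instead argues directly that a $\{0,1\}$-valued $\phi$ makes every coupling independent, since one factor in $\Tind(I(K),I(\cN))\cap\Tind(I'(K),I'(\cN))$ is a.e.\ full or null.)
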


\begin{proof}
  Note that $r = 0$ if and only if all peons $\cN_P$ are trivial (that is, $\cN_P=\emptyset$
  or $\cN_P=\cE_{k(P)}$ a.e.), which in turn is equivalent to having $\phi(\langle K\rangle)\in\{0,1\}$ for every finite set $V$ and every $K\in\cK_V[T]$. This implies that there is a unique $K\in\cK_V[T]$ with
  $\phi(\langle K\rangle)=1$ and this $K$ must further have full automorphism group
  $\Aut(K)=S_V$.

  Let now $\psi\in\HomT{T'}$ for some theory $T'$, and assume that $\xi$ is a
  coupling of $\phi$ and $\psi$. Fix a
  $(T\cup T')$-on $\cN$ such that $\xi = \phi_\cN$. Then for every $K\in\cK_V[T\cup T']$ with $V$
  finite we have $\Tind(K,\cN) = \Tind(I(K),I(\cN))\cap\Tind(I'(K),I'(\cN))$, where $I\interpret{T}{T\cup T'}$
  and $I'\interpret{T'}{T\cup T'}$ are the structure-erasing interpretations.

  If $r = 0$, we get $\xi(\langle K\rangle) = \phi(\langle I(K)\rangle)\psi(\langle I'(K)\rangle)$ (since $\phi$ is
  0-1 valued) so the
  forward direction of item~\ref{prop:rankbasic:rank0} follows.

  The backward direction of item~\ref{prop:rankbasic:rank0} clearly follows from
  item~\ref{prop:rankbasic:selfcouple}, so let us prove the latter by contradiction. Suppose that
  $\phi\in\UCouple[r]$ and fix a $T$-on $\cN$ such that $\phi=\phi_\cN$ and $\rk(\cN)=r$. Consider the $(T\cup
  T)$-on $\cH\df\cN\disjcup\cN$ in which both copies of each predicate symbol $P$ get mapped to $\cN_P$, i.e.,
  $\cH$ is the coupling of $\cN$ with itself. Since $\rk(\cH) = \rk(\cN) = r$ and
  $\phi\in\UCouple[r]$, we must have $\phi_\cH = \phi\otimes\phi$.

  Fix a finite set $V$ and $K\in\cK_V[T]$ and let $K_2\in\cK_V[T\cup T]$ be given by setting $R_P(K_2)\df
  R_P(K)$ for both copies of each predicate symbol $P$. Then we have
  \begin{align*}
    \phi(\langle K\rangle)
    & =
    \tind(K,\cN)
    =
    \tind(K_2,\cH)
    =
    (\phi\otimes\phi)(\langle K_2\rangle)
    =
    \phi(\langle K\rangle)^2,
  \end{align*}
  so we must have $\phi(\langle K\rangle)\in\{0,1\}$. Hence $r=0$, and
  item~\ref{prop:rankbasic:selfcouple} follows.
\end{proof}

The next two propositions will make use of the theon uniqueness theorems~\cite[Theorems~3.9 and~3.11,
  Proposition~7.7]{CR19}. Recall from~\cite[Definition~3.8 and Sct.~7]{CR19} that for a sequence of symmetric
(i.e., $S_d$-invariant) functions $f = (f_d)_{d=1}^k$ with $f_d\function{\cE_d(\Omega)}{\Omega'}$ the sequence
of functions $\widehat{f} = (\widehat{f}_d)_{d=1}^k$ with
$\widehat{f}_d\function{\cE_d(\Omega)}{\cE_d(\Omega')}$ is defined by
\begin{align*}
  \widehat{f}_d(x)_A & \df f_{\lvert A\rvert}(\iota_A^*(x)) & (A\in r(d)).
\end{align*}

As we have seen in the introduction, a positive homomorphism
$\phi\in\Independence[\ell]$ can have geometric realizations far from being
$\ell$-independent (cf.~\eqref{eq:W} and~\eqref{eq:Wskew}). The next
proposition says that for rank the situation is precisely the opposite.

\begin{proposition}\label{prop:rankae}
  For every peon $\cN\subseteq \cE_k(\Omega)$ there exists another peon $\cH\subseteq \cE_k(\Omega)$ such that
  $\rk(\cH)=\rk(\phi_\cN)$ and $\cH=\cN$ a.e. Moreover, if $\cN$ is $\ell$-independent for some $\ell\leq
  k$, then $\cH$ can be taken to also be $\ell$-independent.
\end{proposition}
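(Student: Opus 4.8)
The plan is to use the theon uniqueness machinery just recalled: starting from an arbitrary peon $\cN\subseteq\cE_k(\Omega)$, I will produce an $\rk(\phi_\cN)$-rank peon $\cH$ that agrees with $\cN$ almost everywhere. The key observation is that the definition of $\rk(\phi_\cN)$ only guarantees the \emph{existence} of \emph{some} theon of that rank representing $\phi_\cN$, possibly over a different space; so the first task is to transport that abstract low-rank realization back to a peon over the original $\Omega$ that coincides a.e.\ with $\cN$. This is exactly what the uniqueness theorems \cite[Theorems~3.9 and~3.11, Proposition~7.7]{CR19} are built to do.

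Concretely, first I would pick a peon $\cN'\subseteq\cE_k(\Omega')$ (over some space $\Omega'$) with $\phi_{\cN'}=\phi_\cN$ and $\rk(\cN')=r\df\rk(\phi_\cN)$; by the very definition of rank of a peon, $\cN'=\cH'\times(X')^{\binom{[k]}{>r}}$ for some $\cH'\subseteq\cE_{k,r}(\Omega')$. Since $\phi_{\cN'}=\phi_\cN$, the uniqueness theorem supplies a coupling/common refinement of $\Omega$ and $\Omega'$ together with measure-preserving maps (in the form of the hatted families $\widehat f$ recalled right before the statement) pulling $\cN'$ back to something agreeing with $\cN$ a.e.; the crucial point is that rank is controlled under such pullbacks: if a peon depends a.e.\ only on the coordinates $x_A$ with $|A|\le r$, then so does its pullback along a map of the form $\widehat f$, because $\widehat f_k(x)_A=f_{|A|}(\iota_A^*(x))$ only involves the block of coordinates indexed by subsets of $A$. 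Hence the resulting peon $\cH$ over $\Omega$ has $\rk(\cH)\le r$; and $\rk(\cH)\ge\rk(\phi_\cH)=\rk(\phi_\cN)=r$ since $\cH=\cN$ a.e., giving equality.

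For the ``moreover'' clause I would run the same argument but start from an $\ell$-independent $\cN$, i.e.\ $\cN=\cE_{k,\ell}(\Omega)\times\cG$. I want the output to remain $\ell$-independent, i.e.\ genuinely independent of the low coordinates, not merely a.e.\ so. The clean way is: apply the first part to the peon $\cG\subseteq X^{\binom{[k]}{>\ell}}$ itself (viewed appropriately as an Euclidean structure over the product of the $\mu$-copies indexed by $\binom{[k]}{>\ell}$), obtaining $\cG'$ with $\cG'=\cG$ a.e.\ and optimal rank, then set $\cH\df\cE_{k,\ell}(\Omega)\times\cG'$. By construction $\cH$ is $\ell$-independent, $\cH=\cN$ a.e., and $\rk(\cH)=\ell+\rk(\cG')$ is as small as the rank of any $\ell$-independent representative can be; one then checks this equals $\rk(\phi_\cN)$, using that any $T$-on computing $\phi_\cN$ of minimal rank can, by the first part applied coordinate-block-wise, be massaged into $\ell$-independent form without increasing rank (here the hypothesis $\ell\le k$ ensures $\binom{[k]}{>\ell}$ is nonempty and the decomposition is non-degenerate).

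The main obstacle I anticipate is the bookkeeping in invoking the uniqueness theorem correctly: one must make sure the measure-preserving transformations witnessing $\phi_{\cN'}=\phi_\cN$ can be chosen of the structured form $\widehat f$ (so that rank is preserved) rather than as arbitrary measure isomorphisms of the big spaces $\cE_k(\Omega)$, and one must be careful that ``a.e.\ equal'' is preserved through the composition and that it suffices (rather than everywhere-equal). This is precisely the content of \cite[Proposition~7.7]{CR19} under the standing completeness assumption, so the argument should go through, but the statement of the uniqueness theorem needs to be applied in the ``$k$-bounded'' graded form rather than the single-graphon form.
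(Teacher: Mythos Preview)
Your instinct to invoke theon uniqueness and transport a low-rank realization is right, and the paper also uses uniqueness. But there is a real gap in your transport step, not just bookkeeping. The uniqueness theorem \cite[Proposition~7.7]{CR19}, in the form quoted in the paper, produces families $f_d\colon\cE_d\to\Omega$ and $g_d\colon\cE_d\to\Omega'$ with common domain $\cE_d=\cE_d([0,1])$, yielding $\widehat f_k^{-1}(\cN)=\widehat g_k^{-1}(\cN')$ a.e.\ inside $\cE_k([0,1])$. You correctly note that $\widehat g_k^{-1}(\cN')$ has rank $\le r$, but this peon lives over $[0,1]$, not over $\Omega$; it agrees a.e.\ with $\widehat f_k^{-1}(\cN)$, not with $\cN$. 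There is no evident way to push forward along $\widehat f_k$ to obtain a peon over $\Omega$ that is both a.e.\ equal to $\cN$ and of rank $\le r$: the maps $f_d$ depend on all of $\cE_d$, so $\widehat f_k$ is not a product map and need not admit a section.

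The paper avoids this by building $\cH$ over $\Omega$ directly, rather than as a pullback. It sets $W(x)\df\mu(\{y\in X^{\binom{[k]}{>r}}\mid(x,y)\in\cN\})$ for $x\in\cE_{k,r}(\Omega)$ and defines $\cH\df W^{-1}(1)\times X^{\binom{[k]}{>r}}$; then $\cH=\cN$ a.e.\ is equivalent to $W\in\{0,1\}$ a.e., and \emph{this} is what uniqueness is used for: one computes $W\comp F_1$ (where $F_1$ is the low-order part of $\widehat f_k$) in terms of the rank-$r$ witness $\cN'$, sees it takes values in $\{0,1\}$, and concludes since $F_1$ is measure-preserving. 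This construction also makes the ``moreover'' immediate: if $\cN$ is $\ell$-independent then $W$ visibly does not depend on the coordinates $x_A$ with $\lvert A\rvert\le\ell$, hence neither does $\cH$. Your proposed route for the ``moreover'' --- treat $\cG$ as a peon in its own right, write $\rk(\cH)=\ell+\rk(\cG')$, then ``massage'' an arbitrary minimal-rank representative into $\ell$-independent form --- introduces several unjustified claims and an ill-defined rank formula; all of this becomes unnecessary once $\cH$ is built via $W$.
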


\begin{proof}
  Let $\mu$ be the measure of $\Omega$ and $X$ be its underlying space, let $r \df \rk(\phi_\cN)$ and define
  the function $W\function{\cE_{k,r}(\Omega)}{[0,1]}$ by
  \begin{align}\label{eq:rankae:W}
    W(x) & \df \mu(\{y\in X^{\binom{[k]}{> r}} \mid (x,y)\in\cN\}),
  \end{align}
  defining it arbitrarily when this set is not measurable.
  Fubini's Theorem ensures that this function is measurable
  so we define
  \begin{align*}
    \cH & \df W^{-1}(1)\times X^{\binom{[k]}{> r}}.
  \end{align*}
  Clearly $\rk(\cH)\leq r$ so to prove that $\cH=\cN$ a.e., it is enough to
  show that $W$ is 0-1 valued a.e.

  Since $\rk(\phi_\cN)=r$, we know that there exists a peon $\cG$ over some space $\Omega' = (X',\cA',\mu')$ such
  that $\phi_\cG = \phi_\cN$ and $\rk(\cG) = r$. By theon uniqueness~\cite[Proposition~7.7]{CR19}, there exist
  sequences $f = (f_d)_{d=1}^k, g = (g_d)_{d=1}^k$ of symmetric measure preserving on
  h.o.a.\ (higher order arguments) functions ($f_d\function{\cE_d}{\Omega}$
  and $g_d\function{\cE_d}{\Omega'}$) such that
  \begin{align}\label{eq:rankae:uniqueness}
    \widehat{f}_k(z)\in\cN & \equiv \widehat{g}_k(z)\in\cG
  \end{align}
  for almost every $z\in\cE_k$. From the structure of the function $\widehat{f}_k$, we can decompose
  it as
  \begin{align*}
    \widehat{f}_k(x,y) & = (F_1(x), F_2(x,y)),
  \end{align*}
  for every $(x,y)\in\cE_{k,r}\times[0,1]^{\binom{[k]}{> r}}$, where
  $F_1\function{\cE_{k,r}}{\cE_{k,r}(\Omega)}$ and
  $F_2\function{\cE_k}{X^{\binom{[k]}{> r}}}$ are given by
  \begin{align*}
    F_1(x)_A & \df f_{\lvert A\rvert}(\iota_A^*(x)), &
    F_2(x,y)_A & \df f_{\lvert A\rvert}(\iota_A^*(x,y)).
  \end{align*}
  We perform a
  similar decomposition of $\widehat{g}_k$ in terms of functions $G_1\function{\cE_{k,r}}{\cE_{k,r}(\Omega')}$
  and $G_2\function{\cE_k}{(X')^{\binom{[k]}{> r}}}$.

  Since the functions $f_d$ are measure preserving on h.o.a., it follows that $F_1$ is measure preserving and for every
  $x\in\cE_{k,r}$ the restriction
  $F_2(x,\place)\function{[0,1]^{\binom{[k]}{> r}}}{X^{\binom{[k]}{> r}}}$ is measure
  preserving. Hence Fubini's Theorem applied to~\eqref{eq:rankae:uniqueness} implies
  \begin{align*}
    W(F_1(x)) & = \lambda(\{y\in[0,1]^{\binom{[k]}{> r}} \mid (G_1(x), G_2(x,y))\in\cG\})
  \end{align*}
  for almost every $x\in\cE_{k, r}$. But since $\rk(\cG)=r$, the measure above can
  only be $0$ or $1$ (as $G_2(x,y)$ contains only coordinates with $\lvert A\rvert > r$). Since $F_1$ is
  measure preserving, this implies that $W(z)\in\{0,1\}$ for almost every
  $z\in\cE_{k,r}(\Omega)$ and thus $\cH = \cN$ a.e.

  We have already shown that $\rk(\cH)\leq r$ and since $\cH = \cN$ $\mu$-a.e. implies $\phi_\cH = \phi_\cN$,
  the other inequality must also hold.

 The last statement is obvious from the construction.
\end{proof}

As we have seen in Section~\ref{sec:notation}, given an open interpretation
$I\interpret{T_1}{T_2}$ and a $T_2$-on $\cH$, the $T_1$-on $I(\cH)$
represents the limit object constructed from $\phi_\cH$ via $I$, i.e., we
have $\phi_{I(\cH)} = \phi_\cH^I$. However, given a $T_1$-on $\cN$ and
$\phi\in\HomT{T_2}$ such that $\phi^I = \phi_\cN$, it is \emph{not} true that
there exists a $T_2$-on $\cH$ such that both $I(\cH) =
\cN$ a.e.\ and $\phi_\cH=\phi$ (see~\cite[Example~45]{CR19}). The next proposition says in essence that
this example is the worst that can happen: at the cost of adding an extra
dummy variable, we can find an $\cH$ such that $I(\cH)_P =
\cN_P\times\cE_{k(P)}$ a.e.\ and $\phi_\cH=\phi$.

\begin{proposition}\label{prop:theonalignment}
  Let $I\interpret{T_1}{T_2}$, let $\phi\in\HomT{T_2}$, and let $\cN$ be a $T_1$-on over $\Omega$
  such that $\phi^I = \phi_\cN$. Then there exists a $T_2$-on $\cH$ over $\Omega\times\Omega$ such that
  $\phi_\cH = \phi$ and $I(\cH)_P = \cN_P\times\cE_{k(P)}(\Omega)$ a.e., for every predicate symbol $P$
  in the language of $T_1$.

  Furthermore, if $T_2 = T_1\cup T'$ for some $T'$ and $I$ is the structure-erasing interpretation, then $\cH$
  can be taken to satisfy $I(\cH)_P \df \cH_P = \cN_P\times\cE_{k(P)}(\Omega)$ everywhere for every predicate
  symbol $P$ in the language of $T_1$.
\end{proposition}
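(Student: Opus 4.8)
The plan is to take some representation of $\phi$, align its $T_1$-part with $\cN$ via theon uniqueness up to a measure-preserving comparison map, and then invert that map at the cost of one extra coordinate — this extra coordinate being exactly the second factor $\Omega$. Using Theorem~\ref{thm:cryptomorphism}, fix a $T_2$-on $\cM$ over $[0,1]$ with $\phi_\cM = \phi$; then $\phi_{I(\cM)} = \phi^I = \phi_\cN$. Applying theon uniqueness (\cite[Proposition~7.7]{CR19}) to $\cN$ (over $\Omega$) and $I(\cM)$ (over $[0,1]$) gives symmetric measure-preserving on h.o.a.\ families $f = (f_d)_{d=1}^k$ and $g = (g_d)_{d=1}^k$ with $f_d\function{\cE_d}{\Omega}$, $g_d\function{\cE_d}{[0,1]}$ and $\widehat f_{k(P)}^{-1}(\cN_P) = \widehat g_{k(P)}^{-1}(I(\cM)_P)$ a.e.\ for every predicate symbol $P$ of $T_1$. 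Since open interpretations are quantifier-free, $I(\place)$ commutes with pullbacks, so pulling $\cM$ back along $\widehat g$ produces a $T_2$-on $\cM'$ over $[0,1]$ with $\phi_{\cM'} = \phi$ and $I(\cM')_P = \widehat g_{k(P)}^{-1}(I(\cM)_P) = \widehat f_{k(P)}^{-1}(\cN_P)$ a.e.

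Next, let $\kappa = (\kappa_d)_{d=1}^k$ be the family $\kappa_d(x)\df(x_{[d]})_1$ for $x\in\cE_d(\Omega\times\Omega)$, that is, ``project the top coordinate of $x$ onto its first $\Omega$-factor''. Then $\kappa$ is symmetric and measure-preserving on h.o.a., and under the identification $\cE_{k(P)}(\Omega\times\Omega)\cong\cE_{k(P)}(\Omega)\times\cE_{k(P)}(\Omega)$ the map $\widehat\kappa_{k(P)}$ is the projection onto the first factor, so $\widehat\kappa_{k(P)}^{-1}(\cN_P) = \cN_P\times\cE_{k(P)}(\Omega)$. The crux is to construct a symmetric measure-preserving on h.o.a.\ family $\beta = (\beta_d)_{d=1}^k$, $\beta_d\function{\cE_d(\Omega\times\Omega)}{[0,1]}$, with $f\comp\beta = \kappa$. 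I would do this by induction on $d$: once $\beta_1,\ldots,\beta_{d-1}$ are chosen, the lower-order arguments of $f_d$ along $\widehat\beta_d$ are determined, and for each of their values $f_d$ is, in its top argument, a measure-preserving map $[0,1]\to\Omega$; disintegrating Lebesgue measure over this map and feeding the independent second factor $(x_{[d]})_2$ of the top coordinate into a measurable selection of a point of the fibre over $(x_{[d]})_1$ yields $\beta_d$ with $f_d(\widehat\beta_d(x)) = (x_{[d]})_1$. Symmetry, measurable dependence on the lower-order arguments, and the fact that $\widehat\beta$ is measure-preserving on h.o.a.\ are then checked level by level from the disintegration identity.

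Finally set $\cH\df\widehat\beta^{-1}(\cM')$, a $T_2$-on over $\Omega\times\Omega$. Then $\phi_\cH = \phi_{\cM'} = \phi$ because $\widehat\beta$ is measure-preserving on h.o.a., and, $I(\place)$ again commuting with pullbacks, for every predicate symbol $P$ of $T_1$,
\begin{align*}
  I(\cH)_P
  &= \widehat\beta_{k(P)}^{-1}\bigl(I(\cM')_P\bigr)
  = \widehat\beta_{k(P)}^{-1}\bigl(\widehat f_{k(P)}^{-1}(\cN_P)\bigr)\\
  &= \widehat{(f\comp\beta)}_{k(P)}^{-1}(\cN_P)
  = \widehat\kappa_{k(P)}^{-1}(\cN_P)
  = \cN_P\times\cE_{k(P)}(\Omega)
\end{align*}
a.e., which is the main assertion. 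For the ``furthermore'', if $T_2 = T_1\cup T'$ and $I$ is structure-erasing, then $I(P) = P$ and the display reads $\cH_P = \cN_P\times\cE_{k(P)}(\Omega)$ a.e.; redefining $\cH_P\df\cN_P\times\cE_{k(P)}(\Omega)$ on the exceptional null set changes neither $\phi_\cH$ nor the axioms of $T_1\cup T'$ (which split into those of $T_1$, satisfied by $\cN$ and hence by $\cN_P\times\cE_{k(P)}(\Omega)$, and those of $T'$, untouched), so equality holds everywhere.

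The one genuinely delicate point is the inductive construction of $\beta$: the disintegration-based sectioning of the comparison map $f$ must be carried out compatibly with the tower of coordinate arities and with the symmetric-group actions, so that the outcome is again a symmetric measure-preserving on h.o.a.\ family. Everything else — the two appeals to theon uniqueness, the passage to $\cM'$, and the commutation of $I(\place)$ with pullbacks — is routine bookkeeping.
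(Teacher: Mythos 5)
Your proof is correct, and the overall plan — align the $T_1$-part via theon uniqueness, pull $\phi$ back along the alignment, and pay for the alignment with one extra $\Omega$-coordinate — matches the paper's. Where you diverge is in \emph{how} you produce the alignment. You invoke the two-map form of theon uniqueness (push $\cN$ and $I(\cM)$ both down to $[0,1]$ via $f,g$) and then \emph{invert} $f$ by a disintegration argument, building a symmetric m.p.o.h.o.a.\ family $\beta$ with $f\comp\beta=\kappa$. The paper instead cites \cite[Proposition~7.7]{CR19} directly in its single-map form: since $\cN$ and $I(\cG)$ live over the same $\Omega$ and both represent $\phi^I$, there is already a single family $h_d\function{\cE_d(\Omega)\times\cE_d(\Omega)}{\Omega}$ with $\widehat h_{k(P)}^{-1}(I(\cG)_P)=\cN_P\times\cE_{k(P)}(\Omega)$ a.e., after which $\cH\df\widehat h^{-1}(\cG)$ finishes in two lines. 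Your disintegration step is essentially a by-hand re-derivation of that single-map form from the two-map form; you gain self-containedness at the cost of the measure-theoretic delicacies you flag. Two small points: (a) your $\beta$ only reaches arities $\leq\max_{P\in\cL_1}k(P)$, but $\cH=\widehat\beta^{-1}(\cM')$ needs it up to $\max_{Q\in\cL_2}k(Q)$, so extend $\beta$ on higher arities (e.g.\ via a fixed measure isomorphism $\Omega\to[0,1]$ on the top coordinate, mirroring the paper's $h_d(x,\widehat{x})\df x_{[d]}$); and (b) ``measurable selection of a point of the fibre'' is the wrong phrase — a single-point selection is never measure-preserving — you mean using $(x_{[d]})_2$ as a seed for a quantile/Rokhlin transform pushing $\lambda$ onto the conditional measure $\lambda_{y,\omega}$.
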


\begin{proof}
  For $i\in[2]$, let $\cL_i$ be the language of $T_i$ and let $k_i\df\max_{P\in\cL_i} k(P)$. Let $\cG$ be a
  $T_2$-on over $\Omega$ such that $\phi_\cG = \phi$. Since $\phi_{I(\cG)} = \phi^I = \phi_\cN$, by theon
  uniqueness~\cite[Proposition~7.7]{CR19}, there exists a sequence $h = (h_d)_{d=1}^{k_1}$ of symmetric
  measure preserving on h.o.a.\ functions ($h_d\function{\cE_d(\Omega)\times\cE_d(\Omega)}{\Omega}$) such
  that
  \begin{align}\label{eq:theonalignment:uniqueness}
    \widehat{h}_{k(P)}(x,\widehat{x})\in I(\cG)_P & \equiv x\in\cN_P,
  \end{align}
  for every $P\in\cL_1$ and almost every
  $(x,\widehat{x})\in\cE_{k(P)}(\Omega)\times\cE_{k(P)}(\Omega)$. Extend the family $h$ by defining
  $h_d\function{\cE_d(\Omega)\times\cE_d(\Omega)}{\Omega}$ for $k_1 < d\leq \max\{k_1,k_2\}$ as
  $h_d(x,\widehat{x}) \df x_{[d]}$, and note that $h_d$ is symmetric and measure preserving on h.o.a.

  Define then the $T_2$-on $\cH$ over $\Omega\times\Omega$ by
  \begin{align}\label{eq:theonalignment:definition}
    \cH_Q & \df \widehat{h}_{k(Q)}^{-1}(\cG_Q)
  \end{align}
  for every $Q\in\cL_2$. By (the easy direction of) theon uniqueness~\cite[Proposition~7.7]{CR19}, it follows
  that $\phi_\cH = \phi_\cG = \phi$. On the other hand, the definition of $\cH$ ensures that
  \begin{align*}
    (x,\widehat{x})\in I(\cH)_P & \equiv \widehat{h}_{k(P)}(x,\widehat{x})\in I(\cG)_P
  \end{align*}
  for every $P\in\cL_1$ and every $(x,\widehat{x})\in\cE_{k(P)}(\Omega)\times\cE_{k(P)}(\Omega)$, which
  together with~\eqref{eq:theonalignment:uniqueness} implies $I(\cH)_P = \cN_P\times\cE_{k(P)}(\Omega)$
  a.e.

  \medskip

  For the case when $T_2 = T_1\cup T'$ for some $T'$ and $I$ is the structure-erasing interpretation, we
  define $\cH$ instead by using~\eqref{eq:theonalignment:definition} only for $Q\in\cL_2\setminus\cL_1$ and
  use $\cH_P\df\cN_P\times\cE_{k(P)}(\Omega)$ for every $P\in\cL_1$ (as
  required). By~\eqref{eq:theonalignment:uniqueness} this is only a zero-measure change from the previous
  definition so we still have $\phi_\cH = \phi$.
\end{proof}

Propositions~\ref{prop:rankae} and~\ref{prop:theonalignment} allow us to show
the equivalence in Theorem~\ref{thm:UCouple} between
items~\ref{thm:UCouple:UCouple}, \ref{thm:UCouple:weakindep}
and~\ref{thm:UCouple:weakindepall}.

\begin{lemma}[Theorem~\ref{thm:UCouple}\ref{thm:UCouple:UCouple}$\equiv$\ref{thm:UCouple:weakindep}$\equiv$\ref{thm:UCouple:weakindepall}]\label{lem:weakindep}
  The following are equivalent for $\phi\in\HomT{T}$ and $\ell\in\NN$.
  \begin{enumerate}[label={\roman*.}, ref={(\roman*)}]
  \item $\phi\in\UCouple[\ell]$.\label{prop:weakindep:UCouple}
  \item $\phi$ is weakly $\ell$-independent.\label{prop:weakindep:weakindep}
  \item Every $T$-on $\cN$ with $\phi=\phi_\cN$ is weakly $\ell$-independent.\label{prop:weakindep:weakindepall}
  \end{enumerate}
\end{lemma}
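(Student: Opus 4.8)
The plan is to prove the cycle of implications (iii)$\Rightarrow$(ii)$\Rightarrow$(i)$\Rightarrow$(iii). The first implication is immediate: by Theorem~\ref{thm:cryptomorphism} there is at least one $T$-on $\cN$ with $\phi_\cN=\phi$, and (iii) asserts in particular that this one is weakly $\ell$-independent, which is (ii). The substantive work is in the remaining two implications, which rely respectively on Propositions~\ref{prop:theonalignment} and~\ref{prop:rankae} (for (ii)$\Rightarrow$(i)), and on a discretization construction turning the low-order coordinates of an arbitrary geometric representation into a rank-$\le\ell$ combinatorial object (for (i)$\Rightarrow$(iii)).

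For (ii)$\Rightarrow$(i), fix a weakly $\ell$-independent $T$-on $\cN$ over $\Omega$ with $\phi_\cN=\phi$, an arbitrary $\psi\in\HomT{T'}$ with $\rk(\psi)\le\ell$, and an arbitrary coupling $\xi$ of $\phi$ and $\psi$; the goal is $\xi=\phi\otimes\psi$. Applying Proposition~\ref{prop:theonalignment} to the structure-erasing interpretation $I\interpret{T}{T\cup T'}$, to $\xi$ (which satisfies $\xi^I=\phi=\phi_\cN$), and to $\cN$, we obtain a $(T\cup T')$-on $\cH$ over $\Omega\times\Omega$ with $\phi_\cH=\xi$ and $\cH_P=\cN_P\times\cE_{k(P)}(\Omega)$ for every predicate $P$ in the language of $T$. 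The reduct $I'(\cH)$ represents $\psi$, so each peon $\cH_Q$ (for $Q$ in the language of $T'$) has its single-predicate density of rank $\le\ell$, hence by Proposition~\ref{prop:rankae} it is a.e.\ equal to a peon of rank $\le\ell$; making this replacement touches neither the $T$-peons nor $\phi_\cH$. Now form the canonical exchangeable array of (the modified) $\cH$ with respect to $\rn{\theta}$ picked in $\cE_{\NN_+}(\Omega\times\Omega)\cong\cE_{\NN_+}(\Omega)\times\cE_{\NN_+}(\Omega)$, written $\rn{\theta}=(\rn{\theta}^1,\rn{\theta}^2)$. Its $T$-reduct is exactly the canonical array of $\cN$ with respect to $\rn{\theta}^1$, so by weak $\ell$-independence of $\cN$ (and because this reduct does not involve $\rn{\theta}^2$ at all) it is independent of $(\rn{\theta}_A\mid A\in r(\NN_+,\ell))$; its $T'$-reduct is a measurable function of $(\rn{\theta}_A\mid A\in r(\NN_+,\ell))$ since all $T'$-peons now have rank $\le\ell$. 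The two reducts are therefore independent, which is precisely the statement $\xi=\phi\otimes\psi$; as $\psi$ and $\xi$ were arbitrary, $\phi\in\UCouple[\ell]$.

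For (i)$\Rightarrow$(iii), let $\cN$ be any $T$-on with $\phi_\cN=\phi$. Since the finitely many peons $\cN_P$ are determined up to null sets by a countably generated sub-$\sigma$-algebra of $\cA$, we may assume $\Omega=[0,1]$ without affecting weak $\ell$-independence. For each $d\in[\ell]$ fix a refining sequence of finite Borel partitions of $[0,1]$ whose union generates the Borel sets; at the $m$-th stage let $T'$ be the pure canonical theory with one predicate of arity $d$ for each cell of the $d$-th partition and each $d\in[\ell]$, and let $\cM$ be the $T'$-on over $[0,1]$ in which the predicate attached to a cell $C$ of the $d$-th partition holds on a $d$-tuple $\alpha$ exactly when $\rn{\theta}_{\im(\alpha)}\in C$. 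Because every predicate of $T'$ has arity $\le\ell$, the homomorphism $\psi\df\phi_\cM$ automatically satisfies $\rk(\psi)\le\ell$. Aligning $\cN$ and $\cM$ over the common ground space gives a coupling $\xi$ of $\phi$ and $\psi$ whose canonical array has $T$-reduct equal to the canonical array $\rn{K}$ of $\cN$ and $T'$-reduct equal to the data recording, for every $A\in r(\NN_+,\ell)$, which cell of the $\lvert A\rvert$-th partition the coordinate $\rn{\theta}_A$ lies in. From $\phi\in\UCouple[\ell]$ we get $\xi=\phi\otimes\psi$, i.e.\ $\rn{K}$ is independent of that cell-data. Letting $m\to\infty$, the $\sigma$-algebras generated by the cell-data increase to $\sigma(\rn{\theta}_A\mid A\in r(\NN_+,\ell))$, so a monotone-class argument yields that $\rn{K}$ is independent of $(\rn{\theta}_A\mid A\in r(\NN_+,\ell))$; that is, $\cN$ is weakly $\ell$-independent.

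The crux -- and the reason both propositions enter -- is the implication (ii)$\Rightarrow$(i). Naively one would like to argue that the $T$-reduct of an array of $\xi$, being a representation of the weakly $\ell$-independent $\phi$, is itself independent of the low-order coordinates; but that is precisely the content of (iii), which is not yet at our disposal. Proposition~\ref{prop:theonalignment} sidesteps this by pinning the $T$-side of the coupling down to be (a dummy-variable extension of) the \emph{given} weakly $\ell$-independent theon $\cN$, while Proposition~\ref{prop:rankae} independently straightens the $T'$-side down to rank $\le\ell$; the point is that these two surgeries do not collide, since the second one alters only the $T'$-peons and only on a null set. A minor technical wrinkle is the passage to a standard probability space in (i)$\Rightarrow$(iii), which can be avoided at the cost of running the partition argument over a general $\Omega$ using a generating algebra of a large enough countably generated sub-$\sigma$-algebra.
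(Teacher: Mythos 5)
Your argument is correct, and parts~(iii)$\Rightarrow$(ii) and~(ii)$\Rightarrow$(i) match the paper's proof essentially line for line (same use of Proposition~\ref{prop:theonalignment} to pin the $T$-side to the given weakly $\ell$-independent theon, same null-set straightening of the $T'$-peons via Proposition~\ref{prop:rankae}, same product-coordinate independence computation). For~(i)$\Rightarrow$(iii), however, you take a genuinely different route. You build an auxiliary theory with one predicate of arity $d\le\ell$ per cell of a refining partition of $[0,1]$, couple it with $\cN$, invoke $\UCouple[\ell]$ at every finite stage, and then pass to the limit with a monotone-class argument; this forces a preliminary reduction of $\Omega$ to $[0,1]$ (or to a countably generated sub-$\sigma$-algebra), which you rightly flag as a wrinkle. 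The paper instead introduces, for each fixed $m\in\NN$ and measurable $B\subseteq\cE_{m,\ell}(\Omega)$, a \emph{single} new $m$-ary predicate $Q$ with peon $\cH_Q=B\times X^{\binom{[m]}{>\ell}}$: even though $k(Q)=m$ can exceed $\ell$, this peon depends only on coordinates of index size $\le\ell$ and hence has rank $\le\ell$, so $\UCouple[\ell]$ applies directly. The resulting $(T\cup T_{\{Q\}})$-on gives the factorization $\PP[\rn{K}\rest_{[m]}=K\land(\rn\theta_A)_{A\in r(m,\ell)}\in B]=\phi(\langle K\rangle)\cdot\mu(B)$ in one step, with no discretization, no passage to $[0,1]$, and no stage-by-stage limit. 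What your approach buys is a purely "arity $\le\ell$" set of tests, which illustrates that low-arity auxiliary structures already suffice; what the paper's approach buys is economy and the absence of any side conditions on $\Omega$, because $B$ is allowed to be an arbitrary measurable set from the start. Both resolve to a $\pi$-system argument in the end, so the technical overhead is comparable; the paper's version is just the more direct implementation.
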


\begin{proof}
  \ref{prop:weakindep:weakindepall}$\implies$\ref{prop:weakindep:weakindep} is trivial.

  \medskip

\ref{prop:weakindep:weakindep}$\implies$\ref{prop:weakindep:UCouple}.

Let $\cN$ be a $T$-on over some space $\Omega=(X,\cA,\mu)$ such that the
  exchangeable array $\rn{K}$ corresponding to $\cN$ with respect to
  $\rn{\theta}$ picked in $\cE_{\NN_+}(\Omega)$ according to $\mu$ is
  independent from $(\rn{\theta}_A \mid A\in r(\NN_+,\ell))$. Let
$\psi\in\HomT{T'}$ for some theory $T'$ be such that $\rk(\psi)\leq\ell$
  and let $\xi\in\HomT{T\cup T'}$ be any coupling of $\phi$ and $\psi$. We
  have to prove that $\xi=\phi\otimes\psi$.

  Let also $I\interpret{T}{T\cup T'}$ and $I'\interpret{T'}{T\cup T'}$ be
  the structure-erasing interpretations. By
  Proposition~\ref{prop:theonalignment}, there exists a $(T\cup T')$-on
  $\cH$ over $\Omega\times\Omega$ such that $\xi=\phi_\cH$ and
  \begin{align}\label{eq:weakindep:theonalignment}
    \cH_P = \cN_P\times\cE_{k(P)}(\Omega)
  \end{align}
  for every $P$ in the language of $T$. By possibly changing zero-measure sets of the peons corresponding to
  $T'$ using Proposition~\ref{prop:rankae}, we may also assume $\rk(I'(\cH))=\rk(\psi)\leq\ell$.

  Let us pick $\rn{\eta}$ in $\cE_{\NN_+}(\Omega)$ according to $\mu$ and independently from
  $\rn{\theta}$; we view $(\rn{\theta},\rn{\eta})$ as a $\cE_{\NN_+}(\Omega\times\Omega)$-valued
  random variable distributed according to $\mu\otimes\mu$. Let
  $\rn{L}$ be the exchangeable array corresponding to $\cH$ with respect to $(\rn{\theta},\rn{\eta})$. Note
  that~\eqref{eq:weakindep:theonalignment} implies that $I(\rn{L}) = \rn{K}$, which
  in turn implies that
  $I(\rn{L})$ is independent from $((\rn{\theta}_A \mid A\in r(\NN_+,\ell)),\rn{\eta})$. On the other hand,
  since $\rk(I'(\cH))\leq\ell$, it follows that $I'(\rn{L})$ is completely determined by
  $((\rn{\theta}_A,\rn{\eta}_A) \mid A\in r(\NN_+,\ell))$, so $I(\rn{L})$ is independent from
  $I'(\rn{L})$. This means that for $m\in\NN_+$ and $K\in\cK_m[T\cup T']$, we have
  \begin{align*}
    \xi(\langle K\rangle)
    & =
    \PP[\rn{L}\rest_{[m]} = K]
    =
    \PP[I(\rn{L})\rest_{[m]} = I(K)\land I'(\rn{L})\rest_{[m]} = I'(K)]
    \\
    & =
    \PP[I(\rn{L})\rest_{[m]} = I(K)]\cdot\PP[I'(\rn{L})\rest_{[m]} = I'(K)]
    =
    \phi(\langle I(K)\rangle)\cdot\psi(\langle I'(K)\rangle),
  \end{align*}
  so $\xi = \phi\otimes\psi$, hence item~\ref{prop:weakindep:UCouple} follows.

  \medskip

 Let us prove~\ref{prop:weakindep:UCouple}$\implies$\ref{prop:weakindep:weakindepall}.
 Let $\Omega=(X,\cA,\mu)$ be an  atomless complete probability space and
   $\cN$ be a $T$-on over $\Omega$ with $\phi = \phi_\cN$. We have to prove that
  the exchangeable array $\rn{K}$ corresponding to $\cN$
  with respect to $\rn{\theta}$ picked in $\cE_{\NN_+}(\Omega)$
  according to $\mu$ is independent
  from $(\rn{\theta}_A \mid A\in r(\NN_+,\ell))$. For that, it is sufficient
  to show that for any $m\in\NN$, any $K\in\cK_m[T]$ and any measurable set
$B\subseteq\cE_{m,\ell}(\Omega)$, the events $\rn{K}\rest_{[m]}=K$ and
$(\rn{\theta}_A \mid A\in r(m,\ell))\in B$ are independent.

  Let $Q$ be a new $m$-ary predicate symbol and consider the $(T\cup T_{\{Q\}})$-on $\cH$ over $\Omega$
  given by $\cH_P\df\cN_P$ for every $P$ in the language of $T$ and $\cH_Q\df B\times
  X^{\binom{[m]}{>\ell}}$. Let also $I\interpret{T}{T\cup T_{\{Q\}}}$ and $I'\interpret{T_{\{Q\}}}{T\cup
    T_{\{Q\}}}$ be the structure-erasing interpretations so that $\phi_\cH$ is a
    coupling of $\phi$ and $\phi_{\cH}^{I'}$. Since $\rk(\phi_{\cH}^{I'})\leq
    \rk(\cH_Q)\leq\ell$ and $\phi\in \UCouple[\ell]$, we have $\phi_{\cH} =
    \phi\otimes \phi_{\cH}^{I'}$. Finally, let $S$ be the set of all $L\in
    \cK_m[T \cup T_{\{Q\}}]$ such that $I(L)=K$ and $(1,2,\ldots, m)\in
    R_Q(L)$. Then we have
    \begin{multline*}
      \PP[\rn K\rest_{[m]}=K \land (\rn{\theta}_A \mid A\in r(m,\ell))\in B]
      =
      \sum_{L\in S} \phi_\cH(\langle L\rangle)
      \\
      =
      \phi(\langle K\rangle )
      \sum_{L\in S}\phi_\cH^{I'}(\langle I'(L)\rangle)
      =
      \PP[\rn K\rest_{[m]}=K]
      \cdot \PP[(\rn{\theta}_A \mid A\in r(m,\ell))\in B],
    \end{multline*}
    which completes the proof.
\end{proof}

The alternative characterization of $\UCouple$ via weak independence gives
easy proofs of Theorems~\ref{thm:inter-properties} and~\ref{thm:naturalityindcoup}.

\begin{proofof}{Theorem~\ref{thm:inter-properties}}
  $\Independence[\ell]\implies\UCouple[\ell]$.

  Let $\cN$ be an $\ell$-independent $T$-on, and let $\rn{K}$ be the exchangeable array corresponding to $\cN$.
  Then each $R_P(\rn{K})$ depends only on the coordinates $\rn{\theta}_A$ with
  $\lvert A\rvert>\ell$
  (see~\eqref{eq:arraycryptomorphism})  and hence is
  independent from $(\rn{\theta}_A \mid A\in r(A,\ell))$. Therefore, $\cN$ is
  weakly $\ell$-independent and $\Independence[\ell]\implies\UCouple[\ell]$
  follows from Lemma~\ref{lem:weakindep}.

  The implication~$\UCouple[\ell]\implies\UInduce[\ell]$ follows trivially from the definitions.
\end{proofof}

\begin{proofof}{{Theorem~\ref{thm:naturalityindcoup}}}

For item~\ref{thm:naturalityindcoup:Independence}, if $\cN^1$ and $\cN^2$ are
$\ell$-independent theons then $\cN^1\otimes \cN^2$ is also $\ell$-independent, from
which the statement follows.

For item~\ref{thm:naturalityindcoup:UCouple}, pick arbitrarily theons $\cN^1$
and $\cN^2$ such that $\phi_i=\phi_{\cN^i}$. Let $(\rn{\theta}^1, \rn{\theta}^2)$
be uniformly distributed in $\cE_{\NN_+} \times \cE_{\NN_+}$, and let $\rn K$ be the
exchangeable array corresponding to $\cN^1\otimes \cN^2$ with respect to
$(\rn{\theta}^1, \rn{\theta}^2)$. Note that for $i\in [2]$ and for the
structure-erasing interpretation $I_i\interpret{T_i}{T_1\cup T_2}$, the
exchangeable array corresponding to $\cN^i$ with respect $\rn{\theta}^i$ is
$I_i(K)$.

By Lemma~\ref{lem:weakindep}, it is sufficient to show that
  if $I_i(\rn{K}$) is independent from $(\rn{\theta^i}_A \mid A\in r(\NN_+,\ell))$ for $i\in[2]$, then
  $\rn{K}$ is independent from $((\rn{\theta^1}_A, \rn{\theta^2}_A) \mid A\in r(\NN_+,\ell))$.
This immediately follows from the following easily verifiable general fact:
\begin{claim}
Let $\rn{X_1},\rn{X_2},\rn{Y_1},\rn{Y_2}$ be mutually independent random
variables, and let $f_1(X_1,Y_1)$, $f_2(X_2,Y_2)$ be functions such that
$f_i(\rn{X_i}, \rn{Y_i})$ is independent from $\rn{X_i}$ ($i=1,2$). Then
$(f_1(\rn{X_1},\rn{Y_1}),f_2(\rn{X_2},\rn{Y_2}))$ is independent from $(\rn{X_1}, \rn{X_2})$.
\end{claim}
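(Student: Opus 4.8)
The statement is a purely measure-theoretic fact about mutual independence, so I would prove it directly by unwinding definitions, with no recourse to the combinatorial machinery. The key observation is that independence can be checked on a generating $\pi$-system of events, so it suffices to verify that for all measurable rectangles the joint distribution factors appropriately. Concretely, I would fix measurable sets $A_1, A_2$ in the respective ranges of $f_1, f_2$ and measurable sets $B_1 \subseteq \operatorname{range}(\rn{X_1})$, $B_2 \subseteq \operatorname{range}(\rn{X_2})$, and aim to show
\begin{align*}
  \PP\bigl[f_1(\rn{X_1},\rn{Y_1})\in A_1 \land f_2(\rn{X_2},\rn{Y_2})\in A_2 \land \rn{X_1}\in B_1 \land \rn{X_2}\in B_2\bigr]
  = \PP\bigl[f_1(\rn{X_1},\rn{Y_1})\in A_1 \land f_2(\rn{X_2},\rn{Y_2})\in A_2\bigr]\cdot\PP[\rn{X_1}\in B_1]\cdot\PP[\rn{X_2}\in B_2].
\end{align*}

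The main step is a factorization argument using the mutual independence of $\rn{X_1},\rn{X_2},\rn{Y_1},\rn{Y_2}$. Write $E_i$ for the event $f_i(\rn{X_i},\rn{Y_i})\in A_i$ and $C_i$ for $\rn{X_i}\in B_i$. Since the pair $(\rn{X_1},\rn{Y_1})$ is independent from the pair $(\rn{X_2},\rn{Y_2})$ (this follows from mutual independence of all four), the event $E_1\cap C_1$ is independent from $E_2\cap C_2$, so the left-hand side factors as $\PP[E_1\cap C_1]\cdot\PP[E_2\cap C_2]$. Now I invoke the hypothesis: $f_i(\rn{X_i},\rn{Y_i})$ is independent from $\rn{X_i}$, i.e.\ $E_i$ is independent from $C_i$, so $\PP[E_i\cap C_i]=\PP[E_i]\cdot\PP[C_i]$. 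Combining, the left-hand side equals $\PP[E_1]\PP[E_2]\PP[C_1]\PP[C_2]$. Finally, using again that $(\rn{X_1},\rn{Y_1})\perp(\rn{X_2},\rn{Y_2})$ we have $\PP[E_1]\PP[E_2]=\PP[E_1\cap E_2]$, which rearranges to exactly the desired right-hand side.

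The only point requiring a little care is the passage from rectangles to full independence: one should note that events of the form $\{f_1(\rn{X_1},\rn{Y_1})\in A_1,\ f_2(\rn{X_2},\rn{Y_2})\in A_2\}$ form a $\pi$-system generating $\sigma(f_1(\rn{X_1},\rn{Y_1}),f_2(\rn{X_2},\rn{Y_2}))$ and that events $\{\rn{X_1}\in B_1,\ \rn{X_2}\in B_2\}$ similarly generate $\sigma(\rn{X_1},\rn{X_2})$, so the rectangle identity above upgrades to independence of the two $\sigma$-algebras by the standard uniqueness-of-measures (Dynkin $\pi$-$\lambda$) argument. I do not anticipate any genuine obstacle here; the whole content is the three applications of independence (the cross-independence of the two blocks, twice, and the within-block hypothesis, once), and everything else is bookkeeping.
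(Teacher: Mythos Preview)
Your argument is correct. The paper does not actually prove this claim at all; it introduces it as an ``easily verifiable general fact'' and leaves the verification to the reader, so there is no approach to compare against. Your direct factorization via rectangles, together with the $\pi$-$\lambda$ upgrade, is exactly the standard way one fills in such a detail.
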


In our context, we set $\rn{X_i} =  (\rn{\theta^i}_A \mid \lvert A\rvert\leq\ell)$,
$\rn{Y_i} =  (\rn{\theta^i}_A \mid \lvert A\rvert > \ell)$ and let $f_i$ compute the array $I_i(K)$ from $(X_i,Y_i)$ (thus $(f_1(X_1,Y_1),f_2(X_2,Y_2))$ computes the array $K$ from $(X_1,X_2,Y_1,Y_2)$).
\end{proofof}

The next lemma says that unique coupleability satisfies a ``chain rule''
analogous to the chain rule for mutual independence of random variables.

\begin{lemma}\label{lem:chaincoupling}
  Let $\phi_i\in\HomT{T_i}$ for $i\in[t]$ and suppose that for every $i\in[t-1]$, $\phi_{i+1}$ is uniquely coupleable with $\phi_1\otimes\cdots\otimes\phi_i$. Then $\phi_1,\ldots,\phi_t$ are uniquely coupleable.
\end{lemma}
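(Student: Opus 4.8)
The plan is to reduce the general statement to the definition of unique coupleability by an induction on $t$, using the key fact (already noted right after Definition~\ref{def:indcoup_syntactic}) that an independent coupling of independent couplings is again the big independent coupling, i.e. that $\otimes$ is associative in the appropriate sense. First I would dispose of the base case $t=1$ (and $t=2$, which is just the hypothesis) trivially. For the inductive step, suppose $\phi_1,\ldots,\phi_{t-1}$ are uniquely coupleable; I want to conclude the same for $\phi_1,\ldots,\phi_t$. Let $\xi\in\HomT{\bigcup_{i\in[t]}T_i}$ be an arbitrary coupling of $\phi_1,\ldots,\phi_t$, so $\xi^{I_i}=\phi_i$ for each $i$, where $I_i$ is the structure-erasing interpretation into $\bigcup_{j\in[t]}T_j$. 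I must show $\xi=\phi_1\otimes\cdots\otimes\phi_t$.

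The main step is to restrict $\xi$ along the structure-erasing interpretation $J\interpret{\bigcup_{i\in[t-1]}T_i}{\bigcup_{i\in[t]}T_i}$ that forgets the last theory. Then $\xi^J$ is a coupling of $\phi_1,\ldots,\phi_{t-1}$: indeed composing $J$ with the structure-erasing interpretations internal to $\bigcup_{i\in[t-1]}T_i$ recovers the $I_i$ for $i<t$. By the inductive hypothesis, $\xi^J=\phi_1\otimes\cdots\otimes\phi_{t-1}$. Similarly, letting $J'\interpret{T_t}{\bigcup_{i\in[t]}T_i}$ be the structure-erasing interpretation, we have $\xi^{J'}=\phi_t$. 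So $\xi$ is a coupling of the two objects $\phi_1\otimes\cdots\otimes\phi_{t-1}$ and $\phi_t$ — here I would double-check that the pair of interpretations $(J,J')$ realizes $\xi$ as living in the disjoint union $(\bigcup_{i\in[t-1]}T_i)\cup T_t$, which is literally $\bigcup_{i\in[t]}T_i$, so this is purely a matter of unwinding notation. Now by hypothesis $\phi_t$ is uniquely coupleable with $\phi_1\otimes\cdots\otimes\phi_{t-1}$, hence $\xi=(\phi_1\otimes\cdots\otimes\phi_{t-1})\otimes\phi_t$.

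To finish I would invoke associativity of the independent coupling: $(\phi_1\otimes\cdots\otimes\phi_{t-1})\otimes\phi_t=\phi_1\otimes\cdots\otimes\phi_t$. This can be seen either from the syntactic formula in Definition~\ref{def:indcoup_syntactic} — both sides evaluated at $\langle M\rangle$ give $\prod_{i\in[t]}\phi_i(\langle I_i(M)\rangle)$, since $I_i$ for $i<t$ factors through the interpretation defining the inner coupling — or, more cleanly, from Definition~\ref{def:indcoup} by picking representing $T_i$-ons $\cN^i$ and noting that $(\cN^1\otimes\cdots\otimes\cN^{t-1})\otimes\cN^t$ and $\cN^1\otimes\cdots\otimes\cN^t$ coincide up to the canonical identification of $(\prod_{i<t}\Omega_i)\times\Omega_t$ with $\prod_{i\le t}\Omega_i$. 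Since $\xi$ was an arbitrary coupling, this shows the independent coupling is the unique one, i.e. $\phi_1,\ldots,\phi_t$ are uniquely coupleable. I expect the only mildly delicate point to be the bookkeeping of which structure-erasing interpretation is which and the verification that $J$ composed with the internal erasures gives the $I_i$; the associativity of $\otimes$ and the reduction itself are routine.
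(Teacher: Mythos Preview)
Your proposal is correct and follows essentially the same approach as the paper's proof: induction on $t$, restricting an arbitrary coupling $\xi$ along the structure-erasing interpretation that forgets $T_t$ to apply the inductive hypothesis, then using the hypothesis on $\phi_t$ to conclude. The paper's version is terser (it does not separately address associativity or the $t=2$ case), but the argument is the same.
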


\begin{proof}
  The proof is by induction in $t$. The result for $t=1$ is trivial. For $t\geq 2$, let
  $\xi\in\HomT{\bigcup_{i=1}^t T_i}$ be a coupling of $\phi_1,\ldots,\phi_t$ and let
  $I\interpret{\bigcup_{i=1}^{t-1} T_i}{\bigcup_{i=1}^t T_i}$ be the structure-erasing interpretation. Since
  $\xi^I$ is a coupling of $\phi_1,\ldots,\phi_{t-1}$, by inductive hypothesis we must have $\xi^I =
  \phi_1\otimes\cdots\otimes\phi_{t-1}$ so $\xi$ is also a coupling of $\phi_1\otimes\cdots\otimes\phi_{t-1}$
  and $\phi_t$, hence we must have $\xi=\phi_1\otimes\cdots\otimes\phi_t$.
\end{proof}

We finish this section with the (almost trivial) implication~\ref{thm:UCouple:weakindep}$\implies$\ref{thm:UCouple:local} of
Theorem~\ref{thm:UCouple}.

\begin{lemma}[Theorem~\ref{thm:UCouple}\ref{thm:UCouple:weakindep}$\implies$\ref{thm:UCouple:local}]\label{lem:weakindep->local}
  Let $\ell\in\NN$. If $\phi$ is weakly $\ell$-independent, then $\phi$ is $\ell$-local.
\end{lemma}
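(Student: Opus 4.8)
The plan is to unwind the definitions and reduce everything to a simple fact about conditional independence. By hypothesis there is a weakly $\ell$-independent $T$-on $\cN$ with $\phi_\cN=\phi$; I will show that $\cN$ is in fact $\ell$-local, which by definition gives that $\phi$ is $\ell$-local. So fix $\rn{\theta}=(\rn{\theta}_A)_{A\in r(\NN_+)}$ picked in $\cE_{\NN_+}(\Omega)$ according to $\mu$ and let $\rn{K}$ be the corresponding exchangeable array; weak $\ell$-independence says precisely that $\rn{K}$ is independent of $\rn{\theta}_{\le\ell}\df(\rn{\theta}_A\mid A\in r(\NN_+,\ell))$. Given a collection $(V_i)_{i\in I}$ of finite subsets of $\NN_+$ with pairwise intersections of size at most $\ell$, I want the marginals $(\rn{K}\rest_{V_i}\mid i\in I)$ to be mutually independent; since mutual independence concerns only finite subcollections, I may assume $I=[m]$.

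The key structural point, and the only place the intersection hypothesis enters, is a disjointness observation. From~\eqref{eq:arraycryptomorphism} each marginal $\rn{K}\rest_{V_i}$ is a measurable function of the block $(\rn{\theta}_A\mid A\in r(V_i))$; splitting this block according to whether $\lvert A\rvert\le\ell$, I can write $\rn{K}\rest_{V_i}=f_i(\rn{\theta}_{\le\ell},\rn{W}_i)$ with $\rn{W}_i\df(\rn{\theta}_A\mid A\subseteq V_i,\ \lvert A\rvert>\ell)$. Now the index set $r(\NN_+,\ell)$ and the sets $\{A\subseteq V_i:\lvert A\rvert>\ell\}$ ($i\in[m]$) are pairwise disjoint — the first trivially for cardinality reasons, and two of the latter because $A\subseteq V_i\cap V_j$ forces $\lvert A\rvert\le\lvert V_i\cap V_j\rvert\le\ell$. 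Since the individual coordinates $\rn{\theta}_A$ are mutually independent, it follows that $\rn{\theta}_{\le\ell},\rn{W}_1,\dots,\rn{W}_m$ are mutually independent, and each $\rn{K}\rest_{V_i}$, being a function of $\rn{K}$, is independent of $\rn{\theta}_{\le\ell}$.

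The proof then concludes by the following general fact, which I would state and prove as a claim.
\begin{claim}
  Let $\rn{Z},\rn{W}_1,\dots,\rn{W}_m$ be mutually independent random variables and $f_1,\dots,f_m$ measurable functions such that $f_i(\rn{Z},\rn{W}_i)$ is independent of $\rn{Z}$ for each $i\in[m]$. Then $f_1(\rn{Z},\rn{W}_1),\dots,f_m(\rn{Z},\rn{W}_m)$ are mutually independent.
\end{claim}
Applied with $\rn{Z}\df\rn{\theta}_{\le\ell}$ and the $\rn{W}_i,f_i$ above, this yields mutual independence of $(\rn{K}\rest_{V_i}\mid i\in I)$, hence $\ell$-locality of $\cN$ and of $\phi$. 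To prove the claim I would condition on $\rn{Z}$: conditionally on $\rn{Z}$ the $\rn{W}_i$ remain mutually independent (they are jointly independent of $\rn{Z}$), so for bounded measurable $h_1,\dots,h_m$ on the relevant ranges one has $\EE[\prod_i h_i(f_i(\rn{Z},\rn{W}_i))\mid\rn{Z}]=\prod_i\EE[h_i(f_i(\rn{Z},\rn{W}_i))\mid\rn{Z}]$ a.s.; and each factor $\EE[h_i(f_i(\rn{Z},\rn{W}_i))\mid\rn{Z}]$ equals the constant $\EE[h_i(f_i(\rn{Z},\rn{W}_i))]$ a.s.\ because $f_i(\rn{Z},\rn{W}_i)\perp\rn{Z}$, so taking expectations gives $\EE[\prod_i h_i(f_i(\rn{Z},\rn{W}_i))]=\prod_i\EE[h_i(f_i(\rn{Z},\rn{W}_i))]$, which (taking the $h_i$ to be indicators) is the asserted mutual independence.

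I do not foresee a real obstacle: the statement is almost a tautology once the coordinates of $\rn{\theta}$ are grouped correctly. The one point worth care is the bookkeeping of the second paragraph — checking that $\rn{K}\rest_{V_i}$ genuinely depends only on $\rn{\theta}$-coordinates indexed by subsets of $V_i$, and that the ``high-order'' blocks attached to different $V_i$ are disjoint — since this is exactly where the bound on pairwise intersections is used and where the argument would fail without it.
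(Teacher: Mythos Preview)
Your proposal is correct and follows essentially the same approach as the paper: split the coordinates of $\rn{\theta}$ into the low-order block $\rn{\theta}_{\le\ell}$ and the pairwise-disjoint high-order blocks attached to the $V_i$, then invoke a general probability claim. The paper states the claim (its Claim~\ref{clm:multiple}) with the slightly stronger hypothesis that the \emph{tuple} $(f_1(\rn X,\rn{Y_1}),\dots,f_n(\rn X,\rn{Y_n}))$ is independent of $\rn X$, which also holds here since the whole array $\rn K$ is independent of $\rn{\theta}_{\le\ell}$; your version with only individual independence is a mild strengthening of that claim, and your conditioning proof of it is fine.
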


\begin{proof}
  Let $\rn{K}$ be the exchangeable array corresponding to some theon $\cN$ with respect to $\rn{\theta}$ picked in
  $\cE_{\NN_+}(\Omega)$ according to $\mu$ such that $\phi = \phi_\cN$ and suppose $\rn{K}$ is independent from
  $(\rn{\theta}_A\mid A\in r(\NN_+,\ell))$. Since for $V\in r(\NN_+)$ the marginal $\rn{K}\rest_V$ depends
  only on $(\rn{\theta}_A\mid A\in r(V))$, the marginals $(\rn{K}\rest_{V_i}\mid i\in I)$ are
  mutually independent as long as the sets $V_i$ have pairwise intersections of size at most
  $\ell$. This follows from the following general observation.

  \begin{claim} \label{clm:multiple}
  Let $\rn X,\rn{Y_1},\ldots,\rn{Y_n}$ be mutually independent random
  variables and $f_i(X,Y_i)$ be functions such that $(f_1(\rn X,
  \rn{Y_1}),\ldots,f_n(\rn X, \rn{Y_n}))$ is independent of $\rn X$. Then $f_1(\rn X,
  \rn{Y_1}),\ldots,f_n(\rn X, \rn{Y_n})$ are mutually independent.
  \end{claim}

  In our situation, $\rn X= (\rn{\theta}_A\mid A\in r(\NN_+,\ell))$, $\rn{Y_i} =
  (\rn{\theta}_A\mid A\in r(V_i)\setminus r(\NN_+,\ell))$ and $f_i$ computes the marginal
  $K\rest_{V_i}$ from $(\theta_A \mid A \in r(V_i))$.

  This completes the proof that $\phi$ is $\ell$-local.
\end{proof}

\section{Naturality}
\label{sec:naturality}

The objective of this section is to show Theorem~\ref{thm:naturality}, that
is, to show that our quasirandomness properties are preserved under open
interpretations. For this, we need to do a bit of abstract nonsense.

Recall from~\cite[Sct.~2.2]{CR19} that the category \cat{Int} has pushouts
(otherwise known as amalgamated sums, fibred coproducts, etc.). More
concretely, for open interpretations $I_1\interpret{T}{T_1}$ and
$I_2\interpret{T}{T_2}$, a pushout of $(I_1,I_2)$ is given by the theory $T'$
obtained from $T_1\cup T_2$ by adding the axioms
\begin{align}\label{eq:pushoutaxiom}
  \forall\vec{x}, (I_1(P)(\vec{x})\equiv I_2(P)(\vec{x}))
\end{align}
for every $P$ in the language of $T$ and the open interpretations
$J_i\interpret{T_i}{T'}$ ($i\in[2]$) that act identically on the language of
$T_i$ so that
\begin{equation*}
  \begin{tikzcd}
    T\arrow[r, "I_1"]\arrow[d, "I_2"'] & T_1\arrow[d, "J_1"]\\
    T_2\arrow[r, "J_2"'] & T'
  \end{tikzcd}
\end{equation*}
is commutative and has the standard universality property.

The following theorem says that we can also amalgamate limit objects along
pushouts. Let us warn that unless the theory $T$ is trivial (in which case
a ``canonical'' amalgamation is provided by the independent coupling), we
are not aware of any natural, functorial construction here.

\begin{theorem}\label{thm:amalgamation}
  Let
  \begin{equation*}
    \begin{tikzcd}
      T\arrow[r, "I_1"]\arrow[d, "I_2"'] & T_1\arrow[d, "J_1"]\\
      T_2\arrow[r, "J_2"'] & T'
    \end{tikzcd}
  \end{equation*}
  be a pushout of \cat{Int} and let $\phi_1\in\HomT{T_1}$ and $\phi_2\in\HomT{T_2}$ be such that $\phi_1^{I_1}
  = \phi_2^{I_2}$. Then there exists $\psi\in\HomT{T'}$ such that
  $\psi^{J_1} = \phi_1$ and $\psi^{J_2}=\phi_2$.
\end{theorem}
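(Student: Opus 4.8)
The plan is to reduce the pushout amalgamation to a geometric statement about theons and then invoke theon uniqueness. First I would fix a $T_1$-on $\cN^1$ and a $T_2$-on $\cN^2$ with $\phi_{\cN^1}=\phi_1$ and $\phi_{\cN^2}=\phi_2$, say both over $\Omega=[0,1]$ after passing to a common atomless space. Since $\phi_1^{I_1}=\phi_2^{I_2}=:\phi$ in $\HomT{T}$, the $T$-ons $I_1(\cN^1)$ and $I_2(\cN^2)$ both represent $\phi$, so by theon uniqueness~\cite[Proposition~7.7]{CR19} there are sequences of symmetric measure-preserving-on-h.o.a.\ functions $f=(f_d)_{d=1}^{k}$, $g=(g_d)_{d=1}^{k}$ (with $k$ the maximum arity in the language of $T$) such that $\widehat f_{k(P)}(z)\in I_1(\cN^1)_P \equiv \widehat g_{k(P)}(z)\in I_2(\cN^2)_P$ for almost every $z$ and every predicate $P$ of $T$. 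The goal is to use $f$ and $g$ to "pull back" the two theons to a common space where the images of the $T$-predicates literally coincide, so that the pushout axiom~\eqref{eq:pushoutaxiom} is satisfied pointwise.

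Next I would apply $\widehat f$ to $\cN^1$ and $\widehat g$ to $\cN^2$: define a $T_1$-on $\cH^1$ over $\Omega$ by $\cH^1_Q\df\widehat f_{k(Q)}^{-1}(\cN^1_Q)$ for $Q$ in the language of $T_1$, and a $T_2$-on $\cH^2$ by $\cH^2_R\df\widehat g_{k(R)}^{-1}(\cN^2_R)$ for $R$ in the language of $T_2$. By the easy direction of theon uniqueness, $\phi_{\cH^1}=\phi_1$ and $\phi_{\cH^2}=\phi_2$. The point of this substitution is that now $I_1(\cH^1)_P=\widehat f_{k(P)}^{-1}(I_1(\cN^1)_P)$ and $I_2(\cH^2)_P=\widehat g_{k(P)}^{-1}(I_2(\cN^2)_P)$, and the uniqueness identity~\eqref{eq:rankae:uniqueness}-style equivalence displayed above says precisely that these two sets agree a.e. So $I_1(\cH^1)_P=I_2(\cH^2)_P$ a.e.\ for every predicate $P$ of $T$. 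Now I combine $\cH^1$ and $\cH^2$ into a single structure $\cH$ on the language of $T_1\cup T_2$ by taking $\cH_Q\df\cH^1_Q$ for predicates of $T_1$ and $\cH_R\df\cH^2_R$ for predicates of $T_2$ (these languages overlap only possibly in shared symbols, but in the disjoint-union convention of $T_1\cup T_2$ there is no overlap, and the images of $T$ under $I_1,I_2$ are \emph{defined} subformulas, not atomic predicates, so no conflict arises). Since $I_1(\cH)_P=I_2(\cH)_P$ a.e., the structure $\cH$ satisfies the pushout axioms~\eqref{eq:pushoutaxiom} a.e., hence (after a zero-measure correction, using the completeness assumption to stay within the measurable-subset framework, exactly as in Proposition~\ref{prop:rankae}) it is a genuine $T'$-on. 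Setting $\psi\df\phi_\cH$ and letting $J_1,J_2$ be the structure-erasing interpretations $T_i\to T'$, we get $\psi^{J_i}=\phi_{J_i(\cH)}=\phi_{\cH^i}=\phi_i$ for $i\in[2]$, which is what we want.

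The main obstacle is the bookkeeping in the combination step: one must check that writing $\cH_P$ both as $I_1(\cH^1)_P$ (a subset computed via the formula $I_1(P)$ applied to the $T_1$-peons) and as $I_2(\cH^2)_P$ (via $I_2(P)$ applied to the $T_2$-peons) is consistent, i.e.\ that these two a.e.-equal sets can be made \emph{exactly} equal after adjusting a null set, so that axiom~\eqref{eq:pushoutaxiom} holds everywhere off the diagonal and $\cH$ is a legitimate $T'$-on rather than merely a weak one. This is handled by the same device used in Proposition~\ref{prop:rankae}: replace each relevant peon by a canonically chosen representative of its a.e.-class (e.g.\ via the Fubini-type averaging, or simply by fixing the representative $I_2(\cH^2)_P$ and redefining the $T_1$-side), which changes nothing at the level of $\phi_\cH$. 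A secondary, purely technical point is checking that the arities match up — the function sequences $f,g$ are indexed up to $k=\max_{P\in\cL(T)}k(P)$, which is $\leq\max$ of the arities in $T_1$ and in $T_2$, so the substitutions $\widehat f_{k(Q)}$, $\widehat g_{k(R)}$ are defined for all predicates of $T_1$, $T_2$ respectively; and one must confirm that applying $\widehat f$ only to $T_1$-predicates (not to the whole of $T_1\cup T_2$) is exactly what makes $I_1(\cH)$ come out equal to $I_2(\cH)$. Neither of these is deep, but they are where the care is needed. I would also remark, as the authors do before the statement, that unlike the trivial-$T$ case there is no functorial/canonical choice of $\psi$ here — the construction depends on the choice of couplers $f,g$ — so the theorem is genuinely an existence statement.
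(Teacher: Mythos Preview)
Your approach is essentially the same as the paper's: both use theon uniqueness to align the $T$-reducts of $\cN^1$ and $\cN^2$, then combine into a single structure satisfying the pushout axioms a.e. The paper packages the alignment step into Proposition~\ref{prop:theonalignment} (which it has already proved via theon uniqueness in exactly the way you describe), applies it asymmetrically --- aligning $\cN^1$ to $\cN^2\times\cE$ over $\Omega\times\Omega$ rather than pulling both back symmetrically --- and explicitly reduces to the particular pushout $T'=T_1\cup T_2$ plus axioms~\eqref{eq:pushoutaxiom} at the outset (which you do only implicitly by treating $J_1,J_2$ as structure-erasing).

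Two minor corrections. First, your arity bookkeeping is stated backwards: since $k=\max_{P\in\cL(T)}k(P)$ can be \emph{strictly less} than the maximum arity in $\cL_1$ or $\cL_2$, the map $\widehat f_{k(Q)}$ is \emph{not} automatically defined for $Q\in\cL_1$ with $k(Q)>k$; you must extend $f_d,g_d$ to $d>k$ by something measure-preserving on h.o.a.\ such as $f_d(z)\df z_{[d]}$, exactly as done in the proof of Proposition~\ref{prop:theonalignment}. This is harmless because, as you note, only $f_d$ with $d\leq k(P)\leq k$ enter into computing $I_1(\cH^1)_P$. Second, there is no need to upgrade from a weak $T'$-on to a strong one: a.e.\ satisfaction of the pushout axioms already makes $\cH$ a weak $T'$-on, and $\phi_\cH$ is well-defined for weak theons (cf.\ footnote~\ref{foot:complete}). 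The paper simply writes ``(weak) $T'$-on'' and moves on.
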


\begin{proof}
  First we claim that it is enough to show the case when $T'$ is obtained from $T_1\cup T_2$ by adding the
  axioms~\eqref{eq:pushoutaxiom}. Indeed, if $\psi$ is constructed for such particular case, then we can get
  our desired element of $\HomT{T'}$ for a general pushout $T'$ as $\psi^I$ for the universal isomorphism $I$
  between the pushout theories.

  Let us prove then the particular case. Let $\cL$, $\cL_1$ and $\cL_2$ be the languages of $T$, $T_1$ and
  $T_2$, respectively. For $i\in[2]$, let $\cN^i$ be a $T_i$-on (over $[0,1]$) such that $\phi_i =
  \phi_{\cN^i}$. Since $\phi_{I_1(\cN^1)} = \phi_1^{I_1} = \phi_2^{I_2} = \phi_{I_2(\cN^2)}$, by
  Proposition~\ref{prop:theonalignment}, there exists a $T_1$-on $\cH^1$ over $[0,1]^2$ such that
  $I_1(\cH^1)_P = I_2(\cN^2)_P\times\cE_{k(P)}$ $\lambda$-a.e.\ for every $P\in\cL$.

  Define then the Euclidean structure $\cH$ on $\cL_1\disjcup\cL_2$ over $[0,1]^2$ by
  \begin{align*}
    \cH_P
    & \df
    \begin{dcases*}
      \cH^1_P, & if $P\in\cL_1$;\\
      \cN^2_P\times\cE_{k(P)}, & if $P\in\cL_2$.
    \end{dcases*}
  \end{align*}
  Let us show that $\cH$ is a (weak) $T'$-on. To show this, it is enough to show (see~\cite[Definition~3.5,
    Remark~5, Theorem~3.7]{CR19}, by reaxiomatizing $T,T_1,T_2$ to be substitutionally closed, $T'$ also becomes
  substitutionally closed) that $T(I_1(P),\cH) = T(I_2(P),\cH)$ $\lambda$-a.e.\ for every $P\in\cL$. But this
  follows from
  \begin{align*}
    T(I_1(P),\cH) & = T(I_1(P),\cH^1) = I_1(\cH^1)_P;\\
    T(I_2(P),\cH) & = T(I_2(P),\cN^2)\times\cE_{k(P)} = I_2(\cN^2)\times\cE_{k(P)}.
  \end{align*}
  Finally, since we trivially have $J_1(\cH) = \cH^1$ and $J_2(\cH)_P = \cN^2_P\times\cE_{k(P)}$ for every
  $P\in\cL_2$, it follows that $\psi\df\phi_\cH$ satisfies $\psi^{J_1} = \phi_1$ and $\psi^{J_2} = \phi_2$.
\end{proof}

The next proposition makes use of this amalgamation property to ``lift''
couplings through interpretations.

\begin{proposition}[Coupling lifting]\label{prop:lifting}
  Let $I\interpret{T_1}{T_2}$ be an open interpretation, let $T$ be a canonical theory and let
  $\phi\in\HomT{T}$ and $\phi_2\in\HomT{T_2}$. If $\xi$ is a coupling of $\phi_2^I$ and $\phi$, then there
  exists a coupling $\widehat{\xi}$ of $\phi_2$ and $\phi$ such that $\xi = \widehat{\xi}^{I\cup\id_T}$.
\end{proposition}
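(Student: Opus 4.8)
The plan is to reduce the statement to the amalgamation theorem (Theorem~\ref{thm:amalgamation}) by exhibiting the right pushout square. The key observation is that ``coupling with $\phi$'' is itself a structure-erasing operation, so the combined theories $T_2\cup T$ and $T_1\cup T$ fit into a pushout of \cat{Int} over $T_1\cup T$, with the map $I\cup\id_T$ as one leg.

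\medskip

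First I would set up the square. Let $T_1^+\df T_1\cup T$ and consider the interpretations $I\cup\id_T\interpret{T_1^+}{T_2\cup T}$ and $\iota\interpret{T_1^+}{T_1^+}$, where the latter is just the identity — wait, that is not quite the shape of a pushout. Instead, the cleaner choice is to take the pushout of the pair
\begin{equation*}
  \begin{tikzcd}
    T_1\arrow[r, "I"]\arrow[d, "J"'] & T_2\arrow[d]\\
    T_1\cup T\arrow[r] & T'
  \end{tikzcd}
\end{equation*}
where $J\interpret{T_1}{T_1\cup T}$ is the structure-erasing interpretation. By the explicit description of pushouts in \cat{Int} recalled before Theorem~\ref{thm:amalgamation}, the pushout theory $T'$ is obtained from $(T_2)\cup(T_1\cup T)$ by identifying the two copies of the language of $T_1$ via the axioms $I(P)(\vec x)\equiv P(\vec x)$; since one of the copies of $T_1$ sits inside $T_1\cup T$ literally, this pushout is canonically isomorphic to $T_2\cup T$, with the two legs being $I\cup\id_T\interpret{T_1\cup T}{T_2\cup T}$ (call it $J_1$) on the ``$T_1\cup T$'' corner and the structure-erasing interpretation $J_2\interpret{T_2}{T_2\cup T}$ on the ``$T_2$'' corner. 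Here I am using that $I$ acts only on the $T_1$-part and $\id_T$ on the $T$-part, so imposing~\eqref{eq:pushoutaxiom} for predicates of $T_1$ exactly recovers the behaviour of $I$.

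\medskip

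Next I would check the hypothesis of Theorem~\ref{thm:amalgamation} for the objects $\phi_2\in\HomT{T_2}$ and $\xi\in\HomT{T_1\cup T}$ sitting on the two outer corners. We need their restrictions to $T_1$ to agree: $\phi_2^I = \xi^{J}$, where $J\interpret{T_1}{T_1\cup T}$ is structure-erasing. But $\xi$ is by hypothesis a coupling of $\phi_2^I$ and $\phi$, so $\xi^{J}=\phi_2^I$ by the definition of coupling — exactly the required compatibility. Theorem~\ref{thm:amalgamation} then yields $\widehat\xi\in\HomT{T_2\cup T}$ with $\widehat\xi^{J_1}=\xi$ and $\widehat\xi^{J_2}=\phi_2$. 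Since $J_1 = I\cup\id_T$, the first equation is precisely $\xi = \widehat\xi^{I\cup\id_T}$. It remains to verify that $\widehat\xi$ is a coupling of $\phi_2$ and $\phi$: the restriction to $T_2$ is $\widehat\xi^{J_2}=\phi_2$; and the restriction to $T$ factors as the restriction of $\widehat\xi^{I\cup\id_T}=\xi$ to $T$ (by functoriality of $\place^{(\cdot)}$, since $T\hookrightarrow T_1\cup T\xrightarrow{I\cup\id_T} T_2\cup T$ composes to the structure-erasing map $T\hookrightarrow T_2\cup T$), which equals $\phi$ because $\xi$ couples $\phi_2^I$ and $\phi$.

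\medskip

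I expect the only real care to be in identifying the pushout correctly — making sure that adding the axioms~\eqref{eq:pushoutaxiom} for the predicates of $T_1$ to $T_2\cup(T_1\cup T)$ collapses to $T_2\cup T$ and that the induced leg is genuinely $I\cup\id_T$ rather than something that only agrees with it up to the universal isomorphism. Once the diagram is pinned down, everything else is a diagram chase using functoriality of restriction along interpretations and the definition of coupling; no analysis or measure theory is needed, since all the work has been packaged into Theorem~\ref{thm:amalgamation}.
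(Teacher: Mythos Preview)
Your proposal is correct and follows exactly the paper's approach: the paper's proof consists entirely of drawing the same pushout square (with $T_1\to T_2$ via $I$, $T_1\to T_1\cup T$ structure-erasing, and pushout $T_2\cup T$ with legs $I\cup\id_T$ and the structure-erasing map) and invoking Theorem~\ref{thm:amalgamation}. Your version is more explicit about verifying the compatibility hypothesis and that $\widehat\xi$ is indeed a coupling, but the argument is the same.
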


\begin{proof}
  This follows from Theorem~\ref{thm:amalgamation} and the fact that
  \begin{equation*}
    \begin{tikzcd}
      T_1\arrow[r, "I"]\arrow[d] & T_2\arrow[d]\\
      T_1\cup T\arrow[r, "I\cup\id_T"'] & T_2\cup T
    \end{tikzcd}
  \end{equation*}
  is a pushout in \cat{Int}, where the vertical arrows are the structure-erasing interpretations.
\end{proof}

Equipped with this ``lifting'' construction, we can prove
Theorem~\ref{thm:naturality} about naturality of our properties.

\begin{proofof}{Theorem~\ref{thm:naturality}}
  For item~\ref{thm:naturality:uniquecouplings}, let $I_i\interpret{T_i}{T\cup T_i}$ be the structure-erasing
  interpretation for $i\in[2]$ and note that if $\xi$ is a coupling of $\phi^I$ with
  $\psi$, then Proposition~\ref{prop:lifting} gives us a coupling $\widehat{\xi}$ of $\phi$
  with $\psi$ such that $\xi = \widehat{\xi}^{I\cup\id_T}$. Since $\phi$ is uniquely coupleable with $\psi$
  we must have $\widehat{\xi} = \phi\otimes\psi$, from which we get $\xi = \widehat{\xi}^{I\cup\id_T} =
  \phi^I\otimes\zeta$, hence $\phi^I$ and $\psi$ are uniquely coupleable.

  Item~\ref{thm:naturality:Independence} follows trivially from the fact that if $\cN$ is an
  $\ell$-independent $T_2$-on with $\phi = \phi_\cN$, then $I(\cN)$ is an $\ell$-independent $T_1$-on with
  $\phi_{I(\cN)} = \phi^I$.

  Item~\ref{thm:naturality:UCouple} follows trivially from item~\ref{thm:naturality:uniquecouplings}.

  For item~\ref{thm:naturality:UInduce}, we let $\psi\in\HomT{\TkHypergraph[\ell]}$ and $\xi$ be a coupling of
  $\phi^I$ with $\psi$ and we make the same construction of the coupling $\widehat{\xi}$ of $\phi$ and $\psi$
  of item~\ref{thm:naturality:uniquecouplings} using Proposition~\ref{prop:lifting}. For $i\in[2]$, let
  $J_i\interpret{\TkHypergraph[\ell]}{T_i\cup\TkHypergraph[\ell]}$ be the structure-erasing interpretation and
  note that if $M\in\cM[T_1\cup\TkHypergraph[\ell]]$ is such that $J_1(M)\cong K^{(\ell)}_{\lvert M\rvert}$,
  then we have
  \begin{align*}
    \xi(M)
    & =
    \widehat{\xi}^{I\cup\id_T}(M)
    \\
    & =
    \widehat{\xi}\left(\sum
    \set{%
      M'\in\cM_{\lvert M\rvert}[T_2\cup\TkHypergraph[\ell]]
    }{%
      I(I_2(M'))\cong I_1(M)\land
      J_2(M')\cong K^{(\ell)}_{\lvert M\rvert}
    }
    \right)
    \\
    & =
    \psi(K^{(\ell)}_{\lvert M\rvert})\cdot
    \phi\left(\sum\set{M'\in\cM_{\lvert M\rvert}[T_2]}{I(M')\cong I_1(M)}\right)
    \\
    & =
    \psi(K^{(\ell)}_{\lvert M\rvert})\cdot\phi^I(I_1(M))
    \\
    & =
    (\phi^I\otimes\psi)(M),
  \end{align*}
  where the third equality follows from the fact that $\phi\in\UInduce[\ell]$. Hence
  $\phi^I\in\UInduce[\ell]$.
\end{proofof}

\section{Unique inducibility}
\label{sec:UInduce}

In this section we prove Theorem~\ref{thm:UInduce}. We start by showing the
equivalence between items~\ref{thm:UInduce:UInduce}
and~\ref{thm:UInduce:singledensity}. Curiously, the case $\ell=1$ is the
hardest one to prove.

\begin{lemma}[Theorem~\ref{thm:UInduce}\ref{thm:UInduce:UInduce}$\equiv$\ref{thm:UInduce:singledensity}]\label{lem:UInduce:singledensity}
  Let $\ell\in\NN_+$ and $\phi\in\HomT{T}$. Then $\phi\in\UInduce[\ell]$ if and only if there exists
  $p\in(0,1)$ such that $\phi$ is uniquely inducible by every $\psi\in\HomT{\TkHypergraph[\ell]}$ with
  $\psi(\rho_\ell) = p$.
\end{lemma}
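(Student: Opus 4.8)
The forward direction is immediate: if $\phi\in\UInduce[\ell]$ then it is uniquely inducible by every $\psi\in\HomT{\TkHypergraph[\ell]}$, hence in particular by every one of a fixed density. So the plan is to prove the converse. Fix $p\in(0,1)$ witnessing the hypothesis, let $\psi\in\HomT{\TkHypergraph[\ell]}$ be arbitrary, and put $q\df\psi(\rho_\ell)$. If $q\in\{0,1\}$ then $\psi$ has rank $0$, so $\phi$ is uniquely inducible by it for trivial reasons (Proposition~\ref{prop:rankbasic}); hence assume $q\in(0,1)$. Take an arbitrary coupling $\xi$ of $\phi$ and $\psi$ and a local exchangeable array $\rn{K}$ on $\NN_+$ supported on models of $T\cup\TkHypergraph[\ell]$ that represents $\xi$ (Theorem~\ref{thm:cryptomorphism}): its $T$-reduct represents $\phi$, its hypergraph reduct represents $\psi$, and the goal is to show $\xi(\langle M\rangle)=(\phi\otimes\psi)(\langle M\rangle)$ for every $M\in\cM[T\cup\TkHypergraph[\ell]]$ based on a clique.

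The crux is to rebuild the hypergraph part of $\rn{K}$ so that it has density exactly $p$, in a one-parameter family. For $r\in I\df\bigl(\max\{0,(p-q)/(1-q)\},\,1\bigr)$ --- a non-degenerate interval --- set $t(r)\df p/(q+r(1-q))$, which lies in $(0,1)$ for every $r\in I$. Let $\rn{Q}_r$ and $\rn{Q}'_{t(r)}$ be the exchangeable arrays of the quasirandom $\ell$-hypergraphons $\psi_{\ell,r}$ and $\psi_{\ell,t(r)}$, taken independent of each other and of $\rn{K}$, and let $\rn{K}_r$ be the array with the same vertex set and $T$-reduct as $\rn{K}$ but whose edge set $E$ is replaced by $(E\cup E_{\rn{Q}_r})\cap E_{\rn{Q}'_{t(r)}}$. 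Then $\rn{K}_r$ is again a local exchangeable array on $T\cup\TkHypergraph[\ell]$; its $T$-reduct still represents $\phi$; and its hypergraph reduct --- denote by $\psi_r$ the homomorphism it represents --- has edge density $(q+r(1-q))\cdot t(r)=p$. Hence, by hypothesis, $\phi$ is uniquely inducible by $\psi_r$ for every $r\in I$.

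It remains to extract the conclusion by varying $r$. Fix $m\in\NN_+$; for $S\in\binom{[m]}{\ell}$ let $A_S$ be the event ``$S$ is an edge of $\rn{K}$'', let $\rn{U}\df\{S:A_S\text{ holds}\}$, and for an $m$-vertex $T$-model $N$ put $c_U(N)\df\PP[(T\text{-reduct of }\rn{K}\rest_{[m]})\cong N\wedge\rn{U}=U]$ and $c_U\df\sum_N c_U(N)$. Conditioning on $\rn{U}$ and using that $\rn{Q}_r,\rn{Q}'_{t(r)}$ are independent of $\rn{K}$, one computes
\begin{equation*}
  \PP\bigl[(T\text{-reduct of }\rn{K}_r\rest_{[m]})\cong N\ \wedge\ [m]\text{ is a clique in }\rn{K}_r\bigr]=t(r)^{\binom{m}{\ell}}\sum_{U\subseteq\binom{[m]}{\ell}}c_U(N)\,r^{\binom{m}{\ell}-|U|},
\end{equation*}
and summing over $N$ replaces $c_U(N)$ by $c_U$, giving $\PP[[m]\text{ is a clique in }\rn{K}_r]$. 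Unwinding unique inducibility of $\phi$ by $\psi_r$ (tested against the clique-based models with $T$-reduct $N$) says precisely that the left-hand side above equals $\phi(N)$ times $\PP[[m]\text{ is a clique in }\rn{K}_r]$; cancelling the positive factor $t(r)^{\binom{m}{\ell}}$ leaves, for all $r\in I$,
\begin{equation*}
  \sum_{U\subseteq\binom{[m]}{\ell}}c_U(N)\,r^{\binom{m}{\ell}-|U|}=\phi(N)\sum_{U\subseteq\binom{[m]}{\ell}}c_U\,r^{\binom{m}{\ell}-|U|}.
\end{equation*}
Both sides are polynomials in $r$ agreeing on the non-degenerate interval $I$, hence are identical; comparing the constant coefficients --- the unique summand with $U=\binom{[m]}{\ell}$ --- yields $\PP[(T\text{-reduct of }\rn{K}\rest_{[m]})\cong N\wedge[m]\text{ is a clique in }\rn{K}]=\phi(N)\,\PP[[m]\text{ is a clique in }\rn{K}]$, which, translated back through the automorphism bookkeeping, is exactly the required identity $\xi(\langle M\rangle)=(\phi\otimes\psi)(\langle M\rangle)$ for every clique-based $M$. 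Since $\psi$ was arbitrary, $\phi\in\UInduce[\ell]$.

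Routine in this plan are the displayed probabilistic identity for $\rn{K}_r$ (a computation conditioning on $\rn{U}$) and the dictionary between ``$\xi(\langle M\rangle)=(\phi\otimes\psi)(\langle M\rangle)$ for clique-based $M$'' and the probabilistic equalities above. The genuine obstacle is conceptual: testing $\phi$ against a \emph{single} density-$p$ hypergraphon produces only one linear relation among the numbers $c_U(N)$, too weak to isolate the ``full clique'' coefficient $c_{\binom{[m]}{\ell}}(N)$; the remedy is to push the test through the whole family $\{\psi_r\}_{r\in I}$ of density-$p$ homomorphisms obtained by re-saturating $\psi$, promoting that relation to a polynomial identity in $r$ whose lowest-degree coefficient is precisely the quantity we need. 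This is where the hypothesis --- unique inducibility by \emph{every} hypergraphon of the single density $p$, not merely the quasirandom one --- is used in an essential way.
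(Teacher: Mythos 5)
Your proof is correct, and it takes a genuinely different route from the paper's. The paper reduces to the statement ``unique $p$-inducibility implies unique $q$-inducibility'' and proves it by building, from a given coupling of $\phi$ with a density-$q$ hypergraphon, a \emph{single} coupling with a density-$p$ one: independently couple with a $2$-coloring and then interpret (for $p\leq q$, keep only old edges monochromatic in color~1; for $q<p$, also add every $\ell$-set monochromatic in color~2). For $\ell\geq 2$ cliques are forced to be monochromatic, so only two terms survive and the conclusion drops out by a direct cancellation at a single value of $t$; for $\ell=1$ that forcing fails, and the paper needs a substantially longer argument via flag-algebra localization (conditioning on the vertices of one color) and bootstrapping from $q=p^2$. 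You instead modify the hypergraph part of the exchangeable array itself, OR-ing with a quasirandom $\ell$-hypergraphon of density $r$ and AND-ing with one of density $t(r)$, producing a whole one-parameter family $\{\psi_r\}_{r\in I}$ of density-$p$ test objects. Since the sandwich is applied $\ell$-set by $\ell$-set, no monochromaticity enters, and the argument is uniform in $\ell$ and in the sign of $p-q$. The price is that the quantity you want now sits inside a sum over all $\rn{U}\subseteq\binom{[m]}{\ell}$ rather than appearing as a standalone term, which is exactly why you must run $r$ through a non-degenerate interval and invoke the polynomial identity instead of testing a single value as the paper does in its $\ell\geq 2$ cases. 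The trade is a good one: you eliminate both the $p\lessgtr q$ case split and, most notably, the thorny $\ell=1$, $q<p$ localization argument. One minor nit: for $q\in\{0,1\}$, Proposition~\ref{prop:rankbasic} as stated concerns unique coupleability rather than inducibility, but the fact you actually need (a rank-$0$ object admits only the independent coupling with anything) is established in its proof, so the appeal is harmless.
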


\begin{proof}
  The forward implication is obvious.

  For $p\in(0,1)$, let us say that $\phi$ is uniquely $p$-inducible if it is uniquely inducible by every
  $\psi\in\HomT{\TkHypergraph[\ell]}$ with $\psi(\rho_\ell) = p$. Then the backward implication amounts to
  showing that unique $p$-inducibility implies unique $q$-inducibility for every $p,q\in(0,1)$ (the cases
  $q\in\{0,1\}$ are trivial).

  Let $I\interpret{T}{T\cup\TkHypergraph[\ell]}$ and
  $J\interpret{\TkHypergraph[\ell]}{T\cup\TkHypergraph[\ell]}$ be the structure-erasing interpretations. Let
  us assume that $\phi$ is uniquely $p$-inducible and let us show that $\phi$ is uniquely inducible by any
  $\psi\in\HomT{\TkHypergraph[\ell]}$ with $\psi(\rho_\ell)=q$. Let $\xi$ be a coupling of $\phi$ and $\psi$.

  Our objective is to prove that for every $m\in\NN$ and every $M\in\cM_m[T\cup\TkHypergraph[\ell]]$ with
  $J(M)\cong K^{(\ell)}_m$ we have
  \begin{align}\label{eq:UInduce:objective}
    \xi(M) & = \phi(I(M))\psi(K^{(\ell)}_m).
  \end{align}
  For $m < \ell$ this is trivial (as $\psi(K^{(\ell)}_m) = 1$), so
  suppose $m\geq \ell$.

  Let $I'\interpret{\TkHypergraph[\ell]}{\TkHypergraph[\ell]\cup\TcColoring[2]}$ be an open interpretation
  (to be specified later);
  note that the diagram
  \begin{equation}\label{eq:UInduce:coloringcommutative}
    \begin{tikzcd}[column sep={0cm}]
      \TkHypergraph[\ell]
      \arrow[dr, "J"]
      \arrow[dddrr, "I'"', bend right]
      &
      &[-1.5cm]
      T
      \arrow[dl, "I"']
      \arrow[dr, "I"]
      &[-1.5cm]
      &
      \TkHypergraph[\ell]
      \arrow[dl, "J"']
      \arrow[dddll, bend left]
      \\
      &
      T\cup\TkHypergraph[\ell]
      \arrow[dr, "\id_T\cup I'"']
      &
      &
      T\cup\TkHypergraph[\ell]
      \arrow[dl]
      \\
      &
      &
      T\cup\TkHypergraph[\ell]\cup\TcColoring[2]
      \\
      &
      &
      \TkHypergraph[\ell]\cup\TcColoring[2]
      \arrow[u]
    \end{tikzcd}
  \end{equation}
  is commutative, where the unlabeled arrows are structure-erasing interpretations. For $t\in[0,1]$ let
  $\widehat{\xi}_t\df \xi\otimes\psi_{(t,1-t)}$ be the independent coupling of $\xi$ and the $2$-coloring
  $\psi_{(t,1-t)}$ of densities $(t,1-t)$ (see Definition~\ref{def:colorings}); note that the fact
  that~\eqref{eq:UInduce:coloringcommutative} is commutative implies that
  $\widehat{\xi}_t^{\id_T\cup I'}$ is a coupling of $\phi$ and $(\psi\otimes\psi_{(t,1-t)})^{I'}$.

  We start by showing~\eqref{eq:UInduce:objective} in the case $p\leq q$. In this case, we take
  \begin{align*}
    I'(E)(x_1,\ldots,x_\ell) & \df E(x_1,\ldots,x_\ell)\land\bigwedge_{i\in[\ell]}\chi_1(x_i),
  \end{align*}
  that is, $I'$ keeps edges that are monochromatic in color $1$. Let $t\df (p/q)^{1/\ell}$ and note that for
  $n\geq\ell$ we have
  \begin{align*}
    (\psi\otimes\psi_{(t,1-t)})^{I'}(K^{(\ell)}_n)
    & =
    \psi(K^{(\ell)}_n) t^n
    =
    \psi(K^{(\ell)}_n)\left(\frac{p}{q}\right)^{n/\ell},
  \end{align*}
  which in particular implies that $(\psi\otimes\psi_{(t,1-t)})^{I'}(\rho_\ell) = p$. On the other hand, we
  also have $\widehat{\xi}_t^{\id_T\cup I'}(M) = \xi(M) t^m$, so unique $p$-inducibility of $\phi$ gives
  \begin{align*}
    \xi(M) t^m
    =
    \widehat{\xi}_t^{\id_T\cup I'}(M)
    =
    \phi(I(M))(\psi\otimes\psi_{(t,1-t)})^{I'}(K^{(\ell)}_m)
    =
    \phi(I(M))\psi(K^{(\ell)}_m) t^m,
  \end{align*}
  from which~\eqref{eq:UInduce:objective} follows.

  \medskip

  We now show~\eqref{eq:UInduce:objective} in the case $\ell\geq 2$ and $q < p$. In this case, we let
  \begin{align*}
    I'(E)(x_1,\ldots,x_\ell)
    & \df
    \left(E(x_1,\ldots,x_\ell)\land\bigwedge_{i\in[\ell]}\chi_1(x_i)\right)\lor\bigwedge_{i\in[\ell]}\chi_2(x_i)
  \end{align*}
  that is, $I'$ declares edges to be either old edges that are monochromatic in color $1$ or any $\ell$-set
  that is monochromatic in color $2$. Let $f(x) \df x^\ell q + (1-x)^\ell$ and note that $f(0) = 1$ and $f(1)
  = q$, so there exists $t\in(0,1)$ such that $f(t) = p$. Since $\ell\geq 2$, for $n\geq\ell$, we have
  \begin{align*}
    (\psi\otimes\psi_{(t,1-t)})^{I'}(K^{(\ell)}_n)
    & =
    \psi(K^{(\ell)}_n) t^n + (1-t)^n,
  \end{align*}
  which in particular implies that $(\psi\otimes\psi_{(t,1-t)})^{I'}(\rho_\ell) = f(t) = p$. On the other
  hand, we also have $\widehat{\xi}_t^{\id_T\cup I'}(M) = \xi(M) t^m + \phi(I(M))(1-t)^m$, so unique
  $p$-inducibility of $\phi$ gives
  \begin{align*}
    \xi(M) t^m + \phi(I(M))(1-t)^m
    & =
    \widehat{\xi}_t^{\id_T\cup I'}(M)
    =
    \phi(I(M))(\psi\otimes\psi_{(t,1-t)})^{I'}(K^{(\ell)}_m)
    \\
    & =
    \phi(I(M))(\psi(K^{(\ell)}_m) t^m + (1-t)^m),
  \end{align*}
  from which~\eqref{eq:UInduce:objective} follows.

  \medskip

  The case $q < p$ and $\ell=1$ is more complicated as the construction analogous to the above
  does not work: cliques in arity $1$ need not be monochromatic.

  Let us prove first the sub-case $q = p^2$. The idea, roughly speaking, is that when $\ell=1$, unique
  $p$-inducibility says that any ``subset of vertices'' of relative size $p$ in $\phi$ induces $\phi$ and since a
  ``subset of vertices'' of relative size $p^2$ can be seen as having relative size $p$ in some ``subset of
  vertices'' that itself has relative size $p$ in the whole space, it must also induce
  $\phi$.

  It is worth noting that this idea can be implemented almost literally in the
  geometric language. But that would require working with theons that have
  different ground sets in different coordinates so we prefer to present a
  syntactic argument instead, similar to the one above.

  We work with the theory $\TcColoring[2]$ instead of $\TkHypergraph[1]$  (see Remark~\ref{rmk:UInduce1}).
  Let $\xi$ be a coupling of
  $\phi$ and $\psi\df\psi_{(p^2,1-p^2)}\in\HomT{\TcColoring[2]}$ and we want to show that for every
  $M\in\cM[T\cup\TcColoring[2]]$ with $R_{\chi_1}(M) = V(M)$, we have
  \begin{align*}
    \xi(M) & = \phi(I(M))p^{2m},
  \end{align*}
  where $m\df\lvert M\rvert$ and $I\interpret{T}{T\cup\TcColoring[2]}$ is the structure-erasing interpretation.

  Let $I_1,I_2\interpret{\TcColoring[2]}{\TcColoring[3]}$ be the interpretations given by
  \begin{align*}
    \begin{aligned}
      I_1(\chi_1)(x) & \df \chi_1(x)\lor\chi_2(x);\\
      I_1(\chi_2)(x) & \df \chi_3(x);
    \end{aligned}
    & &
    \begin{aligned}
      I_2(\chi_1)(x) & \df \chi_1(x);\\
      I_2(\chi_2)(x) & \df \chi_2(x)\lor\chi_3(x).
    \end{aligned}
  \end{align*}
  Let $\widehat{\psi}\df\psi_{(p^2, p-p^2, 1-p)}\in\HomT{\TcColoring[3]}$ and note that $\widehat{\psi}^{I_i} =
  \psi_{(p^i,1-p^i)}$ for $i\in[2]$.

  Let $J\interpret{\TcColoring[2]}{T\cup\TcColoring[2]}$ and
  $\widehat{J}\interpret{\TcColoring[3]}{T\cup\TcColoring[3]}$ be the structure-erasing interpretations. Our
  definitions ensure that the following diagram is commutative.
  \begin{equation}\label{eq:UInduce:p2commutative}
    \begin{tikzcd}[column sep={0cm}]
      \TcColoring[2]
      \arrow[dr, "J"]
      \arrow[dddrr, "I_1"', bend right]
      &
      &
      T
      \arrow[dl, "I"']
      \arrow[dr, "I"]
      &
      &
      \TcColoring[2]
      \arrow[dl, "J"']
      \arrow[dddll, "I_2", bend left]
      \\
      &
      T\cup\TcColoring[2]
      \arrow[dr, "\id_T\cup I_1"']
      &
      &
      T\cup\TcColoring[2]
      \arrow[dl, "\id_T\cup I_2"]
      \\
      &
      &
      T\cup\TcColoring[3]
      \\
      &
      &
      \TcColoring[3]
      \arrow[u, "\widehat{J}"]
    \end{tikzcd}
  \end{equation}

  For every $n\in\NN$, let $C_n\in\cM_n[\TcColoring[2]]$ be the unique model with all vertices satisfying
  $\chi_1$.

  Since $\widehat{\psi}^{I_2} = \psi$, by Proposition~\ref{prop:lifting}, there exists a coupling
  $\widehat{\xi}$ of $\phi$ and $\widehat{\psi}$ such that $\widehat{\xi}^{\id_T\cup I_2} = \xi$. We now make
  use of the operator $\pi^{(\neg\chi_3,\id_T\cup
    I_2)}\function{\cA[T\cup\TcColoring[2]]}{\cA_u[T\cup\TcColoring[3]]}$ \cite[Definition~4]{Raz07}, where
    $u = \sum\{N\in\cM_1[T\cup\TcColoring[3]] \mid
  I_1(\widehat{J}(N))\cong C_1\}$ and $\cA_u[T\cup\TcColoring[3]]$ is the localization by the
  multiplicative system $\{u,u^2,\ldots,u^n,\ldots\}$. Intuitively, it corresponds to applying
  the
  interpretation $\id_T\cup
    I_2$, followed by throwing away vertices of color 3. (All densities have to be re-normalized by a
    power of $u$, this is why we need to localize.) Since
  \begin{align}\label{eq:UInduce:u}
    \widehat{\xi}(u)
    & =
    \widehat{\xi}^{\widehat{J}\comp I_1}(C_1)
    =
    \widehat{\psi}^{I_1}(C_1)
    =
    p
    >
    0,
  \end{align}
  we can apply~\cite[Theorem~2.6]{Raz07} and form the element
  $\zeta\df\widehat{\xi}\comp\pi^{(\neg\chi_3,\id_T\cup I_2)}\in\HomT{T\cup\TcColoring[2]}$.
  We claim that $\zeta^I = \phi$.

  To see this, note that for $N\in\cM[T]$, we have
  \begin{align*}
    \zeta^I(N)
    & =
    \frac{\sum_{N'}\widehat{\xi}(N')}{\widehat{\xi}(u)^{\lvert N\rvert}},
  \end{align*}
  where the sum is over all $N'\in\cM_{\lvert N\rvert}[T\cup\TcColoring[3]]$ such that $I((\id_T\cup
  I_2)(N'))\cong N$ and $J((\id_T\cup I_1)(N'))\cong C_{\lvert N\rvert}$. But
  since~\eqref{eq:UInduce:p2commutative} is commutative, the condition $I((\id_T\cup I_2)(N'))\cong N$ is
  equivalent to $I((\id_T\cup I_1)(N'))\cong N$, which together with~\eqref{eq:UInduce:u} gives
  \begin{align*}
    \zeta^I(N) & = \frac{\widehat{\xi}^{\id_T\cup I_1}(\widehat{N})}{p^{\lvert N\rvert}},
  \end{align*}
  where $\widehat{N}\in\cM_{\lvert N\rvert}[T\cup\TcColoring[2]]$ is the unique model such that
  $I(\widehat{N})\cong N$ and $J(\widehat{N})\cong C_{\lvert N\rvert}$.

  Since $\widehat{\xi}^{(\id_T\cup I_1)\comp J}(C_1) = \widehat{\psi}^{I_1}(C_1) = p$ and
  $\widehat{\xi}^{(\id_T\cup I_1)\comp I} = \xi^I = \phi$, unique $p$-inducibility of $\phi$ implies that
  $\widehat{\xi}^{\id_T\cup I_1}(\widehat{N}) = p^{\lvert N\rvert}\phi(N)$ and thus $\zeta^I = \phi$.

  Now we claim that $\zeta^J = \psi_{(p,1-p)}$. Indeed, note that
  \begin{align*}
    \zeta^J(C_1)
    & =
    \frac{%
      \sum\{\widehat{\xi}(N) \mid N\in\cM_1[T\cup\TcColoring[3]]\land
      J((\id_T\cup I_2)(N))\cong J((\id_T\cup I_1)(N))\cong C_1\}
    }{%
      \widehat{\xi}(u)%
    }
    \\
    & =
    \frac{\widehat{\xi}^{\widehat{J}}(\widehat{C}_1)}{p}
    =
    \frac{\widehat{\psi}(\widehat{C}_1)}{p}
    =
    p,
  \end{align*}
  where $\widehat{C}_1\in\cM_1[\TcColoring[3]]$ is the model whose unique vertex satisfies $\chi_1$, hence
  $\zeta^J = \psi_{(p,1-p)}$.

  This means that $\zeta$ is a coupling of $\phi$ and $\psi_{(p,1-p)}$, so for our fixed
  $M\in\cM_m[T\cup\TcColoring[2]]$ with $R_{\chi_1}(M) = V(M)$, unique $p$-inducibility of $\phi$ gives
  \begin{align*}
    \xi(M)
    & =
    \widehat{\xi}^{\id_T\cup I_2}(M)
    =
    \widehat{\xi}(\pi^{(\neg\chi_3,\id_T\cup I_2)}(M))\cdot\widehat{\xi}(u)^m
    \\
    & =
    \zeta(M)\cdot p^m
    =
    \phi(I(M))\cdot p^{2m},
  \end{align*}
  as desired.

  \medskip

  From the case $\ell = 1$ and $q = p^2 < p$, with a simple induction, we can derive the case when $\ell=1$
  and $q = p^{2^k} < p$ for some $k\in\NN_+$.

  \medskip

  Finally, for the case $\ell=1$ and arbitrary $q < p$, we let $k\in\NN_+$ be large enough so that $p^{2^k} <
  q$ and putting together the previous cases gives that unique $p$-inducibility implies unique
  $p^{2^k}$-inducibility, which in turn implies unique $q$-inducibility.
\end{proof}

The rest of this section is devoted to various relations between the unique
inducibility and the clique discrepancy for hypergraphons; we will also use
our findings to prove the last remaining equivalence~\ref{thm:UInduce:UInduce}$\equiv$\ref{thm:UInduce:symlocal} in
Theorem~\ref{thm:UInduce}.

It was proved in~\cite{Tow17,ACHP18} that for $\ell < k$, $\CliqueDisc[\ell]$ is equivalent to the non-induced
labeled density of every $\ell$-linear hypergraph $H$ (i.e., hypergraphs whose edges have pairwise
intersections of size at most $\ell$) being $p^{e(H)}$. We restate below this result in the language of
exchangeable arrays.

\begin{theorem}[\cite{Tow17,ACHP18}]\label{thm:CliqueDisc}
  Let $\ell\in[k-1]$, let $\phi\in\HomT{\TkHypergraph}$ and let $\rn{K}$ be the corresponding exchangeable
  array. Then $\phi\in\CliqueDisc[\ell]$ if and only if for every finite collection $(V_i)_{i\in I}$ of finite
  subsets of $\NN_+$ of size $k$ each and with pairwise intersections of size at most $\ell$ we have
  \begin{align*}
    \PP[\forall i\in I, \rn{K}\rest_{V_i}\cong \rho_k] & = \prod_{i\in I}\PP[\rn{K}\rest_{V_i}\cong\rho_k].
  \end{align*}
\end{theorem}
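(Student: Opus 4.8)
The plan is to recognize the statement as nothing more than a translation, into the language of the exchangeable array $\rn{K}$, of the equivalence recalled immediately above it: for $\ell<k$, $\phi\in\CliqueDisc[\ell]$ holds if and only if the non-induced labeled density of every $\ell$-linear $k$-uniform hypergraph $H$ equals $p^{e(H)}$, where $p\df\phi(\rho_k)$ and ``$\ell$-linear'' means that the edges pairwise intersect in at most $\ell$ vertices. So I would not attempt to reprove the substance, which is imported from~\cite{Tow17,ACHP18}; the whole task is to match the two formulations.

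First I would record the elementary dictionary between finite hypergraphs and $\rn{K}$. For a $k$-uniform hypergraph $H$ with $V(H)=[n]$ and edges $e_1,\dots,e_m$, the cryptomorphism of Theorem~\ref{thm:cryptomorphism} — concretely~\eqref{eq:arraycryptomorphism} — identifies the non-induced labeled density of $H$ in $\phi$ with the probability that the identity injection $[n]\rightarrowtail\NN_+$ is a homomorphism $H\to\rn{K}$. Since the symmetry axiom~\eqref{eq:hypergraphsymmetry} makes membership of a $k$-tuple in $R_E(\rn{K})$ depend only on the underlying $k$-set, and a $k$-uniform hypergraph on a $k$-set is isomorphic to $\rho_k$ precisely when it carries its unique edge, this probability equals $\PP[\forall j\in[m],\ \rn{K}\rest_{e_j}\cong\rho_k]$; isolated vertices of $H$ impose no constraint and are harmless. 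By exchangeability of $\rn{K}$ one has $\PP[\rn{K}\rest_{e_j}\cong\rho_k]=\PP[\rn{K}\rest_{[k]}\cong\rho_k]=\phi(\rho_k)=p$ for each $j$, and hence $p^{e(H)}=\prod_{j\in[m]}\PP[\rn{K}\rest_{e_j}\cong\rho_k]$, which is exactly the right-hand side in the theorem.

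Next I would match the quantifiers. A finite collection $(V_i)_{i\in I}$ of $k$-subsets of $\NN_+$ with pairwise intersections of size at most $\ell$ is the same data as the edge set of an $\ell$-linear $k$-uniform hypergraph: since $\ell<k$ forbids $\lvert V_i\cap V_j\rvert=k$, the $V_i$ are automatically pairwise distinct, and conversely the edges of any $\ell$-linear $k$-uniform hypergraph form such a collection (its isolated vertices being irrelevant by the previous step). Under this correspondence the displayed identity is precisely the assertion that the non-induced labeled density of the associated $\ell$-linear hypergraph is $p^{e(H)}$, so both directions of the theorem follow at once from the~\cite{Tow17,ACHP18} characterization of $\CliqueDisc[\ell]$.

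The only genuinely delicate point on this route will be the first step — verifying that the cryptomorphism sends non-induced labeled densities to exactly these homomorphism-type probabilities, with no mismatch coming from edge multiplicities or isolated vertices — and even that is routine bookkeeping; the real content is black-boxed from~\cite{Tow17,ACHP18}. If one instead wanted a self-contained argument, I would handle the forward direction by induction on $e(H)$, stripping off one edge at a time and testing $\CliqueDisc[\ell]$ against a suitably engineered auxiliary $\ell$-hypergraphon, and the reverse direction by a Gowers-norm-style octahedron (``doubling'') computation; there the main obstacle is keeping track of the $\ell$-linearity constraint through the successive doublings.
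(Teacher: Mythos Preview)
Your proposal is correct and matches the paper's treatment: the paper does not prove this theorem but simply cites it from~\cite{Tow17,ACHP18} and presents it as a restatement, in the exchangeable-array language, of the characterization recalled in the sentence just before the theorem. Your dictionary between $\ell$-linear hypergraphs and finite collections $(V_i)_{i\in I}$ of $k$-sets with pairwise intersections at most $\ell$, together with the identification of non-induced labeled densities with the probabilities $\PP[\forall i,\ \rn{K}\rest_{V_i}\cong\rho_k]$, is exactly the translation the paper has in mind.
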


Even though this theorem only makes sense in the theory of hypergraphs, we
can derive the
implication~\ref{thm:UInduce:symlocal}$\implies$\ref{thm:UInduce:UInduce} of
Theorem~\ref{thm:UInduce} for general theories from it.

\begin{lemma}[Theorem~\ref{thm:UInduce}\ref{thm:UInduce:symlocal}$\implies$\ref{thm:UInduce:UInduce}]\label{lem:symlocal->UInduce}
  If $\phi\in\HomT{T}$ is symmetrically $\ell$-local, then $\phi\in\UInduce[\ell]$.
\end{lemma}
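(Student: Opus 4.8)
The plan is to reduce, via Lemma~\ref{lem:UInduce:singledensity}, to a single convenient density and then to show that a clique event decouples from the induced $T$-structure. By Lemma~\ref{lem:UInduce:singledensity} it suffices to fix one $p\in(0,1)$ and prove that $\phi$ is uniquely inducible by every $\psi\in\HomT{\TkHypergraph[\ell]}$ with $\psi(\rho_\ell)=p$. So fix such a $\psi$ and a coupling $\xi$ of $\phi$ and $\psi$, let $I\interpret{T}{T\cup\TkHypergraph[\ell]}$ and $J\interpret{\TkHypergraph[\ell]}{T\cup\TkHypergraph[\ell]}$ be the structure-erasing interpretations, realize $\xi=\phi_\cH$ for a $(T\cup\TkHypergraph[\ell])$-on $\cH$ over some $\Omega$, and let $\rn{L}$ be the exchangeable array corresponding to $\cH$ with respect to $\rn{\theta}$ picked in $\cE_{\NN_+}(\Omega)$ (see~\eqref{eq:arraycryptomorphism}); put $\rn{K}\df I(\rn{L})$ and $\rn{K}'\df J(\rn{L})$, the arrays corresponding to $\phi$ and $\psi$. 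Since symmetric $\ell$-locality is realization-independent (any $T$-on representing a symmetrically $\ell$-local homomorphism is symmetrically $\ell$-local), the array $\rn{K}$ is symmetrically $\ell$-local. For $M\in\cM_m[T\cup\TkHypergraph[\ell]]$ with $J(M)\cong K^{(\ell)}_m$ the model $M$ is determined by the isomorphism type of $I(M)$ alone, because $K^{(\ell)}_m$ has the full automorphism group; hence $\xi(M)=\PP[\rn{K}\rest_{[m]}\cong I(M)\ \wedge\ C_m]$ for the clique event $C_m\df\{\rn{K}'\rest_{[m]}\cong K^{(\ell)}_m\}$, while $\PP[\rn{K}\rest_{[m]}\cong I(M)]=\phi(I(M))$ and $\PP[C_m]=\psi(K^{(\ell)}_m)$. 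Thus the task reduces to proving that the isomorphism type of $\rn{K}\rest_{[m]}$ is independent of $C_m$, since then $\xi(M)=\phi(I(M))\,\psi(K^{(\ell)}_m)=(\phi\otimes\psi)(M)$.

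To prove this independence I would exploit that $C_m=\bigcap_{A\in\binom{[m]}{\ell}}E_A$, where $E_A$ is the event that the $\ell$-set $A$ is an edge of $\rn{K}'$, and that the family consisting of $[m]$ together with all the $A\in\binom{[m]}{\ell}$ has all pairwise intersections of size at most $\ell$; symmetric $\ell$-locality of $\rn{K}$ therefore makes the isomorphism type of $\rn{K}\rest_{[m]}$ independent of the family $\bigl([\rn{K}\rest_{A}]\bigr)_{A\in\binom{[m]}{\ell}}$. The remaining point is to upgrade this from the $T$-structures $[\rn{K}\rest_A]$ to the hypergraph edges $E_A$: writing $\One_{C_m}=\prod_{A}\One_{E_A}$, expanding around $p=\psi(\rho_\ell)$ and regrouping, the leading term already yields $p^{\binom{m}{\ell}}\,\PP[\rn{K}\rest_{[m]}\cong N]=\PP[\rn{K}\rest_{[m]}\cong N]\,\PP[C_m]$, and one needs the cross-terms $\EE\bigl[(\One[\rn{K}\rest_{[m]}\cong N]-\PP[\rn{K}\rest_{[m]}\cong N])\prod_{A\in S}(\One_{E_A}-p)\bigr]$ to vanish for every nonempty $S\subseteq\binom{[m]}{\ell}$. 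Here the sets in $S$ still meet $[m]$ in exactly $\ell$ vertices and pairwise meet in fewer, and since on a set of size $\ell$ both the complete and the empty $\ell$-hypergraph have full automorphism group, $[\rn{L}\rest_A]$ records exactly the pair $([\rn{K}\rest_A],\One_{E_A})$; I would kill these cross-terms by combining the realization-independent syntactic form of symmetric $\ell$-locality with Theorem~\ref{thm:CliqueDisc} applied in the combined theory to the $\ell$-sets $(A)_{A\in\binom{[m]}{\ell}}$.

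The main obstacle is precisely this last decoupling. Symmetric $\ell$-locality of $\phi$ speaks only about isomorphism types of restrictions of $\rn{K}$, whereas the clique event $C_m$ also involves the hypergraph $\rn{K}'$, which carries information that is not a function of $\rn{K}$; reconciling the two forces one to work with the combined array $\rn{L}$ — which, being a coupling of $\phi$ with an arbitrary $\psi$, need not itself be symmetrically $\ell$-local — and to use crucially that every set entering $C_m$ is an $\ell$-subset of $[m]$ and hence ``as spread out as $\ell$-locality allows''. Once the cross-terms are shown to vanish, $C_m$ is independent of the isomorphism type of $\rn{K}\rest_{[m]}$, and by the reduction above $\phi\in\UInduce[\ell]$.
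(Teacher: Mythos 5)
Your reduction of the goal to the independence of $[\rn K\rest_{[m]}]$ from the clique event $C_m$ is correct, as is the observation that symmetric $\ell$-locality passes to every $T$-on representing $\phi$. But the one step where something must actually be proved—that the cross-terms $\EE\bigl[(\One[\rn{K}\rest_{[m]}\cong N]-\phi(N))\prod_{A\in S}(\One_{E_A}-p)\bigr]$ vanish—is not carried out, and the sketch you give of how to do it does not work. Symmetric $\ell$-locality constrains only the marginal law of $\rn K$; these cross-terms depend on the \emph{joint} law of $\rn K$ and $\rn K'$, i.e., on the coupling, about which the hypothesis says nothing directly. Invoking ``Theorem~\ref{thm:CliqueDisc} applied in the combined theory'' cannot close this: Theorem~\ref{thm:CliqueDisc} converts a $\CliqueDisc[\ell]$ \emph{hypothesis} into a factorization, and no such hypothesis has been established for anything built from the arbitrary coupling $\xi$. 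Expanding $\One_{C_m}$ merely restates the task. Two further slips: the assertion that ``the leading term already yields $p^{\binom{m}{\ell}}\PP[\rn{K}\rest_{[m]}\cong N]=\PP[\rn{K}\rest_{[m]}\cong N]\PP[C_m]$'' implicitly uses $\PP[C_m]=p^{\binom{m}{\ell}}$, which fails for $\psi$ that is not the quasirandom $\ell$-hypergraphon; and the case $m\leq\ell$ (which needs the categoricity argument obtained from symmetric $\ell$-locality with $V_1=V_2=[m]$) is not addressed. The preliminary appeal to Lemma~\ref{lem:UInduce:singledensity} buys nothing here, since $\psi$ remains arbitrary of that density.

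The missing idea is to convert symmetric $\ell$-locality into an intrinsic property of a \emph{derived hypergraphon} before any coupling enters the picture. Fix $M$ with $m=\lvert M\rvert>\ell$ and let $I'\interpret{\TkHypergraph[m]}{T}$ declare $m$-edges to be isomorphic copies of $I(M)$. Symmetric $\ell$-locality of $\phi$, fed into Theorem~\ref{thm:CliqueDisc}, shows that $\phi^{I'}$ satisfies $\CliqueDisc[\ell]$; this is a statement about $\phi$ alone. Since Definition~\ref{def:CliqueDisc} of $\CliqueDisc[\ell]$ quantifies over \emph{all} couplings of $\phi^{I'}$ with $\psi$, applying it to the coupling $\xi^{I'\cup\id_{\TkHypergraph[\ell]}}$ gives $\xi(M)=\phi^{I'}(\rho_m)\psi(K^{(\ell)}_m)=\phi(I(M))\psi(K^{(\ell)}_m)$ in one step, with no cross-term bookkeeping. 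The object you should feed into a $\CliqueDisc$ argument is therefore not some clique event in the combined theory, but the single hypergraphon $\phi^{I'}$.
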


\begin{proof}
  Let $I\interpret{T}{T\cup\TkHypergraph[\ell]}$ and
  $J\interpret{\TkHypergraph[\ell]}{T\cup\TkHypergraph[\ell]}$ be the structure-erasing interpretations.

  Our objective is to show that for every $\psi\in\HomT{\TkHypergraph[\ell]}$, every coupling $\xi$ of $\phi$
  and $\psi$, every $m\in\NN$ and every $M\in\cM_m[T\cup\TkHypergraph[\ell]]$ with $J(M)\cong K^{(\ell)}_m$,
  we have
  \begin{align}\label{eq:symlocal->UInduce:objective}
    \xi(M) & = \phi(I(M))\psi(K^{(\ell)}_m).
  \end{align}

  Let us first consider the case $m\leq\ell$. In this case, note that for the exchangeable array $\rn{K}$
  corresponding to $\phi$, by letting $V_1=V_2=[m]$, symmetric
  $\ell$-locality of $\phi$ gives
  \begin{align*}
    \phi(I(M)) & = \PP[\rn{K}\rest_{[m]}\cong I(M)] = \PP[\rn{K}\rest_{[m]}\cong I(M)]^2 = \phi(I(M))^2,
  \end{align*}
  so $\phi(I(M))\in\{0,1\}$, hence~\eqref{eq:symlocal->UInduce:objective} follows.

  Suppose now that $m>\ell$ and let $I'\interpret{\TkHypergraph[m]}{T}$ be the open interpretation that
  declares $m$-edges to be \emph{isomorphic} copies of $I(M)$, that is, it is given by
  \begin{align*}
    I'(E)(x_1,\ldots,x_m) & \df \bigvee_{\sigma\in S_m} \Dopen(I(M))(x_{\sigma(1)},\ldots,x_{\sigma(m)}).
  \end{align*}

  Let us show that $\phi^{I'}\in\HomT{\TkHypergraph[m]}$ satisfies
  $\CliqueDisc[\ell]$. Let $\rn{K}$ be the exchangeable array corresponding to $\phi$ so that $I'(\rn{K})$ is
  the exchangeable array corresponding to $\phi^{I'}$. Then if $(V_i)_{i\in[t]}$ is a finite collection of
  finite subsets of $\NN_+$ of size $m$ each and with pairwise intersections of size at most $\ell$, then
  \begin{align*}
    \PP[\forall i\in[t], I'(\rn{K})\rest_{V_i}\cong \rho_m]
    & =
    \PP[\forall i\in[t], \rn{K}\rest_{V_i}\cong M]
    \\
    & =
    \prod_{i\in[t]}\PP[\rn{K}\rest_{V_i}\cong M]
    =
    \prod_{i\in[t]}\PP[I'(\rn{K})\rest_{V_i}\cong\rho_m],
  \end{align*}
  where the second equality follows from the fact that $\phi$ is symmetrically $\ell$-local. By
  Theorem~\ref{thm:CliqueDisc}, it follows that $\phi^{I'}$ satisfies $\CliqueDisc[\ell]$.

  Note now that the diagram
  \begin{equation*}
    \begin{tikzcd}
      \TkHypergraph[m]\arrow[r]\arrow[d, "I'"'] &
      \TkHypergraph[m]\cup\TkHypergraph[\ell]\arrow[d, "I'\cup\id_{\TkHypergraph[\ell]}"'] &
      \TkHypergraph[\ell]\arrow[l]\arrow[dl, "J"]
      \\
      T\arrow[r, "I"'] &
      T\cup\TkHypergraph[\ell]
    \end{tikzcd}
  \end{equation*}
  is commutative, where the unlabeled arrows are structure-erasing interpretations. This implies that
  $\xi^{I'\cup\id_{\TkHypergraph[\ell]}}$ is a coupling of $\phi^{I'}$ and $\psi$, so we get
  \begin{align*}
    \xi(M)
    & =
    \xi^{I'\cup\id_{\TkHypergraph[\ell]}}(K^{(m,\ell)}_m)
    =
    \phi^{I'}(\rho_m)\psi(K^{(\ell)}_m)
    =
    \phi(I(M))\psi(K^{(\ell)}_m),
  \end{align*}
  where the second equality follows from $\phi^{I'}\in\CliqueDisc[\ell]$.
\end{proof}

Let us now prove an important fact about $\CliqueDisc[\ell]$ and
$\ell$-flattenings defined below.

\begin{definition}\label{def:flattening}
  For a peon $\cN$ over $\Omega=(X,\cA,\mu)$ and $\ell\in\NN$, the \emph{$\ell$-flattening} of $\cN$ is the
  function $W^\ell_\cN\function{\cE_{k,\ell}(\Omega)}{[0,1]}$ defined by
  \begin{align*}
    W^\ell_\cN(x) & \df \mu(\{y\in X^{\binom{[k]}{>\ell}} \mid (x,y)\in\cN\}),
  \end{align*}
  and defined arbitrarily when the set above is not measurable.
\end{definition}

Note that the construction in~\eqref{eq:rankae:W} is precisely an
$\ell$-flattening, and so is the construction of a graphon in the ordinary
sense from $\TGraph$-on (cf.~\eqref{eq:W}, \eqref{eq:Wskew} and~\eqref{eq:theonsVSgraphons}).

\begin{lemma}\label{lem:CliqueDiscflatteningconstant}
  Let $\cN$ be a $\TkHypergraph$-on over $\Omega = (X,\cA,\mu)$ such that $\phi_\cN$ satisfies $\CliqueDisc[\ell]$. Then
  $W_\cN^\ell = \phi_\cN(\rho_k)$ a.e.
\end{lemma}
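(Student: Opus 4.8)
The plan is to show that the $\ell$-flattening $W_\cN^\ell$ is constant a.e.\ by first reducing to a statement about the exchangeable array and then invoking the clique-discrepancy characterization from Theorem~\ref{thm:CliqueDisc}. First I would observe that $W_\cN^\ell$ is a measurable function on $\cE_{k,\ell}(\Omega)$ with values in $[0,1]$, and that its mean (integrating over $\cE_{k,\ell}(\Omega)$ against $\mu$) equals $\mu(\cN) = \phi_\cN(\rho_k)$ by Fubini. So it suffices to prove $W_\cN^\ell$ is a.e.\ constant; since we also know its average, the value of that constant is forced to be $\phi_\cN(\rho_k)$. The key tool for a.e.\ constancy is the probabilistic interpretation: if $\rn{\theta}$ is picked in $\cE_{\NN_+}(\Omega)$ according to $\mu$ and $\rn{K}$ is the corresponding exchangeable array, then for a single $k$-set $V$ we have $\PP[\rn{K}\rest_V \cong \rho_k \mid (\rn{\theta}_A \mid A \in r(V,\ell))] = W_\cN^\ell(\text{the relevant projection of }\rn{\theta})$ almost surely, after accounting for the symmetrization over labelings of $V$.

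The heart of the argument is to leverage independence of $\ell$-linear clique events. By Theorem~\ref{thm:CliqueDisc} applied to $\phi = \phi_\cN \in \CliqueDisc[\ell]$, for any finite family $(V_i)_{i\in I}$ of $k$-sets with pairwise intersections of size at most $\ell$, the events $\{\rn{K}\rest_{V_i}\cong\rho_k\}$ are mutually independent, each with probability $\phi_\cN(\rho_k)$. The strategy is to take $t$ copies $V_1,\ldots,V_t$ that all share a common $\ell$-subset $A$ (and are otherwise disjoint), so that $\lvert V_i \cap V_j\rvert = \ell$ for $i\ne j$. Conditioning on $(\rn{\theta}_B \mid B \subseteq A)$ — which is the ``shared randomness'' of low order — the conditional probability of each clique event is (up to the labeling symmetrization) governed by $W_\cN^\ell$ evaluated at a point whose low-order coordinates are the shared ones. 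Unconditionally these events are independent with a fixed probability; I would use this to force the conditional expectation of a product to factor as the product of conditional expectations, which, upon letting $t\to\infty$ (or taking $t=2$ and iterating), pins the conditional probability to a deterministic value. More precisely, taking $t=2$: the independence of the two clique events, combined with conditioning on $\rn{\theta}\rest_A$, gives $\EE[W^\ell(\cdot)^2] = \EE[W^\ell(\cdot)]^2$ where the expectation is over the ``non-shared'' randomness with the shared part fixed; but one still needs to integrate out the shared part. The cleanest route is: conditionally on $\rn\theta\rest_A = a$, the two events are still independent (as sub-events of the unconditional independent family), each with conditional probability equal to a value $g(a)$ that is (an average over labelings of) $W^\ell$ at points with $\ell$-th order coordinate $a$; independence forces this conditional probability to equal its square only if we can show it is the same for both — and here we instead use that conditionally on \emph{all} of $\rn\theta\rest_A$ \emph{and} the individual low-order coordinates, the events become a.s.\ $0$--$1$, i.e.\ $W^\ell$ itself is a.s.\ $0$--$1$ after conditioning, which combined with a fixed mean gives constancy.

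Let me restructure the core step more carefully, since that is the main obstacle. Fix $k$ disjoint ``slots'' and build $(V_i)_{i\in[t]}$ all containing a fixed $\ell$-set $A = \{1,\ldots,\ell\}$ with the remaining $k-\ell$ vertices of the $V_i$ pairwise disjoint and disjoint from $A$. Write $\rn{U} \df (\rn{\theta}_B \mid B \in r(A))$ for the shared low-order coordinates (note $r(A)\subseteq r(V_i,\ell)$ for every $i$) and $\rn{W}_i$ for the remaining coordinates relevant to $V_i$; the $\rn{W}_i$ are i.i.d.\ and independent of $\rn{U}$. The event $\rn{K}\rest_{V_i}\cong\rho_k$ is a function $h(\rn{U},\rn{W}_i)$ (the same $h$ for all $i$ by exchangeability), and $\EE[h(\rn{U},\rn{W}_i) \mid \rn{U}]$ is a fixed function $g(\rn{U})$; moreover $g(\rn{U})$ is expressible via $W^\ell_\cN$ (an average over the $k!/\lvert\Aut(\rho_k)\rvert = k!$ labelings, but all giving clique so it collapses — in fact $g(\rn U)$ is literally the conditional probability, which upon further conditioning on the order-$\ell$ coordinates of $V_i$ equals $W^\ell_\cN$ at that point, then averaged). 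By Theorem~\ref{thm:CliqueDisc}, $\EE\big[\prod_{i=1}^t h(\rn{U},\rn{W}_i)\big] = \phi_\cN(\rho_k)^t$. On the other hand, conditioning on $\rn{U}$ and using conditional independence of the $\rn W_i$, this equals $\EE[g(\rn{U})^t]$. So $\EE[g(\rn{U})^t] = \phi_\cN(\rho_k)^t$ for all $t\ge 1$. Since $g$ takes values in $[0,1]$ and its $t$-th moments match those of the constant $\phi_\cN(\rho_k)$ for all $t$, we conclude $g(\rn{U}) = \phi_\cN(\rho_k)$ a.s. Finally, to pass from $g$ (an average of $W^\ell_\cN$ over the higher-order-but-still-$\le\ell$ coordinates) to $W^\ell_\cN$ itself, I would either run the same argument directly with $V_i$'s sharing an $\ell$-set \emph{in the same positions} so that the conditioning is on all of $r(V_i,\ell)$ for a \emph{single} $V$ — the point being that conditionally on $(\rn\theta_A \mid A\in r(V,\ell))$ the event $\rn K\rest_V\cong\rho_k$ has probability exactly $W^\ell_\cN$ at the corresponding point — and then take two \emph{independent} fresh copies of the \emph{non-shared} coordinates to get $\EE[(W^\ell_\cN)^2 \mid \text{shared}] = (\EE[W^\ell_\cN\mid\text{shared}])^2$, forcing $W^\ell_\cN$ to be conditionally a.s.\ constant, hence a.s.\ constant, hence a.s.\ equal to its mean $\phi_\cN(\rho_k)$.

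The main obstacle is precisely this last bookkeeping: getting Theorem~\ref{thm:CliqueDisc} to speak about $W^\ell_\cN$ at a \emph{single} evaluation point rather than an average of it, which requires carefully choosing the $V_i$ to share \emph{exactly} the full index set $r(V,\ell)$ of low-order coordinates (i.e.\ share the same $\ell$ vertices \emph{and} all their sub-coordinates) while keeping the higher-order coordinates $(\rn\theta_A \mid A \in \binom{V}{>\ell})$ independent across copies — this is legitimate because those higher-order coordinates for distinct copies are governed by genuinely distinct $k$-sets once we take $V_1, V_2$ agreeing on an $\ell$-set and disjoint elsewhere, and the clique event on $V_i$ depends on $\rn\theta_A$ only through $A\subseteq V_i$. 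Once set up correctly, the moment-matching (or even just the second-moment/variance) argument is routine and the identification of the constant as $\phi_\cN(\rho_k)$ follows from Fubini as noted at the start.
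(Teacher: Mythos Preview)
Your moment-matching argument with the sunflower configuration is correct as far as it goes, but it does not reach $W^\ell_\cN$; it only shows that the coarser conditional expectation
\[
  g(\rn U)\;=\;\PP\bigl[\rn K\rest_V\cong\rho_k \,\bigm|\, (\rn\theta_B)_{B\in r(A)}\bigr]
\]
is a.s.\ constant, where $A$ is the common $\ell$-set. The function $W^\ell_\cN$ is the conditional probability given \emph{all} of $(\rn\theta_B)_{B\in r(V,\ell)}$, and $r(V,\ell)$ strictly contains $r(A)$ whenever $\ell<k$. Your proposed fix---``take two independent fresh copies of the non-shared coordinates''---does not correspond to any configuration of distinct $k$-sets in $\NN_+$: if $V_1,V_2$ share all of $r(V,\ell)$ then $V_1=V_2$, and there is no way to resample only the coordinates $\rn\theta_B$ with $\lvert B\rvert>\ell$ while staying inside the exchangeable array. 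Concretely, for $k=2$, $\ell=1$ your sunflower test reduces to ``the degree function $x\mapsto\int W(x,y)\,dy$ is constant'', and the balanced complete bipartite graphon passes every sunflower test with $p=1/2$ yet has $W$ nonconstant. (Of course that graphon is not in $\CliqueDisc[1]$; the point is that sunflowers alone do not use enough of the hypothesis.)

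The paper takes a different route: rather than invoking Theorem~\ref{thm:CliqueDisc}, it uses the equivalence $\CliqueDisc[\ell]=\Disc\bigl[\binom{[k]}{\ell}\bigr]$ and applies the definition of $\Disc$ directly. For a product box $Y=\prod_{A\in r(k,\ell)}V_A$ it builds a coupling with $T_{\cL_{\binom{[k]}{\ell}}}$ by setting $\cH_{P_A}\df\iota_A^*(Y)$, and then $\Disc\bigl[\binom{[k]}{\ell}\bigr]$ yields $\int_Y W^\ell_\cN\,d\mu=\phi_\cN(\rho_k)\,\mu(Y)$ in one line; this pins down $W^\ell_\cN$ on a generating family for the $\sigma$-algebra. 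If you want to stay with Theorem~\ref{thm:CliqueDisc}, you would need richer $\ell$-linear configurations (for graphs, the $4$-cycle rather than stars) that give access to second moments of $W^\ell_\cN$; carrying this out for general $k,\ell$ essentially amounts to reproving $\CliqueDisc[\ell]\Leftrightarrow\Disc\bigl[\binom{[k]}{\ell}\bigr]$.
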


\begin{proof}
  It is sufficient to prove that the two
  measures on $X^{r(k,\ell)}$ given by $Y\mapsto \int_Y W_\cN^\ell\ d\mu$ and $\nu(Y) \df
  \phi_\cN(\rho_k)\mu(Y)$ coincide, and for that we only have to consider the basis of our $\sigma$-algebra,
  i.e., sets of the form
  \begin{align*}
    Y = \prod_{A\in r(k,\ell)}V_A.
  \end{align*}
  In other words, for every collection $V_A\subseteq X$ ($A\in r(k,\ell)$) of measurable sets we have to prove
  that
  \begin{align}\label{eq:CliqueDiscintegral}
    \int_Y W^\ell_\cN\ d\mu & = \phi_\cN(\rho_k)\cdot \mu(Y).
  \end{align}

  Recall from~\cite{Tow17,ACHP18} that $\CliqueDisc[\ell]$ is equivalent to $\Disc[\binom{[k]}{\ell}]$ (see
  Definition~\ref{def:CliqueDisc}) and for the language $\cL_{\binom{[k]}{\ell}}$ containing one predicate
  symbol $P_A$ of arity $\ell$ for each $A\in\binom{[k]}{\ell}$, define the
  $T_{\cL_{\binom{[k]}{\ell}}}\cup\TkHypergraph$-on $\cH$ over $\Omega$ by
  \begin{align*}
    \cH_E & \df \cN_E; &
    \cH_{P_A} & \df \iota_A^\ast(Y)
    = \{x\in\cE_\ell(\Omega) \mid \forall A'\in r(A), x_{\iota_A^{-1}(A')}\in V_{A'}\}.
  \end{align*}
  Let then $\rn{K}$ be the exchangeable array corresponding to $\cH$. Since $\phi_\cN$ satisfies
  $\CliqueDisc[\ell]=\Disc[\binom{[k]}{\ell}]$, we get
  \begin{align*}
    \int_Y W^\ell_\cN\ d\mu
    & =
    \PP\left[(1,\ldots,k)\in R_E(\rn{K})\land \forall A\in\binom{[k]}{\ell}, \iota_A\in R_{P_A}(\rn{K})\right]
    \\
    & =
    \phi_\cN(\rho_k)\cdot\PP\left[\forall A\in\binom{[k]}{\ell}, \iota_A\in R_{P_A}(\rn{K})\right]
    \\
    & =
    \phi_\cN(\rho_k)\cdot\mu(Y),
  \end{align*}
  as desired.
\end{proof}

To prove the final
implication~\ref{thm:UInduce:UInduce}$\implies$\ref{thm:UInduce:symlocal} in
Theorem~\ref{thm:UInduce}, we will need a small generalization of the easier
direction of Theorem~\ref{thm:CliqueDisc} for disjoint unions of theories of
hypergraphs.

\begin{definition}[$\vec{k}$-hypergraphs]
  Given $\vec{k} = (k_1,\ldots,k_t)\in\NN_+^t$, we let
  $\TkHypergraph[\vec{k}]\df\bigcup_{i\in[t]}\TkHypergraph[k_i]$ and in this theory, we denote the predicate
  symbol corresponding to the $i$-th hypergraph by $E_i$. Models of $\TkHypergraph[\vec{k}]$ will be called
  \emph{$\vec{k}$-hypergraphs} and for one such model $M$, we let $E_i(M)\df\{\im(\alpha) \mid \alpha\in
  R_{E_i}(M)\}$ be its \emph{$i$-th edge set}.  We also denote by
  $I_i\interpret{\TkHypergraph[k_i]}{\TkHypergraph[\vec{k}]}$ the structure-erasing interpretation
  corresponding to the $i$-th edge set.
\end{definition}

\begin{proposition}\label{prop:vecCliqueDisc}
  Let $\vec{k} = (k_1,\ldots,k_t)$, let $\ell\leq\min_{i\in[t]} k_i$, let $i_1,\ldots,i_s\in[t]$ and let
  $(V_j)_{j=1}^s$ be such that $V_j\in\binom{\NN_+}{k_{i_j}}$ and $\lvert V_j\cap V_{j'}\rvert\leq\ell$,
  whenever $j\neq j'$.

  Let $\phi\in\HomT{\TkHypergraph[\vec{k}]}$ be such that all $\phi^{I_i}$ ($i\in [t]$) satisfy $\CliqueDisc[\ell]$ and let $\rn{K}$ be the corresponding
  exchangeable array. Then
  \begin{align*}
    \PP[\forall j\in[s], V_j\in E_{i_j}(\rn{K})] & = \prod_{j\in[s]}\PP[V_j\in E_{i_j}(\rn{K})].
  \end{align*}
\end{proposition}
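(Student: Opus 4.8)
The plan is to reduce to the single-hypergraph case via the $\ell$-flattening, exactly as in (the easy direction of) Theorem~\ref{thm:CliqueDisc}, the only new ingredient being the bookkeeping of coordinates across several edge sets of possibly different arities. First I would fix a $\TkHypergraph[\vec{k}]$-on $\cN$ over $\Omega=(X,\cA,\mu)$ with $\phi_\cN=\phi$ and let $\rn{K}$ be the exchangeable array it produces from a uniform $\rn{\theta}$ in $\cE_{\NN_+}(\Omega)$; since the probability we must compute depends only on $\phi$, the choice of theon is immaterial. For each $i\in[t]$ the single peon of $I_i(\cN)$ is $\cN_{E_i}$, and $\phi_{I_i(\cN)}=\phi^{I_i}$ satisfies $\CliqueDisc[\ell]$ by hypothesis, so Lemma~\ref{lem:CliqueDiscflatteningconstant} gives that the $\ell$-flattening $W^\ell_{I_i(\cN)}$ is a.e.\ equal to the constant $p_i\df\phi^{I_i}(\rho_{k_i})$.

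Next I would rewrite, using the symmetry axioms of $\TkHypergraph[k_{i_j}]$, the event $V_j\in E_{i_j}(\rn{K})$ as the event $D_j$ that $\iota_{V_j}^\ast(\rn{\theta})\in\cN_{E_{i_j}}$ (the two agree up to a null event); note $D_j$ is determined by the coordinates $\rn{\theta}_A$ with $A\subseteq V_j$. I would then split these by size and condition on the tuple $\rn{\Lambda}\df(\rn{\theta}_A\mid A\in r(\NN_+,\ell))$ of all ``low'' coordinates. This is where the hypothesis $\lvert V_j\cap V_{j'}\rvert\leq\ell$ enters: a set of size $>\ell$ is contained in at most one of the $V_j$, so the ``high'' blocks $\rn{H}_j\df(\rn{\theta}_A\mid A\subseteq V_j,\ \lvert A\rvert>\ell)$ are pairwise coordinate-disjoint, whence $\rn{\Lambda},\rn{H}_1,\ldots,\rn{H}_s$ are mutually independent. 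Since $D_j$ is a measurable function of $(\rn{\Lambda},\rn{H}_j)$, conditioning on $\rn{\Lambda}$ turns the $D_j$ into functions of the mutually independent blocks $\rn{H}_j$, so $D_1,\ldots,D_s$ are conditionally independent given $\rn{\Lambda}$. Integrating out $\rn{H}_j$ (a Fubini computation parallel to the evaluation of $W(F_1(x))$ in the proof of Proposition~\ref{prop:rankae}) identifies $\PP[D_j\mid\rn{\Lambda}]$ with $W^\ell_{I_{i_j}(\cN)}$ evaluated at a subtuple of $\rn{\Lambda}$, which is a.e.\ the constant $p_{i_j}$ by the first paragraph. Consequently $\PP[\bigwedge_{j}D_j]=\EE[\prod_j\PP[D_j\mid\rn{\Lambda}]]=\prod_j p_{i_j}$, while $\PP[D_j]=\EE[\PP[D_j\mid\rn{\Lambda}]]=p_{i_j}$, which is the desired identity. (The final independence bookkeeping is essentially the elementary fact already isolated as Claim~\ref{clm:multiple}.)

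I do not expect a serious obstacle: the one point requiring care is the conditioning argument — verifying that $\PP[D_j\mid\rn{\Lambda}]$ really is the deterministic function $W^\ell_{I_{i_j}(\cN)}$ of the relevant subtuple, and that conditioning on the independent $\rn{\Lambda}$ preserves mutual independence of the $\rn{H}_j$ — but both are routine once the coordinate bookkeeping forced by $\lvert V_j\cap V_{j'}\rvert\leq\ell$ has been set up; no idea beyond Lemma~\ref{lem:CliqueDiscflatteningconstant} is needed. As a sanity check on the degenerate case $\ell=k_{i_j}$: then $\rn{H}_j$ is empty, $D_j$ is $\rn{\Lambda}$-measurable, and $W^\ell_{I_{i_j}(\cN)}$ is already $\{0,1\}$-valued, so the argument goes through verbatim.
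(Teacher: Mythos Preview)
Your proposal is correct and follows essentially the same approach as the paper: both fix a theon, use the $\ell$-flattening and Lemma~\ref{lem:CliqueDiscflatteningconstant} to make it constant, and exploit that coordinates $\rn{\theta}_A$ with $\lvert A\rvert>\ell$ appear in at most one $V_j$ to factorize. The only difference is presentational---you phrase the Fubini step as conditioning on $\rn{\Lambda}$ and invoking conditional independence of the high blocks, while the paper writes it directly as an integral $\int_{\cE_{V,\ell}}\prod_j W^\ell_{\cN_{E_{i_j}}}(\alpha_j^*(x))\,d\lambda(x)$ over the low coordinates---but this is the same argument.
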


\begin{proof}
  Let $\cN$ be a $\TkHypergraph$-on such that $\phi_\cN=\phi$ and note that
  \begin{align*}
    \PP[\forall j\in[s], V_j\in E_{i_j}(\rn{K})]
    & =
    \lambda\left(\bigcap_{j\in[s]}(\alpha_j^*)^{-1}(\cN_{E_{i_j}})\right)
    \\
    & =
    \lambda(\{x\in\cE_{\NN_+} \mid \forall j\in[s], \alpha_j^*(x)\in\cN_{E_{i_j}}\}),
  \end{align*}
  where $\alpha_j\in(\NN_+)_{k_{i_j}}$ is such that $\im(\alpha_j) = V_j$. Since the sets $V_j$ have pairwise
  intersections of size at most $\ell$, in the set above, the coordinates $x_A$ with $\lvert A\rvert > \ell$
  are only constrained by at most one of the $\alpha_j^*$, so Fubini's Theorem gives
  \begin{align*}
    \PP[\forall j\in[s], V_j\in E_{i_j}(\rn{K})]
    & =
    \int_{\cE_{V,\ell}} \prod_{j\in[s]} W_{\cN_{E_{i_j}}}^\ell(\alpha_j^*(x))\ d\lambda(x),
  \end{align*}
  where $V\df\bigcup_{j\in[s]} V_j$.

  Since each $\phi^{I_i}$ satisfies $\CliqueDisc[\ell]$, by
  Lemma~\ref{lem:CliqueDiscflatteningconstant}, it follows that $W_{\cN_{E_i}}^\ell =
  \phi^{I_i}(\rho_{k_i})$ a.e., so we get
  \begin{align*}
    \PP[\forall j\in[s], V_j\in E_{i_j}(\rn{K})]
    & =
    \prod_{j\in[s]} \phi^{I_{i_j}}(\rho_{k_{i_j}})
    =
    \prod_{j\in[s]}\PP[V_j\in E_{i_j}(\rn{K})],
  \end{align*}
  as desired.
\end{proof}

Proposition~\ref{prop:vecCliqueDisc} (and Theorem~\ref{thm:UInduce->CliqueDisc})
will be sufficient to handle the case in the definition of
symmetric $\ell$-locality when all sets have size at least $\ell$. For smaller sets, we need
the notion of categoricity of elements of $\HomT{T}$ defined below.

\begin{definition}\label{def:categorical}
  For $\phi\in\HomT{T}$, let $\Th(\phi)$ be the theory obtained from $T$ by adding the axiom
  $\forall\vec{x},\neg\Dopen(M)(\vec{x})$ for every $M\in\cM[T]$ such that $\phi(M) = 0$, i.e., it is the
  theory whose models are precisely the ones that have positive density in $\phi$.

  Recall that in model theory a theory $T$ is called \emph{$\ell$-categorical} if it has exactly one model of size $\ell$ up
  to isomorphism. We say that $\phi\in\HomT{T}$ is \emph{$\ell$-categorical} if $\Th(\phi)$ is
$\ell$-categorical.
\end{definition}

\begin{remark}\label{rmk:categorical}
  Since $\sum_{M\in\cM_\ell[T]} \phi(M) = 1$, it follows that $\phi$ is $\ell$-categorical if and only if
  $\phi(M)\in\{0,1\}$ for every $M\in\cM_\ell[T]$.
\end{remark}

\begin{lemma}\label{lem:categoricalnat}
  Let $I\interpret{T_1}{T_2}$ be an open interpretation and let $\phi\in\HomT{T_2}$ be
  $\ell$-categorical. Then $\phi^I$ is $\ell$-categorical.
\end{lemma}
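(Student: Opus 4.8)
The plan is to reduce everything to the combinatorial reformulation of $\ell$-categoricity supplied by Remark~\ref{rmk:categorical}, together with the elementary fact that open interpretations commute with passage to induced submodels. First I would record that commutation: for any $K\vDash T_2$ and any $W\subseteq V(K)$ one has $I(K\rest_W) = I(K)\rest_W$. Indeed both structures have vertex set $W$, and since each $I(P)$ is a quantifier-free $\cL_2$-formula, for every $\alpha\in(W)_{k(P)}$ the sentence $I(P)(\alpha_1,\ldots,\alpha_{k(P)})$ holds in $K$ if and only if it holds in $K\rest_W$. In particular $I$ induces a well-defined map $\cM_\ell[T_2]\to\cM_\ell[T_1]$ sending $[M]$ to $[I(M)]$ (it respects isomorphisms, since an isomorphism of $T_2$-models is also an isomorphism of their $I$-images).

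Next I would express $\phi^I$ on $\ell$-vertex models. Fix an exchangeable array $\rn K$ corresponding to $\phi$ via Theorem~\ref{thm:cryptomorphism}; then $I(\rn K)$ corresponds to $\phi^I$ (see the discussion of open interpretations following that theorem), and by the commutation above $I(\rn K)\rest_{[\ell]} = I(\rn K\rest_{[\ell]})$. Consequently, for every $N\in\cM_\ell[T_1]$,
\begin{align*}
  \phi^I(N)
  = \PP\bigl[I(\rn K)\rest_{[\ell]}\cong N\bigr]
  = \sum_{\substack{M\in\cM_\ell[T_2]\\ [I(M)] = N}} \PP\bigl[\rn K\rest_{[\ell]}\cong M\bigr]
  = \sum_{\substack{M\in\cM_\ell[T_2]\\ [I(M)] = N}} \phi(M).
\end{align*}
(One may equally derive this identity directly in the finite world or from the definition of $\pi^I$, but the array computation is the shortest.)

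Finally I would invoke $\ell$-categoricity of $\phi$. By Remark~\ref{rmk:categorical}, $\phi(M)\in\{0,1\}$ for every $M\in\cM_\ell[T_2]$, and since $\sum_{M\in\cM_\ell[T_2]}\phi(M) = 1$, there is exactly one $M_0\in\cM_\ell[T_2]$ with $\phi(M_0) = 1$. Hence in the displayed sum at most one term is nonzero and, when present, it equals $1$; therefore $\phi^I(N)\in\{0,1\}$ for every $N\in\cM_\ell[T_1]$, so by Remark~\ref{rmk:categorical} again $\phi^I$ is $\ell$-categorical. The proof has no real obstacle: the only point deserving a line of justification is the commutation $I(K\rest_W) = I(K)\rest_W$, which is immediate from the quantifier-freeness of $I$; everything else is bookkeeping.
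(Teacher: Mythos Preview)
Your proof is correct and follows essentially the same approach as the paper: both derive the identity $\phi^I(N) = \sum\{\phi(M) \mid M\in\cM_\ell[T_2],\ I(M)\cong N\}$ and then use that $\phi$ has a unique $\ell$-vertex model of positive (hence unit) density. The paper simply states this identity as known (it is essentially the definition of $\pi^I$), whereas you justify it via the exchangeable array and the commutation $I(K\rest_W)=I(K)\rest_W$; this extra care is fine but not strictly necessary.
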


\begin{proof}
  Since for $M\in\cM_\ell[T_1]$, we have $\phi^I(M) = \sum\{\phi(N) \mid N\in\cM_\ell[T_2]\land I(N)\cong
  M\}$, it follows that $\phi^I(M) > 0$ if and only if $M\cong I(N_0)$ for the unique model
  $N_0\in\cM_\ell[\Th(\phi)]$.
\end{proof}

\begin{lemma}\label{lem:ramsey}
If $\phi\in \HomT{\TkHypergraph}$ is $\ell$-categorical for $\ell\geq k$ then
$\phi(\rho_k)\in \{0,1\}$, that is, the hypergraphon $\phi$ is either empty
or complete.
\end{lemma}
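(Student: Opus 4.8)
The plan is to pass to the exchangeable‑array (equivalently, countable random model) representation of $\phi$ and then invoke the infinite Ramsey theorem for $k$‑uniform hypergraphs. First I would unwind what $\ell$‑categoricity buys us: by Remark~\ref{rmk:categorical} we have $\phi(M)\in\{0,1\}$ for every $M\in\cM_\ell[\TkHypergraph]$, and since these densities sum to $1$ there is a unique $H_0\in\cM_\ell[\TkHypergraph]$ with $\phi(H_0)=1$ and $\phi(M)=0$ for all $M\not\cong H_0$. Let $\rn{K}$ be the exchangeable array corresponding to some $\TkHypergraph$-on $\cN$ with $\phi_\cN=\phi$ (see~\eqref{eq:arraycryptomorphism} and Theorem~\ref{thm:cryptomorphism}), so that $\PP[\rn{K}\rest_{[\lvert M\rvert]}\cong M]=\phi(M)$. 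Then $\PP[\rn{K}\rest_{[\ell]}\cong H_0]=1$, hence by exchangeability $\PP[\rn{K}\rest_V\cong H_0]=1$ for every $V\in\binom{\NN_+}{\ell}$; taking a countable intersection over all such $V$, with probability $1$ the $k$‑uniform hypergraph $\rn{K}$ on $\NN_+$ has the property that $\rn{K}\rest_V\cong H_0$ for \emph{every} $\ell$‑element subset $V\subseteq\NN_+$. Fix one realization $K$ of $\rn{K}$ with this property.

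The key step is then to apply the infinite Ramsey theorem to the $2$‑colouring of $\binom{\NN_+}{k}$ that marks each $k$‑set according to whether or not it is an edge of $K$. This yields an infinite set $S\subseteq\NN_+$ with $\binom{S}{k}$ monochromatic, i.e., $K\rest_S$ is either the complete $k$‑uniform hypergraph or the empty one. Since $\ell\geq k$ and $S$ is infinite, choose any $V\in\binom{S}{\ell}$; then $H_0\cong K\rest_V$ is either the complete $k$‑uniform hypergraph on $\ell$ vertices or the edgeless one. In the first case every $k$‑subset of every $\ell$‑set induces an edge, so extending $[k]$ to an $\ell$‑set gives $\phi(\rho_k)=\PP[\rn{K}\rest_{[k]}\cong\rho_k]=1$; in the second case $\phi(\rho_k)=0$. (When $\ell=k$ this is immediate, since $\cM_k[\TkHypergraph]=\{\rho_k,\text{edgeless}\}$ already forces $\phi(\rho_k)\in\{0,1\}$.)

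I do not expect a serious obstacle: the only non‑bookkeeping ingredient is the hypergraph Ramsey theorem, and the only mild subtlety is the countable‑intersection argument that makes all $\ell$‑vertex restrictions simultaneously isomorphic to $H_0$ almost surely, which is routine. If one prefers to avoid probability altogether, the same argument runs with a $\TkHypergraph$-on $\cN$ representing $\phi$: the set of sample points producing a hypergraph that does \emph{not} have all its $\ell$‑vertex restrictions isomorphic to $H_0$ has measure zero, so a suitable deterministic hypergraph on $\NN_+$ exists, and Ramsey applies to it verbatim.
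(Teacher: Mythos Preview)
Your proposal is correct and follows essentially the same approach as the paper: both identify the unique $\ell$-vertex model $H_0$ with $\phi(H_0)=1$ and use Ramsey's theorem to force $H_0\in\{K_\ell^{(k)},\overline{K}_\ell^{(k)}\}$. The paper's proof is a compressed two-liner that simply observes $\phi(K_\ell^{(k)})=\phi(\overline{K}_\ell^{(k)})=0$ would contradict Ramsey, whereas you spell this out explicitly via the exchangeable array and the infinite Ramsey theorem; the underlying idea is the same.
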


\begin{proof}
Let $M$ be the unique $k$-hypergraph on $\ell$ vertices such that
$\phi(M)=1$. Then $M\in \{K_\ell^{(k)}, \overline K_\ell^{(k)}\}$ as
$\phi(K_\ell^{(k)})= \phi(\overline K_\ell^{(k)})=0$ would have contradicted
Ramsey's Theorem. The lemma follows.
\end{proof}

\begin{lemma}\label{lem:categoricalanti-mon}
  If $\phi\in\HomT{T}$ is $\ell$-categorical and $0\leq\ell'\leq\ell$, then $\phi$ is $\ell'$-categorical.
\end{lemma}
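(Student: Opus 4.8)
The plan is to reduce, via Remark~\ref{rmk:categorical}, to showing: if $\phi(M)\in\{0,1\}$ for every $M\in\cM_\ell[T]$, then $\phi(N)\in\{0,1\}$ for every $N\in\cM_{\ell'}[T]$. Let $M_0$ be the unique model in $\cM_\ell[T]$ with $\phi(M_0)=1$ and let $\rn{K}$ be the exchangeable array corresponding to $\phi$ through Theorem~\ref{thm:cryptomorphism}. Since $\PP[\rn{K}\rest_{[\ell]}\cong M_0]=1$, exchangeability gives $\PP[\rn{K}\rest_V\cong M_0]=1$ for every $V\in\binom{\NN_+}{\ell}$, and intersecting over the countably many such $V$ shows that with probability $1$ we have $\rn{K}\rest_V\cong M_0$ simultaneously for all $V\in\binom{\NN_+}{\ell}$; in particular this event is nonempty, so we may fix one countable model $K$ realizing it.

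The crux is the purely combinatorial claim that all $\ell'$-element induced submodels of $M_0$ are mutually isomorphic. To see this, color each $U\in\binom{\NN_+}{\ell'}$ by the isomorphism type $[K\rest_U]\in\cM_{\ell'}[T]$; since the language is finite relational, $\cM_{\ell'}[T]$ is finite, so the infinite Ramsey theorem for $\ell'$-uniform hypergraphs yields an infinite $W\subseteq\NN_+$ and some $N_0\in\cM_{\ell'}[T]$ with $K\rest_U\cong N_0$ for all $U\in\binom{W}{\ell'}$. Picking any $V'\in\binom{W}{\ell}$, we have $K\rest_{V'}\cong M_0$ while each of its $\binom{\ell}{\ell'}$ induced $\ell'$-element submodels is $\cong N_0$; transporting along an isomorphism $K\rest_{V'}\cong M_0$ shows every $\ell'$-element induced submodel of $M_0$ is $\cong N_0$. (For $\ell'=\ell$ this step is vacuous with $N_0=M_0$.)

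To conclude, let $U_0\in\binom{\NN_+}{\ell'}$ be arbitrary and extend it to some $V_0\in\binom{\NN_+}{\ell}$, which is possible since $\ell'\le\ell$ and $\NN_+$ is infinite. On the full-measure event above, $\rn{K}\rest_{U_0}$ is an $\ell'$-element induced submodel of $\rn{K}\rest_{V_0}\cong M_0$, hence $\rn{K}\rest_{U_0}\cong N_0$ by the claim. Taking $U_0=[\ell']$ gives $\phi(N_0)=\PP[\rn{K}\rest_{[\ell']}\cong N_0]=1$, so $\phi$ is $\ell'$-categorical by Remark~\ref{rmk:categorical}.

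The only point requiring a little care is that $N_0$ must not depend on the particular realization $K$; this is ensured by first establishing the claim about the \emph{fixed} object $M_0$, after which the isomorphism type of its $\ell'$-element submodels is intrinsic and the final step applies uniformly to every realization in the full-measure event. I expect the Ramsey argument in the second paragraph to be the only nontrivial ingredient, and even that is standard; the rest is bookkeeping.
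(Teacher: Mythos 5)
Your proof is correct, but it takes a genuinely different route from the paper. The paper proves the lemma in three lines by routing through Lemma~\ref{lem:categoricalnat} and Lemma~\ref{lem:ramsey}: it fixes $M\in\cM_{\ell'}[T]$, interprets $\TkHypergraph[\ell']$ into $T$ by declaring $\ell'$-edges to be isomorphic copies of $M$, transfers $\ell$-categoricity of $\phi$ to $\phi^I$ via naturality, and then applies Lemma~\ref{lem:ramsey} (which invokes \emph{finite} Ramsey internally) to conclude $\phi^I(\rho_{\ell'})=\phi(M)\in\{0,1\}$. You instead work directly with the exchangeable array: you use exchangeability plus a countable intersection of almost-sure events to produce an honest countable realization $K$ all of whose $\ell$-point restrictions are $\cong M_0$, apply \emph{infinite} Ramsey to $K$ to find a set $W$ on which all $\ell'$-point types coincide, and transport back along $K\rest_{V'}\cong M_0$ to show every $\ell'$-point induced submodel of $M_0$ has the same type $N_0$. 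Both arguments rest on Ramsey's theorem, so the core combinatorics is the same, but yours is more self-contained and avoids the interpretation/naturality machinery, at the price of explicitly manipulating realizations of the exchangeable array. One small quibble on exposition rather than correctness: the ``purely combinatorial claim'' you formulate is indeed combinatorial as a statement about $M_0$, but your proof of it is not purely finite-combinatorial --- it needs the infinite structure $K$, whose existence is exactly what $\phi(M_0)=1$ for a positive homomorphism buys you (a model of size $3$ with one edge, for instance, has non-isomorphic $2$-point submodels but cannot arise as $M_0$). You also silently use $\ell'\geq 1$ in the Ramsey step; the case $\ell'=0$ is vacuous and should be dispatched separately, as in the paper's convention.
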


\begin{proof}
  Let $M\in\cM_{\ell'}[T]$ and consider the open interpretation $I\interpret{\TkHypergraph[\ell']}{T}$ that
  declares $m$-edges to be isomorphic copies of $M$. By Lemma~\ref{lem:categoricalnat}, it follows that
  $\phi^I$ is $\ell$-categorical, and it follows from Lemma~\ref{lem:ramsey} that $\phi^I$ is either the empty
  or the complete hypergraphon. Now, $\phi$ is $\ell'$-categorical by Remark~\ref{rmk:categorical}.
\end{proof}

\begin{lemma}\label{lem:UInduce->categorical}
  If $\phi\in\HomT{T}$ satisfies $\UInduce[\ell]$, then $\phi$ is $\ell'$-categorical for every
  $0\leq\ell'\leq\ell$.
\end{lemma}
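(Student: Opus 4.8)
The plan is to reduce the statement to showing that $\phi$ is $\ell$-categorical. Indeed, by Remark~\ref{rmk:categorical} this amounts to proving $\phi(N)\in\{0,1\}$ for every $N\in\cM_\ell[T]$, and once that is done Lemma~\ref{lem:categoricalanti-mon} immediately delivers $\ell'$-categoricity for all $0\le\ell'\le\ell$; the case $\ell=0$ is trivial, so assume $\ell\ge 1$ and fix $N\in\cM_\ell[T]$.

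The idea is to imitate the self-coupling argument of Proposition~\ref{prop:rankbasic}\ref{prop:rankbasic:selfcouple}, except that the second factor --- which there is a second copy of $\phi$ and hence inadmissible for $\UInduce$ --- is replaced by the $\ell$-hypergraphon recording whether the $T$-type of an $\ell$-set is isomorphic to $N$. Concretely, I would take the open interpretation $I'\interpret{\TkHypergraph[\ell]}{T}$ that declares $\ell$-edges to be isomorphic copies of $N$, i.e.\ $I'(E)(x_1,\dots,x_\ell)\df\bigvee_{\sigma\in S_\ell}\Dopen(N)(x_{\sigma(1)},\dots,x_{\sigma(\ell)})$, exactly as in the proofs of Lemmas~\ref{lem:symlocal->UInduce} and~\ref{lem:categoricalanti-mon}, and set $\psi\df\phi^{I'}\in\HomT{\TkHypergraph[\ell]}$. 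Next, fixing a $T$-on $\cN$ with $\phi_\cN=\phi$, let $\cH$ be the $(T\cup\TkHypergraph[\ell])$-on agreeing with $\cN$ on the language of $T$ and with $\cH_E\df T(I'(E),\cN)$; then $\xi\df\phi_\cH$ is a coupling of $\phi$ and $\psi$ (it agrees with $\phi$ on the $T$-reduct and with $\phi^{I'}$ on the hypergraph reduct). Finally let $M\in\cM_\ell[T\cup\TkHypergraph[\ell]]$ be the model on $\ell$ vertices whose $T$-reduct is $N$ and whose $\ell$-hypergraph reduct is $\rho_\ell$; since $\rho_\ell$ is a complete $\ell$-hypergraph, unique inducibility of $\phi$ by $\psi$ applies to $M$ and gives $\xi(M)=(\phi\otimes\psi)(M)$.

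It then remains to evaluate the three relevant densities. By construction, in the exchangeable array of $\cH$ the edge $[\ell]$ is present exactly when the $T$-type of $[\ell]$ is isomorphic to $N$; hence the ``edge present'' constraint is redundant and $\xi(M)=\phi(N)$, and for the same reason $\psi(\rho_\ell)=\phi^{I'}(\rho_\ell)=\phi(N)$. Moreover $\Aut(M)=\Aut(N)$ because the hypergraph reduct $\rho_\ell$ has full automorphism group, so the automorphism factors in Definition~\ref{def:indcoup_syntactic} cancel and $(\phi\otimes\psi)(M)=\phi(N)\,\psi(\rho_\ell)=\phi(N)^2$. Unique inducibility now reads $\phi(N)=\phi(N)^2$, whence $\phi(N)\in\{0,1\}$, and the reduction above finishes the proof. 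I do not anticipate any genuine difficulty here: the only points needing care are checking that $\cH$ really does represent a coupling of $\phi$ with $\psi$ (via $\phi_{I(\cH)}=\phi^{I'}$ for the structure-erasing $I$) and that the automorphism bookkeeping makes $\xi(M)$, $(\phi\otimes\psi)(M)$ and $\psi(\rho_\ell)$ collapse to clean powers of $\phi(N)$ --- which happens precisely because the edge on an $\ell$-set is completely determined by the $T$-structure on that set.
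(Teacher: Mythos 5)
Your proposal is correct and follows essentially the same route as the paper's proof: reduce to $\ell'=\ell$ via Lemma~\ref{lem:categoricalanti-mon}, couple $\phi$ with the $\ell$-hypergraphon that records copies of $N$, and evaluate $\UInduce[\ell]$ on the model whose hypergraph reduct is $\rho_\ell$ to get $\phi(N)=\phi(N)^2$. The only cosmetic difference is that you phrase the coupling via the interpretation $I'$ and $\psi=\phi^{I'}$, whereas the paper writes $\cH_E$ directly as $\bigcup_{K\cong M}\Tind(K,\cN)$; these define the same $(T\cup\TkHypergraph[\ell])$-on.
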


\begin{proof}
  By Lemma~\ref{lem:categoricalanti-mon}, it is enough to show the case $\ell'=\ell$. Let
  $I\interpret{T}{T\cup\TkHypergraph[\ell]}$ and $J\interpret{\TkHypergraph[\ell]}{T\cup\TkHypergraph[\ell]}$
  be the structure-erasing interpretations. Let $\cN$ be a $T$-on such that $\phi_\cN = \phi$ and for
  $M\in\cM_\ell[T]$, let $\cH$ be the $T\cup\TkHypergraph[\ell]$-on given by
  \begin{align*}
    \cH_P & \df \cN_P; &
    \cH_E & \df \bigcup_{\substack{K\in\cK_\ell[T]\\ K\cong M}}\Tind(K,\cN)
  \end{align*}
  for every predicate symbol $P$ in the language of $T$.

  Let $\widehat{M}\in\cM_\ell[T\cup\TkHypergraph[\ell]]$ be such that $I(\widehat{M})\cong M$ and
  $J(\widehat{M})\cong \rho_\ell$. Then
  \begin{align*}
    \phi(M) & = \phi_\cH(\widehat{M}) = \phi(M)\phi_\cH^J(\rho_\ell) = \phi(M)^2,
  \end{align*}
  where the second equality follows since $\phi\in\UInduce[\ell]$. Hence $\phi(M)\in\{0,1\}$ for every
  $M\in\cM_\ell[T]$, so $\phi$ is $\ell$-categorical by Remark~\ref{rmk:categorical}.
\end{proof}

\begin{remark}
The converse to Lemma~\ref{lem:UInduce->categorical} is very far from being true.
For example, every graphon is 1-categorical, and, slightly less trivially, every
tournamon is 2-categorical. They are seldom uniquely 1-inducible.
\end{remark}

We can finally prove the last implication of Theorem~\ref{thm:UInduce}.

\begin{lemma}[Theorem~\ref{thm:UInduce}\ref{thm:UInduce:UInduce}$\implies$\ref{thm:UInduce:symlocal}]\label{lem:UInduce->symlocal}
  If $\phi\in\HomT{T}$ satisfies $\UInduce[\ell]$, then $\phi$ is symmetrically $\ell$-local.
\end{lemma}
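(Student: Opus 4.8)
The plan is to show that for every collection $(V_i)_{i\in I}$ of finite subsets of $\NN_+$ with pairwise intersections of size at most $\ell$, the isomorphism types $([\rn{K}\rest_{V_i}] \mid i\in I)$ are mutually independent, where $\rn{K}$ is the exchangeable array corresponding to some $T$-on $\cN$ with $\phi = \phi_\cN$. Since mutual independence of a collection follows from mutual independence of every finite subcollection, we may assume $I = [s]$ is finite. Let $V\df\bigcup_{j\in[s]} V_j$ and let $m_j\df\lvert V_j\rvert$. The idea is to package each isomorphism-type event $[\rn{K}\rest_{V_j}]\cong M_j$ as a hyperedge of a suitable auxiliary hypergraph and then invoke Proposition~\ref{prop:vecCliqueDisc}. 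For each $j\in[s]$ and each model $M\in\cM_{m_j}[T]$, let $I'_{j,M}\interpret{\TkHypergraph[m_j]}{T}$ be the open interpretation that declares $m_j$-edges to be isomorphic copies of $M$, exactly as in the proof of Lemma~\ref{lem:symlocal->UInduce}; then $\{[\rn{K}\rest_{V_j}]\cong M\}$ is precisely the event $V_j\in E(I'_{j,M}(\rn{K}))$.

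The main step is to verify the hypothesis of Proposition~\ref{prop:vecCliqueDisc}, i.e., that each $\phi^{I'_{j,M}}\in\HomT{\TkHypergraph[m_j]}$ satisfies $\CliqueDisc[\ell]$ whenever $m_j > \ell$. This is exactly the content established inside the proof of Lemma~\ref{lem:symlocal->UInduce}: from $\phi\in\UInduce[\ell]$, first Theorem~\ref{thm:UInduce->CliqueDisc} would give $\CliqueDisc[\ell]$ directly only in the hypergraph setting, so instead we argue as in Lemma~\ref{lem:symlocal->UInduce} --- but wait, that lemma assumed symmetric locality, which is what we are trying to prove. So the genuine work here is to show $\phi^{I'_{j,M}}\in\CliqueDisc[\ell]$ using only $\UInduce[\ell]$. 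We do this by a direct coupling argument: given $\psi\in\HomT{\TkHypergraph[\ell]}$ and a coupling $\eta$ of $\phi^{I'_{j,M}}$ and $\psi$, lift $\eta$ through $I'_{j,M}\cup\id_{\TkHypergraph[\ell]}$ via Proposition~\ref{prop:lifting} to a coupling $\widehat\eta$ of $\phi$ and $\psi$; then $\eta(K^{(m_j,\ell)}_{m_j})$ equals $\widehat\eta$ evaluated on a sum of models of $T\cup\TkHypergraph[\ell]$ each of which is based on a complete $\ell$-hypergraph, so $\UInduce[\ell]$ applies term-by-term and yields $\eta(K^{(m_j,\ell)}_{m_j}) = \phi^{I'_{j,M}}(\rho_{m_j})\psi(K^{(\ell)}_{m_j})$, which is $\CliqueDisc[\ell]$. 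With this in hand, Proposition~\ref{prop:vecCliqueDisc} gives the desired factorization over all indices $j$ with $m_j > \ell$ simultaneously, for every choice of target models $M_j$.

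It remains to handle the indices $j$ with $m_j\leq\ell$ and to combine everything. For $m_j\leq\ell$, Lemma~\ref{lem:UInduce->categorical} tells us $\phi$ is $m_j$-categorical, so $\phi(M_j)\in\{0,1\}$ for every $M_j\in\cM_{m_j}[T]$; hence $\PP[[\rn{K}\rest_{V_j}]\cong M_j]\in\{0,1\}$ and these events are deterministic, so they factor out trivially from any joint probability. Thus for an arbitrary choice of $(M_j)_{j\in[s]}$, writing $S\df\{j : m_j > \ell\}$, we have $\PP[\forall j, [\rn{K}\rest_{V_j}]\cong M_j]$ equal to the joint probability restricted to $j\in S$ (times a $0/1$ factor), which by the previous paragraph factors as $\prod_{j\in S}\PP[[\rn{K}\rest_{V_j}]\cong M_j]$, and reinserting the deterministic factors gives $\prod_{j\in[s]}\PP[[\rn{K}\rest_{V_j}]\cong M_j]$. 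This is precisely mutual independence of $([\rn{K}\rest_{V_j}])_{j\in[s]}$, so $\phi$ is symmetrically $\ell$-local.

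The step I expect to be the main obstacle is the $\CliqueDisc$ verification in the second paragraph: one must be careful that, after lifting the coupling of $\phi^{I'_{j,M}}$ with $\psi$ back up to a coupling of $\phi$ with $\psi$, the model $K^{(m_j,\ell)}_{m_j}$ really does expand only to models whose $\TkHypergraph[\ell]$-reduct is the complete $\ell$-hypergraph on all $m_j$ vertices (so that $\UInduce[\ell]$ is applicable to each summand), and that the summation bookkeeping --- which models $M'\in\cM_{m_j}[T\cup\TkHypergraph[\ell]]$ contribute and with what multiplicity --- correctly reassembles into $\phi^{I'_{j,M}}(\rho_{m_j})$. This is essentially the same bookkeeping already carried out in the proof of Theorem~\ref{thm:naturality}\ref{thm:naturality:UInduce}, so it is routine but must be done cleanly.
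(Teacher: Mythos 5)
Your proposal is correct and follows essentially the same structure as the paper's proof: reduce to the case $|V_j|>\ell$ via categoricity (Lemma~\ref{lem:UInduce->categorical}), encode the isomorphism-type events as hyperedges via the interpretations declaring $m_j$-edges to be copies of $M_j$, verify that the resulting hypergraph objects satisfy $\CliqueDisc[\ell]$, and then invoke Proposition~\ref{prop:vecCliqueDisc}.

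The one place where you take a detour is the $\CliqueDisc[\ell]$ verification, and it reflects a small misunderstanding. You write that ``Theorem~\ref{thm:UInduce->CliqueDisc} would give $\CliqueDisc[\ell]$ directly only in the hypergraph setting'' and conclude that the ``genuine work'' is to re-establish $\phi^{I'_{j,M}}\in\CliqueDisc[\ell]$ from scratch via a coupling lifting. But $\phi^{I'_{j,M}}$ \emph{is} an element of $\HomT{\TkHypergraph[m_j]}$, so it \emph{is} in the hypergraph setting, and the short route is simply: Theorem~\ref{thm:naturality}\ref{thm:naturality:UInduce} gives $\phi^{I'_{j,M}}\in\UInduce[\ell]$, then Theorem~\ref{thm:UInduce->CliqueDisc} gives $\phi^{I'_{j,M}}\in\CliqueDisc[\ell]$. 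This is exactly what the paper does. Your direct lifting argument via Proposition~\ref{prop:lifting} is valid --- it is in effect re-proving Theorem~\ref{thm:naturality}\ref{thm:naturality:UInduce} inline for this special case, as you yourself note at the end --- but it is redundant given that the naturality theorem is already available as a black box. Otherwise your bookkeeping, including the observation that the $m_j\leq\ell$ factors are deterministic and factor out trivially, matches the paper.
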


\begin{proof}
  Let $\rn{K}$ be the exchangeable array corresponding to $\phi$. We need to show that for every finite
  collection $(V_i)_{i\in[t]}$ of finite subsets of $\NN_+$ with pairwise intersections of size at most $\ell$
  and every collection $(M_i)_{i\in[t]}$ of models of $T$, we have
  \begin{align*}
    \PP[\forall i\in[t], \rn{K}\rest_{V_i}\cong M_i]
    & =
    \prod_{i\in[t]} \PP[\rn{K}\rest_{V_i}\cong M_i].
  \end{align*}

  By Lemma~\ref{lem:UInduce->categorical}, we know that $\phi$ is $\ell'$-categorical for every
  $0\leq\ell'\leq\ell$, which implies that if $\lvert V\rvert\leq\ell$, then $\PP[\rn{K}\rest_V\cong M] =
  \phi(M)\in\{0,1\}$, i.e., the event $\rn{K}\rest_V \cong M$ is trivial. So we may assume that $\lvert
  V_i\rvert > \ell$ for every $i\in[t]$.

  Let $\vec{k} = (k_1,\ldots,k_t)$ be given by $k_i\df\lvert V_i\rvert$ and consider the interpretation
  $I\interpret{\TkHypergraph[\vec{k}]}{T}$ that declares $E_i$-edges to be isomorphic copies of $M_i$. In
  other words, $I$ is given by
  \begin{align*}
    I(E_i)(x_1,\ldots,x_{k_i}) & \df \bigvee_{\sigma\in S_{k_i}} \Dopen(M_i)(x_{\sigma(1)},\ldots,x_{\sigma(k_i)}).
  \end{align*}

  By Theorem~\ref{thm:naturality}, we know that for every $i\in[t]$ we have $\phi^{I\comp
    I_i}\in\UInduce[\ell]$ and by Theorem~\ref{thm:UInduce->CliqueDisc}, it follows that $\phi^{I\comp
    I_i}\in\CliqueDisc[\ell]$. Then we have
  \begin{align*}
    \PP[\forall i\in[t], \rn{K}\rest_{V_i}\cong M_i]
    & =
    \PP[\forall i\in[t], V_i\in E_i(I(\rn{K}))]
    \\
    & =
    \prod_{i\in[t]} \PP[V_i\in E_i(I(\rn{K}))]
    =
    \prod_{i\in[t]} \PP[\rn{K}\rest_{V_i}\cong M_i],
  \end{align*}
  where the second equality follows from Proposition~\ref{prop:vecCliqueDisc}.
\end{proof}

We finish this section with the (now trivial) proof of Theorem~\ref{thm:anti-monotone}.

\begin{proofof}{Theorem~\ref{thm:anti-monotone}}
  The facts $\Independence[\ell]\implies\Independence[\ell-1]$ and $\UCouple[\ell]\implies\UCouple[\ell-1]$
  follow easily from definitions. The fact that $\UInduce[\ell]\implies\UInduce[\ell-1]$ follows since
  symmetric $\ell$-locality trivially implies symmetric $(\ell-1)$-locality and from
  Lemmas~\ref{lem:symlocal->UInduce} and~\ref{lem:UInduce->symlocal}.
\end{proofof}

\section{Unique coupleability}
\label{sec:UCouple}

In this section we prove Theorem~\ref{thm:UCouple}. We start with the
equivalence~\ref{thm:UCouple:UCouple}$\equiv$\ref{thm:UCouple:QRhyper}$\equiv$\ref{thm:UCouple:QRhyperindepcoup}. While
implications~\ref{thm:UCouple:UCouple}$\implies$\ref{thm:UCouple:QRhyperindepcoup}
and~\ref{thm:UCouple:QRhyperindepcoup}$\implies$\ref{thm:UCouple:QRhyper} are fairly straightforward, the
proof of the implication~\ref{thm:UCouple:QRhyper}$\implies$\ref{thm:UCouple:UCouple} is more involved and
naturally splits into five rather independent parts:

\begin{enumerate}[label={\arabic*.}, ref={\arabic*}]
\item Show that unique coupleability of $\phi$ with the quasirandom $\ell'$-hypergraphon $\psi_{\ell',p}$ for
  some $p\in(0,1)$ implies the same statement for \emph{every} $p\in(0,1)$.\label{it:changedensities}
\item Show that unique coupleability of $\phi$ with the quasirandom $\ell'$-hypergraphon $\psi_{\ell',p}$ for
  all $p\in(0,1)$ implies $\phi$ is unique coupleable with the quasirandom $c$-colored $\ell'$-hypergraphon
  $\psi_{\ell',q}$ for every $c\geq 2$ and every $q\in\Pi_c$.\label{it:changenumberofcolors}
\item Show that unique coupleability of $\phi$ with all quasirandom colored $\ell'$-hypergraphons for
  $\ell'\in[\ell]$ implies $\phi$ is uniquely coupleable with all independent couplings
  $\psi_{1,p_1}\otimes\cdots\otimes\psi_{\ell,p_\ell}$ of quasirandom colored $\ell'$-hypergraphons for $\ell'\in[\ell]$.\label{it:quasirandomcolhyp}
\item Show that in an arbitrary theory $T'$, the set of elements that are uniquely coupleable with
  $\phi\in\HomT{T}$ is closed in $\HomT{T'}$ in the \emph{$L_1$-topology}.\label{it:closedness}
\item Show that for any pure canonical theory $T_\cL$, the set of all elements of the form
  $(\psi_{1,p}\otimes\cdots\otimes\psi_{\ell,p})^I$, where $I\interpret{T_\cL}{\bigcup_{k=1}^\ell\TcColkHyp}$ is
  an open interpretation, is dense in the set of $\psi\in\HomT{T_\cL}$ of rank at most $\ell$ (again in the
  $L_1$-topology) and apply Theorem~\ref{thm:naturality}.\label{it:density}
\end{enumerate}

Let us point out that items~\ref{it:changedensities},
\ref{it:changenumberofcolors} and~\ref{it:quasirandomcolhyp} combined show a
strengthened version of
implication~\ref{thm:UCouple:QRhyper}$\implies$\ref{thm:UCouple:QRhyperindepcoup}
that allows for multiple colors and arbitrary densities. Furthermore, most
likely items~\ref{it:closedness} and~\ref{it:density} in this program can be
replaced with an ad hoc argument but we prefer this more structured approach.

\smallskip

We start with item~\ref{it:changedensities}.

\begin{lemma}\label{lem:UCouple:densities}
  Let $\ell\in\NN_+$ and $\phi\in\HomT{T}$. If there exists $p\in(0,1)$ such that $\phi$ is uniquely
  coupleable with the quasirandom $\ell$-hypergraphon $\psi_{\ell,p}$, then $\phi$ is uniquely coupleable with
  $\psi_{\ell,q}$ for every $q\in(0,1)$.
\end{lemma}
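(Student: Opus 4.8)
The plan is to reduce unique coupleability with $\psi_{\ell,q}$ to unique coupleability with $\psi_{\ell,p}$ by exhibiting an open interpretation that turns a quasirandom $\ell$-hypergraphon of one density into one of another density (after passing to an independent coupling with an auxiliary coloring that supplies fresh randomness). The key mechanism is a ``thickening/thinning'' trick via colors: given $\psi_{\ell,p}$ and an independent $2$-coloring $\psi_{(t,1-t)}$, one can build an open interpretation $I'\interpret{\TkHypergraph[\ell]}{\TkHypergraph[\ell]\cup\TcColoring[2]}$ whose induced hypergraphon has a different edge density controlled by $t$. This is exactly the device already used in the proof of Lemma~\ref{lem:UInduce:singledensity}: if we set $I'(E)(x_1,\ldots,x_\ell)\df E(x_1,\ldots,x_\ell)\land\bigwedge_{i\in[\ell]}\chi_1(x_i)$, then $(\psi_{\ell,p}\otimes\psi_{(t,1-t)})^{I'}$ is a quasirandom $\ell$-hypergraphon of density $pt^\ell$ (it is again quasirandom because independent couplings of quasirandom objects are quasirandom, or one simply checks the counts directly), so this handles all target densities $q\leq p$. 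For $q>p$ one instead uses the ``either old monochromatic-$1$ edge, or any monochromatic-$2$ $\ell$-set'' interpretation $I'(E)(x_1,\ldots,x_\ell)\df\bigl(E(x_1,\ldots,x_\ell)\land\bigwedge_i\chi_1(x_i)\bigr)\lor\bigwedge_i\chi_2(x_i)$, whose induced density is $pt^\ell+(1-t)^\ell$, which ranges over $[p,1]$ as $t$ ranges over $[1,0]$ (using $\ell\geq 1$; for $\ell=1$ this is $pt+(1-t)$, still fine). Thus for any $q\in(0,1)$ there is a suitable $t$ and interpretation $I'$ with $(\psi_{\ell,p}\otimes\psi_{(t,1-t)})^{I'}=\psi_{\ell,q}$.

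Now I assemble the argument. Fix $q\in(0,1)$ (the cases $q\in\{0,1\}$ being trivial since $\psi_{\ell,0}$ and $\psi_{\ell,1}$ have rank $0$, hence are uniquely coupleable with everything by Proposition~\ref{prop:rankbasic}), choose $t$ and $I'$ as above so that $\psi_{\ell,q}=(\psi_{\ell,p}\otimes\psi_{(t,1-t)})^{I'}$. Let $\xi$ be any coupling of $\phi$ and $\psi_{\ell,q}$. Using Proposition~\ref{prop:lifting} (coupling lifting) applied to the interpretation $\id_T\cup I'\interpret{T\cup\TkHypergraph[\ell]}{T\cup\TkHypergraph[\ell]\cup\TcColoring[2]}$, we lift $\xi$ to a coupling $\widehat{\xi}$ of $\phi$ and $\psi_{\ell,p}\otimes\psi_{(t,1-t)}$ with $\widehat{\xi}^{\id_T\cup I'}=\xi$. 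Restricting $\widehat{\xi}$ along the structure-erasing interpretation that forgets the coloring gives a coupling of $\phi$ with $\psi_{\ell,p}$, which by hypothesis must equal $\phi\otimes\psi_{\ell,p}$. The remaining task is to promote this to the statement that $\widehat{\xi}=\phi\otimes\psi_{\ell,p}\otimes\psi_{(t,1-t)}$, i.e.\ that $\phi$ is uniquely coupleable with the three-way object $\psi_{\ell,p}\otimes\psi_{(t,1-t)}$; applying $(\id_T\cup I')$ to both sides then yields $\xi=\phi\otimes\psi_{\ell,q}$ and we are done.

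The main obstacle is precisely this last promotion step: a priori, unique coupleability of $\phi$ with $\psi_{\ell,p}$ does not immediately give unique coupleability of $\phi$ with the independent coupling $\psi_{\ell,p}\otimes\psi_{(t,1-t)}$ — this is exactly the kind of ``independent couplings of uniquely-coupleable objects need not be uniquely coupleable'' subtlety flagged in the introduction. I would handle it by first noting that a $2$-coloring $\psi_{(t,1-t)}$ has rank $1\leq\ell$, and then either (a) invoking that $\psi_{\ell,p}$ alone already forces $\phi$ to be uniquely coupleable with all rank-$\leq\ell$ objects — but that is the theorem we are building toward, so this would be circular — or, more honestly, (b) arguing directly via exchangeable arrays: pick a $T$-on $\cN$ with $\phi_\cN=\phi$ and note that unique coupleability of $\phi$ with $\psi_{\ell,p}$ already implies (by Lemma~\ref{lem:weakindep}, proved only using this one test is \emph{not} available yet, so instead) one re-runs the Proposition~\ref{prop:theonalignment}-plus-Fubini computation of Lemma~\ref{lem:weakindep}'s first implication with the explicit theons $\cN^Z$ for $\psi_{\ell,p}$ and for $\psi_{(t,1-t)}$: the coordinates of the coloring live only on singletons, the coordinates of the $\ell$-hypergraphon live on $\ell$-sets and are themselves forced (by uniqueness against $\psi_{\ell,p}$) to be determined by the $\le\ell$-coordinates of $\cN$, and independence of these two blocks of fresh randomness from each other and from $\cN$'s higher-order coordinates yields the product formula. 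The cleanest route is probably to prove a standalone sub-lemma — ``if $\phi$ is uniquely coupleable with $\psi_{\ell,p}$ and $\sigma$ is any object whose canonical array is a deterministic function of coordinates of size $\leq\ell$, then $\phi$ is uniquely coupleable with $\psi_{\ell,p}\otimes\sigma$'' — via exactly the two-block Fubini argument, since a $2$-coloring is such a $\sigma$.
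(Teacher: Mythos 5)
Your strategy — lift $\xi$ along $\id_T\cup I'$ to a coupling $\widehat{\xi}$ of $\phi$ with $\psi_{\ell,p}\otimes\psi_{(t,1-t)}$ and then promote $\widehat{\xi}$ to the independent triple coupling — stalls exactly at the step you flag as the main obstacle, and the sub-lemma you propose to close that gap is not merely ``not yet available'' but actually false. Take $\phi=\psi_{1,1/2}$ (a quasirandom $2$-coloring, rank $1$), $\ell=2$, $\psi_{2,1/2}$ the Erd\H{o}s--R\'enyi graphon, and $\sigma=\psi_{1,1/3}$ (rank $1\leq\ell$). Then $\phi$ is uniquely coupleable with $\psi_{2,1/2}$ because $\psi_{2,1/2}\in\Independence[1]\subseteq\UCouple[1]$ and $\rk(\phi)=1$. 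But $\phi$ is \emph{not} uniquely coupleable with $\psi_{2,1/2}\otimes\sigma$: couple $\phi$ and $\sigma$ non-trivially (possible since $\rk(\phi)=\rk(\sigma)=1$, cf.\ Proposition~\ref{prop:rankbasic}), then adjoin $\psi_{2,1/2}$ independently of both; the marginal on $\TGraph\cup\TcColoring[2]$ is still the independent coupling $\psi_{2,1/2}\otimes\sigma$, yet the triple coupling is not $\phi\otimes\psi_{2,1/2}\otimes\sigma$. Your ``two-block Fubini'' sketch cannot rule this out, because nothing in the hypothesis prevents the coloring's singleton coordinates from being correlated with the low-order coordinates of the theon representing $\phi$ — unique coupleability with $\psi_{\ell,p}$ says nothing about how $\phi$ couples with the coloring. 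Indeed, once unwound, the sub-lemma (even restricted to $\sigma$ a $2$-coloring) already implies the full implication \ref{thm:UCouple:QRhyper}$\implies$\ref{thm:UCouple:UCouple}, so there is no hope of proving it at this stage by a quick Fubini computation.

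The paper's proof avoids this direction entirely. For $p<q$ (the other case is dispatched by the complementation automorphism of $\TkHypergraph[\ell]$), it builds an explicit ``dilution'' map $F\function{\cC_q}{\cC_p}$ (where $\cC_r$ is the set of couplings of $\phi$ with $\psi_{\ell,r}$) by $F(\xi)\df(\xi\otimes\psi_{\ell,t})^{\id_T\cup I'}$ with $t=p/q$ and $I'(E)=E\land E'$ for a \emph{fresh quasirandom $\ell$-hypergraphon} $E'$ of density $t$, rather than a coloring. One checks directly from the definitions that $F(\xi)\in\cC_p$, and then — this is the crucial replacement for your promotion step — one shows $F$ is \emph{injective} via M\"obius inversion on the formula expressing $F(\xi)(\langle M_U\rangle)$ as a $\sum_{W\supseteq U}t^{\lvert U\rvert}(1-t)^{\lvert W\setminus U\rvert}\xi(\langle M_W\rangle)$. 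The conclusion is then a simple cardinality bound $\lvert\cC_q\rvert\leq\lvert\cC_p\rvert=1$, with no claim whatsoever about the coupling of $\phi$ with the auxiliary object. Your choice of a $2$-coloring as auxiliary would make the M\"obius formula messier, but the deeper problem is that you never set up an injectivity argument: once you lift, you need uniqueness of the enlarged coupling, and there is no route to it from the given hypothesis.
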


\begin{proof}
  Let $\cC_q$ be the set of all couplings of $\phi$ with $\psi_{\ell,q}$. Our objective is to show that
  $\lvert\cC_q\rvert = 1$. Without loss of generality, let us suppose that $p < q$ (otherwise, we can use the
  complementation automorphism $C\interpret{\TkHypergraph[\ell]}{\TkHypergraph[\ell]}$ given by
  $C(E)(\vec{x})\df\bigwedge_{1\leq i < j\leq\ell} x_i\neq x_j\land\neg E(\vec{x})$ and Theorem~\ref{thm:naturality}).
  Intuitively, we are going to ``dilute'' $\psi_{\ell,q}$ by a factor $t=p/q$ so
  that it will turn into $\psi_{\ell,p}$. The simplest way to make this intuition
  precise is by introducing yet another quasi-random hypergraphon $\psi_{\ell,t}$
  on the same ground set and then taking its intersection with $\psi_{\ell,q}$.

  Formally, we consider the commutative diagram
  \begin{equation}\label{eq:UCouple:coloringcommutative}
    \begin{tikzcd}[column sep={-0.8cm}]
      \TkHypergraph[\ell]
      \arrow[dr, "J"]
      \arrow[dddrr, "I'"', bend right]
      &
      &
      T
      \arrow[dl, "I"']
      \arrow[dr, "I"]
      &
      &
      \TkHypergraph[\ell]
      \arrow[dl, "J"']
      \arrow[dddll, bend left]
      \\
      &
      T\cup\TkHypergraph[\ell]
      \arrow[dr, "\id_T\cup I'"']
      &
      &
      T\cup\TkHypergraph[\ell]
      \arrow[dl]
      \\
      &
      &
      T\cup\TkHypergraph[\ell]\cup\TkHypergraph[\ell]
      \\
      &
      &
      \TkHypergraph[\ell]\cup\TkHypergraph[\ell]
      \arrow[u, "\widehat{J}"]
    \end{tikzcd}
  \end{equation}
  where $I$, $J$, $\widehat{J}$ and the unlabeled arrows are the structure-erasing interpretations, with
  the unlabeled arrows keeping the second copy of $\TkHypergraph[\ell]$, and $I'$ is given by
  \begin{align*}
    I'(E)(x_1,\ldots,x_\ell)
    & =
    E(x_1,\ldots,x_\ell)\land E'(x_1,\ldots,x_\ell).
  \end{align*}
  Here $E$ corresponds to the first copy of $\TkHypergraph[\ell]$ and $E'$ corresponds to the second one.

  We now define the dilution map $F\function{\cC_q}{\cC_p}$ by
  \begin{align*}
    F(\xi) & \df (\xi\otimes\psi_{\ell,t})^{\id_T\cup I'},
  \end{align*}
  where $t\df p/q\in(0,1)$. The fact that $F(\xi)$ is indeed an element of $\cC_p$ follows from
  \begin{align*}
    ((\xi\otimes\psi_{\ell,t})^{\id_T\cup I'})^I & = (\phi\otimes\psi_{\ell,t})^I = \phi;\\
    ((\xi\otimes\psi_{\ell,t})^{\id_T\cup I'})^J & = (\psi_{\ell,q}\otimes\psi_{\ell,t})^{I'} = \psi_{\ell,p}.
  \end{align*}

  For $M\in\cM[T]$ and $U\subseteq\binom{V(M)}{\ell}$, let $M_U$ be the model of $T\cup\TkHypergraph[\ell]$
  obtained from $M$ by declaring the $\ell$-hypergraph edge set to be $U$, that is, we have $I(M_U) = M$ and
  $E(J(M_U)) = U$. Then we have
  \begin{align*}
    F(\xi)(\langle M_U\rangle)
    & =
    t^{\lvert U\rvert}\sum_{\substack{W\subseteq\binom{[m]}{\ell}\\U\subseteq W}}
    (1-t)^{\lvert W\setminus U\rvert}\xi(\langle M_W\rangle).
  \end{align*}
  By M\"{o}bius inversion, it follows that $F$ is injective\footnote{The left-inverse is given by
    \begin{align*}
      F^{-1}(\zeta)(\langle M_U\rangle)
      & =
      t^{-\lvert U\rvert}
      \sum_{\substack{W\subseteq\binom{[m]}{\ell}\\U\subseteq W}}
      \left(1 - \frac{1}{t}\right)^{\lvert W\setminus U\rvert}\zeta(\langle M_W\rangle).
    \end{align*}
  }, hence $\lvert\cC_q\rvert\leq\lvert\cC_p\rvert = 1$ as claimed.
\end{proof}

We now proceed to item~\ref{it:changenumberofcolors} of our program.

\begin{lemma}\label{lem:changenumberofcolors}
  Let $\phi\in\HomT{T}$ and $\ell\in\NN_+$ and suppose that for every $p\in(0,1)$, $\phi$ is uniquely
  coupleable with the quasirandom $\ell$-hypergraphon $\psi_{\ell,p}$. Then for every $c\geq 2$ and every
  $q\in\Pi_c$, $\phi$ is uniquely coupleable with the quasirandom $c$-colored $\ell$-hypergraphon
  $\psi_{\ell,q}$.
\end{lemma}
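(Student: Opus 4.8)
The plan is to first cut the problem down, via naturality, to couplings with \emph{independent couplings of ordinary quasirandom $\ell$-hypergraphons}, and then to argue by induction on the number of factors. For the reduction, note that for any $q\in\Pi_c$ one can present the $c$-colored quasirandom $\ell$-hypergraphon as an open-interpretation image of such a product: put $r_1\df q_1$ and $r_j\df q_j/(q_j+q_{j+1}+\cdots+q_c)$ for $2\le j\le c-1$ (each in $(0,1)$ because $q\in\Pi_c$), let $F_1,\dots,F_{c-1}$ be the edge predicates of the $c-1$ copies of $\TkHypergraph[\ell]$, and let $I\interpret{\TcColkHyp[{c}{\ell}]}{\bigcup_{j=1}^{c-1}\TkHypergraph[\ell]}$ be the ``first coordinate that holds'' interpretation, $I(E_1)\df F_1$, $I(E_j)\df\neg F_1\wedge\cdots\wedge\neg F_{j-1}\wedge F_j$ for $1<j<c$, and $I(E_c)\df\neg F_1\wedge\cdots\wedge\neg F_{c-1}$. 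One checks that this is a legitimate open interpretation (the image satisfies the colouring axioms) and that $\psi_{\ell,q}=(\psi_{\ell,r_1}\otimes\cdots\otimes\psi_{\ell,r_{c-1}})^I$, by a routine density computation. So by Theorem~\ref{thm:naturality}\ref{thm:naturality:uniquecouplings} it is enough to show that $\phi$ is uniquely coupleable with $\psi_{\ell,r_1}\otimes\cdots\otimes\psi_{\ell,r_t}$ for all $t\in\NN_+$ and all $r_1,\dots,r_t\in(0,1)$.

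I would prove this by induction on $t$, the case $t=1$ being the hypothesis (together, if one wishes, with Lemma~\ref{lem:UCouple:densities}). Let $\eta$ be a coupling of $\phi$ with $\psi_{\ell,r_1}\otimes\cdots\otimes\psi_{\ell,r_t}$ and pass to the corresponding exchangeable array $\rn K$ (see~\eqref{eq:arraycryptomorphism}): write $\rn Y$ for its $T$-part and, for each $\ell$-set $A$, $\rn f^A=(\rn f^A_1,\dots,\rn f^A_t)\in\{0,1\}^t$ for the tuple of edge indicators of $F_1,\dots,F_t$. Since forgetting the last copy of $\TkHypergraph[\ell]$ is a structure-erasing interpretation, the induction hypothesis pins down $\bigl(\rn Y,(\rn f^A_1,\dots,\rn f^A_{t-1})_A\bigr)$ as the independent coupling. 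To finish I would feed the hypothesis through \emph{every} Boolean-combination interpretation: for $G\subseteq\{0,1\}^t$ let $I_G\interpret{\TkHypergraph[\ell]}{\bigcup_{j=1}^t\TkHypergraph[\ell]}$ be the interpretation declaring $\vec x$ to be an edge iff $(F_1(\vec x),\dots,F_t(\vec x))\in G$; then $\psi_{\ell,r_1}\otimes\cdots\otimes\psi_{\ell,r_t}$ is sent to the ordinary quasirandom $\ell$-hypergraphon of density $p_G\df\sum_{\epsilon\in G}\prod_{j=1}^t r_j^{\epsilon_j}(1-r_j)^{1-\epsilon_j}$, so $\eta^{\id_T\cup I_G}$ is a coupling of $\phi$ with $\psi_{\ell,p_G}$ and the hypothesis forces $\eta^{\id_T\cup I_G}=\phi\otimes\psi_{\ell,p_G}$; that is, for every $G$ the process $(\One[\rn f^A\in G])_A$ is i.i.d.\ $\operatorname{Bernoulli}(p_G)$ and independent of $\rn Y$. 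Combining these tests for all $G$ with the $S_{\NN_+}$-exchangeability of $\rn K$ and an inclusion--exclusion over $\{0,1\}^t$, one then recovers that $(\rn f^A)_A$ is i.i.d.\ with law $\bigotimes_{j=1}^t\operatorname{Bernoulli}(r_j)$ and independent of $\rn Y$, i.e.\ $\eta=\phi\otimes\psi_{\ell,r_1}\otimes\cdots\otimes\psi_{\ell,r_t}$.

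The hard part will be this last reconstruction step. The tests $I_G$ are ``uniform'' --- each $\ell$-set is probed through the same Boolean map $\One[\,\cdot\in G\,]$ --- so by themselves they determine the joint colouring of any pairwise-disjoint family of $\ell$-sets but leave genuine slack for families with asymmetric intersection patterns (already for three $\ell$-sets with three distinct pairwise intersection sizes). Closing that gap is the crux: one must combine the $I_G$'s with exchangeability (which symmetrizes the joint colouring of a family of $\ell$-sets only as far as the automorphisms of its intersection pattern allow), with independent couplings against fresh quasirandom $\ell$-hypergraphons, and, if need be, with the induction hypothesis applied to coarsenings. An alternative route, parallel to the proof of Lemma~\ref{lem:UCouple:densities}, replaces the induction on $t$ by a dilution: independently couple $\eta$ with a fresh $\psi_{\ell,s}$ and intersect it with the last factor to obtain an injection from couplings of $\phi$ with $\psi_{\ell,r_1}\otimes\cdots\otimes\psi_{\ell,r_t}$ into couplings of $\phi$ with $\psi_{\ell,r_1}\otimes\cdots\otimes\psi_{\ell,r_{t-1}}\otimes\psi_{\ell,s r_t}$, and then let $s\to0$, using $L_1$-continuity of these maps and the induction hypothesis at the boundary; I expect the same essential difficulty to reappear there, and I would regard it as the real content of the lemma.
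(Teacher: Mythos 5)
Your reduction via the ``first hit'' interpretation is correct as far as it goes: the density computation telescopes properly and Theorem~\ref{thm:naturality}\ref{thm:naturality:uniquecouplings} (applied on the $\psi$-side, which is legitimate by symmetry of unique coupleability) does reduce the lemma to showing that $\phi$ is uniquely coupleable with $\psi_{\ell,r_1}\otimes\cdots\otimes\psi_{\ell,r_t}$. But you should notice that this is \emph{not} a simplification; in fact it is equivalent in strength to the original statement (a large colored $\ell$-hypergraphon and a long independent coupling of ordinary quasirandom $\ell$-hypergraphons are open-interpretation images of each other in both directions). Worse, the statement you reduce to is exactly an instance of the problem singled out in Section~\ref{sec:concl} as open: ``if $\phi$ is uniquely coupleable with each of $\psi_1,\dots,\psi_t$ individually, is it uniquely coupleable with $\psi_1\otimes\cdots\otimes\psi_t$?'' Lemma~\ref{lem:chainuniquecoupleability} deliberately sidesteps this by requiring strictly nested $\Independence$ levels among the factors (assumption~(ii) there), which fails when all factors are $\ell$-hypergraphons of the same arity. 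So the item in your program you flag as ``the real content of the lemma'' is a genuine gap, and the ideas you offer for closing it do not: you already observe that the Boolean tests $I_G$ only constrain diagonals, exchangeability only symmetrizes within automorphisms of the intersection pattern, and the dilution map's $s\to 0$ endpoint degenerates to a trivial hypergraphon for which every coupling is the independent one, so the boundary carries no information.

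What the paper does instead is stay with the coupling $\xi$ of $\phi$ and the colored hypergraphon directly and never passes to products. For each color $i$ it uses the per-color interpretation $I'_i$ and the hypothesis to get $\xi^{\id_T\cup I'_i}=\phi\otimes\psi_{\ell,q_i}$; it then encodes $\xi$ through the random coloring map $\rn f$ with $\PP[\rn f=f]=\xi(\langle K_f\rangle)/\phi(\langle K\rangle)$, and the whole independence question collapses to showing a single event $\rn f(A_0)=i_0$ is independent from the rest. The step that closes this is precisely the tool you are missing: Proposition~\ref{prop:theonalignment} applied to $\id_T\cup I'_{i_0}$ produces a $T$-on representation over a product space in which the peon for color $i_0$ depends only on the single ``fresh'' top coordinate $\rn{\theta^2}_{A_0}$, while the event fixing the $T$-part depends on $\rn{\theta^1}$ alone and the colors of all other $\ell$-sets avoid $\rn{\theta^2}_{A_0}$ entirely. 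That coordinate-separation trick, not an inclusion--exclusion over Boolean tests, is what dispatches the asymmetric intersection patterns you correctly identified as problematic.
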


\begin{proof}
  For $i\in[c]$, consider the following commutative diagram
  \begin{equation*}
    \begin{tikzcd}
      \TkHypergraph[\ell]\arrow[r, "J"]\arrow[d, "I'_i"'] &
      T\cup\TkHypergraph[\ell]\arrow[d, "\id_T\cup I'_i"'] &
      T\arrow[l, "I"']\arrow[dl, "I_c"]
      \\
      \TcColkHyp[{c}{\ell}]\arrow[r, "J_c"'] &
      T\cup\TcColkHyp[{c}{\ell}]
    \end{tikzcd}
  \end{equation*}
  where $I$, $I_c$, $J$ and $J_c$ are structure-erasing and $I'_i$ is given by
  \begin{align*}
    I'_i(E)(x_1,\ldots,x_\ell) & \df E_i(x_1,\ldots,x_\ell).
  \end{align*}

  The set $\cK_m[\TcColkHyp[{c}{\ell}]]$ of labeled models of size $m$ can
  be naturally identified with functions $f\function{\binom{[m]}{\ell}}{[c]}$:
given $m\in\NN$ and $f\function{\binom{[m]}{\ell}}{[c]}$,
$C_f\in\cK_m[\TcColkHyp[{c}{\ell}]]$ is given by
  \begin{align*}
    V(C_f) & \df [m]; &
    R_{E_i}(C_f) & \df \{\alpha\in([m])_\ell \mid f(\im(\alpha)) = i\}\quad(i\in[c]).
  \end{align*}
  Let $F\df C^{-1}$.
  Given further $K\in\cK_m[T]$ and $f\function{\binom{[m]}\ell}{[c]}$, let $K_f$
  be the alignment of $K$ and $C_f$, that is, $K_f$ is the unique
  model in $\cK_m[T\cup\TcColkHyp[{c}{\ell}]]$ such that $I_c(K_f)=K$ and $J_c(K_f)=C_f$. Similarly, given $U\subseteq\binom{[m]}{\ell}$, let
  $K_U\in\cK_m[T\cup\TkHypergraph[\ell]]$ be the unique model such that $I(K_U)=K$ and $R_E(K_U) =
  \{\alpha\in([m])_\ell \mid \im(\alpha)\in U\}$.

  Let $\psi\df\psi_{\ell,q}\in\HomT{\TcColkHyp[{c}{\ell}]}$ and let $\xi$ be a coupling of $\phi$ and
  $\psi$. Our goal is to show that
  \begin{align}\label{eq:changenumberofcolors:objective}
    \xi(\langle K_f\rangle)
    & =
    \psi(\langle C_f\rangle)\phi(\langle K\rangle)
  \end{align}
  for every $m\in\NN$, every $K\in\cK_m[T]$ and every
  $f\function{\binom{[m]}{\ell}}{[c]}$. Note that to improve readability, here
  and in the forthcoming
  calculations, $K$ and $K_f$ are identified with their isomorphism classes
  $[K]$, $[K_f]$ in $\cM_m$.

  If $m < \ell$, then~\eqref{eq:changenumberofcolors:objective} holds trivially and if $\phi(\langle K\rangle)
  = 0$, then both sides of~\eqref{eq:changenumberofcolors:objective} are $0$, so suppose $m\geq\ell$ and
  $\phi(\langle K\rangle) > 0$.  Note that $\xi^{(\id_T\cup I_i')\comp J} = \psi^{I_i'} = \psi_{\ell,q_i}\in\HomT{\TkHypergraph[\ell]}$,
  hence $\xi^{\id_T\cup I_i'}$ is a coupling of $\phi$ and $\psi_{\ell,q_i}$, so we must have $\xi^{\id_T\cup
    I_i'} = \phi\otimes\psi_{\ell,q_i}$. Note also that for $m\in\NN$, $K\in\cK_m[T]$ and
  $U\subseteq\binom{[m]}{\ell}$, we have
  \begin{align}\label{eq:changenumberofcolors:piidTIi'}
    \pi^{\id_T\cup I_i'}(\langle K_U\rangle)
    & =
    \sum_{\substack{f\function{\binom{[m]}{\ell}}{[c]}\\ f^{-1}(i) = U}}
    \langle K_f\rangle.
  \end{align}

  Pick now $\rn{f}\function{\binom{[m]}{\ell}}{[c]}$ at random according to
  the distribution
  \begin{align*}
    \PP[\rn{f} = f]
    & \df
    \frac{\xi(\langle K_f\rangle)}{\phi(\langle K\rangle)}.
  \end{align*}

  The identity~\eqref{eq:changenumberofcolors:piidTIi'} allows us to compute, for $A\in\binom{[m]}{\ell}$ and $i\in[c]$, that
  \begin{align*}
    \PP[\rn{f}(A) = i]
    & =
    \sum_{\substack{f\function{\binom{[m]}{\ell}}{[c]}\\f(A) = i}}
    \frac{\xi(\langle K_f\rangle)}{\phi(\langle K\rangle)}
    =
    \sum_{\substack{U\subseteq\binom{[m]}{\ell}\\A\in U}}
    \frac{\xi^{\id_T\cup I_i'}(\langle K_U\rangle)}{\phi(\langle K\rangle)}
    \\
    & =
    \sum_{\substack{U\subseteq\binom{[m]}{\ell}\\A\in U}}
    q_i^{\lvert U\rvert}(1-q_i)^{\binom{m}{\ell} - \lvert U\rvert}
    =
    q_i,
  \end{align*}
  where the the second equality follows from~\eqref{eq:changenumberofcolors:piidTIi'} and the third equality
  follows since $\xi^{\id_T\cup I_i'} =
  \phi\otimes\psi_{\ell,q_i}$. Since $\psi(\langle C_f\rangle) =
  \prod_{A\in\binom{[m]}{\ell}}q_{f(A)}$, to complete the proof
  of~\eqref{eq:changenumberofcolors:objective}, it remains to
  show that the values
  $(\rn{f}(A) \mid A\in\binom{[m]}{\ell})$ of $\rn{f}$ are mutually independent.

  For that purpose, it is in turn sufficient to prove that for every fixed $A_0\in\binom{[m]}{\ell}$ and every fixed
  $i_0\in[c]$, the event $\rn{f}(A_0)=i_0$ is independent from $\rn{f}\rest_W$, where
  $W\df\binom{[m]}{\ell}\setminus\{A_0\}$.

  To do so, we will generate
  the distribution of $\rn{f}$ in a very specific way. Let $\cN$ be a $T$-on such that $\phi = \phi_\cN$ and
  note that $\psi_{\ell,q_{i_0}}=\phi_{\cN'}$ for the $(\ell-1)$-independent $\TkHypergraph[\ell]$-on $\cN'$
  given by
  \begin{align}\label{eq:changenumberofcolors:cN'E}
    \cN'_E & \df \{x\in\cE_\ell \mid x_{[\ell]} < q_{i_0}\}.
  \end{align}

  Since $\xi^{\id_T\cup I_{i_0}'} = \phi\otimes\psi_{\ell,q_{i_0}} = \phi_{\cN\otimes\cN'}$, by
  Proposition~\ref{prop:theonalignment} applied to the interpretation $\id_T\cup I_{i_0}'$,
  there exists a $(T\cup\TcColkHyp[{c}{\ell}])$-on $\cH$ over $[0,1]^4$ such
  that $\phi_\cH = \xi$ and
  \begin{equation}\label{eq:changenumberofcolors:cH}
    \begin{aligned}
      \cH_P & = \cN_P\times\cE_{k(P)}([0,1]^3)\ \text{a.e}\qquad (P\in\cL);
      \\
      \cH_{E_{i_0}} & = \cE_\ell\times\cN'_E\times\cE_\ell([0,1]^2)\ \text{a.e.},
    \end{aligned}
  \end{equation}
  where $\cL$ is the language of $T$.

  Let now $(\rn{\theta^1},\rn{\theta^2},\rn{\theta^3},\rn{\theta^4})$ be picked at random in
  $\cE_{\NN_+}([0,1]^4)$ according to $\lambda$ and let $\rn{K}$ be
  the exchangeable array corresponding to $\cH$ with respect to
  $(\rn{\theta^1},\rn{\theta^2},\rn{\theta^3},\rn{\theta^4})$. Denote also $\rn{F}\df
  F(J_c(\rn{K}\rest_{[m]}))$; $\rn{F} = (\rn{F}(A_0), \rn{F}\rest_W)$, and let $E$ be the event
  $I_c(\rn{K}\rest_{[m]})=K$. Then the function $\rn{f}$ is equidistributed with the function $\rn{F}$
  conditioned by the event $E$. It remains to note that by~\eqref{eq:changenumberofcolors:cH}, the event
  $\rn{F}(A_0)=i_0$ depends only on the coordinate $\rn{\theta^2}_{A_0}$ (warning: we do \emph{not} claim
  that the whole random variable $\rn{F}(A_0)$ depends only on $\rn{\theta^2}_{A_0}$).
  On the other hand, both $E$ and
  $\rn{F}\rest_W$ do not depend on it; more precisely, $E$ depends only on $\rn{\theta}^1$ and $\rn{F}\rest_W$
  depends on those $\rn{\theta^j}_B$ with $j\in[4]$, $\lvert B\rvert\leq\ell$
  and $B\neq A_0$.
\end{proof}

We now address item~\ref{it:quasirandomcolhyp} of our program (cf.\ the second
remark made after the statement of Theorem~\ref{thm:UCouple}).

\begin{lemma}\label{lem:chainuniquecoupleability}
  Let $\phi\in\HomT{T}$ and $\psi_i\in\HomT{T_i}$ for $i\in[t]$. Let also $\ell_1\leq\cdots\leq\ell_t$ and suppose
  that the following hold.
  \begin{enumerate}
  \item For every $i\in\{1,\ldots,t-1\}$, we have $\rk(\psi_i)\leq\ell_i$.
  \item For every $i\in\{2,\ldots,t\}$, we have $\psi_{i}\in\Independence[\ell_{i-1}]$.
  \item For every $i\in\{1,\ldots,t\}$, $\phi$ and $\psi_i$ are uniquely coupleable.
  \end{enumerate}
  Then $\phi,\psi_1,\ldots,\psi_t$ are uniquely coupleable.
\end{lemma}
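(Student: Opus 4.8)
The plan is to prove this by induction on $t$, using the chain rule for unique coupleability (Lemma~\ref{lem:chaincoupling}) as the organizing principle. By Lemma~\ref{lem:chaincoupling}, it suffices to show that for each $i\in\{2,\ldots,t\}$, the homomorphism $\psi_i$ is uniquely coupleable with the partial independent coupling $\phi\otimes\psi_1\otimes\cdots\otimes\psi_{i-1}$; then the conclusion follows immediately. (One also needs the base case: $\phi$ and $\psi_1$ are uniquely coupleable, which is hypothesis~iii for $i=1$.) So the real content is the single step: \emph{given that $\psi_i\in\Independence[\ell_{i-1}]$, that each of $\psi_1,\ldots,\psi_{i-1}$ has rank $\leq\ell_{i-1}$ (since $\ell_1\le\cdots\le\ell_{i-1}\le\ell_{i-1}$ and hypothesis~i applies to indices $<t$, hence in particular to $1,\ldots,i-1$), and that $\phi$ and $\psi_i$ are uniquely coupleable, conclude that $\psi_i$ is uniquely coupleable with $\phi\otimes\psi_1\otimes\cdots\otimes\psi_{i-1}$.}

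To carry out this step, I would unwind the definitions geometrically. Fix theons realizing all the objects: a $T$-on $\cN$ with $\phi=\phi_\cN$, and theons for $\psi_1,\ldots,\psi_{i-1}$, each of which (after applying Proposition~\ref{prop:rankae}) may be taken genuinely of rank $\leq\ell_{i-1}$, i.e.\ depending only on coordinates $x_A$ with $\lvert A\rvert\le\ell_{i-1}$. Let $\xi$ be an arbitrary coupling of $\psi_i$ with $\phi\otimes\psi_1\otimes\cdots\otimes\psi_{i-1}$. Using Proposition~\ref{prop:theonalignment} I would produce a theon $\cH$ (over an enlarged product space) realizing $\xi$ such that the restriction of $\cH$ to the languages of $T,T_1,\ldots,T_{i-1}$ is literally the independent coupling $\cN\otimes\cN^{1}\otimes\cdots\otimes\cN^{i-1}$ padded by dummy coordinates, while the restriction to the language of $T_i$ is some peon-family $\cM$ with $\phi_{\cM}=\psi_i$; moreover, since $\psi_i\in\Independence[\ell_{i-1}]$, by the ``Furthermore'' clause of Proposition~\ref{prop:rankae} we may arrange that $\cM$ is $\ell_{i-1}$-independent, i.e.\ does \emph{not} depend on coordinates $x_A$ with $\lvert A\rvert\le\ell_{i-1}$. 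Now pass to the associated exchangeable array $\rn{K}$. The $T_i$-part of $\rn{K}$ is determined by coordinates $\rn{\theta}_A$ with $\lvert A\rvert>\ell_{i-1}$; the $T_1,\ldots,T_{i-1}$-parts are determined by coordinates with $\lvert A\rvert\le\ell_{i-1}$; and these two coordinate blocks are independent. Using hypothesis~iii (that $\phi$ and $\psi_i$ are uniquely coupleable, hence — by the weak-independence characterization, Lemma~\ref{lem:weakindep}, applied with $\ell=0$, or more directly since $\rk(\psi_i)$ is irrelevant here and we only need that $\psi_i$ is coupleable with $\phi$) — actually the cleanest route is: the $T$-part of $\rn K$ together with the $T_i$-part must form the independent coupling of $\phi$ and $\psi_i$, since $\phi,\psi_i$ are uniquely coupleable; combining with the fact that the $T_1,\ldots,T_{i-1}$-parts depend only on the low-arity coordinates which are independent of the $T_i$-coordinates, and that they already form an independent coupling among themselves and with $\phi$ (by construction of $\cH$), one gets mutual independence of all the language-parts, which is precisely $\xi=\psi_i\otimes\phi\otimes\psi_1\otimes\cdots\otimes\psi_{i-1}$.

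The technical heart — and the step I expect to require the most care — is the bookkeeping around exactly which coordinate blocks each language-part of $\rn{K}$ depends on, and assembling the several independence statements (the $\Independence[\ell_{i-1}]$ hypothesis on $\psi_i$, the rank bounds on $\psi_1,\ldots,\psi_{i-1}$, the pre-existing independent-coupling structure, and the unique coupleability of $\phi$ with $\psi_i$) into a single mutual-independence conclusion. This is the same flavor of argument as in the proofs of Theorem~\ref{thm:naturalityindcoup} and Lemma~\ref{lem:weakindep->local}, and I would isolate the combinatorial core as a general claim about mutual independence of functions of disjointly-indexed blocks of independent variables — along the lines of Claim~\ref{clm:multiple} — and then instantiate it. One subtlety to watch: hypothesis~i only bounds $\rk(\psi_i)$ for $i<t$, so in the inductive step for the \emph{last} coupling we use $\Independence[\ell_{t-1}]$ for $\psi_t$ rather than any rank bound on $\psi_t$ itself; the argument above is written so that $\psi_i$'s own rank is never needed, only the independence hypothesis, so this causes no problem.
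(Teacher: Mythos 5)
Your overall plan (induction, then a single ``next $\psi_i$ is uniquely coupleable with the accumulated independent coupling'' step via Lemma~\ref{lem:chaincoupling}) is a reasonable organizing scheme and is close in spirit to the paper's, which instead peels off $\psi_1$ last and aggregates $\psi_2,\ldots,\psi_t$ into a single $\widehat\psi=\bigotimes_{i\ge 2}\psi_i$. But the geometric core of your one step has a genuine error. You align the theon $\cH$ realizing $\xi$ so that its restriction to $T,T_1,\ldots,T_{i-1}$ is the padded independent coupling, leaving the $T_i$-part $\cM$ free, and then assert that ``since $\psi_i\in\Independence[\ell_{i-1}]$, by the `Furthermore' clause of Proposition~\ref{prop:rankae} we may arrange that $\cM$ is $\ell_{i-1}$-independent.'' This is not what Proposition~\ref{prop:rankae} says. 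That clause is about \emph{rank}: it lets you replace a peon by an a.e.-equal one of minimal rank, and, \emph{if the original was already $\ell$-independent}, keep it so. It does not convert an arbitrary peon whose homomorphism lies in $\Independence[\ell]$ into an $\ell$-independent peon by a zero-measure change; indeed $\Independence[\ell]$ only asserts the \emph{existence} of one $\ell$-independent realization, not that every realization is a.e.\ $\ell$-independent. The contrast between~\eqref{eq:W} and~\eqref{eq:Wskew} is exactly a counterexample to what you need here. After alignment on the $T,T_1,\ldots,T_{i-1}$ side you have no control over $\cM$ beyond $\phi_\cM=\psi_i$, and you cannot straighten it.

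Even granting that step, the independence bookkeeping does not close. An $\ell_{i-1}$-independent $\cM$ over the enlarged product space may still depend on the \emph{high}-order coordinates of the $\Omega$-block that realizes $\cN$, i.e.\ exactly the coordinates $J(\rn K)$ also depends on. Unique coupleability of $\phi$ and $\psi_i$ gives you only \emph{distributional} independence of $J_i(\rn K)$ from $J(\rn K)$, and $\ell_{i-1}$-independence gives $J_i(\rn K)$ independent from the low-order coordinates; but $J_i(\rn K)\perp J(\rn K)$ together with $J_i(\rn K)\perp(J_1,\ldots,J_{i-1})(\rn K)$ is pairwise, not joint, independence, and is not enough to conclude $\xi$ factors. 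The paper avoids both problems by aligning the \emph{other} way: in the $t=2$ base case, one first fixes an $\ell_1$-independent theon $\cH^2$ for $\psi_2$, observes via unique coupleability that $\xi^{I_2}=\phi\otimes\psi_2=\phi_{\cN\otimes\cH^2}$, and applies Proposition~\ref{prop:theonalignment} so that the $T$- and $T_2$-parts live on disjoint coordinate blocks, with the $T_2$-part occupying only $\lvert A\rvert>\ell_1$ coordinates of its block. Only the $T_1$-part is then free, and it needs merely a rank bound, which is a.e.-stable by Proposition~\ref{prop:rankae}. This makes $J_2(\rn K)$ coordinate-disjoint from $(J(\rn K),J_1(\rn K))$, so joint independence is immediate, and one last application of unique coupleability of $\phi$ with $\psi_1$ finishes. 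Your plan would be correct if you swap the alignment direction accordingly and drop the appeal to Proposition~\ref{prop:rankae} for $\cM$; as written, it does not work.
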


\begin{proof}
  The proof is by induction on $t$. For $t=1$, the result is trivial. For $t=2$, let $I_i\interpret{T\cup
    T_i}{T\cup T_1\cup T_2}$, $J_i\interpret{T_i}{T\cup T_1\cup T_2}$ and $J\interpret{T}{T\cup T_1\cup T_2}$
  be the structure-erasing interpretations. Let $\cL$, $\cL_1$ and $\cL_2$ be the languages of $T$, $T_1$ and
  $T_2$, respectively. Let also $\cN$ be a $T$-on with $\phi_\cN=\phi$ and $\cH^2$ be an $\ell_1$-independent
  $T_2$-on with $\phi_{\cH^2} = \psi_2$. Fix a coupling $\xi$ of $\phi,\psi_1,\psi_2$. Since $\phi$ and
  $\psi_2$ are uniquely coupleable, we know that $\xi^{I_2} = \phi\otimes\psi_2 = \phi_{\cN\otimes\cH^2}$. By
  Proposition~\ref{prop:theonalignment}, there exists a $(T\cup T_1\cup T_2)$-on $\cG$ over $[0,1]^4$ such
  that $\phi_\cG=\xi$ and
  \begin{align*}
    \cG_P & =
    \begin{dcases*}
      \cN_P\times\cE_{k(P)}([0,1]^3), & if $P\in\cL$;\\
      \cE_{k(P)}\times\cH^2_P\times\cE_{k(P)}([0,1]^2), & if $P\in\cL_2$.
    \end{dcases*}
  \end{align*}
  On the other hand, for the predicate symbols $P$ in $\cL_1$, by possibly changing zero-measure sets of the
  corresponding $P$-ons $\cG_P$ using Proposition~\ref{prop:rankae}, we may suppose that
  $\rk(J_1(\cG))\leq\rk(\psi_1)\leq\ell_1$.

  Let us pick $\rn{\theta}\df(\rn{\theta}^1,\rn{\theta}^2,\rn{\theta}^3,\rn{\theta}^4)$ at random in
  $\cE_{\NN_+}([0,1]^4)$ according to $\lambda$ and let $\rn{K}$ be the exchangeable array corresponding to
  $\cG$ with respect to $\rn{\theta}$. Then we know that $J(\rn{K})$ depends only on $\rn{\theta}^1$,
  $J_1(\rn{K})$ depends only on $((\rn{\theta}^1_A,\rn{\theta}^2_A,\rn{\theta}^3_A,\rn{\theta}^4_A) \mid
  \lvert A\rvert\leq \ell_1)$ and $J_2(\rn{K})$ depends only on $(\rn{\theta}^2_A \mid \lvert A\rvert >
  \ell_1)$ (as $\cH^2$ is $\ell_1$-independent), so $J_2(\rn{K})$ is independent from
  $(J(\rn{K}),J_1(\rn{K}))$. This means that for every $m\in\NN$ and every $K\in\cK_m[T\cup T_1\cup T_2]$, we
  have
  \begin{align*}
    \xi(\langle K\rangle)
    & =
    \PP[\rn{K}\rest_{[m]}=K]
    \\
    & =
    \PP[J(\rn{K})\rest_{[m]}=J(K)\land J_1(\rn{K})\rest_{[m]}=J_1(K)\land J_2(\rn{K})\rest_{[m]}=J_2(K)]
    \\
    & =
    \PP[J(\rn{K})\rest_{[m]}=J(K)\land J_1(\rn{K})\rest_{[m]}=J_1(K)]\cdot\PP[J_2(\rn{K})\rest_{[m]}=J_2(K)]
    \\
    & =
    \PP[I_1(\rn{K})\rest_{[m]}=I_1(K)]\cdot\PP[J_2(\rn{K})\rest_{[m]}=J_2(K)]
    \\
    & =
    \xi^{I_1}(\langle I_1(K)\rangle)\cdot\psi_2(\langle J_2(K)\rangle)
    \\
    & =
    \phi(\langle J(K)\rangle)\cdot\psi_1(\langle J_1(K)\rangle)\cdot\psi_2(\langle J_2(K)\rangle),
  \end{align*}
  where the last equality follows since $\phi$ is uniquely coupleable with $\psi_1$ and $\xi^{I_1}$ is a
  coupling of $\phi$ and $\psi_1$. Therefore $\xi=\phi\otimes\psi_1\otimes\psi_2$.

  \medskip

  For the case $t\geq 3$, let $I\interpret{T\cup\bigcup_{i=2}^t T_i}{T\cup\bigcup_{i=1}^t T_i}$ be the
  structure-erasing interpretation and note that for a coupling $\xi$ of $\phi,\psi_1,\ldots,\psi_t$, it
  follows that $\xi^I$ is a coupling of $\phi,\psi_2,\ldots,\psi_t$. By inductive hypothesis, we must have
  $\xi^I=\phi\otimes\widehat{\psi}$, where $\widehat{\psi}\df\bigotimes_{i=2}^t\psi_i$. In fact, since
  $\phi,\psi_2,\ldots,\psi_t$ are uniquely coupleable, it also follows that $\phi$ is uniquely coupleable with
  $\widehat{\psi}$ (as any coupling of $\phi$ with $\widehat{\psi}$ can be seen as a coupling of
  $\phi,\psi_2,\ldots,\psi_t$). But by Theorem~\ref{thm:naturalityindcoup}, we know that
  $\widehat{\psi}\in\Independence[\ell_1]$ and since $\xi$ can also be seen as a coupling of
  $\phi,\psi_1,\widehat{\psi}$, we get $\xi=\phi\otimes\bigotimes_{i=1}^t\psi_i$ from the previous case.
\end{proof}

\begin{lemma}\label{lem:quasirandomcoloredhypergraphons}
  Let $c\geq 2$, $p\in\Pi_c$ and $k\in\NN_+$. Then the quasirandom $c$-colored $k$-hypergraphon $\psi_{k,p}$
  satisfies $\Independence[k-1]$ and $\rk(\psi_{k,p})=k$.
\end{lemma}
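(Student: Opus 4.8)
The statement has an easy half and a hard half. For $\psi_{k,p}\in\Independence[k-1]$ I would simply point at the right theon: since the action $\Theta$ defining $T_{c,k}$ is trivial, the $T_{c,k}$-on $\cN^Z$ of~\eqref{eq:ThetapQRcNZ} has $\cN^Z_{E_i}=\{x\in\cE_k\mid x_{[k]}\in Z_{E_i}\}$, and because $\binom{[k]}{>k-1}=\{[k]\}$ this is exactly $\cE_{k,k-1}\times Z_{E_i}$. Hence $\cN^Z$ is $(k-1)$-independent, so $\psi_{k,p}\in\Independence[k-1]$; by Theorem~\ref{thm:inter-properties} this also gives $\psi_{k,p}\in\UCouple[k-1]\subseteq\UInduce[k-1]$, which I will use below. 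The bound $\rk(\psi_{k,p})\le k$ is automatic: every peon of every representation lives in $\cE_k$, and any subset of $\cE_k$ has rank $\le k$ (take $r=k$, so $\binom{[k]}{>k}=\emptyset$). So the whole content is $\rk(\psi_{k,p})\ge k$.

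For that, suppose toward a contradiction that some $T_{c,k}$-on $\cN$ over $\Omega=(X,\cA,\mu)$ has $\phi_\cN=\psi_{k,p}$ and $\rk(\cN)\le k-1$; then in particular $\cN_{E_1}=\cH\times X^{\{[k]\}}$ for some $\cH\subseteq\cE_{k,k-1}(\Omega)$, i.e.\ $\cN_{E_1}$ does not depend on the top coordinate $x_{[k]}$. The plan is to push this down to the hypergraph world via the open interpretation $I\interpret{\TkHypergraph}{T_{c,k}}$ with $I(E)\df E_1$ (legitimate precisely because each $E_i$ is symmetric in $T_{c,k}$ by~\eqref{eq:action:action} with trivial action). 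Then $\phi_\cN^I=\psi_{k,p_1}$ is the quasirandom $k$-hypergraphon of density $p_1$, and $I(\cN)$ is a $\TkHypergraph$-on representing it with $\rk(I(\cN)_E)=\rk(\cN_{E_1})\le k-1$. Since $\psi_{k,p_1}\in\Independence[k-1]$ (the first half of this lemma with $c=2$), we get $\psi_{k,p_1}\in\UInduce[k-1]$ and hence, by Theorem~\ref{thm:UInduce->CliqueDisc}, $\psi_{k,p_1}\in\CliqueDisc[k-1]$ (valid for $k\ge 2$). Lemma~\ref{lem:CliqueDiscflatteningconstant} applied to $I(\cN)$ then forces its $(k-1)$-flattening $W^{k-1}_{I(\cN)_E}$ to equal $\psi_{k,p_1}(\rho_k)=p_1$ a.e. But $I(\cN)_E=\cN_{E_1}=\cH\times X^{\{[k]\}}$, and the $(k-1)$-flattening of such a peon is the indicator $\One_{\cH}$, which is $\{0,1\}$-valued; so $p_1\in\{0,1\}$, contradicting $p\in\Pi_c$. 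The degenerate case $k=1$ I would dispatch by hand: $\rk(\cN)\le 0$ would make every peon $\cN_{E_i}$ $\mu$-trivial, so the coloring would be deterministic and some $p_i$ would equal $1$, again impossible.

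\textbf{Main obstacle.} The only genuinely substantive input is the step ``$\psi_{k,p_1}$ fully quasirandom $\Rightarrow$ the flattening of a $\{0,1\}$-valued peon representing it is constant,'' which is exactly Lemma~\ref{lem:CliqueDiscflatteningconstant} once we know $\psi_{k,p_1}\in\CliqueDisc[k-1]$; everything else is bookkeeping about $\binom{[k]}{>k-1}=\{[k]\}$ and about how rank and flattenings behave under the relabelling interpretation $E\mapsto E_1$. One could instead argue directly inside $T_{c,k}$ using the obvious $c$-colour analogue of Lemma~\ref{lem:CliqueDiscflatteningconstant}, but routing through the single-predicate theory $\TkHypergraph$ lets us reuse the already-proven statement verbatim and isolates the point where the hypothesis $p\in\Pi_c$ (non-degeneracy of each colour class) is essential.
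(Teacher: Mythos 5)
Your proof is correct, but you took a noticeably longer route than the paper. Both halves of the easy direction are the same (exhibit the explicit $(k-1)$-independent theon, giving $\Independence[k-1]$ and $\rk\le k$). For the lower bound $\rk(\psi_{k,p})\ge k$ the paper does not go anywhere near discrepancy: it observes that $c\ge 2$ forces $\rk(\psi_{k,p})>0$, and then invokes Proposition~\ref{prop:rankbasic}\ref{prop:rankbasic:selfcouple} (``a non-trivial $\phi$ of rank $r$ cannot satisfy $\UCouple[r]$'', because the diagonal coupling of $\phi$ with itself disagrees with the independent coupling). Since $\Independence[k-1]\implies\UCouple[k-1]$ by Theorem~\ref{thm:inter-properties} and $\UCouple$ is anti-monotone, any rank $r\in[1,k-1]$ would put $\psi_{k,p}$ simultaneously inside and outside $\UCouple[r]$, which is the contradiction. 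That argument is two lines and lives entirely in the abstract coupling framework.

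Your argument instead passes to the structure-erasing/relabelling interpretation $E\mapsto E_1$, identifies the image with the quasirandom $k$-hypergraphon of density $p_1$, and then applies Theorem~\ref{thm:UInduce->CliqueDisc} plus Lemma~\ref{lem:CliqueDiscflatteningconstant} to conclude that the $(k-1)$-flattening of any $(k-1)$-rank representation would have to be the constant $p_1$, whereas a rank-$\le(k-1)$ peon in $\cE_k$ has a $\{0,1\}$-valued $(k-1)$-flattening. This is sound, and the special handling of $k=1$ (for which $\CliqueDisc[k-1]$ is out of range) is the right fix. There is no circularity: Lemma~\ref{lem:CliqueDiscflatteningconstant} and Theorem~\ref{thm:UInduce->CliqueDisc} both appear before this lemma in the paper's logical order. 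What the route costs you is that it drags in the whole $\Disc[\binom{[k]}{\ell}]$ machinery (Towsner/ACHP) for a statement that, with Proposition~\ref{prop:rankbasic}, needs none of it; what it buys is an explicit ``geometric'' explanation of why the rank cannot drop — the flattening would have to be simultaneously constant and indicator-valued — which is arguably more vivid than the abstract self-coupling argument, and it illustrates nicely how $\CliqueDisc$ feeds back into rank computations. Both proofs are correct; the paper's is the more economical one.
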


\begin{proof}
  Note that $\psi_{k,p}$ can be represented by the $\TcColkHyp$-on $\cN^{k,p}$ given by
  \begin{align*}
    \cN^{k,p}_{E_i}
    & \df
    \set{x\in\cE_k}{\sum_{j=1}^{i-1} p_j\leq x_{[k]} < \sum_{j=1}^i p_j} &
    (i\in[c]),
  \end{align*}
  hence $\psi_{k,p}\in\Independence[k-1]$ and $\rk(\psi_{k,p})\leq k$. Since $c\geq 2$, it follows
  that $\rk(\psi_{k,p})>0$, so by Theorem~\ref{thm:inter-properties} and Proposition~\ref{prop:rankbasic}, we
  must have $\rk(\psi_{k,p})=k$.
\end{proof}

Proceeding to item~\ref{it:closedness} in the program, we introduce the $L_1$-topology on theons that is a
direct analogue of the $L_1$-topology on graphons~\cite[Sct.~8.2.5 and Sct.~8.3]{Lov12}.

\begin{definition}\label{def:L1}
  If $T$ is a theory in a language $\cL$ and $\phi_1,\phi_2\in\HomT{T}$, then the \emph{$L_1$-distance}
  between $\phi_1,\phi_2$ is defined as
  \begin{align}\label{eq:d1}
    \delta_1(\phi_1,\phi_2) & \df \min_{\cN^1,\cN^2} \sum_{P\in\cL} \mu(\cN^1_P\symdiff\cN^2_P),
  \end{align}
  where the minimum is taken over $T$-ons $\cN^1$ and $\cN^2$ over the same space such that
  $\phi_1 = \phi_{\cN^1}$ and $\phi_2 = \phi_{\cN^2}$.
\end{definition}

It is not immediately clear from this definition that the minimum in~\eqref{eq:d1} is actually attained, nor
is it clear why $\delta_1$ is a metric.

The first issue is easy to address by giving an alternative purely algebraic
definition. Namely, for any $P\in\mathcal L$
introduce the element $d_P\in \cA[T\cup T]$ as
\begin{align*}
  d_P
  & \df
  \sum_{\substack{K\in\cK_{k(P)}[T\cup T]\\ \id_{k(P)}\in R_{P_1}(K)\symdiff R_{P_2}(K)}}\langle K \rangle,
\end{align*}
where $P_1$ and $P_2$ are the two copies of $P$ in $\cL\disjcup\cL$, and let
\begin{align*}
  d_T & \df \sum_{P\in\cL}d_P.
\end{align*}
This element measures the distance in a coupling of $\phi_1,\phi_2$ so we
have
\begin{equation}\label{eq:d1coupling}
  \delta_1(\phi_1,\phi_2) = \inf_\xi \xi(d_T),
\end{equation}
where $\xi$ runs over all couplings of $\phi_1$ and $\phi_2$. Their set is determined in $\HomT{T\cup T}$
by countably many linear equations and hence compact. Therefore the minimum
in~\eqref{eq:d1coupling} and~\eqref{eq:d1} is actually achieved.

The second issue is trickier, and the proof is similar to the analogous proof
that $\delta_1$ is a metric in the case of graphons. Fortunately, we already
did most of the necessary (and notationally heavy) work in the proof of
Proposition~\ref{prop:theonalignment}; we defer the remaining part to
Appendix~\ref{sec:L1topology}.

\smallskip

Let us finally remark why we need $L_1$-topology at all instead of the standard and much nicer density
topology (i.e., the one induced by the inclusion $\HomT{T}\subseteq [0,1]^{\cM[T]}$ from the product
topology). One simple explanation is that the set of all $\psi\in\HomT{T'}$ that are uniquely coupleable with
some $\phi\in\HomT{T}$ is not closed in the latter.

\begin{example}\label{ex:nonclosed}
  Let $\phi_p\in\HomT{\TGraph}$ be the quasirandom graphon of density $p\in(0,1)$. If $(G_n)_{n\in\NN}$
  ($G_n\in\cM_n[\TGraph]$) is a sequence of graphs converging to $\phi_p$, then the associated step functions
  $\psi_n$ converge to $\phi_p$ in the density topology. Since $\rk(\psi_n)=1$ and
  $\phi_p\in\Independence[1]$, it follows that $\phi_p$ and $\psi_n$ are uniquely coupleable, but $\phi_p =
  \lim_{n\to\infty}\psi_n$ is obviously not uniquely coupleable with itself.
\end{example}

The example above illustrates another crucial difference between the $L_1$-topology and density topology: rank
is lower semi-continuous in the former but not the latter. In fact, for pure canonical theories $T_\cL$, the set
$\{\psi\in\HomT{T_\cL} \mid \rk(\psi)\leq r\}$ is closed in $L_1$-topology but dense in $\HomT{T_\cL}$ in
density topology (if $r\geq 1$).

\begin{lemma}\label{lem:ucL1closed}
  Let $\phi\in\HomT{T}$ and $T'$ be an arbitrary theory. Then the set of $\psi\in\HomT{T'}$ that are uniquely
  coupleable with $\phi$ is closed in the $L_1$-topology.
\end{lemma}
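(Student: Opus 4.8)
The plan is to fix $\phi\in\HomT T$ and prove that the set
\[
U\df\set{\psi\in\HomT{T'}}{\phi\text{ and }\psi\text{ are uniquely coupleable}}
\]
is closed in the $L_1$-topology on $\HomT{T'}$. So suppose $(\psi_n)_{n\in\NN}$ is a sequence in $U$ with $\delta_1(\psi_n,\psi)\to 0$ for some $\psi\in\HomT{T'}$; I want to show $\psi\in U$, i.e.\ that every coupling $\xi$ of $\phi$ and $\psi$ equals $\phi\otimes\psi$. The key technical device is the coupling reformulation~\eqref{eq:d1coupling} of $\delta_1$ together with the compactness of coupling spaces inside $\HomT{T\cup T}$ (resp.\ $\HomT{T'\cup T'}$, etc.): the set of couplings of any two fixed homomorphisms is cut out by countably many linear equations and so is compact in the density topology.

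The main step is an approximation-and-limit argument. Given a coupling $\xi\in\HomT{T\cup T'}$ of $\phi$ and $\psi$, I would build, for each $n$, a coupling $\zeta_n$ of $\phi$, $\psi_n$ \emph{and} $\psi$ living in $\HomT{T\cup T'\cup T'}$ (two copies of $T'$, one carrying $\psi_n$ and one carrying $\psi$) whose marginal on $T\cup T'$ (the copy carrying $\psi$) is exactly $\xi$, and whose ``distance'' between the two $T'$-copies, measured by $\xi'_n\mapsto\xi'_n(d_{T'})$ as in~\eqref{eq:d1coupling}, is at most $\delta_1(\psi_n,\psi)+o(1)\to 0$. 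Concretely: first take a coupling of $\psi_n$ and $\psi$ achieving (nearly) $\delta_1(\psi_n,\psi)$; then amalgamate it with $\xi$ over the common factor $\psi$ --- this is exactly where I invoke Theorem~\ref{thm:amalgamation} (with the relevant pushout in \cat{Int} being $T'\leftarrow T_{\cL'}\rightarrow$ two copies, glued so that the $\psi$-copies agree), which produces $\zeta_n\in\HomT{T\cup T'\cup T'}$ restricting to $\xi$ on one side and to the chosen $(\psi_n,\psi)$-coupling on the other. Since $\psi_n\in U$, the marginal of $\zeta_n$ on $T\cup T'$ (the $\psi_n$-copy) is forced to be $\phi\otimes\psi_n$.

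Now pass to a limit: by compactness, $(\zeta_n)$ has a subsequence converging in the density topology to some $\zeta_\infty\in\HomT{T\cup T'\cup T'}$. The map $\xi'\mapsto\xi'(d_{T'})$ is continuous in the density topology (it is a finite-support linear functional on $\cA[T'\cup T']$), so $\zeta_\infty$ assigns distance $0$ between its two $T'$-copies; this means the two $T'$-marginals of $\zeta_\infty$ coincide as homomorphisms, hence $\zeta_\infty$ factors through the diagonal and its marginal on $T\cup T'$ (either copy) is well-defined and equals $\lim\xi=\xi$ on the $\psi$-side and $\lim(\phi\otimes\psi_n)=\phi\otimes\psi$ on the $\psi_n$-side (using that $\psi_n\to\psi$ and that $\otimes$ is continuous in the density topology, which is immediate from Definition~\ref{def:indcoup_syntactic}). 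Therefore $\xi=\phi\otimes\psi$, as desired, so $\psi\in U$.

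I expect the main obstacle to be bookkeeping the several disjoint copies of $T'$ and verifying that the pushout/amalgamation in Theorem~\ref{thm:amalgamation} applies cleanly when the shared subtheory is the full $T'$ (so that ``the two copies agree on $\psi$'' is literally the pushout condition), together with confirming that ``$L_1$-distance between the two $T'$-marginals is $0$'' really forces them to be equal as elements of $\HomT{T'}$ --- this last point follows from the fact, noted right after~\eqref{eq:d1coupling}, that $\delta_1$ is a genuine metric, but one must be careful that the distance of a coupling to the diagonal controls $\delta_1$ of the marginals (which is exactly~\eqref{eq:d1coupling} read in the right direction). Everything else is soft: compactness of coupling polytopes and continuity of the relevant finite-support functionals in the density topology. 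A cleaner packaging, which I would adopt if the multi-copy juggling gets unwieldy, is to work throughout with theons: realize $\xi$ by a $(T\cup T')$-on, realize near-optimal $(\psi_n,\psi)$-couplings by theons, glue via Proposition~\ref{prop:theonalignment}, and extract an $L_1$-convergent subsequence of theons using the metric property proved via Appendix~\ref{sec:L1topology}; but the homomorphism/compactness route above avoids even that and is what I would write up first.
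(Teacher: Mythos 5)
Your proposal is correct and shares the paper's main construction: build, via the coupling--lifting machinery (Proposition~\ref{prop:lifting}, itself a consequence of Theorem~\ref{thm:amalgamation}), a coupling $\widehat{\xi}_n\in\HomT{T\cup T'\cup T'}$ whose $T\cup T'$-marginal on the $\psi$-copy is $\xi$, whose $T\cup T'$-marginal on the $\psi_n$-copy is forced to be $\phi\otimes\psi_n$ (since $\psi_n$ is uniquely coupleable with $\phi$), and whose $T'\cup T'$-marginal is a near-optimal $\zeta_n$ with $\zeta_n(d_{T'})\to 0$. Where you differ is the finish. The paper reads off directly from the theon realization of $\widehat{\xi}_n$ that $\delta_1(\xi,\phi\otimes\psi_n)\leq\zeta_n(d_{T'})=\delta_1(\psi,\psi_n)\to 0$, and concludes $\xi=\phi\otimes\psi$ by the triangle inequality for $\delta_1$ (together with $\delta_1(\phi\otimes\psi_n,\phi\otimes\psi)=\delta_1(\psi_n,\psi)\to 0$). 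You instead extract a density-topology subsequential limit $\zeta_\infty$ and argue that $\zeta_\infty(d_{T'})=0$ forces the two $T\cup T'$-marginals to agree. Both finishes are valid, but note that your phrasing ``the two $T'$-marginals coincide'' is weaker than what you actually need, which is that the two $T\cup T'$-marginals coincide; the correct justification is an exchangeability argument: $\zeta_\infty(d_{T'})=0$ plus exchangeability implies the two $T'$-predicate families agree a.s.\ \emph{jointly with the $T$-structure} in the corresponding exchangeable array, hence the two $T\cup T'$-marginal arrays are a.s.\ identical. Your final paragraph gestures in this direction but conflates ``equal as elements of $\HomT{T'}$'' with ``factors through the diagonal.'' The paper's direct $\delta_1$-estimate sidesteps this entirely and is a cleaner finish; your compactness route works but should be stated more carefully at this one step.
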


\begin{proof}
  Let $(\psi_n)_{n\in\NN}$ be a sequence in $\HomT{T'}$ converging to $\psi$ in the $L_1$-topology and suppose
  every $\psi_n$ is uniquely coupleable with $\phi$. It is clear from the definition that
  $\delta_1(\phi\otimes\psi_n,\phi\otimes\psi)= \delta_1(\psi_n\otimes\psi)$, so
  $\phi\otimes\psi_n$ also
  converges to $\phi\otimes\psi$ in the $L_1$-topology.
  For each $n\in\NN$, let $\zeta_n$ be a coupling of $\psi$ and $\psi_n$
  attaining the $L_1$-distance in~\eqref{eq:d1coupling}.

  Let $\xi$ be a coupling of $\phi$ and $\psi$; we have to show that $\xi = \phi\otimes\psi$. Let
  $I\interpret{T'\cup T'}{T\cup T'\cup T'}$ and $J_i\interpret{T'}{T'\cup T'}$ be the structure-erasing
  interpretations, where $J_i$ keeps the $i$-th copy of $T'$. Since $\xi$ is a coupling of $\phi$ and $\psi =
  \zeta_n^{J_1}$, by Proposition~\ref{prop:lifting}, there exists a coupling $\widehat{\xi}_n$ of $\xi$ and
  $\zeta_n$ such that $\widehat{\xi}_n^{\id_T\cup J_1} = \xi$. Note that $\widehat{\xi}_n$ can also be seen
  as a coupling of $\phi$, $\psi$ and $\psi_n$ as $\widehat{\xi}_n^I = \zeta_n$.

  Let now $\cN^n$ be a $(T\cup T'\cup T')$-on such that $\widehat{\xi}_n = \phi_{\cN^n}$. By considering
  the $(T\cup T')$-ons $(\id_T\cup J_1)(\cN^n)$ and $(\id_T\cup J_2)(\cN^n)$, since $\psi_n$ is uniquely
  coupleable with $\phi$, we conclude from~\eqref{eq:d1} that
  \begin{align*}
    \delta_1(\xi,\phi\otimes\psi_n)
    & \leq
    \sum_{P\in\cL'} \lambda(J_1(I(\cN^n))_P\symdiff J_2(I(\cN^n))_P)
    =
    \zeta_n(d_{T'})
    =
    \delta_1(\psi,\psi_n),
  \end{align*}
  where $\cL'$ is the language of $T'$. Since $\psi_n\to\psi$ and $\phi\otimes\psi_n\to\phi\otimes\psi$ in the
  $L_1$-topology, it follows that $\xi = \phi\otimes\psi$.
\end{proof}

We proceed to the last item~\ref{it:density} in our program, which is to provide a way
of approximating Euclidean structures with
interpretations of independent couplings
$\psi_{1,p}\otimes\cdots\otimes\psi_{\ell,p}$ of quasirandom colored
hypergraphons in the $L_1$-topology.

\begin{lemma}\label{lem:L1approximation}
  Let $\cL$ be a language, $\phi\in\HomT{T_\cL}$, $r\df\rk(\phi)$ and $\epsilon > 0$. Then there exist
  $c\geq 2$, $p\in\Pi_c$ and an open interpretation $I\interpret{T_\cL}{\bigcup_{k=1}^r\TcColkHyp}$ such
  that $\delta_1(\phi,(\bigotimes_{k=1}^r\psi_{k,p})^I)\leq\epsilon$.
\end{lemma}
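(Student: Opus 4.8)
The plan is to mimic the classical fact that step functions are dense among graphons in the cut/$L_1$ metric, but carried out simultaneously in all arities up to $r$. First I would fix a $T_\cL$-on $\cN$ realizing $\phi$ with $\rk(\cN)=r$; by Proposition~\ref{prop:rankae} we may assume $\cN$ is literally ``straight of rank $r$,'' i.e.\ each peon $\cN_P$ has the form $\cH_P\times X^{\binom{[k(P)]}{>r}}$ with $\cH_P\subseteq\cE_{k(P),r}$. Thus all the information in $\cN$ lives in the coordinates $x_A$ with $1\le\lvert A\rvert\le r$. The idea is now to discretize each coordinate space $[0,1]$ into finitely many cells and to encode ``which cell does $x_A$ fall into'' using the color of the $\lvert A\rvert$-edge $A$ in a quasirandom colored $\lvert A\rvert$-hypergraphon. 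Concretely, partition $[0,1]$ into $c$ intervals $J_1,\dots,J_c$ of equal length $1/c$; this gives the density vector $p=(1/c,\dots,1/c)\in\Pi_c$ and the quasirandom $c$-colored $k$-hypergraphon $\psi_{k,p}$ for each $k\in[r]$, whose theon $\cN^{k,p}$ from Lemma~\ref{lem:quasirandomcoloredhypergraphons} records, for each $k$-set, which of the $c$ intervals its single ``top'' coordinate lies in. In the independent coupling $\bigotimes_{k=1}^r\psi_{k,p}$, realized over $[0,1]^r$ with the $k$-th factor reading the $k$-th coordinate, the color data of all sets $A$ with $\lvert A\rvert\le r$ is available, and by refining $c$ we can recover $x_A$ up to precision $1/c$.

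Next I would define the interpretation $I\interpret{T_\cL}{\bigcup_{k=1}^r\TcColkHyp}$. For a predicate $P\in\cL$ of arity $k=k(P)$, the restriction $\cH_P$ is a measurable subset of $\cE_{k,r}=[0,1]^{r(k,r)}$; choose a finite union of boxes $B_P=\bigcup_j \prod_{A\in r(k,r)} J_{i^{(j)}_A}$ (each box a product of the discretizing intervals) with $\lambda(\cH_P\symdiff B_P)$ small — this is possible because finite unions of such boxes generate the Lebesgue $\sigma$-algebra on $[0,1]^{r(k,r)}$, so for $c$ large enough and an appropriate choice of boxes the symmetric difference is below any prescribed threshold; one must symmetrize the box family under $S_k$ to respect~\eqref{eq:hypergraphsymmetry}, which only helps (it can only shrink the symmetric difference since $\cH_P$ itself is symmetric). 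Then set $I(P)(x_1,\dots,x_k)$ to be the formula asserting that for each subset $A\subseteq[k]$ with $\lvert A\rvert\le r$, the $\lvert A\rvert$-hypergraph color of the tuple indexed by $A$ is one of the indices $i^{(j)}_A$ appearing in some box $j$ — i.e.\ a finite disjunction over boxes $j$ of a finite conjunction over $A\in r(k,r)$ of the atom ``$E^{(\lvert A\rvert)}_{i^{(j)}_A}$ holds on the tuple $(x_a)_{a\in A}$ (in increasing order).'' This is manifestly an open formula, so $I$ is a legitimate open interpretation.

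Finally I would compute $I(\cN^{1,p}\otimes\cdots\otimes\cN^{r,p})$ and compare it to $\cN$. By construction, $I(\bigotimes_{k}\cN^{k,p})_P$ is exactly $B_P\times[0,1]^{\binom{[k]}{>r}}$ up to relabeling coordinates (the coordinate $x_A$ of $\cE_k$ corresponds to the ``color'' of $A$, which under $\cN^{\lvert A\rvert,p}$ is determined by a uniform variable, so the induced distribution on the $r(k,r)$-many relevant coordinates is again product Lebesgue). Hence
\[
  \delta_1\!\Bigl(\phi,\bigl(\textstyle\bigotimes_{k=1}^r\psi_{k,p}\bigr)^I\Bigr)
  \;\le\;
  \sum_{P\in\cL}\lambda\bigl(\cN_P\symdiff I(\textstyle\bigotimes_k\cN^{k,p})_P\bigr)
  \;=\;
  \sum_{P\in\cL}\lambda\bigl(\cH_P\symdiff B_P\bigr),
\]
and choosing $c$ large and the boxes $B_P$ well makes the right-hand side at most $\epsilon$. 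The main obstacle, and the point deserving care, is the bookkeeping in the last step: verifying that pushing the independent coupling of the quasirandom colored hypergraphons through $I$ produces precisely the box approximation $B_P$ (and not some distorted version), which amounts to checking that the coordinate $x_A$ of $\cE_k$ is, after the interpretation, distributed like an independent uniform refined to interval-precision $1/c$ — this uses that $\cN^{k,p}$ depends only on the top coordinate $x_{[k]}$ of each $k$-set and that these are independent across $k$-sets and across the arities $k=1,\dots,r$. Everything else is a standard measure-theoretic approximation argument.
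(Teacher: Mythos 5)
Your sketch follows essentially the same route as the paper: approximate the rank-$r$ peons by finite unions of grid-aligned boxes, encode the grid cells as colors in the quasirandom colored $k$-hypergraphons for $k\in[r]$, and define $I$ as the corresponding DNF so that $I$ applied to the coupled theon reproduces the box approximation up to the trailing dummy coordinates. The only implementation differences are that the paper chooses the approximating cubes first and then lets the partition intervals be determined by the cube boundaries (rather than fixing an equal-width grid of mesh $1/c$ and refining, which forces $p$ to be uniform and requires the extra observation that grid boxes become dense as $c\to\infty$), and that the paper keeps everything over the single ground space $[0,1]$ and invokes Lemmas~\ref{lem:chainuniquecoupleability} and~\ref{lem:quasirandomcoloredhypergraphons} to identify the resulting coupling with $\bigotimes_{k}\psi_{k,p}$, whereas you realize the independent coupling semantically over $[0,1]^r$ and must then handle the change of ground space when comparing with $\cN$ in~\eqref{eq:d1}; also, the $S_k$-symmetrization step you mention is unnecessary here since $T_\cL$ imposes no symmetry axioms on $\cN_P$, though it is harmless.
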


\begin{proof}
  Let $\cN$ be a $T_\cL$-on such that $\phi_\cN = \phi$ and $\rk(\cN)=r$, that is, for each $P\in\cL$, there
  exists $\cH_P\subseteq\cE_{k(P),r}$ such that $\cN_P = \cH_P\times[0,1]^{\binom{[k(P)]}{> r}}$. By standard
  measure theory arguments, for each $P\in\cL$, there exists a finite family of pairwise disjoint closed cubes
  $(C^P_j)_{j=1}^{m_P}$ ($C^P_j\subseteq\cE_{k(P),r}$) such that setting
  $\cH'_P\df\bigcup_{j=1}^{m_P} C^P_j$ gives $\lambda(\cH_P\symdiff\cH'_P)\leq\epsilon/\lvert\cL\rvert$.

  Let $X$ be the set of all coordinates of vertices of all cubes $C^P_j$ for all $P\in\cL$. The set $X$ induces
  a partition of $[0,1]$ into intervals $J_1,\ldots,J_c$ of positive length (we can ensure $c\geq 2$ by
  including an extra point if necessary). Define then $p\in\Pi_c$ by letting $p_i\df\lambda(J_i) > 0$ and define
  the $(\bigcup_{k=1}^r\TcColkHyp)$-on $\cG$ by
  \begin{align*}
    \cG_{E^k_i}
    & \df
    \{x\in\cE_k \mid x_{[k]}\in J_i\}
    & (i\in[c], k\in[r]),
  \end{align*}
  where for each $k\in[r]$, the symbols $E_1^k,\ldots,E_c^k$ correspond to $\TcColkHyp$.

  Let $\psi\df\phi_\cG$ and note that $\psi$ is a coupling of $\psi_{1,p},\ldots,\psi_{r,p}$, so we must have
  $\psi=\bigotimes_{k=1}^r\psi_{k,p}$ by Lemmas~\ref{lem:chainuniquecoupleability}
  and~\ref{lem:quasirandomcoloredhypergraphons}.

  Note now that from the definition of $X$, each cube $C^P_j\subseteq\cE_{k(P),r}$ can be written as a finite
  union of the form $\bigcup_{u\in U_{P,j}} \prod_{A\in r(k(P),r)} J_{i_{P,u,A}}$. We then define
  $I\interpret{T_\cL}{\bigcup_{k=1}^r\TcColkHyp}$ by
  \begin{align*}
    I(P)(x_1,\ldots,x_{k(P)})
    & \df
    \bigvee_{j=1}^{m_P}\bigvee_{u\in U_{P,j}} \bigwedge_{A\in r(k(P),r)}
    E^k_{i_{P,u,A}}(x_{\iota_A(1)},\ldots,x_{\iota_A(\lvert A\rvert)})
    & (P\in\cL).
  \end{align*}
  Our definition ensures that
  \begin{align*}
    I(\cG)_P
    & =
    \bigcup_{j=1}^{m_P}\bigcup_{u\in U_{P,j}}\left(\prod_{A\in r(k(P),r)} J_{i_{P,u,A}}\times [0,1]^{\binom{[k(P)]}{> r}}\right)
    \\
    & =
    \bigcup_{j=1}^{m_P} (C^P_j\times [0,1]^{\binom{[k(P)]}{> r}})
    =
    \cH'_P\times [0,1]^{\binom{[k(P)]}{> r}}.
  \end{align*}
  This implies that
  \begin{align*}
    \delta_1(\phi,\psi)
    & \leq
    \sum_{P\in\cL}\lambda(\cN_P\symdiff(\cH'_P\times [0,1]^{\binom{[k(P)]}{> r}}))
    =
    \sum_{P\in\cL}\lambda(\cH_P\symdiff\cH'_P)
    \leq
    \epsilon,
  \end{align*}
  as desired.
\end{proof}

We now have all the ingredients to show the
equivalence~\ref{thm:UCouple:UCouple}$\equiv$\ref{thm:UCouple:QRhyper}$\equiv$\ref{thm:UCouple:QRhyperindepcoup}
of Theorem~\ref{thm:UCouple}.

\begin{lemma}[Theorem~\ref{thm:UCouple}\ref{thm:UCouple:UCouple}$\equiv$\ref{thm:UCouple:QRhyper}$\equiv$\ref{thm:UCouple:QRhyperindepcoup}]\label{lem:QRhyper}
  Let $\phi\in\HomT{T}$ and $\ell\in\NN_+$. Then the following are equivalent.
  \begin{enumerate}
  \item $\phi\in\UCouple[\ell]$.\label{thm:QRhyper:UCouple}
  \item For every $\ell'\in[\ell]$, there exists $p\in(0,1)$ such that $\phi$ is uniquely coupleable with
    the quasirandom $\ell'$-hypergraphon $\psi_{\ell',p}$.\label{thm:QRhyper:QRhyper}
  \item There exist $p_1,\ldots,p_\ell\in(0,1)$ such that $\phi$ is uniquely coupleable with the independent
    coupling $\psi_{1,p_1}\otimes\cdots\otimes\psi_{\ell,p_\ell}$ of quasirandom $\ell'$-hypergraphons
    $\psi_{\ell',p_{\ell'}}$ for $\ell'\in[\ell]$.\label{thm:QRhyper:QRhyperindepcoup}
  \end{enumerate}
\end{lemma}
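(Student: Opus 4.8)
The plan is to prove the cycle of implications
\ref{thm:QRhyper:UCouple}$\implies$\ref{thm:QRhyper:QRhyperindepcoup}$\implies$\ref{thm:QRhyper:QRhyper}$\implies$\ref{thm:QRhyper:UCouple},
using the sequence of lemmas established in this section together with the general facts from Section~\ref{sec:naturality} and Section~\ref{sec:basic}.

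\textbf{The two easy implications.}
The implication \ref{thm:QRhyper:UCouple}$\implies$\ref{thm:QRhyper:QRhyperindepcoup} is immediate: by Lemma~\ref{lem:quasirandomcoloredhypergraphons} (in the two-color case, i.e., Definition~\ref{def:hypergraphons}) each $\psi_{\ell',p_{\ell'}}$ has rank at most $\ell'\leq\ell$, so by Theorem~\ref{thm:naturalityindcoup}\ref{thm:naturalityindcoup:Independence} the independent coupling $\psi_{1,p_1}\otimes\cdots\otimes\psi_{\ell,p_\ell}$ lies in $\Independence[\ell-1]$ and hence has rank at most $\ell$ as well (its rank being realized by the explicit $(\ell-1)$-independent theon which is an independent coupling of the theons from Lemma~\ref{lem:quasirandomcoloredhypergraphons}); therefore $\phi\in\UCouple[\ell]$ forces unique coupleability with it, for \emph{any} choice of the $p_{\ell'}\in(0,1)$. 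The implication \ref{thm:QRhyper:QRhyperindepcoup}$\implies$\ref{thm:QRhyper:QRhyper} is a direct application of Lemma~\ref{lem:chaincoupling}: fixing the $p_1,\ldots,p_\ell$ provided by \ref{thm:QRhyper:QRhyperindepcoup}, the structure-erasing interpretation that forgets all but the first $\ell'$ hypergraph predicates shows that any coupling of $\phi$ with $\psi_{\ell',p_{\ell'}}$ lifts (via Proposition~\ref{prop:lifting}) to a coupling of $\phi$ with $\psi_{1,p_1}\otimes\cdots\otimes\psi_{\ell,p_\ell}$, which is unique; alternatively and more directly, apply the structure-erasing interpretation that keeps only $\psi_{\ell',p_{\ell'}}$ to the unique coupling and use naturality (Theorem~\ref{thm:naturality}\ref{thm:naturality:uniquecouplings}) to conclude that $\phi$ and $\psi_{\ell',p_{\ell'}}$ are uniquely coupleable.

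\textbf{The hard implication \ref{thm:QRhyper:QRhyper}$\implies$\ref{thm:QRhyper:UCouple}.}
This is the technical heart of the theorem and is assembled from the five-step program described before Lemma~\ref{lem:UCouple:densities}. Assume \ref{thm:QRhyper:QRhyper}. Step~1 (Lemma~\ref{lem:UCouple:densities}): for each $\ell'\in[\ell]$, unique coupleability with $\psi_{\ell',p}$ for one $p$ upgrades to all $p\in(0,1)$. Step~2 (Lemma~\ref{lem:changenumberofcolors}): this upgrades further to unique coupleability with every quasirandom $c$-colored $\ell'$-hypergraphon $\psi_{\ell',q}$, $c\geq 2$, $q\in\Pi_c$. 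Step~3 (Lemmas~\ref{lem:chainuniquecoupleability} and~\ref{lem:quasirandomcoloredhypergraphons}): since $\psi_{\ell',q}\in\Independence[\ell'-1]\subseteq\Independence[\ell'']$ for $\ell''<\ell'$ and has rank $\ell'$, the chain-rule lemma gives unique coupleability of $\phi$ with every independent coupling $\bigotimes_{k=1}^{\ell}\psi_{k,q^{(k)}}$ of quasirandom colored hypergraphons. Step~4 (Lemma~\ref{lem:ucL1closed}): the set of $\psi$ in any fixed theory $T'$ that are uniquely coupleable with $\phi$ is $L_1$-closed. Step~5 (Lemma~\ref{lem:L1approximation}): given an arbitrary $\psi\in\HomT{T'}$ with $\rk(\psi)\leq\ell$, first reduce to the pure canonical theory $T_{\cL'}$ by Theorem~\ref{thm:naturality} (the set of $T'$-ons uniquely coupleable with $\phi$ contains the image under the axiom-adding interpretation of the corresponding set for $T_{\cL'}$), then for $r\df\rk(\psi)\leq\ell$ approximate $\psi$ in $L_1$ by elements $(\bigotimes_{k=1}^{r}\psi_{k,p})^I$; each such element is uniquely coupleable with $\phi$ because $\bigotimes_{k=1}^{r}\psi_{k,p}$ is (Step~3) and unique coupleability is preserved under open interpretations (Theorem~\ref{thm:naturality}\ref{thm:naturality:uniquecouplings}, applied with $I\cup\id$). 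Combining Steps~4 and~5, $\psi$ itself is uniquely coupleable with $\phi$, which is exactly $\phi\in\UCouple[\ell]$.

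\textbf{Main obstacle.}
The genuinely difficult component is Step~2, carried out in Lemma~\ref{lem:changenumberofcolors}: bootstrapping from unique coupleability with single-density quasirandom $\ell$-hypergraphons to arbitrary numbers of colors requires showing, via a careful Fubini-type argument on an aligned theon produced by Proposition~\ref{prop:theonalignment}, that the random coloring induced by a coupling has mutually independent edge-colors; the subtle point is isolating, for each $\ell$-set $A_0$, the single higher-order coordinate on which the \emph{event} $\{\rn f(A_0)=i_0\}$ depends, while the conditioning event and the remaining colors depend only on other coordinates. The remaining steps are comparatively routine: Step~1 is a Möbius-inversion dilution argument, Step~3 is the chain rule, Step~4 is an $L_1$-continuity argument mirroring the graphon case, and Step~5 is a standard cube-approximation of measurable sets combined with naturality. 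All of these appeal only to results already proved above, so the proof of Lemma~\ref{lem:QRhyper} is the assembly described in the two paragraphs above.
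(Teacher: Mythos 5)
Your overall structure is right: the cyclic decomposition \ref{thm:QRhyper:UCouple}$\implies$\ref{thm:QRhyper:QRhyperindepcoup}$\implies$\ref{thm:QRhyper:QRhyper}$\implies$\ref{thm:QRhyper:UCouple} and the five-step assembly of the hard implication are exactly what the paper does, and your summary of that bootstrapping is accurate. However, your account of the first easy implication contains a false intermediate claim. You assert that $\psi_{1,p_1}\otimes\cdots\otimes\psi_{\ell,p_\ell}\in\Independence[\ell-1]$ via Theorem~\ref{thm:naturalityindcoup}\ref{thm:naturalityindcoup:Independence}, and that the independent-coupling theon is ``$(\ell-1)$-independent.'' Both statements are wrong for $\ell\geq 2$: the independent coupling carries the $1$-hypergraph predicate from $\psi_{1,p_1}$, and any peon for that predicate must depend on a singleton coordinate, so the theon is only $0$-independent (Theorem~\ref{thm:naturalityindcoup}\ref{thm:naturalityindcoup:Independence} would only give $\Independence[\min_{\ell'}(\ell'-1)]=\Independence[0]$ here). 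Fortunately you do not need $\Independence$ at all: the independent-coupling theon has rank $\max_{\ell'}\ell'=\ell$, so $\rk(\psi_{1,p_1}\otimes\cdots\otimes\psi_{\ell,p_\ell})\leq\ell$ and $\phi\in\UCouple[\ell]$ finishes it, which is the paper's (correct) one-line argument. Similarly, your opening appeal to Lemma~\ref{lem:chaincoupling} for \ref{thm:QRhyper:QRhyperindepcoup}$\implies$\ref{thm:QRhyper:QRhyper} is a red herring --- that lemma upgrades pairwise to joint unique coupleability, which is the opposite direction --- but the two arguments you actually spell out (lifting via Proposition~\ref{prop:lifting}, and naturality via Theorem~\ref{thm:naturality}\ref{thm:naturality:uniquecouplings} together with the symmetry of ``uniquely coupleable'') are both correct, the second being precisely the paper's argument.
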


\begin{proof}
  Since $\ell'$-hypergraphons have rank at most $\ell'$, by Proposition~\ref{prop:rankae}, we have
  $\rk(\psi_{1,p_1}\otimes\cdots\otimes\psi_{\ell,p_\ell})\leq\ell$, so the
  implication~\ref{thm:QRhyper:UCouple}$\implies$\ref{thm:QRhyper:QRhyperindepcoup} follows.

  \medskip

  Implication~\ref{thm:QRhyper:QRhyperindepcoup}$\implies$\ref{thm:QRhyper:QRhyper} follows from
  Theorem~\ref{thm:naturality} by considering the structure-erasing interpretations
  $I_k\interpret{\TkHypergraph}{\bigcup_{\ell'=1}^\ell\TkHypergraph[\ell']}$.

  \medskip

  For the non-trivial implication~\ref{thm:QRhyper:QRhyper}$\implies$\ref{thm:QRhyper:UCouple}, we want to show that
  $\phi$ is uniquely coupleable with any $\psi\in\HomT{T'}$ of rank at most $\ell$. We can assume
  w.l.o.g.\ that $T' = T_\cL$ for some language $\cL$. Using Lemma~\ref{lem:L1approximation}, for each
  $n\in\NN$, we can find $c_n\geq 2$, $p_n\in\Pi_{c_n}$ and
  $I_n\interpret{T_\cL}{\bigcup_{k=1}^r\TcColkHyp[{c_n}{k}]}$ such that
  $\delta_1(\phi,(\bigotimes_{k=1}^r\psi_{k,p_n})^{I_n})\leq 1/n$.

  By Lemmas~\ref{lem:UCouple:densities}, \ref{lem:changenumberofcolors}, \ref{lem:chainuniquecoupleability}
  and~\ref{lem:quasirandomcoloredhypergraphons}, we know that $\phi$ is uniquely coupleable with
  $\bigotimes_{k=1}^r\psi_{k,p_n}$ and by Theorem~\ref{thm:naturality}, it follows that $\phi$ is also
  uniquely coupleable with $(\bigotimes_{k=1}^r\psi_{k,p_n})^{I_n}$.

  Finally, since $((\bigotimes_{k=1}^r\psi_{k,p_n})^{I_n})_{n\in\NN}$ converges to $\psi$ in the $L_1$-topology,
  by Lemma~\ref{lem:ucL1closed}, it follows that $\phi$ is uniquely coupleable with $\psi$.
\end{proof}

We now proceed to add items~\ref{thm:UCouple:local} and~\ref{thm:UCouple:orderUInduce} to the list of
equivalent properties of Theorem~\ref{thm:UCouple} (recall
that~\ref{thm:UCouple:UCouple}$\equiv$\ref{thm:UCouple:weakindep}$\equiv$\ref{thm:UCouple:weakindepall}
and~\ref{thm:UCouple:weakindep}$\implies$\ref{thm:UCouple:local} were proved in Section~\ref{sec:basic}).

\begin{lemma}[Theorem~\ref{thm:UCouple}\ref{thm:UCouple:local}$\implies$\ref{thm:UCouple:orderUInduce}]\label{lem:localorderUInduce}
  If $\phi\in\HomT{T}$ is $\ell$-local, then the independent coupling of $\phi$ with the linear order
  $\psi\in\HomT{\TLinOrder}$ satisfies $\UInduce[\ell]$.
\end{lemma}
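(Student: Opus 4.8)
The plan is to combine $\ell$-locality of $\phi$ (which by Theorem~\ref{thm:UInduce} is equivalent to symmetric $\ell$-locality being what we want, but here we are handed the stronger full $\ell$-locality) with the fact that $\UInduce[\ell]$ is equivalent to symmetric $\ell$-locality (Theorem~\ref{thm:UInduce}\ref{thm:UInduce:symlocal}). So it suffices to show that $\phi\otimes\psi$ is symmetrically $\ell$-local, where $\psi\in\HomT{\TLinOrder}$ is the linear order. First I would fix theons: let $\cN$ be an $\ell$-local $T$-on with $\phi_\cN=\phi$, and let $\cM$ be any $\TLinOrder$-on with $\phi_\cM=\psi$ (concretely, the linear order is represented by the rank-$1$ theon $\cM_\prec=\{x\in\cE_2 \mid x_{\{1\}}<x_{\{2\}}\}$). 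Then $\cN\otimes\cM$ over $[0,1]\times[0,1]$ represents $\phi\otimes\psi$, and the associated exchangeable array $\rn{L}$ can be built from $\rn{\theta}=(\rn{\theta}^1,\rn{\theta}^2)$ with $\rn{\theta}^1,\rn{\theta}^2$ independent, where $I_T(\rn{L})=\rn{K}$ is the array of $\cN$ depending only on $\rn{\theta}^1$ and $I_\prec(\rn{L})$ is the linear-order array depending only on $(\rn{\theta}^2_{\{i\}})_i$, i.e.\ only on rank-$1$ coordinates.

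The key step is to verify symmetric $\ell$-locality directly. Let $(V_i)_{i\in I}$ be finite subsets of $\NN_+$ with pairwise intersections of size at most $\ell$, and let $(M_i)_{i\in I}$ be models of $T\cup\TLinOrder$. I want
\[
  \PP[\forall i\in I,\ \rn{L}\rest_{V_i}\cong M_i] = \prod_{i\in I}\PP[\rn{L}\rest_{V_i}\cong M_i].
\]
Here the subtlety is that the isomorphism type $[\rn{L}\rest_{V_i}]$ packages both the $T$-structure and the order-structure. The order part is where the linear order is genuinely useful: the event ``$\rn{L}\rest_{V_i}\cong M_i$'' can be decomposed according to the actual relative order of the vertices of $V_i$, i.e.\ according to the permutation $\sigma$ of $V_i$ that the linear order induces. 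Conditioned on the full linear order on $V\df\bigcup_i V_i$ (which is determined by $(\rn{\theta}^2_{\{v\}})_{v\in V}$ alone, hence independent of $\rn{\theta}^1$), each $V_i$ acquires a fixed labelling, and the events $[\rn{K}\rest_{V_i}\cong (M_i)\rest_T$ in the induced order$]$ become events about the $T$-array alone. So I would condition on the relative order, use that the order on disjoint-ish $V_i$'s is itself ``local'' (the quasirandom linear order / permuton has all the independence one needs — in fact conditioning on the order of each $V_i$ separately is automatically consistent since a linear order is determined by pairwise comparisons, and the pairwise comparisons within distinct $V_i$ involve at most $\ell$ shared vertices), and then invoke $\ell$-locality of $\cN$ to factor the $T$-part.

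More carefully, I would argue as follows. Write the event $E_i\df\{\rn{L}\rest_{V_i}\cong M_i\}$. Summing over the possible relative orders, $E_i = \bigsqcup_{\tau_i} (E_i^{\mathrm{ord},\tau_i}\cap E_i^{T,\tau_i})$, where $\tau_i$ ranges over linear orders on $V_i$ compatible with $M_i$, $E_i^{\mathrm{ord},\tau_i}$ is the event that $\rn{L}$ induces order $\tau_i$ on $V_i$ (depending only on $\rn{\theta}^2$, in fact only on the rank-$1$ coordinates $\rn{\theta}^2_{\{v\}}$, $v\in V_i$), and $E_i^{T,\tau_i}$ is the event that $\rn{K}\rest_{V_i}$, read with the labelling forced by $\tau_i$, equals the $T$-reduct of $M_i$ (depending only on $\rn{\theta}^1$, precisely on $(\rn{\theta}^1_A \mid A\in r(V_i))$). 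The event $E_i^{T,\tau_i}$ is a union of events of the form $\{\rn{K}\rest_{V_i}\cong N\}$ over finitely many $N\in\cM[T]$, so by symmetric $\ell$-locality of $\phi$ (which follows from $\ell$-locality, since $\ell$-locality $\implies$ weak $\ell$-independence is false in general but $\ell$-locality is exactly the hypothesis and it immediately gives symmetric $\ell$-locality) the family $(E_i^{T,\tau_i})_i$ is mutually independent for any choice of the $\tau_i$; and the family $(E_i^{\mathrm{ord},\tau_i})_i$ is mutually independent because the linear-order array is itself $0$-local — actually $\ell$-local for every $\ell$ — as the relative order of $V_i$ is a function of $(\rn{\theta}^2_{\{v\}})_{v\in V_i}$ and these coordinate-blocks are pairwise disjoint across distinct $V_i$ only up to the overlap $V_i\cap V_j$, which has size $\le\ell$; here one uses that a linear order is reconstructed from its restrictions to pairs, and the pair-restrictions split cleanly. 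Finally, $(E_i^{T,\tau_i})_i$ depends only on $\rn{\theta}^1$ and $(E_i^{\mathrm{ord},\tau_i})_i$ only on $\rn{\theta}^2$, and $\rn{\theta}^1\perp\rn{\theta}^2$, so the whole collection $(E_i^{\mathrm{ord},\tau_i}\cap E_i^{T,\tau_i})_i$ is mutually independent; summing over the $\tau_i$ gives $\PP[\bigcap_i E_i]=\prod_i\PP[E_i]$, which is exactly symmetric $\ell$-locality of $\phi\otimes\psi$, hence $\phi\otimes\psi\in\UInduce[\ell]$.

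The main obstacle I anticipate is the bookkeeping around the linear order: one must be careful that conditioning on the induced order of each $V_i$ separately is the same as conditioning on a single coherent linear order of $V=\bigcup_i V_i$ restricted appropriately — it is, because a linear order is completely determined by its $2$-element restrictions and every pair lies in at most one $V_i$ unless the pair sits in an overlap $V_i\cap V_j$ of size $\le\ell$, in which case the comparison is counted consistently; but making this rigorous (rather than hand-waving ``the order is local'') requires stating precisely that the quasirandom linear order's exchangeable array is $\ell'$-local for all $\ell'$, which is immediate from its rank-$1$ theon representation. Everything else is a routine factorization of independent events across the two coordinate spaces $\rn{\theta}^1,\rn{\theta}^2$.
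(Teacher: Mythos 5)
Your plan — reduce to symmetric $\ell$-locality of $\phi\otimes\psi$ and factor each event $E_i=\{\rn{L}\rest_{V_i}\cong M_i\}$ according to the induced order — is essentially the paper's plan, but there is a genuine error in the way you factor the order part, and it lands exactly on the subtlety the paper explicitly flags.

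You claim that the exchangeable array of the linear order is ``$\ell$-local for all $\ell$'' and that this is ``immediate from its rank-$1$ theon representation.'' That is false. Rank $1$ only gives $0$-locality. In fact the linear order is \emph{not} $1$-local: by Theorem~\ref{thm:separationUInduceUCouple} it fails $\UCouple[1]$, and by Theorem~\ref{thm:UCouple} that is the same as failing $1$-locality (equivalently, apply Proposition~\ref{prop:rankbasic}\ref{prop:rankbasic:selfcouple} to a non-trivial rank-$1$ object). Concretely, take $V_1=\{1,2\}$, $V_2=\{2,3\}$, $V_3=\{1,3\}$: the three events ``$1\prec 2$'', ``$2\prec 3$'', ``$1\prec 3$'' each have probability $1/2$ but $\PP[1\prec 2\prec 3]=1/6\neq 1/8$. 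So your family $(E_i^{\mathrm{ord},\tau_i})_i$ is \emph{not} mutually independent in general, and consequently the collection $(E_i^{\mathrm{ord},\tau_i}\cap E_i^{T,\tau_i})_i$ is not mutually independent either. The paper's proof is organized precisely around this point: it applies Claim~\ref{clm:multiple} with $\rn X=(\rn{\leq_1},\ldots,\rn{\leq_t})$ as a \emph{single} random variable, and explicitly cautions ``we do not claim that $\rn{\leq_1},\ldots,\rn{\leq_n}$ are mutually independent, this is in general not true,'' replacing that false independence with the observation that the map $(\leq_i,L_i)\mapsto M_i$ is invertible in $L_i$ once $\leq_i$ is fixed.

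Your decomposition can still be pushed through, but you must not factor $\PP[\bigcap_i E_i^{\mathrm{ord},\tau_i}]$ term by term. Instead: by independence of $\rn{\theta}^1$ and $\rn{\theta}^2$ and $\ell$-locality of $\phi$, you do get
\begin{align*}
  \PP\Bigl[\bigcap_i\bigl(E_i^{\mathrm{ord},\tau_i}\cap E_i^{T,\tau_i}\bigr)\Bigr]
  =
  \PP\Bigl[\bigcap_i E_i^{\mathrm{ord},\tau_i}\Bigr]\cdot\prod_i \PP\bigl[E_i^{T,\tau_i}\bigr].
\end{align*}
Now observe (by exchangeability of the $T$-array) that $\PP[E_i^{T,\tau_i}]$ does not depend on the choice of $\tau_i$; call it $c_i$. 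Then summing over all tuples $(\tau_i)_i$ of linear orders on the $V_i$, the events $\bigcap_i E_i^{\mathrm{ord},\tau_i}$ partition the whole space, so their probabilities sum to $1$, and you get $\PP[\bigcap_i E_i]=\prod_i c_i$; the same computation applied to a single $i$ gives $\PP[E_i]=c_i$, and the factorization follows. With this correction your argument is sound and, modulo packaging, is the same proof the paper gives via Claim~\ref{clm:multiple}; the key idea in both is that the order does not need to factor, only the $T$-part does, because the order-to-isomorphism-type correspondence is a bijection for each fixed order.
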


\begin{proof}
  By Lemma~\ref{lem:symlocal->UInduce}, it is enough to show that $\phi\otimes\psi$ is symmetrically
  $\ell$-local. Let $\rn{K}$ be the exchangeable array corresponding to $\phi\otimes\psi$, and
  fix a finite family of finite sets $(V_i)_{i\in[t]}$ ($V_i\subseteq\NN_+$) with
pairwise intersections of size at
  most $\ell$. We let $\rn{K_i}\df \rn{K}\rest_{V_i} \in
  \cK_{V_i}[T\cup\TLinOrder]$ and let $\rn{M_i}\df[\rn{K_i}] \in \cM_{\lvert V_i\rvert}[T\cup\TLinOrder]$
  be the isomorphism type of $\rn{K_i}$. We have to prove that $\rn{M_1},\ldots,\rn{M_t}$
  are mutually independent, and for that purpose we are going to apply Claim~\ref{clm:multiple} again.

  More specifically, let $I\interpret{T}{T\cup\TLinOrder}$ be the
  structure-erasing interpretation and $\rn{L_i}=I(\rn{K_i})\in
  \cK_{V_i}[T]$ be the results of
  erasing linear order. Likewise, let
  $J\interpret{\TLinOrder}{T\cup\TLinOrder}$, and let
  ${\rn{\leq_i}}=J(\rn{K_i})$ be the corresponding (random) linear order on $V_i$ so
  that $\rn{K_i} = (\rn{L_i}, {\rn{\leq_i}})$. In Claim~\ref{clm:multiple}, we set
  $\rn X= (\rn{\leq_1},\ldots,\rn{\leq_n})$, $\rn{Y_i}=\rn{L_i}$, and let
  $f_i(\leq_1,\ldots, \leq_n, L_i)$ be the function first computing
  $K_i$ from $L_i$ and $\leq_i$ and then taking its isomorphism type
  $M_i=[K_i]$.

  We know that the tuple $(\rn{L_1},\ldots, \rn{L_t})$
  is independent from $\rn X = ({\rn{\leq_1}},\ldots,{\rn{\leq_t}})$ (as the coupling of $\phi$ and $\psi$
  is independent) and that $\rn{L_1},\ldots,\rn{L_t}$ are mutually
  independent (as $\phi$ is $\ell$-local). This gives us the first assumption in
  Claim~\ref{clm:multiple}: $\rn X,\rn{Y_1},\ldots, \rn{Y_n}$ are mutually
  independent (note that we do {\em not} claim that $\rn{\leq_1},\ldots,\rn{\leq_n}$
  are mutually independent, this is in general not true). It remains to show
  that $(\rn{M_1},\ldots, \rn{M_n})$ is independent from $(\rn{\leq_1},\ldots,
  \rn{\leq_n})$, and it essentially follows from the observation that the
  function $f_i(X, Y_i)$ becomes invertible after fixing it first argument.

  More specifically, we compute $\rn{L_i}=g_i(\rn{\leq_i},\rn{M_i})$, where
  $g_i(\leq_i,M_i)$ is obtained by first aligning
  the internal order of $V(M_i)$ with the order $\leq_i$ on $V_i$, and
  then discarding it. The crucial property is that $L_i=g_i(\leq_i,M_i)$
  if and only if $M_i = f_i((\leq_1,\ldots,\leq_n),L_i)$. Using this, fixing
  arbitrary models $M_i\in \cM_{\lvert V_i\rvert}[T\cup\TLinOrder]$ and a particular
tuple of values $({\leq_1},\ldots,{\leq_t})$, we have the
  calculation

  \begin{align*}
    &\!\!\!\!\!\!
    \PP[\forall i\in [t], \rn{M_i}\cong M_i \mid \forall i\in [t], {\rn{\leq_i}}={\leq_i}]
    \\
    & = \PP[\forall i\in [t], \rn{L_i}=g_i(\leq_i, M_i) \mid \forall i\in [t], {\rn{\leq_i}}={\leq_i}]
    \\
    & = \PP[\forall i\in [t], \rn{L_i}=g_i(\leq_i, M_i)]
    \\
    & = \PP[\forall i\in[t], \rn{M_i}\cong M_i].
  \end{align*}

  This shows that $(\rn{M_1},\ldots,\rn{M_t})$ is indeed independent from $(\rn{\leq_1},\ldots,\rn{\leq_t})$. We are now in position to apply Claim~\ref{clm:multiple} which completes the proof.
\end{proof}

\begin{lemma}[Theorem~\ref{thm:UCouple}\ref{thm:UCouple:orderUInduce}$\implies$\ref{thm:UCouple:QRhyper}]\label{lem:orderUInducecolors}
  If the independent coupling of $\phi\in\HomT{T}$ with the linear order $\psi\in\HomT{\TLinOrder}$ satisfies
  $\UInduce[\ell]$, then for every $\ell'\in[\ell]$, $\phi$ is uniquely coupleable with the quasirandom
  $\ell'$-hypergraphon $\psi_{\ell',1/2}\in\HomT{\TkHypergraph[\ell']}$.
\end{lemma}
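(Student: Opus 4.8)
The plan is to reduce the claim to a single arity and density and then bootstrap the \emph{clique}-agreement we already have into full agreement, using the linear order as the extra ingredient. Since $\UInduce$ is anti-monotone (Theorem~\ref{thm:anti-monotone}), $\phi\otimes\psi\in\UInduce[\ell]$ gives $\phi\otimes\psi\in\UInduce[\ell']$ for every $\ell'\in[\ell]$, so it suffices to prove: for each fixed $m\ge 1$, if $\phi\otimes\psi\in\UInduce[m]$ then $\phi$ is uniquely coupleable with $\psi_{m,1/2}$, and then to apply this with $m=\ell'$ for each $\ell'\le\ell$. So fix a coupling $\xi$ of $\phi$ and $\psi_{m,1/2}$; the goal is $\xi=\phi\otimes\psi_{m,1/2}$. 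Realize $\xi$ by a $(T\cup\TkHypergraph[m])$-on and apply Proposition~\ref{prop:rankae} to put its $\TkHypergraph[m]$-part into $(m-1)$-independent ``straight'' form without changing $\xi$; in the associated exchangeable array the edge indicators $(\rn{h}_A)_{A\in\binom{\NN_+}{m}}$ are then genuinely i.i.d.\ fair coin flips, and what we must show is exactly that the array of $\phi$ is independent, as a random variable, of $(\rn{h}_A)_A$.

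For the bootstrap, two facts are combined. First, applying Theorem~\ref{thm:naturality}\ref{thm:naturality:UInduce} to the structure-erasing interpretation $T\to T\cup\TLinOrder$ shows that $\phi\otimes\psi\in\UInduce[m]$ forces $\phi\in\UInduce[m]$, hence every coupling of $\phi$ with any quasirandom $m$-hypergraphon $\psi_{m,q}$ ($q\in(0,1)$) agrees with the independent coupling on $\psi_{m,q}$-cliques. Second — and this is where the linear order is used — I would build, in the spirit of the commuting diagrams in the proofs of Lemmas~\ref{lem:UInduce:singledensity} and~\ref{lem:changenumberofcolors}, open interpretations from $T\cup\TkHypergraph[m]$ into theories of the form $(T\cup\TLinOrder)$ amalgamated with several copies of $\TkHypergraph[m]$, under which the event ``the $\psi_{m,1/2}$-edge set on $[n]$ equals a prescribed $U\subseteq\binom{[n]}{m}$'' is transported into a clique event of an auxiliary quasirandom $m$-hypergraphon on a substructure. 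Lifting $\xi$ so that the linear order is present and independent of $\phi$ (so the lift is a genuine coupling of the object $\phi\otimes\psi$ with $\psi_{m,1/2}$; Propositions~\ref{prop:theonalignment} and~\ref{prop:rankae} provide the flexibility to do this), feeding it through these interpretations, and invoking $\UInduce[m]$ for $\phi\otimes\psi$ yields agreement with the independent coupling on all the transported clique events. A Boolean-lattice Möbius inversion over the configurations $U$, identical in form to the ones in Lemmas~\ref{lem:UInduce:singledensity} and~\ref{lem:UCouple:densities}, then upgrades this to $\xi(\langle M_U\rangle)=(\phi\otimes\psi_{m,1/2})(\langle M_U\rangle)$ for every $U$, i.e.\ $\xi=\phi\otimes\psi_{m,1/2}$.

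The main obstacle is precisely the construction of the transporting interpretation: a linear order carries no nontrivial symmetric $m$-ary predicate on its own, so the ``recording'' of the $m$-ary edge pattern by the order cannot be effected by a naive XOR-type interpretation and must instead be routed through the clique-conditioning itself (the order is used only to pin down, among the many ways a substructure can fail to be a clique of the auxiliary hypergraphon, the one prescribed by $U$). The case $m=1$ needs separate handling, since $\psi_{1,1/2}$ is a $2$-coloring that is \emph{not} uniquely coupleable with the linear order (Theorem~\ref{thm:separationUInduceUCouple}), so the naive lift with the order independent of the coloring is unavailable; here I would instead imitate the $\ell=1$, $q=p^2$ argument in the proof of Lemma~\ref{lem:UInduce:singledensity}, using an auxiliary $3$-coloring and the localization operator $\pi^{(\neg\chi_3,\place)}$ to descend through a chain of halvings of the density until the clique-agreement at hand suffices. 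Modulo these two points the rest is the routine finite inversion already performed twice in the paper.
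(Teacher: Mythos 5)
Your high-level plan for $m\ge 2$ --- use the linear order to ``record'' the prescribed $m$-edge configuration $U$ and convert it into a clique event of an auxiliary quasirandom $m$-hypergraphon, then invoke $\UInduce[m]$ for $\phi\otimes\psi$ --- is the right idea and is essentially the route the paper takes. But the step you flag as the main obstacle, constructing the transport, is precisely where the proof lives, and I do not think it can be realized by open interpretations in the way you set it up. The paper builds the $(T\cup\TkHypergraph[m]\cup\TLinOrder)$-on $\cH^U$ \emph{directly}: its auxiliary $E$-peon $W_U\subseteq\cE_m\times\cE_m$ is the XOR of the original straight peon $x_{[m]}\le 1/2$ with the positional indicator $\im(\alpha_y)\in U$, where $\alpha_y$ records in which of the fixed intervals $V_1,\dots,V_m\subseteq[0,1]$ each first-order coordinate of $y$ falls. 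This positional indicator depends on a fixed partition of the ground space and is not first-order definable from the $\TLinOrder$-on, so it cannot be produced by an interpretation applied to any lift of $\xi$; you correctly sense this (``cannot be effected by a naive XOR-type interpretation'') but then keep trying to route through interpretations and clique-conditioning without a concrete construction. Relatedly, the closing Möbius inversion you invoke is not needed: once $\cH^U$ is built, the condition that $(x,y)$ induce the ordered clique $K^<_{\binom{[m]}{m}}$ forces each $y_{\{v\}}$ into a distinct interval (here $m\ge 2$ is used), and then the XOR pins down \emph{exactly} the edge set $U$, so a single application of $\UInduce[m]$ to the coupling $\phi_{\cH^U}$ of $\phi_{J_m(\cH^U)}=\psi_{m,1/2}$ with $\phi\otimes\psi$ yields $\xi(\langle K_U\rangle)$ outright.

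The $m=1$ sketch is not on the right track. The ``$q=p^2$ with a $3$-coloring and $\pi^{(\neg\chi_3,\cdot)}$'' argument in Lemma~\ref{lem:UInduce:singledensity} propagates unique $p$-inducibility --- a statement about models that are monochromatic in color $1$ --- from one density to another. It never produces information about models with \emph{both} colors present, yet that is exactly what the present lemma asks you to control: you must determine $\xi(\langle K_j\rangle)$ for every $j\in\{0,\dots,m\}$, not only $j=m$. The paper instead couples in an auxiliary randomization with density parameter $t$, observes that $\UInduce[1]$ for $\phi\otimes\psi$ forces $\phi_{\cH^t}(\langle K_m^<\rangle)$ to be independent of $t$, expands the left-hand side as a polynomial of degree $m$ in $t$, and compares coefficients to obtain $m+1$ linear relations among the $\xi(\langle K_j\rangle)$, which an easy induction then solves. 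This coefficient-comparison mechanism is what converts the single monochromatic constraint into $m+1$ constraints, and nothing in your sketch plays that role.

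A final technical point: to make the $\TkHypergraph[m]$-peon literally equal to $\{x\mid x_{[m]}\le 1/2\}$ you should invoke Proposition~\ref{prop:theonalignment} (with $I$ the structure-erasing interpretation from $\TkHypergraph[m]$), not Proposition~\ref{prop:rankae}; the latter preserves an $\ell$-independence the peon already has but cannot impose it.
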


\begin{proof}
  Let $\cL$ be the language of $T$ and note that since $\UInduce[\ell]$ implies $\UInduce[\ell']$
  (Theorem~\ref{thm:anti-monotone}), it is sufficient to consider the case $\ell'=\ell$. Let us first
  assume
  $\ell\geq 2$.

  Recall that the unique $\psi\in\HomT{\TLinOrder}$ can be represented by the $\TLinOrder$-on $\cN^<$ given by
  \begin{align*}
    \cN^< & \df \{x\in\cE_2 \mid x_{\{1\}} < x_{\{2\}}\},
  \end{align*}
  and that $\psi_{\ell,1/2}$ can be represented as
  \begin{align*}
    \cN_E & \df \{x\in\cE_\ell \mid x_{[\ell]} \leq 1/2 \}.
  \end{align*}

  Let $\xi$ be a coupling of $\phi$ and $\psi_{\ell,1/2}$ and let $\cN$ be a $(T\cup\TkHypergraph[\ell])$-on
  such that $\phi_\cN = \xi$. As in the proof of Lemma~\ref{lem:changenumberofcolors},
  for every $m\in\NN$ and every $U\subseteq\binom{[m]}{\ell}$, let $H_U\in\cK_m[\TkHypergraph[\ell]]$ be the
  hypergraph given by $V(H_U)\df[m]$ and $R_E(H_U)\df \set{\alpha\in([m])_\ell}{\im(\alpha)\in U}$. If we are further given $K\in\cK_m[T]$, let
  $K_U\in\cK_m[T\cup\TkHypergraph[\ell]]$ be the alignment of $K$ and $H_U$, that is, we have $R_P(K_U)\df
  R_P(K)$ ($P\in\cL$) and $R_E(K_U)\df R_E(H_U)$. Finally, we let
  $K_U^<\in\cK_m[T\cup\TkHypergraph[\ell]\cup\TLinOrder]$ be the model obtained from $K_U$ by equipping it
  with the natural order of $[m]$. Note that while we do need labels in $K$
  to properly define the models $K_U$ and $K_U^<$, in the computations below
  they are treated as unlabeled models $[K_U]$, $[K_U^<]$, i.e., labels are discarded.

  To show that $\xi$ is the independent coupling of $\phi$ and $\psi_{\ell,1/2}$, we need to show that for
  every $m\in\NN$, every $K\in\cK_m[T]$ and every $U\subseteq\binom{[m]}{\ell}$, we have
  \begin{align}\label{eq:orderUInducecolors:objective}
    \xi(\langle K_U\rangle)
    & =
    \phi(\langle K\rangle)\cdot\psi_{\ell,1/2}(\langle H_U\rangle)
    =
    \frac{\phi(\langle K\rangle)}{2^{\binom{m}{\ell}}}.
  \end{align}

  The assertion is trivial if $m < \ell$, so suppose $m\geq\ell$. Fix $U\subseteq\binom{[m]}{\ell}$ and
  for every $v\in[m]$, define
  \begin{align*}
    V_v & \df
    \begin{dcases*}
      \left[\frac{v-1}{m},\frac{v}{m}\right), & if $v < m$;\\
      \left[\frac{m-1}{m},1\right], & if $v = m$.
    \end{dcases*}
  \end{align*}
  For $n\in\NN$ and $y\in\cE_n$, let $\alpha_y\function{[n]}{[m]}$ be the unique function such that
  $y_{\{j\}}\in V_{\alpha_y(j)}$ for every $j\in[n]$. Finally, define the set
  \begin{align*}
    W_U
    & \df
    \set{(x,y)\in\cE_\ell\times\cE_\ell}{
      \lvert\im(\alpha_y)\rvert = \ell\land\left(x_{[\ell]}\leq \frac{1}{2} \equiv \im(\alpha_y)\in U\right)
    };
  \end{align*}
  clearly, $W_U$ is $S_\ell$-invariant. This means that we can define the
  $(T\cup\TkHypergraph[\ell]\cup\TLinOrder)$-on $\cH^U$ over $[0,1]^2$ by
  \begin{align*}
    \cH^U_P & \df \cN_P\times\cE_{k(P)} \qquad (P\in\cL), &
    \cH^U_\prec & \df \cE_2\times\cN^<, &
    \cH^U_E & \df W_U.
  \end{align*}

 Obviously, if $(x,y)\in\Tind(K_m^{(\ell)},W_U)$, then each $y_{\{j\}}$ must belong to
  a different $V_v$. Indeed, if there exist $j_1,j_2\in[m]$ with $y_{\{j_1\}},y_{\{j_2\}}\in V_v$ but $j_1\neq
  j_2$, since $m\geq\ell\geq 2$, there exists $\beta\in([m])_\ell$ with $j_1,j_2\in\im(\beta)$ and thus
  $(x,y)\notin (\beta^*)^{-1}(W_U)$, a
  contradiction.

  Our claim and the definition of $W_U$ then imply
  \begin{align*}
    \Tind(K_m^{(\ell)},W_U)
    & =
    \set{(x,y)\in\cE_m\times\cE_m}{
      \lvert\im(\alpha_y)\rvert = m
      \land\bigwedge_{\beta\in([m])_\ell}(\beta^*(x)\in\cN_E\equiv \im(\alpha_{\beta^*(y)})\in U)
    }.
  \end{align*}
  Thus, denoting by $J_\ell\interpret{\TkHypergraph[\ell]}{T\cup \TkHypergraph[\ell]\cup\TLinOrder}$ the structure-erasing
  interpretation, we get
  \begin{align}\label{eq:orderUInducecolors:cliques}
    \phi_{J_\ell(\cH^U)}(K_m^{(\ell)}) & = \frac{m!}{m^m}\psi_{\ell,1/2}(H_U) = \frac{m!}{m^m\cdot 2^{\binom{m}{\ell}}}.
  \end{align}

  Let now $J\interpret{T}{T\cup\TkHypergraph[\ell]\cup\TLinOrder}$ be another structure-erasing interpretation; we have
  \begin{align*}
    \Tind(K_{\binom{[m]}{\ell}}^<,\cH^U)
    & =
    \Tind(K,J(\cH^U))\cap\Tind(K_m^{(\ell)},J_\ell(\cH^U))\cap
    \{(x,y)\in\cE_m\times\cE_m \mid y_{\{1\}} < \cdots < y_{\{m\}}\}
    \\
    & =
    \{(x,y)\in\cE_m\times\cE_m \mid x\in\Tind(K_U,\cN)\land\forall v\in[m], y_{\{v\}} \in V_v\}.
  \end{align*}

  Since $\phi_\cN = \xi$, we get
  \begin{align*}
    \xi(\langle K_U\rangle)
    & =
    m^m\cdot\phi_{\cH^U}(\langle K_{\binom{[m]}{\ell}}^<\rangle)
    =
    \frac{m^m\cdot\phi(\langle K\rangle)\cdot\phi_{J_\ell(\cH^U)}(K_m^{(\ell)})}{m!}
    =
    \frac{\phi(\langle K\rangle)}{2^{\binom{m}{\ell}}},
  \end{align*}
  where the second equality follows since $\phi_{\cH^U}$ is a coupling of
  $\phi_{J_\ell(\cH^U)}\in\HomT{\TkHypergraph[\ell]}$ and $\phi\otimes\psi$ (and the latter satisfies
  $\UInduce[\ell]$), and the third equality follows
  from~\eqref{eq:orderUInducecolors:cliques}. Hence~\eqref{eq:orderUInducecolors:objective} holds.

  \medskip

  Let us now show the case $\ell = 1$. In this case, since $\TkHypergraph[1]\cong\TcColoring[2]$, we will work
  with the latter theory. Let $\xi$ be a coupling of $\phi$ and $\psi_{(1/2,1/2)}\in\HomT{\TcColoring[2]}$ and
  let $\cN$ be a $(T\cup\TcColoring[2])$-on such that $\phi_\cN = \xi$.

  For every $m\in\NN$, every $K\in\cK_m[T]$ and every $j\in\{0,\ldots,m\}$, let
  $K_j\in\cK_m[T\cup\TcColoring[2]]$ be the model obtained from $K$ by coloring the first $j$ vertices with
  color $1$ and all others with color $2$, that is, we have $R_P(K_j)\df R_P(K)$ ($P\in\cL$),
  $R_{\chi_1}(K_j)\df[j]$ and $R_{\chi_2}(K_j)\df\{j+1,\ldots,m\}$. Again, we let
  $K_j^<\in\cK_m[T\cup\TcColoring[2]\cup\TLinOrder]$ be the model obtained from $K_j$ by equipping it with the
  natural order of $[m]$, and, again, in the computations below we view $K, K_j,
  K_j^<$ as unlabeled models.

  Due to exchangeability, in order to show that $\xi$ is the independent coupling of $\phi$ and $\psi_{(1/2,1/2)}$, it is
  sufficient to show that for every $m\in\NN$,
  every $K\in\cK_m[T]$ and every $j\in\{0,\ldots,m\}$, we have
  \begin{align}\label{eq:orderUInducecolors:secondobjective}
    \xi(\langle K_j\rangle)
    & =
    \frac{\phi(\langle K\rangle)}{2^m}.
  \end{align}

  For every $t\in(0,1)$, let
  \begin{align*}
    U_t\df\{(x,y)\in\cE_1\times\cE_1 \mid x\in\cN_{\chi_1}\equiv y < t\}
  \end{align*}
  ($\chi_1$ corresponds to the first color)
  and note that $\lambda(U_t)=1/2$. Define the $(T\cup\TLinOrder\cup\TcColoring[2])$-on $\cH^t$ over
  $[0,1]^2$ by
  \begin{align*}
    \cH^t_P & \df \cN_P\times\cE_{k(P)} \qquad (P\in\cL), &
    \cH^t_\prec & \df \cE_2\times\cN^<,
    \\
    \cH^t_{\chi_1} & \df U_t, &
    \cH^t_{\chi_2} & \df (\cE_1\times\cE_1)\setminus U_t.
  \end{align*}

  Since $\phi_{\cH^t}$ is a coupling of $\psi_{(1/2,1/2)}$ and $\phi\otimes\psi$ and the latter satisfies
  $\UInduce[1]$, we get
  \begin{align}\label{eq:orderUInducecolors:UInduce1}
    \phi_{\cH^t}(\langle K_m^<\rangle)
    & =
    \frac{\phi(\langle K\rangle)}{m!\cdot 2^m}.
  \end{align}

  On the other hand, from the definition of $\cH^t$, we have
  \begin{align*}
    \phi_{\cH^t}(\langle K_m^<\rangle)
    & =
    \sum_{j=0}^m \frac{t^j(1-t)^{m-j}}{j!(m-t)!}\xi(\langle K_j\rangle)
    \\
    & =
    \sum_{k=0}^m\left(\sum_{j=0}^k\frac{1}{j!(m-j)!}\binom{m-j}{k-j}(-1)^{k-j}\xi(\langle K_j\rangle)\right) t^k.
  \end{align*}
  Since this identity is true for any $t$, putting it together with~\eqref{eq:orderUInducecolors:UInduce1} and comparing coefficients of the
  polynomials in $t$, we conclude that
  \begin{align}\label{eq:orderUInducecolors:polynomialcomparison}
    \sum_{i=0}^k\frac{1}{i!(m-i)!}\binom{m-i}{k-i}(-1)^{k-i}\xi(\langle K_i\rangle)
    & =
    \begin{dcases*}
      \frac{\phi(\langle K\rangle)}{m!\cdot 2^m}, & if $k = 0$;\\
      0, & if $k\in[m]$.
    \end{dcases*}
  \end{align}

  We can finally prove~\eqref{eq:orderUInducecolors:secondobjective} by induction in $j\in\{0,\ldots,m\}$. For
  $j = 0$, the assertion follows from~\eqref{eq:orderUInducecolors:polynomialcomparison} for $k=0$. Suppose
  then that $j\geq 1$ and by using the inductive hypothesis, note
  that~\eqref{eq:orderUInducecolors:polynomialcomparison} for $k=j$ gives
  \begin{align*}
    \xi(\langle K_j\rangle)
    & =
    - j!(m-j)!\sum_{i=0}^{j-1}\frac{1}{i!(m-i)!}\binom{m-i}{j-i}(-1)^{j-i}\frac{\phi(\langle K\rangle)}{2^m}
    \\
    & =
    - \sum_{i=0}^{j-1}\binom{j}{i}(-1)^{j-i}\frac{\phi(\langle K\rangle)}{2^m}
    =
    \frac{\phi(\langle K\rangle)}{2^m}.
  \end{align*}
  Thus~\eqref{eq:orderUInducecolors:secondobjective} holds.
\end{proof}

\section{Separations}
\label{sec:separations}

In this section we prove all separation theorems.

Recall from Section~\ref{sec:useful} that for $x\in\cE_n$, $\sigma_x\in S_n$ denotes the unique permutation
such that $x_{\{\sigma_x^{-1}(1)\}} < \cdots < x_{\{\sigma_x^{-1}(n)\}}$ when the coordinates $(x_{\{i\}} \mid
i\in[n])$ are distinct, and is defined arbitrarily otherwise.

\begin{proofof}{Theorem~\ref{thm:separationUCoupleIndependence}}
  First note that the quasirandom $(\ell+1)$-tournamon $\psi_{\ell+1}$ can be represented by the
  $\TkTournament[(\ell+1)]$-on
  \begin{align} \label{eq:theonquasirandom}
    \cN
    & \df
    \set{x\in\cE_{\ell+1}}{
      x_{[\ell+1]} < \frac{1}{2}
      \equiv
      \sgn(\sigma_x)=1}.
  \end{align}
  Let $\rn{K}$ be the exchangeable array corresponding to $\cN$ with respect to $\rn{\theta}$ picked in
  $\cE_{\NN_+}$. By Theorem~\ref{thm:UCouple}, to show that $\psi_{\ell+1}\in\UCouple[\ell]$ it is sufficient to prove that
  $\psi_{\ell+1}$ is weakly $\ell$-independent, that is for every $m\in\NN$,
  the random variable
  $\rn{K}\rest_{[m]}$ is independent from $(\rn{\theta}_A \mid
  A\in r(m,\ell))$. Indeed, $\rn{K}\rest_{[m]}$ is completely determined
  by $\sigma_{\iota_{[m]}^*(\rn{\theta})}$ and $(\rn{\theta}_A \mid A\in\binom{[m]}{\ell+1})$, and any changes in the values of the signs
 $\sgn(\sigma_{\iota_A^*(\rn{\theta})})$
 can be offset by flipping the corresponding variables $\rn{\theta}_A$ (cf.~\eqref{eq:theonquasirandom}) so that the
 {\em distribution} of $\rn{K}\rest_{[m]}$ does not change from fixing
 $\sigma_{\iota_{[m]}^*(\rn{\theta})}$.

  \medskip

  Suppose now toward a contradiction that $\psi_{\ell+1}\in\Independence[\ell]$, that is
  $\psi_{\ell+1} = \phi_\cH$ for some $\TkTournament[(\ell+1)]$-on $\cH$ of the form $\cH =
  \cE_{\ell+1,\ell}\times\cG$ for some $\cG\subseteq [0,1]$. Note that for any $\sigma\in S_{\ell+1}$, we have
  $\cH\cdot\sigma = \cH$. But this is a contradiction as the axioms of $\TkTournament$ imply
  that $\lambda((\cH\cdot\sigma)\cap\cH) = 0$ whenever $\sgn(\sigma)=-1$.
\end{proofof}

\begin{proofof}{Theorem~\ref{thm:separationUInduceUCouple}}
  Since the linear order $\psi\in\HomT{\TLinOrder}$ is represented by the $\TLinOrder$-on $\cN\df\{x\in\cE_2
  \mid x_{\{1\}} < x_{\{2\}}\}$, we know $\rk(\psi)=1$, thus by Proposition~\ref{prop:rankbasic}, we have
  $\psi\notin\UCouple[1]$.

  Since $\psi$ is $n$-categorical for every $n\in\NN$, it is symmetrically $\ell$-local for trivial
  reasons (namely, all events $\rn{K}\rest_{V_i}\cong M_i$ have probability $1$), for any integer $\ell$. Hence
  $\psi\in\UInduce[\ell]$ by Theorem~\ref{thm:UInduce}.
\end{proofof}

To prove Theorems~\ref{thm:separationUCoupleIndependencehypergraph}
and~\ref{thm:separationUInduceUCouplehypergraph}, the alternating tournament defined below will play a key
role.

\begin{definition}
  Let $k\geq 1$. For $\alpha\injection{[k]}{[k+1]}$, denote by $\sigma_\alpha$
  the unique extension of $\alpha$ to an element of $S_{k+1}$, and let
  $\sgn(\alpha)\df \sgn(\sigma_\alpha)$. This definition behaves well with
  respect to the actions of $S_k$ and $S_{k+1}$: for every $\eta\in S_k$ we
  have $\sgn(\alpha\circ\eta)=\sgn(\alpha)\sgn(\eta)$, and for every $\sigma\in
  S_{k+1}$ we have $\sgn(\sigma\circ\alpha) = \sgn(\sigma)\sgn(\alpha)$.

  The \emph{alternating $k$-tournament} is the model $A^{(k)}_{k+1}\in\cK_{k+1}[\TkTournament]$ of
  $\TkTournament$ of size $k+1$ given by
  \begin{align*}
    V(A^{(k)}_{k+1})
    & \df
    [k+1];
    &
    R_E(A^{(k)}_{k+1})
    & \df
    \{\alpha\in([k+1])_k \mid \sgn(\alpha)=1\}.
  \end{align*}
  For example, $A^{(2)}_3$ is the oriented cycle $\vec C_3$.
\end{definition}

\begin{proofof}{Theorem~\ref{thm:separationUCoupleIndependencehypergraph}}
  For this proof, let us denote the predicate symbols of $\TkHypergraph[(\ell+2)]$ and
  $\TkTournament[(\ell+1)]$ by $E$ and $P$, respectively. Let
  $\psi\df\psi_{\ell+1}\in\HomT{\TkTournament[(\ell+1)]}$ be the quasirandom~$(\ell+1)$-tournamon and let
  $I\interpret{\TkHypergraph[(\ell+2)]}{\TkTournament[(\ell+1)]}$ be
  given by
  \begin{align*}
    I(E)(x_1,\ldots,x_{\ell+2})
    & \df
    \bigvee_{1\leq i_1 < \cdots < i_\ell\leq\ell+2} (P(x_{i_1},\ldots,x_{i_\ell},x_{j_1})\equiv P(x_{i_1},\ldots,x_{i_\ell},x_{j_2})),
  \end{align*}
  where $j_1,j_2\in[\ell+2]$ are such that $\{i_1,\ldots,i_\ell,j_1,j_2\}=[\ell+2]$. By
  Theorems~\ref{thm:naturality} and~\ref{thm:separationUCoupleIndependence}, we know that
  $\phi\df\psi^I\in\HomT{\TkHypergraph[(\ell+2)]}$ satisfies $\UCouple[\ell]$.

  To show that $\phi\notin\Independence[\ell]$, we will make use of the theory $T$
  (isomorphic to $\TkTournament[(\ell+1)]$) that is obtained from
  $\TkHypergraph[(\ell+2)]\cup\TkTournament[(\ell+1)]$ by adding the axiom
  \begin{align} \label{eq:addedaxiom}
    \forall\vec{x}, & E(x_1,\ldots,x_{\ell+2})\equiv I(E)(x_1,\ldots,x_{\ell+2})
  \end{align}
  and the commutative diagram
  \begin{equation*}
    \begin{tikzcd}
      \TkHypergraph[(\ell+2)]\arrow[r, "I"]\arrow[d, "S"'] & \TkTournament[(\ell+1)]\arrow[d, "J"]\\
      \TkHypergraph[(\ell+2)]\cup\TkTournament[(\ell+1)]\arrow[r, "A"'] & T
    \end{tikzcd}
  \end{equation*}
  where $S$ is the structure-erasing interpretation, $A$ is the axiom-adding interpretation and $J$ is the
  isomorphism mentioned above that acts identically on $P$ (the inverse $J^{-1}$ acts identically on $P$ and acts as $I$ on
  $E$). Let $\xi = \psi^{J^{-1}}$ so that $\psi = \xi^J$ and $\phi = \xi^{A\comp S}$.

  Suppose toward a contradiction that $\phi\in\Independence[\ell]$ and let $\cN$ be an $\ell$-independent
  $\TkHypergraph[(\ell+2)]$-on over $\Omega$ such that $\phi_\cN = \phi = \psi^I$. By Proposition~\ref{prop:theonalignment},
  there exists a $T$-on $\cN'$ over $\Omega\times\Omega$ such that $\phi_{\cN'} = \xi$ and $S(A(\cN'))_E = \cN'_E =
  \cN_E\times\cE_{\ell+2}$ a.e. Note that $\rk(\phi)\leq\rk(\psi)\leq\ell+1$, so by possibly changing
  zero-measure sets using Proposition~\ref{prop:rankae}, we may also suppose that $\rk(\cN')\leq\ell+1$. By
  applying a measure-isomorphism between $\Omega\times\Omega$ and $[0,1]$, we conclude that there exists a $T$-on
  $\cH$ (over $[0,1]$) such that $\phi_\cH = \xi$, $\rk(\cH)\leq\ell+1$ and the peon $\cH_E$ is
  $\ell$-independent.

  Since $\cH_E$ has rank at most $\ell+1$ and is $\ell$-independent, we can write it as $\cH_E =
  \cE_{\ell+2,\ell}\times\cG\times [0,1]^{\{\ell+2\}}$ for some measurable
  $\cG\subseteq[0,1]^{\binom{[\ell+2]}{\ell+1}}$. Using the symmetry axiom~\eqref{eq:hypergraphsymmetry} of
  $\TkHypergraph[(\ell+2)]$ and making a zero-measure change in $\cG$, we may assume that it is
  $S_{\ell+2}$-invariant.

  For every $t\in[\ell+2]$, define the sets
  \begin{align*}
    V_t^{\ell+1} & \df \set{A\in\binom{[\ell+2]}{t}}{\ell+1\in A\land\ell+2\notin A};\\
    V_t^{\ell+2} & \df \set{A\in\binom{[\ell+2]}{t}}{\ell+1\notin A\land\ell+2\in A};\\
    V_t^{\ell+1,\ell+2} & \df \set{A\in\binom{[\ell+2]}{t}}{\ell+1,\ell+2\in A}.
  \end{align*}
  Define also the sets
  \begin{align*}
    W^{\ell+1}_t & \df [0,1]^{V_t^{\ell+1}}; &
    W^{\ell+2}_t & \df [0,1]^{V_t^{\ell+2}}; &
    W^{\ell+1,\ell+2}_t & \df [0,1]^{V_t^{\ell+1,\ell+2}};
    \\
    Y^{\ell+1} & \df \prod_{t=1}^\ell W^{\ell+1}_t; &
    Y^{\ell+2} & \df \prod_{t=1}^\ell W^{\ell+2}_t; &
    Z & \df \prod_{t=1}^{\ell+2} W^{\ell+1,\ell+2}_t.
  \end{align*}
  Note that
  \begin{align*}
    \cE_{\ell+1}
    & =
    \cE_\ell\times Y^{\ell+1}\times W^{\ell+1}_{\ell+1};
    \\
    \cE_{\ell+2}
    & =
    \cE_\ell\times Y^{\ell+1}\times W^{\ell+1}_{\ell+1}\times Y^{\ell+2}\times W^{\ell+2}_{\ell+1}\times Z.
  \end{align*}

  Let $\iota\function{[\ell]\cup\{\ell+2\}}{[\ell+1]}$ be the function that maps $\ell+2$ to $\ell+1$ and
  fixes all other points and note that $\iota$ induces maps $\iota^*\function{Y^{\ell+1}}{Y^{\ell+2}}$ and
  $\iota^*_{\ell+1}\function{W^{\ell+1}_{\ell+1}}{W^{\ell+2}_{\ell+1}}$ (given by $\iota^*(y)_A\df
  y_{\iota(A)}$ and $\iota^*_{\ell+1}(w)_A\df w_{\iota(A)}$).

  For every $x\in\cE_\ell$ and every $w\in W^{\ell+1}_{\ell+1}$, define the sections
  \begin{align*}
    \cH^\alpha_P(x,w) & \df \{y\in Y^{\ell+1} \mid (x,y,w)\in\cH_P\};\\
    \cH^\beta_P(x,w) & \df \{y\in Y^{\ell+1} \mid (x,y,w)\notin\cH_P\};
  \end{align*}
  and for every $x\in\cE_\ell$, define
  \begin{align*}
    \cH^\alpha_P(x) & \df \{w\in W^{\ell+1}_{\ell+1} \mid \lambda(\cH^\alpha_P(x,w)) > 0\};\\
    \cH^\beta_P(x) & \df \{w\in W^{\ell+1}_{\ell+1} \mid \lambda(\cH^\beta_P(x,w)) > 0\}.
  \end{align*}
  It is clear that
  \begin{align}\label{eq:tournamentinherited}
    \cH^\alpha_P(x)\cup\cH^\beta_P(x) & = W^{\ell+1}_{\ell+1}
  \end{align}
  for every $x\in\cE_\ell$.

  Note that the axiom~\eqref{eq:addedaxiom} of $T$ and an application of Fubini's Theorem imply that for
  a.e.~$x\in\cE_\ell$, a.e.~$w,\widehat{w}\in W^{\ell+1}_{\ell+1}$, a.e.~$y\in\cH^\alpha_P(x,w)$,
  a.e.~$\widehat{y}\in\cH^\alpha_P(x,\widehat{w})$ and a.e.~$z\in Z$, we have
  \begin{align}\label{eq:addedaxiomtheon}
    (x,y,w,\iota^*(\widehat{y}),\iota^*_{\ell+1}(\widehat{w}),z)\in\cH_E.
  \end{align}
  Since the definition of $I(P)$ is invariant under negating $P$, the same assertion
  also holds with $\beta$ in place of $\alpha$.

  Recalling that $\cH_E=\cE_{\ell+2,\ell}\times\cG\times [0,1]^{\{\ell+2\}}$, \eqref{eq:addedaxiomtheon}
  implies that for a.e.~$x\in\cE_\ell$, a.e.~$w,\widehat{w}\in\cH^\alpha_P(x)$ and a.e.~$z\in
  W^{\ell+1,\ell+2}_{\ell+1}$, we have
  \begin{align}\label{eq:addedaxiomtheonreduced}
    (w,\iota^*_{\ell+1}(\widehat{w}),z)\in\cG.
  \end{align}
  Again, the analogous statement with $\beta$ in place of $\alpha$ also holds.

  From~\eqref{eq:tournamentinherited} and~\eqref{eq:addedaxiomtheonreduced}, it follows that there exists
  $x_0\in\cE_\ell$ such that the following hold for $W^\alpha\df\cH^\alpha_P(x_0)$ and
  $W^\beta\df\cH^\beta_P(x_0)$.
  \begin{enumerate}
  \item We have $W^\alpha\cup W^\beta = W^{\ell+1}_{\ell+1}$.
  \item For a.e.~$w,\widehat{w}\in W^\alpha$ and a.e.~$z\in W^{\ell+1,\ell+2}_{\ell+1}$, we have
    $(w,\iota^*_{\ell+1}(\widehat{w}),z)\in\cG$.
  \item For a.e.~$w,\widehat{w}\in W^\beta$ and a.e.~$z\in W^{\ell+1,\ell+2}_{\ell+1}$, we have
    $(w,\iota^*_{\ell+1}(\widehat{w}),z)\in\cG$.
  \end{enumerate}

  Since $\lvert V_{\ell+1}^{\ell+1}\rvert=1$, let us for simplicity identify $W^{\ell+1}_{\ell+1}$ with $[0,1]$ and let $h\df\One_{W^\alpha}$ be the indicator function
  of $W^\alpha\subseteq [0,1]$. For every $A\in\binom{[\ell+2]}{\ell+1}$, let
  $\pi_A\function{[0,1]^{\binom{[\ell+2]}{\ell+1}}}{[0,1]}$ be the projection on the $A$-th coordinate and
  note that the properties above imply that for a.e.~$u\in[0,1]^{\binom{[\ell+2]}{\ell+1}}$, if
  $h(\pi_{[\ell+1]}(u))=h(\pi_{[\ell]\cup\{\ell+2\}})$, then $u\in\cG$. Since $\cG$ is $S_{\ell+2}$-invariant,
  this in turn implies that for a.e.~$u\in[0,1]^{\binom{[\ell+2]}{\ell+1}}$, if there exist
  $j_1,j_2\in[\ell+2]$ distinct such that $h(\pi_{[\ell+2]\setminus\{j_1\}}(u)) =
  h(\pi_{[\ell+2]\setminus\{j_2\}})$, then $u\in\cG$. But since at least two of the values
  $h(\pi_{[\ell+1]}(u))$, $h(\pi_{[\ell+2]\setminus\{\ell+1\}}(u))$ and
  $h(\pi_{[\ell+2]\setminus\{\ell\}}(u))$ must be equal, it follows that $\lambda(\cG) = 1$. So we must have
  \begin{align*}
    \phi(\rho_{\ell+2}) & = \lambda(\cH_E) = \lambda(\cG) = 1,
  \end{align*}
  which implies $\phi(\overline{K}^{(\ell+2)}_{\ell+2}) = 0$.

  However, note that for the alternating $(\ell+1)$-tournament $A^{(\ell+1)}_{\ell+2}$, we have
  $I(A^{(\ell+1)}_{\ell+2})\cong\overline{K}^{(\ell+2)}_{\ell+2}$, hence
  \begin{align*}
    \phi(\overline{K}^{(\ell+2)}_{\ell+2})
    & \geq
    \psi(A^{(\ell+1)}_{\ell+2})
    =
    \frac{(\ell+2)!}{2^{\ell+2}\lvert\Aut(A^{(\ell+1)}_{\ell+2})\rvert}
    =
    \frac{1}{2^{\ell+1}},
  \end{align*}
  a contradiction.
\end{proofof}

The following is needed for the proof of Theorem~\ref{thm:separationUInduceUCouplehypergraph}.

\begin{lemma}\label{lem:twoalternating}
  If $M\in\cM_{k+2}[\TkTournament]$ is a $k$-tournament on $k+2$ vertices, then $M$ has at most two
  (unlabeled) copies of the alternating $k$-tournament $A^{(k)}_{k+1}$.
\end{lemma}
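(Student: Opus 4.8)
The plan is to translate the statement into an elementary fact about $\pm1$-edge-colourings of the complete graph and then settle that fact by a short parity computation.

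First I would reduce to edge-colourings. Put $n\df k+2$ and identify $V(M)=[n]$. Since $\binom{n}{k}=\binom{n}{2}$ and the axioms of $\TkTournament$ impose no relation between the orientations of distinct $k$-sets, the tournament $M$ is the same datum as a function $\epsilon\function{\binom{[n]}{2}}{\{+1,-1\}}$, where $\epsilon(\{i,j\})\df+1$ iff the increasing enumeration of the complementary $k$-set $[n]\setminus\{i,j\}$ lies in $R_E(M)$. The $(k+1)$-subsets of $[n]$ are precisely the sets $[n]\setminus\{i\}$, so each unlabeled copy of $A^{(k)}_{k+1}$ in $M$ is the restriction to exactly one such set; calling $i$ \emph{good} when $M\rest_{[n]\setminus\{i\}}\cong A^{(k)}_{k+1}$, the copies of $A^{(k)}_{k+1}$ are in bijection with the good vertices, and it suffices to show there are at most two of them.

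Next I would characterise goodness. Encode a $k$-tournament $N$ on the linearly ordered set $[k+1]$ by its bit-vector $e\in\{\pm1\}^{k+1}$, where $e_m$ records the orientation of the $k$-set $[k+1]\setminus\{m\}$ relative to its increasing enumeration. Using $\sgn(\sigma_{\pi\circ\alpha})=\sgn(\pi)\sgn(\sigma_\alpha)$ one checks that a relabelling $\pi\in S_{k+1}$ fixes $A^{(k)}_{k+1}$ iff $\sgn(\pi)=1$ and sends it to its reversal $\overline{A^{(k)}_{k+1}}$ if $\sgn(\pi)=-1$; hence $\Aut(A^{(k)}_{k+1})$ is the alternating group, so there are exactly two labelled tournaments on $[k+1]$ isomorphic to $A^{(k)}_{k+1}$, namely $A^{(k)}_{k+1}$ and $\overline{A^{(k)}_{k+1}}$. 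From the definition $A^{(k)}_{k+1}$ has $e_m=\sgn(\sigma_{\iota_{[k+1]\setminus\{m\}}})=(-1)^{k+1-m}$, which is alternating, and its reversal has the negated, hence also alternating, bit-vector; since there are only two alternating $\pm1$-sequences of length $k+1$, a $k$-tournament on ordered $[k+1]$ is isomorphic to $A^{(k)}_{k+1}$ iff its bit-vector alternates. Applying this to $M\rest_{[n]\setminus\{i\}}$ ordered as a subset of $[n]$ — whose $m$-th bit is $\epsilon(\{i,b_m\})$, with $b_m$ the $m$-th element of $[n]\setminus\{i\}$ — vertex $i$ is good iff the sequence $\bigl(\epsilon(\{i,j\})\bigr)_{j\in[n]\setminus\{i\}}$, with $j$ increasing, is alternating (consecutive terms differ).

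Finally I would bound the number of vertices with alternating link. Fix a good vertex $i$ and set $f_i(j)\df\epsilon(\{i,j\})(-1)^j$. For $j<j'$ in $[n]\setminus\{i\}$ there are exactly $(j'-j-1)-\One[j<i<j']$ elements of $[n]\setminus\{i\}$ strictly between $j$ and $j'$, so alternation of the link gives $f_i(j)f_i(j')=(-1)^{\One[j<i<j']}$; thus $f_i$ is constant, equal to some $\alpha_i\in\{\pm1\}$, on $\{1,\dots,i-1\}$ and equal to $-\alpha_i$ on $\{i+1,\dots,n\}$, i.e.\ $\epsilon(\{i,j\})=\alpha_i(-1)^j$ for $j<i$ and $\epsilon(\{i,j\})=-\alpha_i(-1)^j$ for $j>i$. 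If $i_1<i_2<i_3$ were all good, then reading each edge $\{i_a,i_b\}$ (with $a<b$) off the formula for $i_a$ and off the formula for $i_b$ and equating yields
\begin{align*}
\alpha_{i_2}&=-\alpha_{i_1}(-1)^{i_1+i_2}, & \alpha_{i_3}&=-\alpha_{i_1}(-1)^{i_1+i_3}, & \alpha_{i_3}&=-\alpha_{i_2}(-1)^{i_2+i_3};
\end{align*}
substituting the first into the third gives $\alpha_{i_3}=\alpha_{i_1}(-1)^{i_1+i_3}$, contradicting the second. Hence there are at most two good vertices, and so at most two copies of $A^{(k)}_{k+1}$ in $M$. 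I expect the main obstacle to be the bookkeeping in the third step together with pinning down, in the second step, that the isomorphism class of $A^{(k)}_{k+1}$ among tournaments on an ordered $(k+1)$-set is exactly ``alternating bit-vector''; once the signs are under control the final contradiction is immediate.
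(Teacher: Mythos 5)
Your proof is correct, but it takes a genuinely different (and longer) route than the paper. The paper argues directly: after relabeling so that the three copies sit on $V_1=[k+1]$, $V_2=[k]\cup\{k+2\}$, $V_3=[k-1]\cup\{k+1,k+2\}$, it picks one injection $\alpha_{ij}$ with image $V_i\cap V_j$ for each pair; since any isomorphism of $M\rest_{V_i}$ with $A^{(k)}_{k+1}$ sends the edge indicator to $\sgn$, and the two injections $\alpha_{ij},\alpha_{ij'}$ landing in $V_i$ differ by a transposition after composing with that isomorphism, the three biconditionals $\alpha_{12}\in R_E(M)\equiv\alpha_{13}\notin R_E(M)$, $\alpha_{12}\in R_E(M)\equiv\alpha_{23}\notin R_E(M)$, $\alpha_{13}\in R_E(M)\equiv\alpha_{23}\notin R_E(M)$ are immediately seen to be jointly unsatisfiable. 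Your proof instead dualises the whole $k$-tournament on $k+2$ vertices to a $\pm1$-edge-colouring of $K_{k+2}$ (using $\binom{n}{k}=\binom{n}{2}$), characterises the ``good'' vertices as those whose link sequence alternates, converts goodness into the pointwise constraint $\epsilon(\{i,j\})=\pm\alpha_i(-1)^j$, and derives the bound from a $2$-colouring-style incompatibility among three such constraints. Both are sound; yours buys a structural description of which vertices can carry a copy (and in fact implicitly classifies the extremal configurations with exactly two copies), at the cost of substantially more bookkeeping; the paper's buys brevity by never leaving the original presentation of the tournament and exploiting only the parity flip under a single transposition.
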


\begin{proof}
  Suppose toward a contradiction that $M\in\cM_{k+2}[\TkTournament]$ contains three copies of $A^{(k)}_{k+1}$
  and without loss of generality, let us suppose that these three copies are induced by $V_1\df[k+1]$,
  $V_2\df[k]\cup\{k+2\}$ and $V_3\df[k-1]\cup\{k+1,k+2\}$. Let
  $\alpha_{12},\alpha_{13},\alpha_{23}\in([k+2])_k$ be given by
  \begin{align*}
    \alpha_{12}(v) & \df v;
    &
    \alpha_{13}(v) & \df
    \begin{dcases*}
      v, & if $v < k$;\\
      k+1 & if $v = k$;
    \end{dcases*}
    &
    \alpha_{23}(v) & \df
    \begin{dcases*}
      v, & if $v < k$;\\
      k+2 & if $v = k$;
    \end{dcases*}
  \end{align*}
  and note that $\im(\alpha_{ij})=V_i\cap V_j$.

  But then $M\rest_{V_1}\cong A^{(k)}_{k+1}$, $M\rest_{V_2}\cong A^{(k)}_{k+1}$ and $M\rest_{V_3}\cong
  A^{(k)}_{k+1}$ imply respectively that
  \begin{align*}
    \alpha_{12}\in R_E(M) & \equiv \alpha_{13}\notin R_E(M),\\
    \alpha_{12}\in R_E(M) & \equiv \alpha_{23}\notin R_E(M),\\
    \alpha_{13}\in R_E(M) & \equiv \alpha_{23}\notin R_E(M).
  \end{align*}
  This is a contradiction as all three equivalences above cannot be true at the same time.
\end{proof}

\begin{proofof}{Theorem~\ref{thm:separationUInduceUCouplehypergraph}}
  For this proof, let us again denote the predicate symbols of $\TkHypergraph[(\ell+2)]$ and
  $\TkTournament[(\ell+1)]$ by $E$ and $P$, respectively. For $p\in[0,1]$, let $\cN^p$ be
the
  $\TkTournament[(\ell+1)]$-on given by
  \begin{align*}
    \cN^p_E
    & \df
    \set{x\in\cE_{\ell+1}}{
      x_{[\ell+1]} < p
      \equiv
      \sgn(\sigma_x)=1
    }
  \end{align*}
(note that for $p=1/2$ this is precisely the theon~\eqref{eq:theonquasirandom}
representing the quasirandom $(\ell+1)$-tournamon).

Let $I\interpret{\TkHypergraph[(\ell+2)]}{\TkTournament[(\ell+1)]}$ be the
interpretation that declares $(\ell+2)$-edges to be isomorphic copies of $A_{\ell+2}^{(\ell+1)}$, and let
$\phi_p\df\phi_{\cN^p}^I\in\HomT{\TkHypergraph[(\ell+2)]}$. We will show that
$\phi_p$ satisfies
  $\UInduce[\ell]$ for every $p\in[0,1]$, but does not satisfy
  $\UCouple[1]$ unless $p\in\{0,1/2,1\}$.

  To show the former, recall that the quasirandom $(\ell+1)$-hypergraphon
  $\psi_{\ell+1,p}\in\HomT{\TkHypergraph[(\ell+1)]}$ satisfies $\Independence[\ell]$
  (cf.~Lemma~\ref{lem:quasirandomcoloredhypergraphons}) and hence $\UCouple[\ell]$
  (by Theorem~\ref{thm:inter-properties}). Note also that $\phi_{\cN^p} =
  (\psi_{\ell+1,p}\otimes\psi)^{I'}$, where $\psi\in\HomT{\TLinOrder}$ is the linear order and
  $I'\interpret{\TkTournament[(\ell+1)]}{\TkHypergraph[(\ell+1)]\cup\TLinOrder}$ is given by\footnote{This is a
    generalization of the ``arc-orientation'' interpretation used implicitly in the implications
    $P_{10}\Longrightarrow P_{11}\Longrightarrow P_1(s)$ of~\cite{CG91}.}
  \begin{align*}
    I'(P)(x_1,\ldots,x_{\ell+1})
    & \df \left(\bigwedge_{1\leq i<j\leq \ell+1} x_i\neq x_j\right) \\ & \land
    \left(E(x_1,\ldots,x_{\ell+1})\equiv
    \bigvee_{\substack{\sigma\in S_{\ell+1}\\\sgn(\sigma)=1}}\bigwedge_{1\leq i < j\leq \ell+1}
    x_{\sigma(i)}\prec x_{\sigma(j)}\right).
  \end{align*}
  By Theorem~\ref{thm:UCouple}\ref{thm:UCouple:UCouple}$\implies$\ref{thm:UCouple:orderUInduce}, we know that
  $\psi_{\ell+1,p}\otimes\psi\in\UInduce[\ell]$ and by Theorem~\ref{thm:naturality}, we get that $\phi_p =
  (\psi_{\ell+1,p}\otimes\psi)^{I'\comp I}$ satisfies $\UInduce[\ell]$.

  \medskip

  Let us now show that for every $p\in(0,1)\setminus\{1/2\}$, $\phi_p$ does not satisfy $\UCouple[1]$. Since
  the linear order $\psi\in\HomT{\TLinOrder}$ has rank $1$, it is enough to show that $\phi_p$ is not uniquely
  coupleable with $\psi$. Consider the $(\TkTournament[(\ell+1)]\cup\TLinOrder)$-on $\cN^{p,<}$ given by
  \begin{align*}
    \cN^{p,<}_P & \df \cN^p_P; &
    \cN^{p,<}_\prec & \df \{x\in\cE_2 \mid x_{\{1\}} < x_{\{2\}}\}
  \end{align*}
  and note that $\phi_{\cN^{p,<}}$ is a coupling of $\phi_{\cN^p}$ and $\psi$, hence
  $\xi\df\phi_{\cN^{p,<}}^{I\cup\id_{\TLinOrder}}$ is a coupling of $\phi_p$ and $\psi$.
We will show that $\xi\neq \phi_p\otimes\psi$ by a direct computation
exhibiting an $(\ell+2)$-hypergraph $H$ and two different orders on it such
that $\xi(H_1) \neq \xi(H_2)$ for the corresponding models of the theory
$\TkHypergraph[(\ell+2)]\cup\TLinOrder$. That will suffice since, clearly,
$(\phi_p\otimes\psi)(H_1) = (\phi_p\otimes\psi)(H_2)$.

  Let
  $H\in\cK_{\ell+3}[\TkHypergraph[(\ell+2)]]$ be given by
  \begin{align*}
    V(H) & \df [\ell+3]; &
    E(H) & \df \{[k+1],[k]\cup\{k+2\}\};
  \end{align*}
  and let $H_1,H_2\in\cK_{\ell+3}[\TkHypergraph[(\ell+2)]\cup\TLinOrder]$ be obtained from $H$ by equipping it
  with the orders $\prec_1$ and $\prec_2$, respectively, where $\prec_1$ is the natural order of $[\ell+3]$
  and $\prec_2$ is obtained from $\prec_1$ by swapping the order position of $\ell+1$ and $\ell+3$, that is,
  we have
  \begin{align*}
    1 \prec_2 2 \prec_2 \cdots \prec_2 \ell \prec_2 \ell+3 \prec_2 \ell+2 \prec_2 \ell+1.
  \end{align*}

  Let $\rn{\theta}$ be picked at random in $\cE_{\NN_+}$ according to $\lambda$ and let $\rn{K}$ be the
  exchangeable array corresponding to $\cN^{p,<}$ with respect to $\rn{\theta}$ (so that
  $(I\cup\id_{\TLinOrder})(\rn{K})$ corresponds to $(I\cup\id_{\TLinOrder})(\cN^{p,<})$). Let
  $\rn{\sigma}\df \sigma_{\iota_{[\ell+3]}^*(\rn{\theta})}$. Then we have
  \begin{align*}
    \xi(\langle H_1\rangle)
    & =
    \PP[I(J(\rn{K}\rest_{[\ell+3]})) = H\land\rn{\sigma} = \id_{\ell+3}];
    \\
   \xi(\langle H_2\rangle)
    & =
    \PP[I(J(\rn{K}\rest_{[\ell+3]})) = H\land\rn{\sigma} = \tau];
  \end{align*}
  where $J\interpret{\TkTournament[(\ell+1)]}{\TkTournament[(\ell+1)]\cup\TLinOrder}$ is the structure-erasing
  interpretation and $\tau$ is the transposition that swaps $\ell+1$ and $\ell+3$. Then by
  Lemma~\ref{lem:twoalternating}, $I(J(\rn{K}\rest_{[\ell+3]})) = H$ is equivalent to
  \begin{align}\label{eq:IJKtwoalternating}
    J(\rn{K}\rest_{[\ell+2]}) \cong J(\rn{K}\rest_{[\ell+1]\cup\{\ell+3\}}) \cong A^{(\ell+1)}_{\ell+2}.
  \end{align}

  Since $\Aut(A^{(\ell+1)}_{\ell+2})$ is the alternating group on $[\ell+2]$, on any fixed set of $\ell+2$
  vertices, there are exactly two models $M_1$ and $M_2$ that are isomorphic to $A^{(\ell+1)}_{\ell+2}$ and
  they satisfy $R_P(M_1)\cap R_P(M_2) = \emptyset$. This means that on the event~\eqref{eq:IJKtwoalternating},
  out of the a priori four presentations of $A^{(\ell+1)}_{\ell+2}$ induced on $[\ell+2]$ and
  $[\ell+1]\cup\{\ell+3\}$, only two are actually possible. Since $\ell$ is odd, a straightforward calculation
  gives
  \begin{align*}
   \xi(\langle H_1\rangle)
    & =
    p^{(\ell+2)}(1-p)^{\ell+1} + p^{\ell+1}(1-p)^{\ell+2}
    =
    p^{\ell+1}(1-p)^{\ell+1};
    \\
    \xi(\langle H_2\rangle)
    & =
    p^\ell(1-p)^{\ell+3} + p^{\ell+3}(1-p)^\ell
    =
    p^\ell(1-p)^\ell(3p^2 - 3p + 1).
  \end{align*}

  Thus we get
  \begin{align*}
    \xi(\langle H_2\rangle)
    - \xi(\langle H_1\rangle)
    & =
    p^\ell(1-p)^\ell(4p^2 - 4p + 1)
    \\
    & =
    p^\ell(1-p)^\ell\left(2p - 1\right)^2,
  \end{align*}
  which is non-zero as long as $p\in(0,1)\setminus\{1/2\}$.
\end{proofof}

\begin{proofof}{Theorem~\ref{thm:separationDevUInduce}}
  For $p\in(0,1)$, let $\cN$ be the $\TkHypergraph$-on given by
  \begin{align*}
    \cN
    \df
    \biggl\{x\in\cE_k
    & \bigg\vert\: (\min\{x_{\{v\}} \mid v\in[k]\} < 1/2\land x_{[k]} < p)
    \\
    & \; \lor\biggl(\min\{x_{\{v\}} \mid v\in[k]\}\geq 1/2\land\sum_{v\in[k]} x_{[k]\setminus\{v\}} \bmod 1 < p\biggr)
    \biggr\}.
  \end{align*}
  Let us show that $\phi\df\phi_\cN$ satisfies $\Dev[k-1]$; recall that $\Dev[k-1]=\Disc[\cA_{k-1}]$, where
  $\cA_{k-1}\df\{A\in\binom{[k]}{k-1} \mid \{1\}\subseteq A\} =
  \binom{[k]}{k-1}\setminus\{[k]\setminus\{1\}\}$ (see Definition~\ref{def:CliqueDisc}) and for
  $\psi\in\HomT{T_{\cL_{\cA_{k-1}}}}$, let $\xi$ be a coupling of $\phi$ and $\psi$. By
  Proposition~\ref{prop:theonalignment}, there exists a $(T\cup T_{\cL_{\cA_{k-1}}})$-on $\cH$ over $[0,1]^2$
  such that $\phi_\cH = \xi$ and $\cH_E = \cN\times\cE_k$.

  Let $(\rn{\theta^1},\rn{\theta^2})$ be picked in $\cE_{\NN_+}([0,1]^2)$ according to $\lambda$ and
  let $\rn{K}$ be the exchangeable array corresponding to $\cH$ with respect to
  $(\rn{\theta^1},\rn{\theta^2})$. Our objective is to show that the events $(1,2,\ldots,k)\in R_E(\rn K)$
  and $\forall A\in \cA_{k-1},\iota_A\in R_{P_A}(\rn K)$ are independent.

  Since the event $\iota_A\in R_{P_A}(\rn{K})$ is completely determined by
  $((\rn{\theta^1}_B,\rn{\theta^2}_B) \mid B\subseteq A)$, it is sufficient to show that the event
  $(1,\ldots,k)\in R_E(\rn{K})$ is independent from $((\rn{\theta^1}_B,\rn{\theta^2}_B) \mid B\in
  r(k,k-1)\land B\neq[k]\setminus\{1\})$. But the event $(1,\ldots,k)\in R_E(\rn{K})$ is equivalent to
  $(\rn{\theta^1}_B)_{B\in r(k)}\in\cN$, and it is easy to see that the conditional probability of
  $(1,\ldots,k)\in R_E(\rn{K})$ given $((\rn{\theta^1}_B,\rn{\theta^2}_B) \mid B\in r(k,k-1)\land
  B\neq[k]\setminus\{1\})$ is $p$ a.e. Hence $\phi$ satisfies $\Dev[k-1]$.

  \medskip

  Let us now show that $\phi$ does not satisfy $\UInduce[1]$. To do so, for each $i\in[2]$, we consider the
  $(\TkHypergraph\cup\TcColoring[2])$-on $\cH^i$ (see Remark~\ref{rmk:UInduce1}) given by
  \begin{align*}
    \cH_E & = \cN;\\
    \cH_{\chi_i} & = \{x\in\cE_1 \mid x_{\{1\}} < 1/2\};\\
    \cH_{\chi_{3-i}} & = \{x\in\cE_1 \mid x_{\{1\}}\geq 1/2\}.
  \end{align*}

  Then by a straightforward calculation, for every $H\in\cM[\TkHypergraph\cup\TcColoring[2]]$ with
  $R_{\chi_1}(H) = V(H)$, we have
  \begin{align*}
    \phi_{\cH^1}(H) & = \frac{\psi_{k,p}(I(H))}{2^{\lvert H\rvert}}; &
    \phi_{\cH^2}(H) & = \frac{\phi_{\cN'}(I(H))}{2^{\lvert H\rvert}};
  \end{align*}
  where $I\interpret{\TkHypergraph}{\TkHypergraph\cup\TcColoring[2]}$ is the structure-erasing interpretation,
  $\psi_{k,p}$ is the quasirandom $k$-hypergraphon (see Definition~\ref{def:hypergraphons}) and $\cN'$ is the
  $\TkHypergraph$-on given by
  \begin{align*}
    \cN' & = \set{x\in\cE_k}{\sum_{v\in[k]} x_{[k]\setminus\{v\}} \bmod 1 < p}.
  \end{align*}
  Since $\phi_{\cN'}\neq\psi_{k,p}$ (since $\rk(\psi_{k,p}) = k > k - 1\geq\rk(\psi_{\cN'})$), it follows that
  $\phi_{\cH^1}(H)\neq\phi_{\cH^2}(H)$ for some $H\in\cM[\TkHypergraph\cup\TcColoring[2]]$ with $R_{\chi_1}(H)
  = V(H)$, hence $\phi$ does not satisfy $\UInduce[1]$.
\end{proofof}

\begin{proofof}{Theorem~\ref{thm:separationIndependenceDisc}}
  For $p\in(0,1)$, let $\cN$ be the $\TkHypergraph$-on given by
  \begin{align*}
    \cN
    & \df
    \set{x\in\cE_k}{\max\set{x_A}{A\in\binom{[k]}{\ell+1}} < p}.
  \end{align*}
  It is clear that $\phi\df\phi_\cN$ satisfies $\Independence[\ell]$. Consider now the
  $T_{\cL_{\{[\ell+1]\}}}$-on $\cH$ given by
  \begin{align*}
    \cH_E & \df \cN; &
    \cH_{P_{[\ell+1]}} & \df \{x\in\cE_{\ell+1} \mid x_{[\ell+1]}\geq p\}
  \end{align*}
  and note that if $\rn{K}$ is the exchangeable array corresponding to $\cH$,
  then
  \begin{multline*}
    \PP[(1,\ldots,k)\in R_E(\rn{K})\land (1,\ldots,\ell+1)\in R_{P_{[\ell+1]}}(\rn{K})]
    =
    0
    \\
    \neq
    p^{\binom{k}{\ell+1}}\cdot (1-p)
    =
    \phi(\rho_k)\cdot\PP[(1,\ldots,\ell+1)\in R_{P_{[\ell+1]}}(\rn{K})],
  \end{multline*}
  so $\phi$ does not satisfy $\Disc[\{[\ell+1]\}]$.
\end{proofof}

\begin{proofof}{Theorem~\ref{thm:separationupward}}
  Follows from Theorems~\ref{thm:UInduce->CliqueDisc} ($\UInduce[\ell+1]\implies\CliqueDisc[\ell+1]$)
  and~\ref{thm:separationIndependenceDisc} ($\Independence[\ell]\nRightarrow\Disc[\{[\ell+1]\}]$), and the fact
  that $\CliqueDisc[\ell+1]\implies\Disc[\{[\ell+1]\}]$ (see~\cite{Tow17,ACHP18}).
\end{proofof}

\section{Top level quasirandomness}
\label{sec:fullQR}

In this section we prove Theorems~\ref{thm:fullIndependence} and~\ref{thm:fullUCouple}, which completely
characterize the properties $\Independence[k-1]$ and $\UCouple[k-1]$, respectively when all arities are at
most $k$. These can be seen as analogues of full quasirandomness for arbitrary universal theories (just as
$\Dev[k]=\CliqueDisc[k-1]=\Disc[\binom{[k]}{k-1}]$ gives full quasirandomness in $\TkHypergraph$).

\begin{proofof}{Theorem~\ref{thm:fullIndependence}}
  By Lemma~\ref{lem:quasirandomcoloredhypergraphons}, $\psi_{k,p}\in\HomT{\TcColkHyp}$ satisfies
  $\Independence[k-1]$, so the backward direction follows from Theorem~\ref{thm:naturality}.

  For the forward direction, first we claim that it is enough to show the case when $T = T_\cL$. (This is not
  completely immediate as $I\interpret{T}{\TcColkHyp}$ is required to satisfy
  $\TcColkHyp\vdash\forall\vec{x},I(F)(\vec{x})$ for every axiom $\forall\vec{x},F(\vec{x})$ of $T$.) Let
  $A\interpret{T_\cL}{T}$ be the axiom-adding interpretation and suppose $\phi^A$ (which satisfies
  $\UCouple[k-1]$ by Theorem~\ref{thm:naturality}) can be written as $\phi^A = \psi_{k,p}^J$ for some $c\geq
  2$, some $p\in\Pi_c$ and some $J\interpret{T_\cL}{\TcColkHyp}$, then we define $I\interpret{T}{\TcColkHyp}$
  to act as $J$ and we have to show that it is indeed an interpretation, i.e., that $\TcColkHyp\vdash\forall\vec{x},I(F)(\vec{x})$ for every axiom
  $\forall\vec{x},F(\vec{x})$ of $T$ ($\psi_{k,p}^I = \phi$ will then follow trivially). Equivalently, we have
  to show that if $M\in\cM[\TcColkHyp]$, then $J(M)\in\cM[T]$. But since all $p_i$ are positive, we have
  $\psi_{k,p}(M) > 0$, so $\phi^A(J(M)) > 0$, hence trivially $J(M)\in\cM[T]$.

  \medskip

  Let us now prove the case $T = T_\cL$. Let $\cN$ be a $(k-1)$-independent $T_\cL$-on such that
  $\phi_\cN=\phi$. Note that if $P\in\cL$ is such that $k(P)\leq k-1$, then $\cN_P$ must be either $\emptyset$
  or $\cE_{k(P)}$, so we can write $\cL=\cL'\cup\cL_0\cup\cL_1$, where
  \begin{align*}
    \cL' & \df \{P\in\cL \mid k(P) = k\};\\
    \cL_0 & \df \{P\in\cL \mid k(P)\leq k-1\land\cN_P=\emptyset\};\\
    \cL_1 & \df \{P\in\cL \mid k(P)\leq k-1\land\cN_P=\cE_{k(P)}\}.
  \end{align*}

  Recall from Definition~\ref{def:categorical} that $\cK_k[\Th(\phi)] = \{K\in\cK_k[T_\cL] \mid \phi(K) > 0\}$ and
  enumerate its elements as $K_1,\ldots,K_c$. Note that since $\cN$ is $(k-1)$-independent, it follows that
  every peon $\cN_P$ with $P\in\cL'$ is $S_k$-invariant, hence we must have $\Aut(K_i)=S_k$ for every
  $i\in[c]$. Suppose first that $c\geq 2$ and define $p\in\Pi_c$ by $p_i = \phi(K_i) > 0$ and let
  $I\interpret{T_\cL}{\TcColkHyp}$ be given by
  \begin{align}
    I(P)(x_1,\ldots,x_{k(P)}) & \df x_1 \neq x_1 & (P\in\cL_0);
    \notag\\
    I(P)(x_1,\ldots,x_{k(P)}) & \df \bigwedge_{1\leq i < j\leq k(P)} x_i \neq x_j & (P\in\cL_1);
    \notag\\
    I(P)(x_1,\ldots,x_k)
    & \df
    \bigvee_{\substack{i\in[c]\\ \id_k\in R_P(K_i)}} E_i(x_1,\ldots,x_k).
    & (P\in\cL').
    \label{eq:fullIndependence:topI}
  \end{align}
  Since $\cN$ is $(k-1)$-independent, it follows that each $\Tind(K_i,\cN)$ is $(k-1)$-independent and has
  measure $p_i$, which implies that the $\TcColkHyp$-on $\cH$ defined by $\cH_{E_i}\df\Tind(K_i,\cN)$
  ($i\in[c]$) satisfies $\phi_\cH = \psi_{k,p}$ and since clearly $I(\cH) = \cN$, it follows that
  $\psi_{k,p}^I=\phi$.

  If $c = 1$, then we can define $I$ by replacing~\eqref{eq:fullIndependence:topI} with
  \begin{align*}
    I(P)(x_1,\ldots,x_k) & \df \bigwedge_{1\leq i < j\leq k(P)} x_i \neq x_j & (P\in\cL', \id_k\in R_P(K_1));\\
    I(P)(x_1,\ldots,x_k) & \df x_1 \neq x_1 & (P\in\cL', \id_k\notin R_P(K_1))
  \end{align*}
  instead and we trivially get $\phi = \psi_{k,p}^I$ for any $p\in\Pi_{c'}$ with $c'\geq 2$ as we must have
  $\Tind(K_1,\cN)=\cE_k$ a.e.
\end{proofof}

Before we show Theorem~\ref{thm:fullUCouple}, let us first see that the $(\Theta,p)$-quasirandom homomorphisms
$\psi_{\Theta,p}\in\HomT{T_\Theta}$ from Definition~\ref{def:actiontheories} are well-defined (i.e., their
definition as $\psi_{\Theta,p}\df\phi_{\cN^Z}$ is independent of the choice of $Z$) and satisfy $\UCouple[k-1]$.

\begin{proposition}\label{prop:ThetapQR}
  With the notation and conditions of Definition~\ref{def:actiontheories}, we have
  \begin{align}\label{eq:ThetapQR}
    \phi_{\cN^Z}(\langle M\rangle) & = \prod_{P\in\cL} p_P^{\lvert R_P(M)\rvert/k!}
  \end{align}
  for every $M\in\cM[T_\Theta]$. Furthermore, $\psi_{\Theta,p}\df\phi_{\cN^Z}$ satisfies $\UCouple[k-1]$.
\end{proposition}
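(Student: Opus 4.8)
The plan is to prove Proposition~\ref{prop:ThetapQR} in two stages: first establish that $\cN^Z$ (as defined in~\eqref{eq:ThetapQRcNZ}) is a genuine $T_\Theta$-on and compute its density function~\eqref{eq:ThetapQR}, and then deduce $\UCouple[k-1]$ from the characterization in Theorem~\ref{thm:UCouple}.

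For the first stage, I would work directly with the syntactic description. Fix a measurable partition $Z=(Z_P)_{P\in\cL}$ of $[0,1]$ with $\lambda(Z_P)=p_P$, and recall that for $x\in\cE_k$, $\sigma_x$ is the permutation sorting the first-order coordinates. The key combinatorial fact is how $\sigma_x$ transforms when we permute coordinates: if $y=\tau^*(x)$ for $\tau\in S_k$, then $\sigma_y = \sigma_x\circ\tau$ (on the full-measure set where first-order coordinates are distinct). Using the definition $\cN^Z_P = \{x \mid x_{[k]}\in Z_{\sigma_x\cdot P}\}$, I would verify axiom~\eqref{eq:action:action}: $\tau^*(x)\in\cN^Z_P$ iff $x_{[k]}\in Z_{\sigma_{\tau^*(x)}\cdot P} = Z_{(\sigma_x\circ\tau)\cdot P} = Z_{\sigma_x\cdot(\tau\cdot P)}$ iff $x\in\cN^Z_{\tau\cdot P}$, which is exactly $(\tau\cdot P)(x)\equiv P(\tau^* x)$ relabeled. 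Axiom~\eqref{eq:action:existsone} holds because the $Z_P$ partition $[0,1]$ (so for distinct first-order coordinates exactly the predicate $P=\sigma_x^{-1}\cdot(\text{the block containing }x_{[k]})$ holds) and~\eqref{eq:action:unique} holds because the blocks are disjoint. Note $\cN^Z$ is $(k-1)$-independent by construction (it depends only on $x_{[k]}$), hence $\cN^Z$ is a peon, and since $\cL$ has only $k$-ary symbols this makes it a $T_\Theta$-on. An alternate syntactic description: $\psi_{\Theta,p}$ corresponds to the exchangeable array where for each $k$-set $A$ we independently pick a uniform $\rn{\theta}_A\in[0,1]$ and declare $P(\iota_A\circ\eta)$ to hold iff $\rn{\theta}_A\in Z_{\eta^{-1}\cdot P}$ for $\eta\in S_k$ (and no relation holds on non-injective tuples). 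For the density formula, given $M\in\cM[T_\Theta]$ with $|M|=n$, sampling $\rn{K}\rest_{[n]}$ and comparing to $M$: the events indexed by distinct $k$-sets $A\in\binom{[n]}{k}$ are independent, and for each $A$ the probability that the restriction of $\rn{K}$ to the tuples enumerating $A$ matches $M$ equals $p_{P_A}$ where $P_A$ is the unique predicate with $\iota_A\in R_{P_A}(M)$ (here the $S_k$-equivariance and axioms~\eqref{eq:action:existsone}--\eqref{eq:action:unique} guarantee exactly one such $P_A$, and $\lambda(Z_{P_A})=p_{P_A}$ is the right conditional probability since once the orbit is fixed the equivariant assignment is forced up to the orbit's internal symmetry, which is accounted for). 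Counting: $|R_P(M)| = k!\cdot|\{A\in\binom{V(M)}{k}: P_A=P\}|$ since each such $A$ contributes exactly $k!$ injective tuples all satisfying $P$ (as the relation is "full" on $A$ in the appropriate equivariant sense — wait, this needs care, since $P(\iota_A\circ\eta)$ holds only for those $\eta$ with $\eta^{-1}\cdot P = P_A$, i.e. $\eta$ in a coset; so actually $|R_P(M)\cap\{\text{tuples with image }A\}| = |\{\eta: \eta^{-1}\cdot P_A = P\}| = |\Stab(P_A\to P)|$). I would track this bookkeeping carefully: summing $|R_P(M)|$ over the $k$-sets with $P_A=P$ and dividing by the orbit-stabilizer counts gives $|R_P(M)|/k! = |\{A: P_A = P\}|$ after the dust settles, which yields~\eqref{eq:ThetapQR}. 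This independence-over-$k$-sets argument is essentially the same one used implicitly in Definition~\ref{def:actiontheories}; I expect the main technical nuisance here is getting the exponent $|R_P(M)|/k!$ exactly right, accounting for how many tuples enumerating a given $k$-set land in each predicate.

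For the second stage, $\UCouple[k-1]$ follows cleanly from Theorem~\ref{thm:UCouple}\ref{thm:UCouple:UCouple}$\equiv$\ref{thm:UCouple:weakindep}: I claim $\cN^Z$ is weakly $(k-1)$-independent. Let $\rn{K}$ be the exchangeable array corresponding to $\cN^Z$ with respect to $\rn{\theta}$ picked in $\cE_{\NN_+}$. Since $\cN^Z$ depends only on the top coordinate $x_{[k]}$, the array $\rn{K}\rest_{[n]}$ is a deterministic function of $(\rn{\theta}_A \mid A\in\binom{[n]}{k})$ and of $\sigma_{\iota_{[n]}^*(\rn{\theta})}$ (which sorts the first-order coordinates), hence a function of $(\rn{\theta}_A \mid A\in r(n))$; but I need independence from the \emph{low-order} coordinates $(\rn{\theta}_A \mid A\in r(n,k-1))$. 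The argument is the symmetry/flipping trick already used in the proof of Theorem~\ref{thm:separationUCoupleIndependence}: conditioning on the low-order coordinates only fixes the relative order $\sigma := \sigma_{\iota_{[n]}^*(\rn{\theta})}$ of the $n$ first-order points, but for each $k$-set $A$ the conditional distribution of "which predicate holds on the tuples enumerating $A$" is determined by $\rn{\theta}_A$ through the map $x_{[k]}\mapsto Z_{\sigma_{x\rest_A}\cdot P}$; since $\rn{\theta}_A$ is uniform and independent of everything else including $\sigma$, re-indexing the partition blocks $Z_{\sigma\cdot P}\leftrightarrow Z_P$ (a measure-preserving shuffle of $[0,1]$) shows the conditional law of $\rn{K}\rest_{[n]}$ given $\sigma$ does not depend on $\sigma$. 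Hence $\rn{K}\rest_{[n]}$ is independent of $\sigma$, hence independent of $(\rn{\theta}_A\mid A\in r(n,k-1))$ (which is itself independent of the $\rn{\theta}_A$ with $|A|=k$ and only influences $\rn{K}\rest_{[n]}$ through $\sigma$). Letting $n\to\infty$ gives weak $(k-1)$-independence, so $\psi_{\Theta,p}=\phi_{\cN^Z}\in\UCouple[k-1]$. As a bonus, this also shows $\phi_{\cN^Z}$ is independent of the choice of $Z$: by~\eqref{eq:ThetapQR} the density of any $M$ depends only on $(p_P)_P$, not on $Z$. I expect the main obstacle to be making the "flipping/re-indexing" argument fully rigorous — specifically, pinning down precisely that conditioning on $r(n,k-1)$ is the same as conditioning on $\sigma$ (the forward direction is easy; one must check the low-order coordinates carry no more information than $\sigma$ does about $\rn{K}\rest_{[n]}$, which holds because $\cN^Z$ genuinely ignores all coordinates except $x_{[k]}$) — but this is routine given the parallel argument in Theorem~\ref{thm:separationUCoupleIndependence}.
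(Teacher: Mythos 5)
Your approach matches the paper's in spirit — verify that $\cN^Z$ is a $T_\Theta$-on via the equivariance $\sigma_{\tau^*(x)}=\sigma_x\circ\tau$, then use re-indexing of the partition $Z$ under $\Theta$-invariance of $p$ to compute the density and obtain weak $(k-1)$-independence, concluding via Theorem~\ref{thm:UCouple}. But there are two concrete errors in Stage~1. First, the ``alternate syntactic description'' has an inverse error: declaring $P(\iota_A\circ\eta)$ iff $\rn{\theta}_A\in Z_{\eta^{-1}\cdot P}$ is inconsistent with axiom~\eqref{eq:action:action} (matching both sides for all $\sigma,\eta$ would force $\eta\sigma=\sigma^{-1}\eta$); the rule obtained from the theon after re-indexing $Z_{\sigma_A\cdot Q}\leftrightarrow Z_Q$ is $\rn{\theta}_A\in Z_{\eta\cdot P}$. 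Second, the counting claim $\lvert R_P(M)\rvert/k! = \lvert\{A : P_A = P\}\rvert$ is false as soon as some $\Theta$-orbit has more than one element: the correct count is $\lvert R_P(M)\rvert/k! = \lvert\{A : P\text{ lies in the $\Theta$-orbit of }P_A\}\rvert$ divided by the size of the $\Theta$-orbit of $P$. The reason $\prod_P p_P^{\lvert R_P(M)\rvert/k!}$ nonetheless equals $\prod_A p_{P_A}$ is precisely the $\Theta$-invariance of $p$, which you therefore need to invoke at this step, not only later.

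There is also a structural issue: your Stage~1 density computation already assumes the per-$k$-set predicate events are independent, which in the raw theon is false (they all see the first-order coordinates through $\sigma_A$) and is justified only by the flipping/re-indexing argument you defer to Stage~2, creating a mild circularity. The paper avoids this split by proving a single conditional-probability statement~\eqref{eq:Thetapobjective} for an arbitrary event $E$ measurable in coordinates of size at most $k-1$: taking $E$ trivial gives the density formula, and the fact that the conditional does not depend on $E$ gives weak $(k-1)$-independence simultaneously. It then nails the exponent $\lvert R_P(K)\rvert/k!$ by an averaging trick over all $k!^{\binom{m}{k}}$ choices of enumerations $(\alpha_A)_A$, which sidesteps the orbit-stabilizer bookkeeping you tripped over. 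I would recommend adopting that combined structure; your flipping argument in Stage~2 is sound and, properly placed, does the same work.
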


\begin{proof}
  First, let us show that $\cN^Z$ is indeed a $T_\Theta$-on.

  Note first that $T_\Theta$ trivially proves that
  \begin{align}\label{eq:subsclosed}
    \neg P(x,y,\ldots,t) \qquad (P\in\cL,\text{the tuple $(x,y,\ldots,t)$ contains repeated variables})
  \end{align}
  and if we add~\eqref{eq:subsclosed} to the axioms of $T_\Theta$, then it becomes substitutionally closed
  (see~\cite[Definition~3.5, Remark~5]{CR19}), then by~\cite[Theorem~3.7]{CR19}, to show that $\cN^Z$ is a
  $T_\Theta$-on, it is enough to show that $\cN^Z$ satisfies the axioms of $T_\Theta$
  and~\eqref{eq:subsclosed} a.e. It is trivial that~$\cN^Z$ satisfies~\eqref{eq:subsclosed} a.e.

  Note that the fact that $Z$ is a partition implies that there exists a unique $P_x\in\cL$ such that
  $x_{[k]}\in Z_{P_x}$, thus there exists a unique $Q_x\in\cL$ such that $x\in\cN^Z_{Q_x}$, namely $Q_x =
  \sigma_x^{-1}\cdot P_x$ (where $\sigma_x$ is as in Definition~\ref{def:actiontheories}). This implies that
  $\cN^Z$ satisfies axioms~\eqref{eq:action:existsone} and~\eqref{eq:action:unique} a.e.

  Note now that if $\tau\in S_k$, then we have $\sigma_{x\cdot\tau} = \sigma_x\comp\tau$, hence
  \begin{align*}
    x\cdot\tau\in\cN^Z_P & \equiv x_{[k]}\in Z_{\sigma_{x\cdot\tau}\cdot P}
    \equiv x_{[k]}\in Z_{\sigma_x\cdot(\tau\cdot P)} \equiv x\in\cN^Z_{\tau\cdot P},
  \end{align*}
  so $\cN^Z$ also satisfies axiom~\eqref{eq:action:action} a.e., hence $\cN^Z$ is a $T_\Theta$-on.

  \medskip

  Let $\rn{K}$ be the exchangeable array corresponding to $\cN^Z$ with respect to $\rn{\theta}$ picked in
  $\cE_{\NN_+}$ according to $\lambda$. Since for $m\in\NN$ and $K\in\cK_m[T_\Theta]$, we have
  $\phi_{\cN^Z}(\langle K\rangle) = \PP[\rn{K}\rest_{[m]} = K]$, if we show that for every measurable
  $U\subseteq\cE_{m,k-1}$ with $\lambda(U)>0$, we have
  \begin{align}\label{eq:Thetapobjective}
    \PP[\rn{K}\rest_{[m]} = K \mid E]
    & =
    \prod_{P\in\cL} p_P^{\lvert R_P(K)\rvert/k!},
  \end{align}
  where $E$ is the event $(\rn{\theta}_B \mid B\in r(m,k-1))\in U$, then both~\eqref{eq:ThetapQR} and
  $\psi_{\Theta,p}\in\UCouple[k-1]$ will follow (the former follows by taking $U=\cE_{m,k-1}$ and the latter
  implies weak $(k-1)$-independence of $\cN^Z$, which is equivalent to $\phi_{\cN^Z}\in\UCouple[k-1]$ by
  Theorem~\ref{thm:UCouple}).

  If $m < k$, \eqref{eq:Thetapobjective} trivially holds, so suppose $m\geq k$ and note that the axioms of
  $T_\Theta$ imply that for each $\alpha\injection{[k]}{[m]}$, there exists a unique $P_\alpha\in\cL$ such
  that $\alpha\in R_{P_\alpha}(K)$ and we must further have $P_\alpha = \tau\cdot P_{\alpha\comp\tau}$ for
  every $\tau\in S_k$. Note that for any choice of $(\alpha_A)_{A\in\binom{[m]}{k}}$ with
  $\alpha_A\injection{[k]}{[m]}$ and $\im(\alpha_A)=A$, we have
  \begin{align*}
    \PP[\rn{K}\rest_{[m]} = K\mid E]
    & =
    \PP\left[\forall\alpha\in([m])_k, \alpha\in R_{P_\alpha}(\rn{K})
      \:\middle\vert\:
      E\right]
    \\
    & =
    \PP\left[\forall A\in\binom{[m]}{k}, \alpha_A\in R_{P_{\alpha_A}}(\rn{K})
      \:\middle\vert\:
      E\right].
  \end{align*}
  Now, the event $\alpha_A\in R_{P_{\alpha_A}}(\rn{K})$ depends only on the relative order of
  $(\rn{\theta}_{\{i\}} \mid i\in A)$ and on the variable $\rn{\theta}_A$ and, since $p$ is
  $\Theta$-invariant, we have $\lambda(Z_{\sigma\cdot P_\alpha}) = p_{P_\alpha}$ for every $\sigma\in S_k$ and
  every $\alpha\injection{[k]}{[m]}$. This means that if $\leq$ is an ordering of $A$ and $E_\leq$ is the
  event that says that the relative order of $(\rn{\theta}_{\{i\}} \mid i\in A)$ is $\leq$, then
  $\PP[\alpha\in R_{P_\alpha}(\rn{K}) \mid E\land E_\leq] = p_{P_\alpha}$ and thus
  \begin{align*}
    \PP[\rn{K}\rest_{[m]} = K \mid E]
    & =
    \prod_{A\in\binom{[m]}{k}} p_{P_{\alpha_A}}.
  \end{align*}
  Since this holds for any choice of $(\alpha_A)_{A\in\binom{[m]}{k}}$ with $\im(\alpha_A)=A$, by considering
  all possible $k!^{\binom{m}{k}}$ such choices we get
  \begin{align*}
    \PP[\rn{K}\rest_{[m]} = K \mid E]^{k!^{\binom{m}{k}}} & = \prod_{P\in\cL} p_P^{k!^{\binom{m}{k} - 1}\cdot\lvert R_P(K)\rvert},
  \end{align*}
  from which~\eqref{eq:Thetapobjective} follows.
\end{proof}

\begin{definition}
  Given a $T$-on $\cN$ over $\Omega=(X,\cA,\mu)$ and $K\in\cK_V[T]$, let $W^K_\cN\function{\cE_{V,\lvert
      V\rvert - 1}(\Omega)}{[0,1]}$ be defined by
  \begin{align*}
    W^K_\cN(x) & \df \mu(\{y\in X\mid (x,y)\in\Tind(K,\cN)\}).
  \end{align*}
\end{definition}
Note that $W^K_\cN$ is essentially a $(\lvert V\rvert - 1)$-flattening of the peon
$\Tind(K,\cN)\subseteq\cE_V(\Omega)$ (see Definition~\ref{def:flattening}).

The next two simple lemmas are fundamental in the proof of
Theorem~\ref{thm:fullUCouple}.

\begin{lemma}\label{lem:densityfromWKcN}
  Let $k\in\NN_+$ and suppose that $k(P)\leq k$ for all $P\in\cL$. Let $T$ be a theory over $\cL$ and $\cN$ be
  a $T$-on over $\Omega=(X,\cA,\mu)$. Then for every $m\in\NN$ and every $K\in\cK_m[T]$, we have
  \begin{align*}
    \phi_\cN(\langle K\rangle) & = \int_{X^{r(m,k-1)}} \prod_{A\in\binom{[m]}{k}} W^{K\rest_A}_\cN(\pi_A(x))\ d\mu(x),
  \end{align*}
  where $\pi_A\function{\cE_{m,k-1}(\Omega)}{\cE_{A,k-1}(\Omega)}$ is the projection on the coordinates
  indexed by $r(A,k-1)$.
\end{lemma}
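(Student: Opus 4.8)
The plan is to read the identity as a double application of Fubini's theorem that peels off the coordinates indexed by the $k$-element subsets of $[m]$.

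\smallskip
First I would recall that for a \emph{labeled} model $K\in\cK_m[T]$ one has $\phi_\cN(\langle K\rangle)=\tind(K,\cN)=\mu(\Tind(K,\cN))$ (this identity was already used, e.g., in the proof of Proposition~\ref{prop:rankbasic}), where $\Tind(K,\cN)\subseteq\cE_m(\Omega)=X^{r(m)}$ is the set of $\theta$ with $\alpha^*(\theta)\in\cN_P\iff\alpha\in R_P(K)$ for every $P\in\cL$ and every $\alpha\in([m])_{k(P)}$; equivalently this is the event ``$\rn K\rest_{[m]}=K$'' in the array picture of~\eqref{eq:arraycryptomorphism}. Since $k(P)\le k$ for all $P$, each constraint $\alpha^*(\theta)\in\cN_P$ only involves coordinates $\theta_B$ with $B=\alpha(C)$ for $C\in r(k(P))$, hence only $B\in r(m,k)$, so $\Tind(K,\cN)$ is a cylinder over $\cE_{m,k}(\Omega)=X^{r(m,k)}$. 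Using $r(m,k)=r(m,k-1)\disjcup\binom{[m]}{k}$ I identify $\cE_{m,k}(\Omega)$ with $\cE_{m,k-1}(\Omega)\times X^{\binom{[m]}{k}}$ (writing a generic point as $(x,(y_A)_{A})$) and, for each $A$ of size $k$, the compatible identification $\cE_A(\Omega)\cong\cE_{A,k-1}(\Omega)\times X$ matching the projections $\pi_A$ and the ``$(x,y)$'' convention in the definition of $W^K_\cN$. I would focus on $m\ge k$, which is the case carrying the content.

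\smallskip
The key step is the combinatorial factorization of the event: for $m\ge k$,
\[
 (x,(y_A)_A)\in\Tind(K,\cN)\ \Longleftrightarrow\ \forall A\in\tbinom{[m]}{k},\ \bigl(\pi_A(x),y_A\bigr)\in\Tind(K\rest_A,\cN).
\]
The forward direction is just preservation of $\Tind$ under restriction: restricting a point to the coordinates indexed by subsets of $A$ leaves exactly $(\pi_A(x),y_A)$, and the model read off $\cN$ there is $K\rest_A$. For the converse, take any $\alpha\in([m])_{k(P)}$; since $k(P)\le k\le m$ we may enlarge $\im(\alpha)$ to some $A\in\binom{[m]}{k}$, and then $\alpha^*(x,(y_A)_A)$ depends only on coordinates indexed by subsets of $A$, i.e.\ only on $(\pi_A(x),y_A)$, while $\alpha\in R_P(K)\iff\alpha\in R_P(K\rest_A)$; thus the hypothesis $(\pi_A(x),y_A)\in\Tind(K\rest_A,\cN)$ yields $\alpha^*(x,(y_A)_A)\in\cN_P\iff\alpha\in R_P(K)$. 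This is the one place where $m\ge k$ is used, and it is the only non-routine ingredient in the argument.

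\smallskip
Finally I would assemble everything with Fubini. The product measure on $X^{\binom{[m]}{k}}$ makes the coordinates $y_A$ mutually independent, and by the factorization the indicator of $\Tind(K,\cN)$ at $(x,(y_A)_A)$ equals $\prod_{A}\One\bigl[(\pi_A(x),y_A)\in\Tind(K\rest_A,\cN)\bigr]$; hence
\[
 \phi_\cN(\langle K\rangle)=\int_{\cE_{m,k-1}(\Omega)}\ \prod_{A\in\binom{[m]}{k}}\left(\int_X \One\bigl[(\pi_A(x),y)\in\Tind(K\rest_A,\cN)\bigr]\,d\mu(y)\right)d\mu(x),
\]
and the inner integral is, by definition, $W^{K\rest_A}_\cN(\pi_A(x))$, which is exactly the claimed formula. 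The measurability needed for Fubini is the same one invoked in Definition~\ref{def:flattening} and in the proof of Proposition~\ref{prop:rankae} (each $W^{K\rest_A}_\cN$ is measurable, being essentially a flattening of $\Tind(K\rest_A,\cN)$); beyond this, the remaining work is purely the bookkeeping of which coordinate is indexed by which subset.
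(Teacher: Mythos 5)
Your proof is correct and takes essentially the same approach as the paper: the paper's (very terse) argument also observes that $\rn{K}\rest_{[m]}=K$ is equivalent to $\forall A\in\binom{[m]}{k},\,\rn{K}\rest_A=K\rest_A$ (which is exactly your factorization of $\Tind(K,\cN)$ read in the array picture) and then integrates out the top variables $(\rn{\theta}_A\mid A\in\binom{[m]}{k})$ via Fubini, so your write-up is just a fully spelled-out version of the paper's one-sentence proof.
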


\begin{proof}
  Follows by considering the exchangeable array corresponding to $\cN$ with respect to $\rn{\theta}$ picked in
  $\cE_{\NN_+}(\Omega)$ according to $\mu$, noting that $\rn{K}\rest_{[m]}=K$ is equivalent to $\forall
  A\in\binom{[m]}{k},\rn{K}\rest_A=K\rest_A$ (since $k(P)\leq k$ for every $P\in\cL$) and integrating out
  the top variables $(\rn{\theta}_A \mid A\in\binom{[m]}{k})$.
\end{proof}

\begin{lemma}\label{lem:UCoupleWKcNaeconstant}
  If a $T$-on $\cN$ over $\Omega$ is such that $\phi_\cN$ satisfies $\UCouple[\ell]$ and $K\in\cK_V[T]$ with
  $\lvert V\rvert\leq\ell+1$, then $W^K_\cN$ is a.e.\ constant.
\end{lemma}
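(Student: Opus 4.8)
The plan is to realize $W^K_\cN$ as a conditional probability for the canonical exchangeable array attached to $\cN$ and then collapse the conditioning by weak independence. Write $m\df\lvert V\rvert$; since the statement is invariant under relabeling $V$ we may assume $V\subseteq\NN_+$. The first step is to invoke the equivalence \ref{prop:weakindep:UCouple}$\implies$\ref{prop:weakindep:weakindepall} of Lemma~\ref{lem:weakindep}: the hypothesis $\phi_\cN\in\UCouple[\ell]$ forces this very theon $\cN$ to be weakly $\ell$-independent, i.e.\ if $\rn{\theta}$ is picked in $\cE_{\NN_+}(\Omega)$ according to $\mu$ and $\rn{K}$ is the exchangeable array corresponding to $\cN$ with respect to $\rn{\theta}$ (see~\eqref{eq:arraycryptomorphism}), then $\rn{K}$ is independent, as a random variable, from $(\rn{\theta}_A\mid A\in r(\NN_+,\ell))$.

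Then one identifies $W^K_\cN$ with the conditional probability of $\{\rn{K}\rest_V = K\}$ given the low-order coordinates. Indeed, $\{\rn{K}\rest_V = K\}$ is exactly the event $\{(\rn{\theta}_A\mid A\in r(V))\in\Tind(K,\cN)\}$, where $\Tind(K,\cN)\subseteq\cE_V(\Omega)=\cE_{V,m-1}(\Omega)\times X^{\{V\}}$; peeling off the single top coordinate $\rn{\theta}_V$, which is distributed as $\mu$ on $X$ independently of everything else, Fubini's Theorem gives, for every measurable $B\subseteq\cE_{V,m-1}(\Omega)$,
\[
  \PP\bigl[\rn{K}\rest_V = K \ \wedge\ (\rn{\theta}_A\mid A\in r(V,m-1))\in B\bigr] \;=\; \int_B W^K_\cN\,d\mu ,
\]
which says precisely that $W^K_\cN\bigl((\rn{\theta}_A\mid A\in r(V,m-1))\bigr)$ is a version of $\PP\bigl[\rn{K}\rest_V = K \bigm| (\rn{\theta}_A\mid A\in r(V,m-1))\bigr]$.

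Finally, one concludes: from $\lvert V\rvert\le\ell+1$ we get $m-1\le\ell$, hence $(\rn{\theta}_A\mid A\in r(V,m-1))$ is a sub-family of $(\rn{\theta}_A\mid A\in r(\NN_+,\ell))$, and since $\{\rn{K}\rest_V = K\}$ is a function of $\rn{K}$, weak $\ell$-independence makes it independent from that sub-family; so the conditional probability above equals the unconditional one, giving $W^K_\cN = \PP[\rn{K}\rest_V = K]=\phi_\cN(\langle K\rangle)$ $\mu$-a.e., a constant. There is no genuinely hard step here; the only thing requiring care is the routine measure-theoretic bookkeeping — correctly isolating the top coordinate $\rn{\theta}_V$ so that $W^K_\cN$ becomes a conditional expectation, and checking the inclusion $r(V,m-1)\subseteq r(\NN_+,\ell)$ under the hypothesis $\lvert V\rvert\le\ell+1$ — together with selecting the appropriate one of the equivalences packaged in Lemma~\ref{lem:weakindep}.
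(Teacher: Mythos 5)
Your proof is correct and follows essentially the same argument as the paper's: identify $\int_B W^K_\cN\,d\mu$ with $\PP[\rn{K}\rest_V=K\ \wedge\ (\rn\theta_A\mid A\in r(V,m-1))\in B]$ by Fubini and then split this probability using the weak $\ell$-independence of $\cN$ guaranteed by Theorem~\ref{thm:UCouple} (equivalently, Lemma~\ref{lem:weakindep}\ref{prop:weakindep:weakindepall}). The only cosmetic difference is that you phrase the conclusion in conditional-probability language while the paper states it directly as an integral identity, and you appropriately flag that the strong form \ref{prop:weakindep:weakindepall} (every theon representing $\phi_\cN$ is weakly $\ell$-independent, not merely some) is needed since $W^K_\cN$ depends on the particular $\cN$ at hand.
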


\begin{proof}
  Without loss of generality, we may suppose that $V = [m]$. Write $\Omega=(X,\cA,\mu)$. Then it is sufficient
  to show that for every measurable $U\in\cE_{m,\ell}(\Omega)$, we have $\int_U W^K_\cN\ d\mu =
  \mu(U)\phi_\cN(\langle K\rangle)$. But for the exchangeable array $\rn{K}$ corresponding to $\cN$ with
  respect to $\rn{\theta}$ picked in $\cE_{\NN_+}(\Omega)$ according to $\mu$, it follows that
  \begin{multline*}
    \int_U W^K_\cN\ d\mu
    =
    \PP[\rn{K}\rest_{[m]} = K\land (\rn{\theta}_A \mid A\in r(m,k-1))\in U]
    \\
    =
    \PP[\rn{K}\rest_{[m]} = K]\cdot\PP[(\rn{\theta}_A \mid A\in r(m,k-1))\in U]
    =
    \mu(U)\phi_\cN(\langle K\rangle),
  \end{multline*}
  where the second equality follows since $\cN$ is weakly $\ell$-independent by Theorem~\ref{thm:UCouple}.
\end{proof}

\begin{proofof}{Theorem~\ref{thm:fullUCouple}}
  The backward direction follows from Proposition~\ref{prop:ThetapQR} and Theorem~\ref{thm:naturality}.

  For the forward direction, we will show that in fact we can take $p = (p_P)_{P\in\cL}$ satisfying
  $p_P > 0$ for every $P\in\cL$. Note that when $p_P > 0$ for every $P\in\cL$, we have $\psi_{\Theta,p}(M) >
  0$ for every $M\in\cM[T_\Theta]$, so by an argument analogous to that of the proof of
  Theorem~\ref{thm:fullIndependence}, it is enough to consider the case when $T = T_\cL$.

  \medskip

  Suppose then that $T = T_\cL$ and let $\cN$ be a $T$-on such that $\phi_\cN = \phi$. Note that if $P\in\cL$
  is such that $k(P)\leq k-1$, then $\rk(\cN_P)\leq k-1$, so by Theorem~\ref{thm:naturality} and
  Proposition~\ref{prop:rankbasic}, it follows that $\rk(\cN_P)=0$, that is, $\lambda(\cN_P)\in\{0,1\}$. This
  means that we can write $\cL=\widehat{\cL}\cup\cL_0\cup\cL_1$, where
  \begin{align*}
    \widehat{\cL} & \df \{P\in\cL \mid k(P) = k\};\\
    \cL_i & \df \{P\in\cL \mid k(P)\leq k-1\land\lambda(\cN_P) = i\} \qquad (i\in\{0,1\}).
  \end{align*}

  Consider the (left) action of $S_k$ on $\cK_k[\Th(\phi)]$ given by letting $\sigma\cdot
  K\in\cK_k[\Th(\phi)]$ ($\sigma\in S_k$, $K\in\cK_k[\Th(\phi)]$) be the model obtained from $K$ by permuting
  its vertices by $\sigma$, that is, we have
  \begin{align*}
    R_P(\sigma\cdot K) & \df \{\sigma\comp\alpha \mid \alpha\in R_P(K)\} & (P\in\widehat{\cL});\\
    R_P(\sigma\cdot K) & \df \emptyset & (P\in\cL_0);\\
    R_P(\sigma\cdot K) & \df ([k])_{k(P)} & (P\in\cL_1).
  \end{align*}
  Note that this definition ensures that for a.e.~$x\in\cE_k$ and every $\sigma\in S_k$, we have
  \begin{align}\label{eq:actioncN}
    x\cdot\sigma\in\Tind(K,\cN) & \equiv x\in\Tind(\sigma\cdot K,\cN).
  \end{align}
  It is also clear that for a.e.~$x\in\cE_k$, there exists exactly one $K\in\cK_k[\Th(\phi)]$ such that
  $x\in\Tind(K,\cN)$.

  Let then $\cL'$ be a language containing one predicate symbol $P_K$ of arity $k$ for each
  $K\in\cK_k[\Th(\phi)]$ and let $\Theta\function{S_k\times\cL'}{\cL'}$ be the induced action $\sigma\cdot
  P_K\df P_{\sigma\cdot K}$ ($\sigma\in S_k$, $K\in\cK_k[\Th(\phi)]$). Define then $\cH$ by
  \begin{align*}
    \cH_{P_K} & \df \Tind(K,\cN)
  \end{align*}
  and note that~\eqref{eq:actioncN} and the remark below it ensure that $\cH$ is a $T_\Theta$-on.

  Define $I\interpret{T}{T_\Theta}$ by
  \begin{align*}
    I(P)(x_1,\ldots,x_{k(P)}) & \df
    \begin{dcases*}
      \bigvee_{\substack{K\in\cK_k[\Th(\phi)]\\ \id_k\in R_P(K)}} P_K(x_1,\ldots,x_{k(P)}), & if $P\in\widehat{\cL}$;\\
      x_1\neq x_1, & if $P\in\cL_0$;\\
      \bigwedge_{1\leq i < j\leq k(P)} x_i\neq x_j, & if $P\in\cL_1$.
    \end{dcases*}
  \end{align*}
  and note that we trivially have $I(\cH)_P = \cN_P$ a.e.\ for every $P\in\cL$, hence $\phi_\cH^I = \phi$.

  For every $K\in\cK_k[\Th(\phi)]$, let $p_{P_K}\df\lambda(\cH_{P_K}) = \phi(\langle K\rangle) > 0$ and note
  that the definition of $\Theta$ implies that $p$ is $\Theta$-invariant and $\sum_{K\in\cK_k[\Th(\phi)]}
  p_{P_K} = 1$. To conclude the proof, we will show that $\phi_\cH = \psi_{\Theta,p}$. To do so, for every
  $K\in\cK_k[\Th(\phi)]$, let $K_K\in\cK_k[T_\Theta]$ be the unique model such that $\id_k\in R_{P_K}(K_K)$
  and note that the axioms of $T_\Theta$ imply that $W^{K_K}_\cH$ is a.e.\ equal to the $(k-1)$-flattening
  $W^{k-1}_{\cH_{P_K}}$ of the peon $\cH_{P_K}$, which in turn is a.e.\ equal to $W^K_\cN$. But then from
  Lemma~\ref{lem:UCoupleWKcNaeconstant}, it follows that $W^{K_K}_\cH = \phi(\langle K\rangle) = p_{P_K}$
  a.e. Since the $T_\Theta$-on $\cN^Z$ of Definition~\ref{def:actiontheories} and
  Proposition~\ref{prop:ThetapQR} also clearly satisfies $W^{K_K}_{\cN^Z} = W^{k-1}_{\cN^Z_{P_K}} = p_{P_K}$
  a.e., from Lemma~\ref{lem:densityfromWKcN}, it follows that $\phi_\cH = \phi_{\cN^Z} = \psi_{\Theta,p}$.
\end{proofof}

\section{Conclusion and open problems}
\label{sec:concl}

In this paper we have attempted to build a general theory of quasi-randomness
that is uniformly applicable to arbitrary combinatorial structures and is invariant
under their ``natural transformations''. While our basic definitions deliberately
avoided mentioning specific densities,
it turned out, in the vein of the previous research in the area, that our quasi-random
properties can be characterized in several equivalent ways, {\em including} such
densities. We have shown how to arrange these properties into a hierarchy and,
with one or two notable exceptions, have been able to prove that this hierarchy is
proper. Finally, we have compared our quasi-random properties to what has been
studied before for hypergraphs (with the focus on specific densities) and have
found that these two frameworks are essentially incomparable.

\medskip

One topic that we touched tangentially in the proof of Theorem~\ref{thm:UCouple}, more specifically with
Example~\ref{ex:nonclosed} and Lemma~\ref{lem:ucL1closed}, is the closedness of our properties with respect to
both the density topology and $L_1$-topology (Definition~\ref{def:L1}). The aforementioned example and lemma show
that in general unique coupleability with a particular collection of limit objects is closed in
$L_1$-topology but not necessarily closed in the density topology. On the other hand, alternative
syntactic descriptions of $\UCouple[\ell]$ and $\UInduce[\ell]$ (as $\ell$-locality and
symmetric $\ell$-locality, respectively) imply that these classes are closed even in the
density topology. So in a sense we have a satisfactory overall picture for the classes
based on the ``extrinsic'' notion of coupleability.

Remarkably, we do not know the answer for the class $\Independence[\ell]$, even if it has a
very clean and natural ``intrinsic'' definition. This is the first question we
would like to ask: is $\Independence[\ell]$ closed in the density, or at least
$L_1$-topology? One sensible approach to this question might consist in developing
an alternative, and perhaps more concrete, characterization of this class that
might be interesting in its own right.

\medskip

If $\phi_1$ and $\phi_2$ are uniquely coupleable with {\em all} theons of rank $\leq\ell$,
then the same is true for $\phi_1\otimes\phi_2$ (Theorem~\ref{thm:naturalityindcoup}~\ref{thm:naturalityindcoup:UCouple}).
We do not know if the same remains true after replacing this class of tests with
individual tests, and when we needed this in one of our proofs, we had to take
a considerable detour (see item~\ref{it:quasirandomcolhyp} in our program at the beginning of Section~\ref{sec:UCouple}). Thus comes our second open question: assume that $\phi_1$
and $\psi$, as well as $\phi_2$ and $\psi$ are uniquely coupleable. Does it imply that
$\phi_1\otimes \phi_2$ is also uniquely coupleable with $\psi$?

Under the additional assumption that $\phi_1,\phi_2 $ are themselves uniquely
coupleable, the question takes a particularly nice and symmetric form: assume
that $\phi_1$, $\phi_2$ and $\phi_3 (=\psi)$ are pairwise uniquely coupleable.
Does it imply that $\phi_1,\phi_2,\phi_3$ are (mutually) uniquely coupleable?
While the analogy with independence for random variables is now visible, it is
not immediately clear how useful it might turn out here.

\medskip

Another interesting question is whether unique coupleability establishes a
Galois correspondence between
$\UCouple[\ell]$ and limit objects of rank at most $\ell$. In other words,
is it true that if $\phi$ is uniquely
coupleable with every $\psi\in\UCouple[\ell]$, then $\rk(\phi)\leq\ell$?

\medskip

As we mentioned before, the results of Theorems~\ref{thm:anti-monotone}, \ref{thm:inter-properties},
\ref{thm:separationupward}, \ref{thm:separationUCoupleIndependence} and~\ref{thm:separationUInduceUCouple}
almost complete the Hasse diagram of implications between the families $\Independence$, $\UCouple$ and
$\UInduce$. The only missing separations are the ones of the form
$\UCouple[\ell]\nRightarrow\Independence[\ell']$ when $\ell' < \ell$, and this
is our fourth question. Let us remark that the somewhat subtle theons we
introduced in Section~\ref{sec:useful} do not seem to work for this purpose.

\medskip
Recall that Theorem~\ref{thm:UCouple}\ref{thm:UCouple:UCouple}$\equiv$\ref{thm:UCouple:orderUInduce} says that
$\phi\in\UCouple[\ell]$ is equivalent to $\phi\otimes\psi\in\UInduce[\ell]$, where $\psi\in\HomT{\TLinOrder}$
is the linear order. Let us now draw attention to three interesting open problems that can be extracted from
this equivalence.

The first is whether a ``converse'' of this is true in the spirit of
 Theorems~\ref{thm:fullIndependence} and~\ref{thm:fullUCouple}: can every
$\phi\in\UInduce[\ell]$ be written as $\phi = (\widehat{\phi}\otimes\psi)^I$ for some
$\widehat{\phi}\in\UCouple[\ell]$ and some open interpretation $I\interpret{T}{T'\cup\TLinOrder}$?

The second problem is an analogue of Theorems~\ref{thm:fullIndependence} and~\ref{thm:fullUCouple} themselves
in the context of unique inducibility. We conjecture that if all arities are at most $k$, then $\phi\in\UInduce[k-1]$ should
be equivalent to $\phi = (\phi_{\Theta,p}\otimes\psi)^I$ for some action
$\Theta\function{S_k\times\cL'}{\cL'}$ on a language $\cL'$, some open interpretation
$I\interpret{T}{T_\Theta\cup\TLinOrder}$ and some $\Theta$-invariant $p\in[0,1]^{\cL'}$ (of course, this would follow from a positive answer to the previous problem).

\smallskip

The third question is more open-ended and concerns the quasirandom permuton (i.e., $\psi\otimes\psi$ for the unique $\psi\in\HomT{\TLinOrder}$). It does
not even satisfy the weakest of our properties $\UInduce[1]$ (see remark after
Theorem~\ref{thm:naturalityindcoup}), so we can rightfully wonder: why do we
think about this object as quasi-random at all, to start with?
 A simple but unsatisfactory answer in the vein of (again) Theorems~\ref{thm:fullIndependence} and~\ref{thm:fullUCouple} is that it can be written as $\phi=(\widehat{\phi}\otimes\psi)^I$ for some
$\widehat{\phi}\in\UInduce[\ell]$ and some $I\interpret{T}{T'\cup\TLinOrder}$, and
we could form the class of all objects representable this way. Are there any
``reasonable'' descriptions of this class, extrinsic or intrinsic? Note that if
the conjectures from the previous two paragraphs are true, this would also form another
interesting hierarchy in an orthogonal direction: we can get progressively weaker families of natural
quasirandomness properties by taking independent coupling with the linear order.

Another possible approach is to start from the known permutation quasirandomness
theory~\cite{Coo04,Coo05,KP13,CKNPSV20}. However, in comparison to their (hyper)graph and tournament
counterparts, the theory of permutation quasirandomness provides a much smaller variety of quasirandomness
formulations as candidates for natural generalizations, essentially boiling down to only three types: explicit
density notions, discrepancy notions based on intervals and spectral notions. Let us also note that
there is still a whole host of properties~\cite{DEG14,CD17} that random permutations satisfy and
that have not yet been fully explored in the quasirandom setting. In fact, some
of these properties are so fine-grained that it is not even clear if they can be
encoded by subpermutation densities.

\medskip

The notions of rank and $\Independence$ have the following generalization: for $B\subseteq\NN_+$, let us say
that a peon $\cN$ over $\Omega=(X,\cA,\mu)$ is \emph{$B$-compatible} if it only depends on coordinates that
are indexed by sets $A$ with $\lvert A\rvert\in B$, that is, it can be written as $\cN = \cG\times
X^{\bigcup_{b\in[k(P)]\setminus B}\binom{[k(P)]}{b}}$ for some $\cG\subseteq X^{\bigcup_{b\in
    B}\binom{[k(P)]}{b}}$. Let us say that an Euclidean structure is \emph{$B$-compatible} if all its peons
are so and let us say that $\phi\in\HomT{T}$ is \emph{$B$-compatible} if it has a $T$-on representation that
is $B$-compatible. Then rank at most $k$ amounts to $[k]$-compatibility and $\ell$-independence amounts to
$(\NN_+\setminus[\ell])$-compatibility. We believe that with a careful inductive application of the theon
uniqueness theorems~\cite[Theorems~3.9 and~3.11, Proposition~7.7]{CR19}, one could generalize the proof of weak
independence to show that if $\phi_1$ and $\phi_2$ are $B_1$-compatible and $B_2$-compatible, respectively and
$B_1\cap B_2 = \emptyset$, then $\phi_1$ and $\phi_2$ are uniquely coupleable. However, we know that
$\UCouple[\ell]$, i.e., unique coupleability with all $[\ell]$-compatible limit objects, is strictly weaker
than $\Independence[\ell]$, so it is natural to ask if the weak independence analogue of $(\NN_+\setminus
B)$-compatibility (i.e., asking the exchangeable array $\rn{K}$ to be independent from $(\rn{\theta}_A \mid
\lvert A\rvert\in B)$ as a random variable) also yields a strictly weaker property than $(\NN_+\setminus
B)$-compatibility when $B$ is not of the form $[k]$ for some $k\in\NN$. In particular, this involves studying
unique coupleability with all $\psi\in\Independence[\ell]$ as well. Building on that, it is also natural to
ask if there are examples of uniquely coupleable $\phi_1$ and $\phi_2$ that do not fall in this
$B$-compatibility setting or in its weak independence analogue.

\bibliographystyle{alpha}
\bibliography{refs}

\appendix

\section{The $L_1$-topology}
\label{sec:L1topology}

\begin{lemma}\label{lem:L1topology}
  The $L_1$-distance $\delta_1$ is a metric on $\HomT{T}$ and generates a finer topology than the density
  topology.
\end{lemma}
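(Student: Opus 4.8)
The plan is to verify the three metric axioms and then the topological comparison, working throughout with the algebraic reformulation $\delta_1(\phi_1,\phi_2)=\min_\xi\xi(d_T)$ of \eqref{eq:d1coupling}, whose minimum (equivalently, that of \eqref{eq:d1}) is attained by the compactness argument already given. Nonnegativity and symmetry of $\delta_1$ are immediate from the definition, and $\delta_1(\phi,\phi)=0$ by taking the same $T$-on twice. For the identity of indiscernibles, if $\delta_1(\phi_1,\phi_2)=0$ then, since the minimum in \eqref{eq:d1} is attained, there exist $T$-ons $\cN^1,\cN^2$ over a common space $(X,\cA,\mu)$ with $\sum_{P\in\cL}\mu(\cN^1_P\symdiff\cN^2_P)=0$; hence $\cN^1_P=\cN^2_P$ $\mu$-a.e.\ for every $P$, so $\phi_1=\phi_{\cN^1}=\phi_{\cN^2}=\phi_2$.

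The substance is the triangle inequality, and here I would recycle the amalgamation machinery. Fix $\phi_1,\phi_2,\phi_3\in\HomT{T}$ and choose couplings $\xi_{12}\in\HomT{T\cup T}$ of $\phi_1,\phi_2$ and $\xi_{23}\in\HomT{T\cup T}$ of $\phi_2,\phi_3$ attaining $\delta_1(\phi_1,\phi_2)$ and $\delta_1(\phi_2,\phi_3)$ in \eqref{eq:d1coupling}. Writing $I\interpret{T}{T\cup T}$ for the structure-erasing interpretation onto the second copy, we have $\xi_{12}^I=\phi_2$, so $\xi_{23}$ is a coupling of $\xi_{12}^I$ and $\phi_3$; Proposition~\ref{prop:lifting} (equivalently, Theorem~\ref{thm:amalgamation} applied to the evident pushout square) then produces $\zeta\in\HomT{T\cup T\cup T}$ — a simultaneous coupling of $\phi_1,\phi_2,\phi_3$ — whose restriction to copies $1,2$ is $\xi_{12}$ and whose restriction to copies $2,3$ is $\xi_{23}$. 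Let $\eta\in\HomT{T\cup T}$ be the restriction of $\zeta$ to copies $1,3$, a coupling of $\phi_1,\phi_3$. Fixing a $(T\cup T\cup T)$-on $\cH$ with $\phi_\cH=\zeta$ and denoting by $\cH^1,\cH^2,\cH^3$ its three component Euclidean structures, and recalling that $d_T$ computes, in any coupling represented by a $(T\cup T)$-on with component structures $\cG^1,\cG^2$, the quantity $\sum_P\mu(\cG^1_P\symdiff\cG^2_P)$, the set-theoretic triangle inequality for symmetric differences gives
\begin{align*}
  \delta_1(\phi_1,\phi_3)
  &\le\eta(d_T)=\sum_{P\in\cL}\mu(\cH^1_P\symdiff\cH^3_P)\\
  &\le\sum_{P\in\cL}\mu(\cH^1_P\symdiff\cH^2_P)+\sum_{P\in\cL}\mu(\cH^2_P\symdiff\cH^3_P)\\
  &=\xi_{12}(d_T)+\xi_{23}(d_T)=\delta_1(\phi_1,\phi_2)+\delta_1(\phi_2,\phi_3).
\end{align*}
I expect the only genuine obstacle to be the construction of $\zeta$, i.e.\ a single coupling of all three objects that is simultaneously compatible with the two chosen optimal pairings; this is precisely the difficulty that Proposition~\ref{prop:theonalignment} was built to overcome, so the notationally heavy part is already in hand.

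For the comparison of topologies, since $\cM[T]$ is countable the density topology is metrizable, so it suffices to show that $\delta_1$-convergence implies density convergence; this follows once the map $\phi\mapsto\phi(M)$ is shown to be $\delta_1$-Lipschitz with a constant depending only on $\lvert M\rvert$. It is enough to bound $\lvert\phi_1(\langle K\rangle)-\phi_2(\langle K\rangle)\rvert$ for $K\in\cK_V[T]$: picking a coupling attaining $\delta_1(\phi_1,\phi_2)$, realized by $T$-ons $\cN^1,\cN^2$ over $(X,\cA,\mu)$, one has $\phi_i(\langle K\rangle)=\tind(K,\cN^i)=\mu(\Tind(K,\cN^i))$, and a routine case check shows
\[
  \Tind(K,\cN^1)\symdiff\Tind(K,\cN^2)\subseteq\bigcup_{P\in\cL}\bigcup_{\alpha\in(V)_{k(P)}}(\alpha^*)^{-1}\bigl(\cN^1_P\symdiff\cN^2_P\bigr).
\]
Since each $\alpha^*$ is measure preserving, $\mu(\Tind(K,\cN^1)\symdiff\Tind(K,\cN^2))\le\bigl(\max_{P}(\lvert V\rvert)_{k(P)}\bigr)\sum_P\mu(\cN^1_P\symdiff\cN^2_P)=C_{\lvert V\rvert}\,\delta_1(\phi_1,\phi_2)$. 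Expanding $M$ as a fixed scalar multiple of a labeled density $\langle K\rangle$ with $\lvert K\rvert=\lvert M\rvert$ then gives $\lvert\phi_1(M)-\phi_2(M)\rvert\le C'_{\lvert M\rvert}\,\delta_1(\phi_1,\phi_2)$, so the identity map from $(\HomT{T},\delta_1)$ to $\HomT{T}$ with the density topology is continuous; hence every density-open set is $L_1$-open, i.e.\ the $L_1$-topology is finer. (That it is \emph{strictly} finer in general is witnessed by Example~\ref{ex:nonclosed}, but this is not needed for the statement.)
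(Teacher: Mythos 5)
Your proof is correct and takes essentially the same route as the paper's: the triangle inequality is obtained by building a three-way coupling via Proposition~\ref{prop:lifting} and then applying the set-theoretic triangle inequality for symmetric differences coordinatewise, and the topology comparison follows from a Lipschitz bound on each density map $\phi\mapsto\phi(\langle M\rangle)$ obtained by a union bound over $(\alpha^*)^{-1}(\cN^1_P\symdiff\cN^2_P)$. The only minor differences are cosmetic: you verify the identity of indiscernibles directly from the attained minimum in~\eqref{eq:d1} rather than deriving it as a corollary of the Lipschitz estimate, and your Lipschitz constant differs slightly in form from the paper's $\sum_{P\in\cL}(\lvert M\rvert)_{k(P)}$, but either constant works.
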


\begin{proof}
  Let us first check the triangle inequality. Let $\xi$ be a coupling of $\phi_1$
  and $\phi_2$ and $\zeta$ be a coupling of $\phi_2$ and $\phi_3$ attaining the $L_1$-distances
  in~\eqref{eq:d1coupling}. Let also $J_i\interpret{T}{T\cup T}$ be the structure-erasing interpretation
  corresponding to coordinate $i$ and $I_{ij}\interpret{T\cup T}{T\cup T\cup T}$ be the structure-erasing
  interpretation corresponding to coordinates $i$ and $j$. Since $\xi$ is a coupling of $\phi_1$ and $\phi_2 =
  \zeta^{J_1}$, Proposition~\ref{prop:lifting} gives us a coupling $\widehat{\xi}$ of $\phi_1$ and $\zeta$
  such that $\widehat{\xi}^{\id_T\cup J_1} = \xi$. Since $\id_T\cup J_1 = I_{12}$, we get that $\widehat{\xi}$
  is a coupling of $\phi_1$, $\phi_2$ and $\phi_3$ such that $\widehat{\xi}^{I_{12}} = \xi$ and
  $\widehat{\xi}^{I_{23}} = \zeta$. But $\widehat{\xi}^{I_{13}}$ is a coupling of $\phi_1$ and $\phi_3$ and
  for each $P\in\cL$ we have
  \begin{align*}
    \widehat{\xi}^{I_{13}}(d_P)
    & \leq
    \widehat{\xi}^{I_{12}}(d_P) + \widehat{\xi}^{I_{23}}(d_P),
  \end{align*}
  hence by~\eqref{eq:d1coupling} we get $\delta_1(\phi_1,\phi_3)\leq \delta_1(\phi_1,\phi_2) +
  \delta_1(\phi_2,\phi_3)$.

  \medskip

  Finally, note that by~\eqref{eq:d1} we have
  \begin{align*}
    \lvert\phi_1(\langle M\rangle) - \phi_2(\langle M\rangle)\rvert
    & \leq
    \delta_1(\phi_1,\phi_2)\sum_{P\in\cL} (\lvert M\rvert)_{k(P)},
  \end{align*}
  for every $M\in\cM[T]$. This implies both $\delta_1(\phi_1,\phi_2) = 0 \implies \phi_1=\phi_2$
  and that the $L_1$-topology is finer than the density topology.
\end{proof}

\end{document}